  \numberwithin{equation}{section}
  \newcommand{\N}{\mathbb{N}}         
  \newcommand{\R}{\mathbb{R}}         
  \newcommand{\Z}{\mathbb{Z}}         
  \newcommand{\EE}{\mathbb{E}}        
  \newcommand{\PP}{\mathbb{P}}        
  \newcommand{\leb}{\mathcal{L}}      
  \newcommand{\BB}{\mathcal{B}}         
  \newcommand{\ldimloc}{\underline{\dim}}
  \newcommand{\udimloc}{\overline{\dim}}
\newcommand{\iii}{\mathtt{i}}
\newcommand{\jjj}{\mathtt{j}}
  \newcommand{\e}{\varepsilon}
  \renewcommand{\AA}{\mathbb{A}}          
  \newcommand{\Q}{\mathcal{Q}}          
  \newcommand{\cP}{\mathcal{P}}           
  \newcommand{\allbasicconds}{\eqref{SI:bounded}--\eqref{SI:spatial-independence}}    
  \newcommand{\lam}{\lambda}
  \newcommand{\poissonian}{\mathcal{PCM}}
  \newcommand{\subdivision}{\mathcal{SM}}
  \newcommand{\I}{\text{I}}
  \newcommand{\II}{\text{II}}
  \newcommand{\III}{\text{III}}
 \DeclareMathOperator{\iso}{ISO}
 \DeclareMathOperator{\aff}{AFF}
  \DeclareMathOperator{\simi}{SIM}
  \DeclareMathOperator{\dimloc}{\dim}
  \DeclareMathOperator{\GL}{GL}
  \DeclareMathOperator{\dist}{dist}
  \DeclareMathOperator{\diam}{diam}
  \DeclareMathOperator{\supp}{supp}   
  \newtheorem{thm}{Theorem}[section]
  \newtheorem{lemma}[thm]{Lemma}
  \newtheorem{prop}[thm]{Proposition}
  \newtheorem{cor}[thm]{Corollary}
  \theoremstyle{remark}
  \newtheorem{rem}[thm]{Remark}
  \newtheorem{rems}[thm]{Remarks}
  \newtheorem{ex}[thm]{Example}
  \newtheorem{defn}[thm]{Definition}
\begin{document}

\title[SI-martingales]{Spatially independent martingales, intersections, and applications}

\author{Pablo Shmerkin}
\address{Department of Mathematics and Statistics, Torcuato Di Tella University, and CONICET, Buenos Aires, Argentina}
\email{pshmerkin@utdt.edu}
\thanks{P.S. was partially supported by Projects PICT 2011-0436 and PICT 2013-1393 (ANPCyT). Part of this research was completed while P.S. was visiting the University of Oulu.}
\urladdr{http://www.utdt.edu/profesores/pshmerkin}

\author{Ville Suomala}
\address{Department of Mathematical Sciences, University of Oulu, Finland}
\email{ville.suomala@oulu.fi}
\urladdr{http://cc.oulu.fi/~vsuomala/}

\keywords{martingales, random measures, random sets, Hausdorff dimension, fractal percolation, random cutouts, convolutions, projections, intersections}

\subjclass[2010]{Primary: 28A75, 60D05; Secondary: 28A78, 28A80, 42A38, 42A61, 60G46, 60G57}

\begin{abstract}
We define a class of random measures, spatially independent martingales, which we view as a natural generalization of the canonical random discrete set, and which includes as special cases many variants of fractal percolation and Poissonian cut-outs. We pair the random measures with deterministic families of parametrized measures $\{\eta_t\}_t$, and show that under some natural checkable conditions, a.s. the mass of the intersections is H\"older continuous as a function of $t$. This continuity phenomenon turns out to underpin a large amount of geometric information about these measures, allowing us to unify and substantially generalize a large number of existing results on the geometry of random Cantor sets and measures, as well as obtaining many new ones. Among other things, for large classes of random fractals we establish (a) very strong versions of the Marstrand-Mattila projection and slicing results, as well as dimension conservation, (b) slicing results with respect to algebraic curves and self-similar sets, (c) smoothness of convolutions of measures, including self-convolutions, and nonempty interior for sumsets, (d) rapid Fourier decay. Among other applications, we obtain an answer to a question of I. {\L}aba in connection to the restriction problem for fractal measures.
\end{abstract}

\maketitle

\tableofcontents

\section{Introduction}
\label{sec:introduction}

\subsection{Motivation and overview}

One of the most basic examples in the probabilistic method in combinatorics, going back to Erd\H{o}s' classical lower bound on the Ramsey numbers $R(k,k)$, is the random subset $E=E_N$ of $\{1,\ldots,N\}$ obtained by picking each element independently with the same probability $p=p(N)$. On one hand, this random set (or suitable modifications) serves as an example in many problems for which deterministic constructions are not yet known, or hard to construct, see \cite{AlonSpencer08}. On the other hand, the random sets $E$ are often studied for their intrinsic interest. For example, recently there has been much interest in extending results in additive combinatorics such as Szemer\'{e}di's Theorem or S\'{a}rk\H{o}zy's Theorem from $\{1,\ldots, N\}$ to the random sets $E$ (needless to say, the appropriate choice of function $p(N)$ depends on the problem at hand). See e.g. \cite{ConlonGowers10, BMS14} and references therein.

Probabilistic constructions of sets and measures in Euclidean spaces also arise in many problems in analysis and geometry. Again, such constructions are often employed to provide examples of phenomena that are hard to achieve deterministically, and are also often studied for their intrinsic interest. Although there is no canonical construction as in the discrete setting, for many problems (though by no means all) of both kinds  one seeks constructions which, to some extent, share the following two key properties of the discrete canonical random set $E$: (i) all elements have the same probability of being chosen, and (ii) for disjoint sets $(A_i)$, the random sets $E\cap A_i$ are independent. See e.g. \cite{PeresSolomyak05, Korner08, LabaPramanik09, LabaPramanik11, ShmerkinSuomala12, Chen13} for some recent examples of ad-hoc constructions of this kind, meant to solve specific problems in analysis and geometric measure theory. Some classes of random sets and measures that have been thoroughly studied for their own intrinsic interest, and which also enjoy some form of properties (i), (ii) above are fractal percolation, random cascade measures and Poissonian cut-out sets (these will all be defined later). See e.g. \cite{KahanePeyriere76, DekkingSimon08, BromanCamia10, AJJRS12, NacuWerner11, RamsSimon13, PeresRams14}.

While of course many details of these papers differ, there are a number of ideas that arise repeatedly in many of them.  The main goal of this article is to introduce and systematically study a class of random measures on Euclidean spaces which, in our opinion, provides a useful analogue of the canonical discrete random set $E$, and captures the fundamental properties that are common to many of the previously cited works. In particular, this class includes the natural measures on fractal percolation, random cascades, Poissonian random cut-outs, and Poissonian products of cylindrical pulses as concrete examples (see Section \ref{subsec:general-setup} for the description of two key examples, and Section \ref{sec:examples} for the general models). Our main focus is on intersections properties of these random measures (and the random sets obtained as their supports), both for their intrinsic interest, and because they are at the heart of other geometric problems concerning projections, convolutions, and arithmetic and geometric patterns. As a concrete application, we are able to answer a question of {\L}aba from \cite{Laba14} related to the restriction problem for fractal measures. We hope that this general approach will find other applications to problems in random structures and geometric measure theory in the future.

To further motivate this work, we recall some classical results. An affine $k$-plane $A\subset\R^d$ intersects a typical affine $\ell$-plane $B\subset\R^d$ if and only if $k+\ell>d$, in which case the intersection has dimension $k+\ell-d$. Here ``typical'' means an open dense set of full measure (in fact, Zariski dense in the appropriate variety). Similar results, going back to Kakutani's work on polar sets for Brownian motion, hold when $A$ is replaced by a random set sampled from a ``sufficiently rich'' distribution, $B$ by a given deterministic set, and linear dimension by Hausdorff dimension $\dim_H$; here ``typical'' means either almost surely or with positive probability. For example, if $A$ is one of the following random subsets of $\R^d$:
\begin{enumerate}
\item \label{it:E-random-similarities} a random similar image of a fixed set $A^0$ (chosen according to Haar measure on the group of linear similitudes),
\item \label{it:E-Brownian-paths}  a Brownian path,
\item \label{it:E-fractal-perc} fractal percolation,
\end{enumerate}
and $E$ is a deterministic set, then $A$ and $E$ intersect with positive probability if and only if $\dim_H A+\dim_H E>d$; in the latter case, $A\cap E$ has dimension at most $\dim_H A+\dim_H E-d$ almost surely, and dimension at least $\dim_H A+\dim_H E-d$ with positive probability (assuming $A^0$ has positive Hausdorff measure in its dimension in \eqref{it:E-random-similarities}). See e.g. \cite{Mattila84, Kahane86, Peres96} for the proofs.

Clearly, one cannot hope to invert the order of quantifiers in any result of this kind, since a set never intersects its complement. Nevertheless, it seems natural to ask whether these results can be strengthened by replacing the fixed set $E$ by some parametrized family $\{ E_t:t\in\Gamma\}$, and asking if the random set intersects $E_t$ in the expected dimension simultaneously for \emph{all} $t\in\Gamma$ (or at least for $t$ in some open set) with positive probability. For example, this could be a family of $k$-planes, of spheres, of self-similar sets, and so on. For random similar images of an arbitrary set, it is easy to construct counterexamples, for example for the family of $k$-planes; this is due to the limited (finite dimensional) amount of randomness. Regarding Brownian paths, it is known that if $d\ge 3$ almost surely there are cut-planes, that is, there are planes $V$ and times $t$ such that $B[0,t)$ and $B(t,1]$ lie on different sides of $V$ (where $B$ is Brownian motion); in particular, the intersection of these planes with the Brownian path is a singleton. See \cite[Theorem 0.6]{BassBurdzy99}.  On the other hand, some results of this kind for fractal percolation, regarding intersections with lines, have been obtained implicitly in \cite{FalconerGrimmett92,RamsSimon13,RamsSimon14}; some of these results will be recalled later. Compared even to Brownian paths, fractal percolation has a stronger degree of spatial independence (as it is not constrained to be a curve), and as we will see this is key in obtaining uniform intersection results with even more general families.

Yet another motivation comes from geometric measure theory. Classical results going back to Marstrand \cite{Marstrand54} in the plane and Mattila \cite{Mattila75} in higher dimensions say that for a fixed set $A\subset\R^d$, ``typical'' linear projections and intersections with affine planes behave in the ``expected'' way. For example, if $\dim_H A>k$, then for almost all $k$-planes $V$, the orthogonal projection of $A$ onto $V$ has positive Lebesgue measure. See \cite{Mattila95, Mattila04} for a good exposition of the general theory. Recently there has been substantial interest in improving these geometric results for \emph{specific} classes of sets and measures, both deterministic (see e.g. \cite{HochmanShmerkin12, Hochman14} and references therein) and, more relevant to us, random (see e.g. \cite{DekkingSimon08, RamsSimon13, FalconerJin14, RamsSimon14}).

In this article we introduce a large, and in our view natural, class of  random sequences $(\mu_n)$ with a limit $\mu_\infty$, which include as special cases the natural measure on fractal percolation, as well as other random cascade measures, and the natural measure on a large class of Poissonian random cutout fractals (see Section \ref{sec:examples}). The key properties of these measures are inductive versions of the properties (i), (ii) of the canonical random discrete set. We pair these random measures with parametrized families of (deterministic) measures $\{ \eta_t:t\in\Gamma\}$, where $\Gamma$ is a totally bounded metric space with controlled growth. Examples of families that we investigate include, among others, Hausdorff measures on $k$-planes or algebraic curves, and self-similar measures on a wide class of self-similar sets.

Our main abstract result, Theorem \ref{thm:Holder-continuity}, says that under some fairly natural conditions on both the random measures and the deterministic family, the ``intersection measures'' $\mu_\infty\cap \eta_t$ are well defined, and behave in a H\"{o}lder-continuous way as a function of $t$; in particular, with positive probability they are non-trivial for a nonempty open set of $t$. (Some extensions of this theorem are presented in Section \ref{sec:products}.) The hypotheses of Theorem \ref{thm:Holder-continuity} can be checked for many natural examples of random measures $\mu_\infty$ and parametrized families of measures $\{ \eta_t\}_{t\in\Gamma}$; more concrete sufficient geometric conditions are provided in Section \ref{sec:geometric-Holder}. From Section \ref{sec:affine} on, we start applying Theorem \ref{thm:Holder-continuity} in different settings, and deducing a large variety of applications.

Before we proceed with the details, we summarize some of our results, and how they relate to our motivation (as described above), and to existing work in the literature.
\begin{enumerate}
\item The projections of planar fractal percolation to linear subspaces (as well as some classes of non-linear projections) were investigated in \cite{RamsSimon13,RamsSimon14, SimonVago14}. Among other things, in those papers it is proved that, when the dimension of the percolation set $A$ is $>1$, then all orthogonal projections onto lines have nonempty interior, and when the dimension is $\le 1$, then all projections have the same dimension as $A$. We prove that this behavior holds for a large class of natural examples, in arbitrary dimension, including much more general subdivision random fractals and many random cut-outs (see Theorems \ref{thm:linear-projections} and \ref{thm:cor_dim_small}). As indicated in our motivation, these are considerable strengthenings, for this class of sets, of the Marstrand-Mattila Projection Theorem.
\item
Moreover, in the regime when the dimension is $s>1$, we show that for the same class of measures, almost surely the intersection with all lines has box dimension at most $s-1$ (with uniform estimates); see Section \ref{sec:dim_of_intersections}. Moreover, we show that with positive probability (and full probability on survival of a natural measure), in each direction there is an open set of lines which intersect it in Hausdorff dimension at least (and therefore exactly) $s-1$; see Section \ref{sec:lower-bound-dim-intersections}.
Note that this is much stronger than asserting that the projections have nonempty interior. Moreover, our results hold for any ambient dimension and intersections with $k$-planes, for any $k$. Furthermore, we prove similar results for intersections with large classes of self-similar sets, and (in the plane)  also with algebraic curves. In particular, these results apply to fractal percolation and random cut-outs, thereby extending Hawkes' classical intersection result \cite{Hawkes81} from a single set to natural families of sets as well as the results of Z\"ahle \cite{Zahle84}, who considered intersections of random cut-outs with a fixed $k$-plane.
\item Peres and Rams \cite{PeresRams14} proved that for the natural measure $\mu$ on a fractal percolation of dimension $>1$ in the plane, all image measures under an orthogonal projection are absolutely continuous and, {\em other than the principal directions}, have a H\"{o}lder continuous density. For the principal directions, the density is clearly discontinuous, and a similar phenomenon occurs for more general models defined in terms of subdivision inside a polyhedral grid. This led us to investigate the following question: given $1<s<2$, does there exist a measure $\mu$ supported on a set of Hausdorff dimension $s$, such that {\em all} orthogonal projections of $\mu$ have a H\"{o}lder continuous density? We give a strong affirmative answer; there is a rich family of such measures, including many arising from Poissonian cut-out process and percolation on a self-similar tiling. The cut-out construction works in any dimension and we obtain \emph{joint} H\"{o}lder continuity in the orthogonal map as well, see Theorem \ref{thm:linear-projections}. Furthermore, we look into the larger class of polynomial projections, and establish the existence of a measure $\mu$ on $\R^2$ supported on a set of any dimension $s$, $1<s<2$, with the property that \emph{all} polynomial images are absolutely continuous with a piecewise locally H\"{o}lder density. See Theorem \ref{thm:alg_curves}.
\item Closely related to the size of slices is the concept of \emph{dimension conservation}, introduced by Furstenberg in \cite{Furstenberg08}; roughly speaking, a Lipschitz map is dimension conserving if any loss of dimension in the image is compensated by an increase in the dimension of the fibres. We prove that affine and even polynomial maps restricted to many of our random sets are dimension conserving in a very strong fashion. In particular, we partially answer a question of Falconer and Jin \cite{FalconerJin14b}. See Section \ref{subsec:dimension-conservation}.
\item Another problem that has attracted much interest concerns understanding the geometry of arithmetic sums and differences of fractal sets. This is motivated in part by Palis' celebrated conjecture that typically, if $A_1, A_2$ are Cantor sets in the real line with $\dim_H(A_1)+\dim_H(A_2)>1$, then their difference set $A_1-A_2$ contains an interval. Although the conjecture was settled by Moreira and Yoccoz \cite{MoreiraYoccoz01} in the dynamical context most relevant to Palis' motivation, much attention has been devoted to its validity (or lack thereof) for various classes of random fractals, see e.g. \cite{DekkingSimon08, DSS11, DekkingKuijvenhoven11} and the references there. We prove a very strong version of Palis' conjecture when $A_1, A_2$ are independent realizations of a large class of random fractals in $\R^d$, including again many Poissonian cut-outs, as well as subdivision-type random fractals which include fractal percolation as a particular case. Namely, we show that under a suitable non-degeneracy assumption, if $\dim_H A_1, \dim_H A_2>d/2$, then $A_1+S A_2$ has nonempty interior for all $S\in\GL_d(\R)$ simultaneously. In fact, we deduce this from an even stronger result about measures: if $\mu_1,\mu_2$ are independent realizations of a random measure (which again may come from a Poissonian cut-out or repeated subdivision type of process), and the supports have dimension $>d/2$, then the convolution $\mu_1* (S\mu_2)$ is absolutely continuous with a H\"{o}lder density (and also jointly H\"{o}lder in $S$). These results are presented in Section \ref{subsec:products-independent}. See also Theorem \ref{thm:sum-random-and-deterministic} for a result on the arithmetic sum of a random set and an arbitrary deterministic Borel set.
\item It has been known since Wiener and Wintner \cite{WienerWintner38} that there are singular measures $\mu$ such that the self-convolution $\mu*\mu$ is absolutely continuous with a bounded density. Constructing examples of increasingly singular measures $\mu$ with increasingly regular self-convolutions $\mu*\mu$ is the topic of several papers (see e.g. \cite{GuptaHare04, Saeki80, Korner08}). In particular, for any $s\in (1/2,1)$, K\"{o}rner \cite{Korner08} constructs a random measure $\mu$ on the line supported on a set of dimension $s$, such that the self-convolution $\mu*\mu$ is absolutely continuous and has a H\"{o}lder density with exponent $s-1/2$, which he shows to be optimal. While K\"{o}rner has an ad-hoc construction, we show that for our main classes of examples we obtain a similar behaviour, other than for the value of the H\"{o}lder exponent. Furthermore, we prove a stronger result: for each $s\in (d/2,d)$, we exhibit a large class of random measures $\mu$ on $\R^d$ supported on sets of dimension $s$, such that $\mu * S\mu$ is absolutely continuous with a H\"{o}lder density with exponent $\gamma=\gamma(s,d)>0$, for any $S\in\GL_d(\R)$ which does not have $-1$ in its spectrum (if $S+\I_d$ is not invertible the conclusion can fail in general, but we still get a weaker result). See Section \ref{subsec:self-convolutions}.
\item Recall that a Salem measure is a measure whose Fourier transform decays as fast as its Hausdorff dimension allows, see Section \ref{subsec:Salem}. We prove that a class of measures, which includes the natural measure on fractal percolation, are Salem measures when their dimension is $\le 2$ (and this is sharp), adding to the relatively small number of known examples of Salem measures. See Theorem \ref{thm:Salem}. By the result discussed above, these measures have continuous self-convolutions provided they have dimension $>d/2$, and some of them are also Ahlfors regular. The existence of measures with these joint properties is of importance in connection with the restriction problem for fractal measures, and enables us to answer questions of Chen from \cite{Chen14} and {\L}aba from \cite{Laba14}, see Section \ref{subsec:question-laba}. We are also able to prove that a wide class of random measures has a power Fourier decay with an explicit exponent, see Corollary \ref{cor:positive-Fourier-dim}.
\end{enumerate}

\subsection{General setup and major classes of examples}
\label{subsec:general-setup}

Our general setup is as follows. We consider a class of random measures $\mu_\infty$ on $\R^d$, obtained as weak limits of absolutely continuous measures with density $\mu_n$ (we will often identify the densities $\mu_n$ with the corresponding measures). These are Kahane's $T$-martingale measures with $T=\R^d$, together with extra growth and independence conditions to be defined later; intuitively the measure $\mu_n$ should be thought of as the approximation to $\mu$ at scale $2^{-n}$ (we use dyadic scaling for notational convenience). We pair each $\mu_n$ with a family of deterministic measures $\{\eta_t\}_{t\in\Gamma}$, where the parameter set $\Gamma$ is a metric space. We study the (mass of the) ``intersections'' of $\mu_n$ and $\mu_\infty$ with the measures $\eta_t$. A priori there is no canonical way to define this, but we employ the fact that $\mu_\infty$ is a limit of pointwise defined densities
to define the limit of the total masses
\[
Y^t = \lim_{n\to\infty} \int \mu_n(x) \,d\eta_t(x)\,,
\]
provided it exists. Our main abstract result, Theorem \ref{thm:Holder-continuity}, gives broad conditions on the sequence $(\mu_n)$ and the family $\{\eta_t\}_{t\in\Gamma}$ that guarantee that the function $t\mapsto Y^t$ is everywhere well defined and H\"{o}lder continuous. These general conditions can be checked in many concrete situations, leading to the consequences and applications outlined above.

To motivate the general results, we describe two key examples, and defer to Section \ref{sec:examples} for generalizations and further examples. The first one is fractal percolation, which is also sometimes termed Mandelbrot percolation. Fix an integer $M$ and a parameter $p\in (0,1)$. We subdivide the unit cube in $\R^d$ into $M^d$ equal closed sub-cubes. We retain each of them with probability $p$ and discard it with probability $1-p$, with all the choices independent. For each of the retained cubes, we continue inductively in the same fashion, by further subdividing them into $M^d$ equal sub-cubes, retaining them with probability $p$ and discarding them otherwise, with all the choices independent. The fractal percolation limit set $A$ is the set of points which are kept at each stage of the construction, see Figure \ref{fig:perco} for an illustration of the first few steps of the construction.
\begin{figure}
   \centering
  \includegraphics[width=\textwidth]{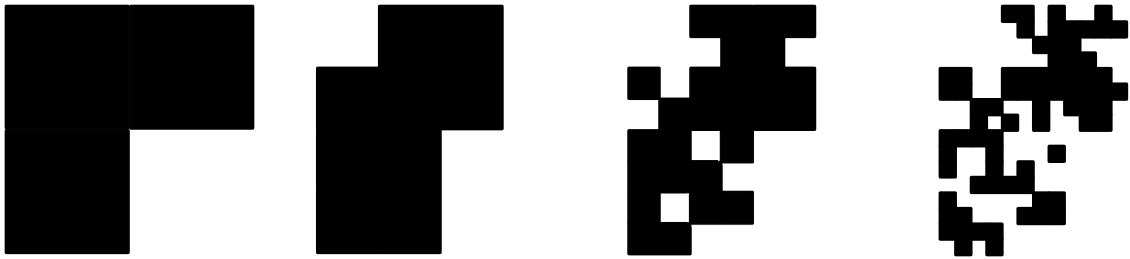}
\caption{The first 4 steps in a fractal percolation process with $p=0.7$.}
\label{fig:perco}
\end{figure}
It is well known that if $p\le M^{-d}$, then $A$ is a.s. empty, and otherwise a.s. $\dim_H A=\log (p M^d)/\log M$ conditioned on $A\neq\varnothing$. The natural measure on $A$ is the weak limit of $\mu_n:=p^{-n} \leb^d|_{A_n}$, 
where $A_n$ is the union of all retained cubes of side length $M^{-d}$, and $\leb^d$ denotes $d$-dimensional Lebesgue measure. Fractal percolation is statistically self-similar with respect to the transformations that map the unit cube to the $M$-adic sub-cubes of size $1/M$.

The geometric measure theoretical properties of fractal percolation and related models have been studied in depth, see \cite{FalconerGrimmett92, MauldinWilliams86, DekkingSimon08, AJJRS12, RamsSimon13, PeresRams14, SimonVago14, FalconerJin14b}. There is another large class of random sets and measures that has achieved growing interest in the probability literature (see e.g. \cite{BromanCamia10, NacuWerner11}) but less so in the fractal geometry literature (although the model essentially  goes back to Mandelbrot \cite{Mandelbrot72}, and Z\"ahle \cite{Zahle84} considered a closely related model).
We describe a particular example and leave further discussion to Section \ref{subsec:Poissonian}. Let $\mathbf{Q}$ be the measure $r s^{-1} dx ds$ on $\R^d\times (0,1/2)$, where $r>0$ is a real parameter. Recall that a Poisson point process with intensity $\mathbf{Q}$ is a random countable collection of points $\mathcal{Y}=\{ (x_j,r_j)\}$ such that:
\begin{itemize}
\item For any Borel set $B\subset \R^d\times (0,1/2)$, the random variable $\#(\mathcal{Y}\cap B)$  is Poisson with mean $\mathbf{Q}(B)$ ($\#X$ denotes the cardinality of $X$).
\item If $\{ B_j\}$ are pairwise disjoint subsets of $\R^d\times (0,1/2)$, then the random variables $\#(\mathcal{Y}\cap B_j)$ are independent.
\end{itemize}
One can then form the random cut-out set $A=\overline{B(0,1)\setminus\bigcup_j B(x_j,r_j)}$, see Figure \ref{fig:ball_type} for an approximation.
\begin{figure}
   \centering
  \includegraphics[width=0.7\textwidth]{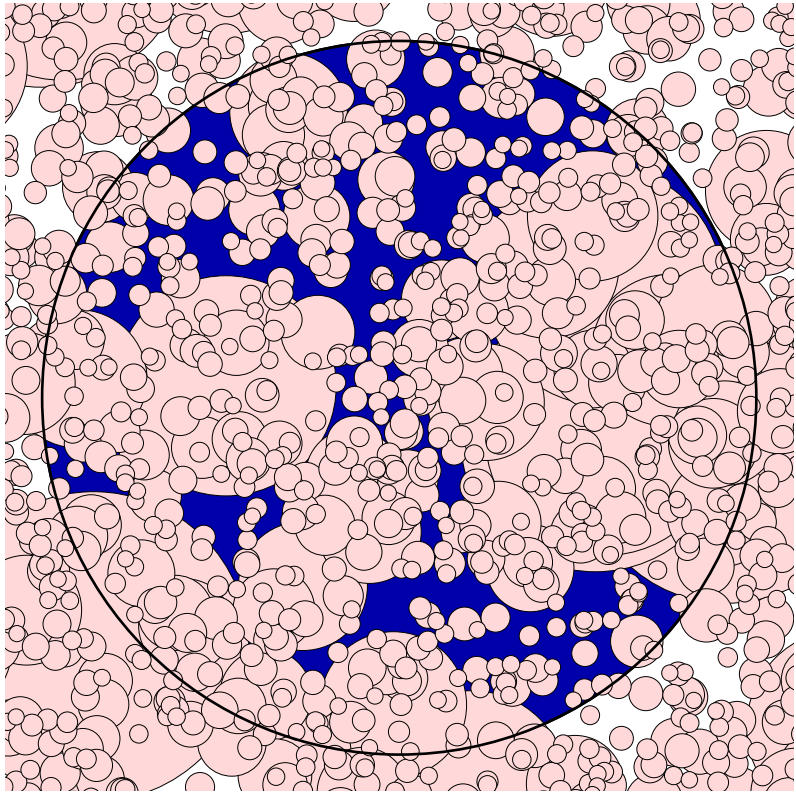}
\caption{A Poissonian cut-out process.}
\label{fig:ball_type}
\end{figure}
There is a natural measure $\mu_\infty$ supported on $A$: it is the weak limit of $d\mu_n(x):=2^{\alpha n}\mathbf{1}_{A_n}(x)$, where $A_n=B(0,1)\setminus\bigcup \{B(x_j,r_j):r_j>2^{-n}\}$, and $\alpha=r c_d$, where $c_d$ is a constant depending only on the ambient dimension $d$. It follows from standard techniques that if $\alpha\le d$ then $\dim_H A=d-\alpha$ almost surely conditioned on $\mu_\infty\neq0$; and otherwise $A$ is almost surely empty.

This model is invariant (in law) under arbitrary rotations. In particular, unlike subdivision models in a polyhedral grid, there are no ``exceptional directions''. This will be an important feature in some of our applications.

\section{Notation}
\label{sec:notation}

A measure will always refer to a locally finite Borel-regular outer measure on some metric space. The notation $B(x,r)$ stands for the closed ball of centre $x$ and radius $r$ in a metric space which will always be clear from context. The open ball will be denoted by $B^\circ(x,r)$.

We will use Landau's $O(\cdot)$ and related notation. If $n>0$ is a variable by $g(n) = O(f(n))$ we mean that there exists $C>0$ such that $g(n)\le C f(n)$ for all $n$. By $g(n)=\Omega(f(n))$ we mean $f(n)=O(g(n))$, and by $g(n)=\Theta(f(n))$ we mean that both $g(n)=O(f(n))$ and $g(n)=\Omega(f(n))$ hold. As usual, $g(n)=o(f(n))$ means that $\lim_{n\rightarrow\infty}g(n)/f(n)=0$. Occasionally we will want to emphasize the dependence of the constants implicit in the $O(\cdot)$ notation on other previously defined constants; the latter will be then added as subscripts. For example, $g(n) = O_\delta(f(n))$ means that $g(n)\le C_\delta f(n)$ for some constant $C_\delta$ which is allowed to depend on $\delta$.

We always work on some Euclidean space $\R^d$. Most of the time the ambient dimension $d$ will be clear from context so no explicit reference will be made to it. We denote by $\mathcal{Q}_n$ the family of dyadic cubes of $\R^d$ with side length $2^{-n}$. It will be convenient that these are pairwise disjoint, so we consider a suitable half-open dyadic filtration.

The indicator function of a set $E\subset\R^d$ will be denoted by either $\mathbf{1}_E$ or $\mathbf{1}[E]$, and we will write $E(\e)$ for the $\e$-neighbourhood $\{ x\in\R^d: \dist(x,E)<\e\}$. We denote the symmetric difference of two sets by $A\Delta B=(A\setminus B)\cup(B\setminus A)$.

We denote the family of finite Borel measures on $\R^d$ by $\mathcal{P}_d$. The trivial measure $\mu(B)=0$ for all sets $B\subset\R^d$ is considered as an element of $\mathcal{P}_d$. Given $\mu\in\mathcal{P}_d$, we denote $\|\mu\|=\mu(\R^d)$.

As noted earlier, $\dim_H$ denotes Hausdorff dimension. We denote upper box-counting (or Minkowski) dimension by $\overline{\dim}_B$, and box-counting dimension (when it exists) by $\dim_B$. A good introduction to fractal dimensions can be found in \cite[Chapters 2 and 3]{Falconer03}. For $\mu\in\mathcal{P}_d$ and $x\in\R^d$, we define the lower and upper dimensions of $\mu$ at $x$ by
\begin{align*}
\ldimloc(\mu,x)&=\liminf_{r\downarrow0}\frac{\log \mu(B(x,r))}{\log r}\,,\\
\udimloc(\mu,x)&=\limsup_{r\downarrow0}\frac{\log \mu(B(x,r))}{\log r}\,,
\end{align*}
and denote the common value by $\dimloc(\mu,x)$ if they are the same.

Let $\iso_d$ be the family of isometries of $\R^d$. This is a manifold diffeomorphic to $\mathbb{O}_d\times \R^d$ (via $(O,y)\mapsto f(x)=Ox+y$), where $\mathbb{O}_d$ is the $d$-dimensional orthogonal group. On $\mathbb{O}_d$ and for more general families of linear maps, we use the standard metric induced by the Euclidean operator norm $\|\cdot\|$, and this also gives a metric in $\iso_d$, and also on the space $\aff_d$ of affine maps: $d(f_1,f_2)=\|g_1-g_2\|+|z_1-z_2|$ if $g_i(x)=f_i(x)+z_i$ for $f_i$ linear and $z_i\in\R^d$. Likewise, $\simi_d$ will denote the space of non-singular similarity maps, which is identified with $(0,\infty)\times\mathbb{O}_d\times\R^d$ via $(r,O,y)\mapsto f(x)=rO(x)+y$. The space of contracting similarities, i.e. those for which $r<1$, will be denoted by $\simi_d^c$.  The identity map of $\R_d$ is denoted by $\I_d$.

The Grassmannian of $k$-dimensional linear subspaces of $\R^d$ will be denoted $\mathbb{G}_{d,k}$. It is a compact manifold of dimension $k(d-k)$, and its metric is
\[
d(V,W)=\|P_V-P_W\|\,,
\]
where $P_{(\cdot)}$ denotes orthogonal projection. The manifold of $k$-dimensional affine subspaces of $\R^d$ will be denoted $\AA_{d,k}$. It is diffeomorphic to $\mathbb{G}_{d,k} \times \R^{d-k}$, and this identification defines a natural metric.

The metrics on all these different spaces will be denoted by $d$; the ambient space will always be clear from context (also note that these metrics and the ambient dimension are denoted by the same symbol $d$).

For $m\ge 2$, let $\Sigma_m$ be the full shift on $m$ symbols. Given $F=(f_1,\ldots,f_m)\in(\simi_{d}^c)^m$, let $\Pi_F$ denote the induced projection map $\Sigma_m\to\R^d$, $(i_1,i_2,\ldots)\mapsto\lim_{N\rightarrow\infty} f_{i_1}\circ\ldots f_{i_N}(0)$, and let $E_F$ be the self-similar set $E_F=\Pi_F(\Sigma_m)$. For further background on iterated function systems, including the definitions of the open set and strong separation conditions, see e.g. \cite[Section 2.2]{Falconer97}.

Throughout the paper, $C, C', C_1$, etc, denote deterministic constants whose precise value is of no importance (and their value may change from line to line), while $K, K'$ etc. will always denote random real numbers.

We summarize our notation and notational conventions in Table \ref{notation}. Many of these concepts will be defined later.
\begin{table}
\begin{tabular}[h]{|l|l|}
\hline
$\leb$, $\leb^d$ & Lebesgue measure on $\R$, $\R^d$\\
$(\mu_n)$,\,\, $\mu_\infty$ & an SI-martingale and its limit\\
$\mathcal{PCM}$ & Poissonian cutout martingales\\
$\mathcal{SM}$ & subdivision martingales\\
$\mu_n^{\text{ball}}$,\, $\mu_n^{\text{snow}}$, \,$\mu_n^{\text{perc}}$  & specific examples of SI-martingales of various types\\
$A_n$, $A$ & random sets related to an SI-martingale $(\mu_n)$:\\
			& $A=\{x\in\R^d\,:\,\mu_n(x)\neq 0\}$, $A=\overline{\cap_n A_n}$\\
$\{\eta_t\,:\,t\in\Gamma\}$ & parametrized family of (deterministic) measures\\
$\mu_{n}^t$ & the ``intersection'' of $\mu_n$ and $\eta_t$.\\
$Y_{n}^t$,\,\, $Y^t$ & total mass of $\mu_{n}^t$, and its limit\\
$\mathcal{X}$ & the family of compact sets of $\R^d$\\
$\mathbf{Q}, \mathbf{Q}_0$ & intensity measures on $\mathcal{X}$\\
$\mathcal{Y}$ &Poisson point process (with intensity $\mathbf{Q}$)\\
$\mathcal{F}$ & family of Borel sets consisting of\\ 					&``typical'' shapes of a specific SI-martingale\\
$\Lambda$ & an element of $\mathcal{F}$\\
$\Omega$ & the seed of the SI-martingale (the support of $\mu_0$)\\
$\mathcal{K}$,\,\,$\mathcal{K}_k$ & the family of real algebraic curves in $\R^2$\\
& and the ones of degree at most $k$.\\
$\mathcal{P}_d$ & the collection of finite Borel measures on $\R^d$.\\
$\dim_F\mu$ & Fourier dimension of
$\mu\in\mathcal{P}_d$\\
$\mathcal{Q}$,\,\, $\mathcal{Q}_n$ & the family of half-open dyadic cubes of $\R^d$\\
& and the ones with side-length $2^{-n}$\\
$\I_d$ & the identity map on $\R^d$\\
$\iso_d$ & the family of isometries of $\R^d$\\
$\aff_d$ & the family of affine maps on $\R^d$\\
$\AA_{d,k}$		& the manifold of $k$-dimensional affine subspaces of $\R^d$\\
$\simi_d^c$ & the family of contracting similarities on $\R^d$\\
$E_F$ & the self-similar set corresponding to the IFS $F\in(\simi_d^c)^m$\\
$\Sigma_m$ & the code space $\{1,\ldots, m\}^\N$.\\
$E(\e)$ & open $\e$-neighbourhood of a set $E\subset\R^d$\\
$\Lambda^\kappa$ & a quantitative interior of the set $\Lambda$\\
$\Lambda_\varrho$ & regular inner approximation of $\Lambda$\\
$P_V$ &orthogonal projection onto the linear subspace $V$\\
\hline
\end{tabular}
\vspace{2mm}
\caption{Summary of notation\label{notation}}
\end{table}

\section{The setting}
\label{sec:setting}

\subsection{A class of random measures}
\label{subsec:classofrandomeasures}

In this section we introduce our general setup. Recall that our ultimate goal is to study intersection properties of random measures $\mu_\infty$  with a deterministic family of measures $\{ \eta_t\}_{t\in\Gamma}$ (and likewise for their supports). We begin by describing the main properties that will be required of the random measures.

We consider a sequence of functions $\mu_n\colon\R^d\to[0,+\infty)$, corresponding to the densities of absolutely continuous measures (also denoted $\mu_n$) satisfying the following properties:

\begin{enumerate}
\renewcommand{\labelenumi}{(SI\arabic{enumi})}
\renewcommand{\theenumi}{SI\arabic{enumi}}
\item \label{SI:bounded} $\mu_0$ is a deterministic bounded function with bounded support.
\item \label{SI:martingale} There exists an increasing filtration of $\sigma$-algebras $\mathcal{B}_n$ such that $\mu_n$ is $\mathcal{B}_n$-measurable.
Moreover, for all $x\in\R^d$ and all $n\in\N$,
\[
\EE(\mu_{n+1}(x)|\BB_n) = \mu_n(x)\,.
\]
\item \label{SI:quotients-bounded}
There is $C<\infty$ such that $\mu_{n+1}(x)\le C\mu_n(x)$ for all $x\in\R^d$ and $n\in\N$.

\item \label{SI:spatial-independence} There is $C<\infty$ such that for any $(C 2^{-n})$-separated family $\mathcal{Q}$ of dyadic cubes of length $2^{-(n+1)}$, the restrictions $\{\mu_{n+1}|_Q | \mathcal{B}_n\}$ are independent.

\end{enumerate}

\begin{defn}
We call a random sequence $( \mu_n)$ satisfying \eqref{SI:bounded}--\eqref{SI:spatial-independence}  a \textbf{spatially independent martingale}, or SI-martingale for short.
\end{defn}

In other words, $(\mu_n)$ is a $T$-martingale (with $T=\R^d$) in the sense of Kahane  \cite{Kahane1987} with the extra growth and independence conditions \eqref{SI:quotients-bounded}, \eqref{SI:spatial-independence}. Intuitively, $\mu_n$ should be thought of as an absolutely continuous approximation of $\mu$ at scale $2^{-n}$.

It is well known that a.s. the sequence $(\mu_n)$ is weakly convergent; denote the limit by $\mu_\infty$. It follows easily that the sequence $\{\supp \mu_n\}$ is a decreasing sequence of compact sets and that $\supp(\mu_\infty)\subset\bigcap_{n=1}^\infty \supp\mu_n$. Note that we do not exclude the possibility that $\mu_n$ is trivial for some (and hence all sufficiently large) $n$.

We remark that the $\mu_n$ are actual functions (defined pointwise) and not just elements of $L^1$. This is crucial because we will be integrating these functions with respect to singular measures. We also note that the fractal percolation and ball cutout examples discussed in the introduction are easily checked to be SI-martingales.

We call condition \eqref{SI:spatial-independence} \textbf{uniform spatial independence}. Although it is the central property that sets our class apart from general Kahane martingales, all of our results hold under substantially weaker independence hypotheses. Since many important examples do indeed have uniform spatial independence, in this article we always assume this condition (except for slight variations in Section \ref{sec:products}), and defer the study of the weaker conditions and their consequences to a forthcoming article \cite{ShmerkinSuomala14}.

Starting with the seminal paper of Kahane \cite{Kahane1987}, there is a rich literature on $T$-martingales, and the important special case of random multiplicative cascades: see e.g. \cite{Liu00, BarralMandelbrot02, Barral03, BarralFan05}. In these papers the main emphasis is on the multifractal properties of the limit measures. Our conditions certainly do not exclude multifractal measures (in particular, large classes of random multiplicative cascades are indeed SI-martingales to which many of our results apply), but our emphasis is different, and for simplicity most of our examples will be monofractal measures.

An important special case is that in which $\mu_n=\beta_n^{-1}\mathbf{1}_{A_n}$ for some (possibly random) sequence $\beta_n$. Denote $A=\overline{\cap_n A_n}$. In this case, $\beta_n^{-1}$ should be thought of as the approximate value of the Lebesgue volume of $A(2^{-n})$, the $(2^{-n})$-neighbourhood of $A$. Recall that the box dimension of a set $E\subset\R^d$ can be defined as
\[
\dim_B(E) = \lim_{n\to\infty} d - \frac{\log_2\leb^d(E(2^{-n}))}{n}\,.
\]
(See e.g. \cite[Proposition 3.2]{Falconer03}). Thus, intuitively, $\lim_{n\to\infty}\frac{\log_2\beta_n}{n}$ should equal $d-\dim(A)$.
This statement can be verified (for both $\dim_H$ and $\dim_B$) in many cases, but in the generality of the given hypotheses it may fail.

\subsection{Parametrized families of measures}
\label{subsec:parametrized-familis}

We now introduce the parametrized families $\{ \eta_t\}_{t\in\Gamma}$ of (deterministic) measures. We always assume the parameter space is a totally bounded metric space $(\Gamma,d)$.  We start by introducing some natural classes of examples; we will come back to them repeatedly in the later parts of the paper. In all cases, $\Upsilon$ is a fixed bounded subset of $\R^d$, such as the unit ball.

\begin{itemize}
\item For some $1\le k<d$, $\Gamma$ is the subset of $\mathbb{A}_{d,k}$ of $k$-planes which intersect $\Upsilon$, with the induced natural metric, and $\eta_V$ is $k$-dimensional Hausdorff measure on $V\in\Gamma$.
\item In this example, $d=2$. Given some $k\in\N$, $\Gamma$ is the family of all algebraic curves of degree at most $k$ which intersect $\Upsilon$, $d$ is a natural metric (see Definition \ref{def:metric-algebraic}) and $\eta_\gamma$ is length measure on $\gamma\cap \Upsilon$.
\item  Let $\nu$ be an arbitrary measure, and let $\Gamma$ be a totally bounded subset of $\iso_d$ with the induced metric. The measures are $\eta_f=f\nu$.
This example generalizes the first one (in which $\nu$ is $k$-dimensional Lebesgue measure on some fixed $k$-plane).
\item Let $m\ge 2$, and let $\Gamma$ be a totally bounded subset of $(\simi_d^c)^m$. Suppose that each iterated function system (IFS) $(F_1,\ldots,F_m)\in\Gamma$ satisfies the open set condition. The measure $\eta_{(F_1,\ldots,F_m)}$ is the natural self-similar measure for the corresponding IFS.
\end{itemize}

We will occasionally state results for all $t\in\Gamma$, where $\Gamma$ is actually unbounded (for example, $\Gamma=\mathbb{A}_{d,k}$). However in these cases it will be clear that the statement is non-trivial only for those $t\in\Gamma$ for which $\supp\eta_t$ intersects a fixed compact set, and this family will be totally bounded.

In most of the examples above, the $\eta_t$-mass of small balls is controlled by a power of the radius which is uniform both in the centre and the parameter $t$. This motivates the following definition.
\begin{defn}
We say that the family $\{ \eta_t\}_{t\in\Gamma}$ has \textbf{Frostman exponent}
$s>0$, if there exists a constant $C>0$ such that
\begin{equation}\label{eq:def_dim_deterministic_family}
\eta_t(B(x,r)) \le C r^s\quad\text{for all }x\in\R^d,\, t\in\Gamma,\,0<r<1\,.
\end{equation}
\end{defn}

We emphasize that $s$ is not unique. In practice, we try to choose $s$ as large as possible, but even in that case, $s$ is the ``worst-case'' exponent over all measures, and for some $t$ better Frostman exponents may exist.

Our main objects of interest will be the ``intersections'' of the random measures $\mu_n$ and $\mu_\infty$ with  $\eta_t$, and their behaviour as $t$ varies.
Formally, we define:
\[
\mu_n^t(A) = \int_A \mu_n(x) d\eta_t(x)\,,
\]
for each Borel set $A\subset\R^d$, $n\in\N$ and $t\in\Gamma$. Note that for each fixed $t$, $(\mu^{t}_n)$ is again a $T$-martingale, thus there is a.s a weak limit $\mu^{t}_\infty$ with $\supp\mu^{t}_\infty\subset\supp\eta_t$. We are mainly interested in the asymptotic behaviour of the total mass, and denote
\begin{align*}
Y_n^t &= \|\mu_n^t\| = \int \mu_n(x) d\eta_t(x)\,,\\
Y^t &=\lim_{n\to\infty} Y_n^t \quad\text{(if the limit exists)}\,.
\end{align*}

The reason we focus on the masses $Y^t$ rather than the actual measures  $\mu^t_\infty$ is twofold. Firstly, for some of our target applications, we only want to know that certain fibers containing the support of the $\mu_t^\infty$ are nonempty, and for this $Y^t>0$ suffices. Secondly, $Y^t$ itself captures (perhaps surprisingly) detailed information about the measures $\mu^t_\infty$ (and their supports), such as their dimension. See Sections \ref{sec:dim-of-projections}--\ref{sec:lower-bound-dim-intersections} for details.

Since our random densities $\mu_n$ are compactly supported, it follows that a.s. for each fixed $t$, $Y^t$ equals $\mu^{t}_\infty(\R^d)$. In Theorem \ref{thm:Holder-continuity}, we prove that in many cases $Y^t$ is a.s. defined for \emph{all} $t\in\Gamma$ and H\"older continuous with respect to $t$. We call the measures $\mu^{t}_n$ and $\mu^{t}_\infty$ ``intersections'', because our results have corollaries on the size of the intersection of $\supp\mu_\infty$ and $\supp\eta_t$ (see Sections \ref{sec:dim_of_intersections} and \ref{sec:lower-bound-dim-intersections}), but also due to the close connection to the more standard intersection measures defined via the slicing method, see \cite[Section 13.3]{Mattila95}. To emphasize this connection, we include the following proposition (which will not be used later in the paper). We omit the proof, which is a simple exercise combining the definition above with those found in \cite{Mattila95} for the intersections $\mu\cap\eta_t$ for almost all $t\in\R^d$.

\begin{prop}
Let $\Gamma=\R^d$ and $\eta_t$ be the translate of a fixed measure $\eta$ under $x\mapsto t+x$. Let $(\mu_n)$ be an SI-martingale. Then, for all $n\in\N$, it follows that
\[
\mu_{n}^{t}=\mu_n\cap \eta_t\text{ for almost all }t\in\R^d\,.
\]
\end{prop}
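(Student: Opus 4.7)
The plan is to unpack Mattila's definition of $\mu_n \cap \eta_t$ from \cite[Section~13.3]{Mattila95} and verify that, for Lebesgue-a.e.\ $t \in \R^d$, it coincides with the measure $\mu_n(x)\,d\eta_t(x)$ that the authors denote $\mu_n^t$. Mattila defines the intersection as a weak limit
\[
\mu \cap \eta_t = \lim_{\epsilon \downarrow 0}\,(\mu * \psi_\epsilon)\,\eta_t,
\]
where $\psi_\epsilon$ is a standard mollifier. Here $\mu_n$ is already a bounded Borel density on $\R^d$ with bounded support: boundedness follows by iterating \eqref{SI:quotients-bounded} starting from \eqref{SI:bounded}, and since $\mu_{n+1}(x)=0$ whenever $\mu_n(x)=0$, the supports are nested inside $\supp\mu_0$. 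So the only thing to check is that, in this setting, Mattila's limit reduces to the obvious candidate $\mu_n(x)\,d\eta_t(x)$.

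The main step is a Lebesgue differentiation plus Fubini argument. By the Lebesgue differentiation theorem applied to the bounded $\mu_n$, we have $(\mu_n * \psi_\epsilon)(x) \to \mu_n(x)$ as $\epsilon \downarrow 0$ at every Lebesgue point, so the exceptional set $N \subset \R^d$ satisfies $\leb^d(N)=0$. Since $\eta_t$ is the push-forward of $\eta$ under $x \mapsto x+t$, translation invariance of $\leb^d$ gives
\[
\int \eta_t(N)\,dt = \int\!\int \mathbf{1}_N(x+t)\,dt\,d\eta(x) = \|\eta\|\cdot\leb^d(N) = 0,
\]
so $\eta_t(N)=0$ for Lebesgue-a.e.\ $t$. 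For such $t$, the bounded convergence theorem (with uniform dominating bound $\|\mu_n\|_\infty$, and using that the mollifications have support in a fixed neighbourhood of $\supp\mu_0$) yields
\[
\int_A (\mu_n * \psi_\epsilon)(x)\,d\eta_t(x) \;\longrightarrow\; \int_A \mu_n(x)\,d\eta_t(x) = \mu_n^t(A)
\]
for every Borel set $A$, identifying Mattila's $\mu_n \cap \eta_t$ with $\mu_n^t$.

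The only delicate point is matching conventions with \cite{Mattila95}: the exact normalization and support of $\psi_\epsilon$, the definition of the translate $\eta_t$, and the topology in which the limit defining $\mu \cap \eta_t$ is taken. Once these are pinned down, the argument is just the standard ``Lebesgue differentiation $+$ Fubini'' manipulation sketched above, with translation invariance of $\leb^d$ doing the heavy lifting — which is presumably why the authors leave it as an exercise.
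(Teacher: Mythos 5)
The paper omits this proof entirely, describing it as ``a simple exercise combining the definition above with those found in \cite{Mattila95}'', so there is no written argument to compare against; your route — Lebesgue differentiation for the bounded density $\mu_n$, Fubini plus translation invariance to show the exceptional set is $\eta_t$-null for a.e.\ $t$, then dominated convergence to identify the mollified limit with $\mu_n(x)\,d\eta_t(x)$ — is a correct execution of exactly that exercise. The one convention to pin down is that Mattila's $\mu\cap\eta_t$ is defined via slicing of $\mu\times\eta$ under $(x,y)\mapsto x-y$ and only shown to coincide with the mollified weak limits for a.e.\ $t$, but that identification is supplied in \cite[Section 13.3]{Mattila95} (and is automatic here since the pushforward of $\mu_n\times\eta$ under the difference map has the bounded density $z\mapsto\int\mu_n\,d\eta_z$), so your argument closes the loop.
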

In many cases, we can use the results of this paper to show that the above proposition remains true for the limit measures and holds for all $t$, i.e. $\mu^{t}_\infty=\mu_\infty\cap \eta_t$ a.s. for all $t$ simultaneously. It is also possible to consider intersections for more general classes of transformations. We do not pursue this direction further since, for our applications, the limit of the total mass $Y^t$ is more important (and easier to handle) than the intersection measures $\mu^{t}_n$, $\mu^{t}_\infty$ themselves.

The role of uniform spatial independence is to ensure that, with overwhelming probability, the convergence of $Y_n^t$ is very fast, provided $\|\mu_n\|_\infty$ does not grow too quickly. This is made precise in the next key technical lemma, which, apart from slight modifications of the same argument, is the only place in the article where spatial independence gets used. Special cases of this appear in \cite{FalconerGrimmett92}, \cite{PeresRams14} and \cite[Theorem 3.1]{ShmerkinSuomala12}, and our proof is similar.

\begin{lemma} \label{lem:large-deviation}
Let $(\mu_n)$ be an SI-martingale. Fix $n$, and let $\eta\in\mathcal{P}_d$ such that $\eta(Q)\le C_1\, 2^{sn}$ for all $Q\in\mathcal{Q}_n$. Write $M=\sup_{x\in\R^d}\mu_n(x)$. Then, for any $\kappa>0$ with
\begin{equation}\label{eq:kappaeq}
\kappa^2 2^{sn}M^{-1}\ge\delta>0\,,
\end{equation}
it holds that
\[
\PP\left(\left.\left|\int \mu_{n+1} \,d\eta - \int \mu_n \,d\eta \right|\ge \kappa\sqrt{\left|\int \mu_n\,d\eta\right|}\,\right| \,\mathcal{B}_n\right) = O\left(\exp\left(-\Omega_{C,C_1,\delta}\left(\kappa^2 2^{s n}M^{-1}\right)\right)\right),
\]
where $C$ is the constant from the definition of SI-martingale.
\end{lemma}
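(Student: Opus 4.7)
The plan is to condition on $\BB_n$, decompose the martingale increment as a sum over cells $Q\in\mathcal{Q}_{n+1}$, and apply Hoeffding's inequality after regrouping the cells so that the independence assumption \eqref{SI:spatial-independence} can be invoked. Writing $\xi_Q := \int_Q(\mu_{n+1}-\mu_n)\,d\eta$ and $\alpha_Q := \int_Q\mu_n\,d\eta$ gives $\int\mu_{n+1}\,d\eta-\int\mu_n\,d\eta=\sum_{Q\in\mathcal{Q}_{n+1}}\xi_Q$. A standard coloring of the dyadic grid splits $\mathcal{Q}_{n+1}=\bigsqcup_{j=1}^N\mathcal{Q}^{(j)}$ into $N=N(C,d)$ subfamilies, each of them $(C\cdot 2^{-n})$-separated. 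By \eqref{SI:spatial-independence} the random variables $\{\xi_Q:Q\in\mathcal{Q}^{(j)}\}$ are then conditionally independent given $\BB_n$, and by \eqref{SI:martingale} each has conditional mean zero.

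The two pointwise estimates I need are: (i) $|\xi_Q|\le C\alpha_Q$, which follows from $0\le\mu_{n+1}\le C\mu_n$ by \eqref{SI:quotients-bounded}; and (ii) $\alpha_Q\le M\,\eta(Q)\le C_1 M\,2^{-sn}$, which uses the Frostman-type hypothesis on $\eta$ applied to the parent of $Q$ in $\mathcal{Q}_n$. Writing $Y:=\int\mu_n\,d\eta$, the trivial inequality $\sum_{Q\in\mathcal{Q}^{(j)}}\alpha_Q\le Y$ then yields
\[
\sum_{Q\in\mathcal{Q}^{(j)}}\alpha_Q^2\;\le\;\Bigl(\max_Q\alpha_Q\Bigr)\sum_Q\alpha_Q\;\le\;C_1 M\,2^{-sn}\,Y.
\]

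With these in hand I would apply Hoeffding's inequality inside each class (conditionally on $\BB_n$) with $c_Q = C\alpha_Q$, obtaining
\[
\PP\!\left(\left|\sum_{Q\in\mathcal{Q}^{(j)}}\xi_Q\right|\ge\frac{\kappa}{N}\sqrt{Y}\;\bigg|\;\BB_n\right)\;\le\;2\exp\!\left(-\frac{\kappa^2 Y/N^2}{2C^2\,C_1 M\,2^{-sn}\,Y}\right)\;=\;2\exp\!\left(-c\,\kappa^2 2^{sn}M^{-1}\right),
\]
for some $c=c(C,C_1,d)>0$. A union bound over the $N$ classes gives the claim up to an extra factor of $2N$, and the hypothesis \eqref{eq:kappaeq} is precisely what allows this factor to be absorbed into the exponential at the cost of letting the implicit constant depend on $\delta$. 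The only step that is not completely routine is the use of the quotient bound \eqref{SI:quotients-bounded} to obtain the sharper estimate $|\xi_Q|\le C\alpha_Q$; without it one would only have $|\xi_Q|\le 2M\eta(Q)$, and $\sum_Q c_Q^2$ would scale with the number of cubes in the support of $\mu_n$ rather than with $Y$, losing the crucial factor of $Y$ in the denominator of the Hoeffding exponent. I do not foresee other serious obstacles.
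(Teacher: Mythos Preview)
Your argument is correct and in fact somewhat cleaner than the paper's. Both proofs condition on $\BB_n$, write the increment as $\sum_Q X_Q$ over $Q\in\mathcal{Q}_{n+1}$, use \eqref{SI:quotients-bounded} to get $|X_Q|\le C\alpha_Q$, and split $\mathcal{Q}_{n+1}$ into $O_{C,d}(1)$ colour classes on which \eqref{SI:spatial-independence} gives conditional independence. The difference lies in how Hoeffding is applied. The paper states Hoeffding only in the uniform-bound form $|X_i|\le R$ (their Lemma~\ref{lem:HoeffdingJanson}), so it cannot exploit the varying sizes of $\alpha_Q$ directly; instead it stratifies the cubes into dyadic mass levels $\mathcal{Q}_{n+1}^\ell=\{Q:\nu_n(\widehat Q)\asymp M2^{-s\ell}\}$, applies the uniform Hoeffding on each level with threshold $\kappa\sqrt{Y_n}/(\ell-n)^2$, and then sums over $\ell>n$. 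You instead invoke the standard Hoeffding inequality with individual ranges $c_Q=C\alpha_Q$ and use the single estimate $\sum_Q\alpha_Q^2\le(\max_Q\alpha_Q)\sum_Q\alpha_Q\le C_1M2^{-sn}Y$, which collapses the whole stratification into one line.

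What each approach buys: yours is shorter and avoids the bookkeeping of the level sets and the $(\ell-n)^{-2}$ weights; the paper's route keeps the dependence on Lemma~\ref{lem:HoeffdingJanson} self-contained as stated. Both yield the same exponent $\Omega(\kappa^2 2^{sn}M^{-1})$, and your remark about why the bound $|\xi_Q|\le C\alpha_Q$ (rather than $2M\eta(Q)$) is essential is exactly the point.
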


In particular, this holds uniformly for all measures in a family $\{\eta_t\}$ with Frostman exponent $s$. In the proof we will make use of Hoeffding's inequality \cite{Hoeffding63}:
\begin{lemma} \label{lem:HoeffdingJanson}
Let $\{ X_i: i\in I\}$ be zero mean independent random variables satisfying $|X_i|\le R$. Then for all $\kappa>0$,
\begin{equation} \label{eq:Hoeffding-Janson}
\mathbb{P}\left(\left|\sum_{i\in I} X_i\right|> \kappa\right) \le 2\exp\left(\frac{-\kappa^2}{2R^2\# I}\right).
\end{equation}
\end{lemma}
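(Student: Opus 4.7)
The plan is to decompose the martingale increment over dyadic cubes of side $2^{-(n+1)}$, use (SI4) to isolate a bounded number of independent subfamilies, and then apply Hoeffding to each. Working conditionally on $\mathcal{B}_n$, write
\[
\int \mu_{n+1}\,d\eta - \int \mu_n\,d\eta \;=\; \sum_{Q\in\mathcal{Q}_{n+1}} X_Q, \qquad X_Q \;:=\; \int_Q (\mu_{n+1}-\mu_n)\,d\eta,
\]
where by (SI2) each $X_Q$ has $\EE(X_Q\mid\mathcal{B}_n)=0$. Using (SI3) together with the Frostman-type control $\eta(Q)\le C_1\,2^{-sn}$ on each $Q\in\mathcal{Q}_n$ (and hence, up to a factor of $2^d$, on each $Q\in\mathcal{Q}_{n+1}$), I get two complementary bounds on each summand:
\[
|X_Q| \;\le\; (C+1)\int_Q \mu_n\,d\eta \;\le\; (C+1)\,C_1'\,M\,2^{-sn}.
\]
The first bound will be summed, the second will play the role of the uniform pointwise bound in Hoeffding's inequality.

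Next I split $\mathcal{Q}_{n+1}$ into finitely many classes $\mathcal{I}_1,\ldots,\mathcal{I}_{K}$, where $K=K(C,d)$, such that each $\mathcal{I}_j$ is $(C 2^{-n})$-separated; this uses only that $C$ is a fixed constant from (SI4). By (SI4), for each fixed $j$ the collection $\{\mu_{n+1}|_Q : Q\in\mathcal{I}_j\}$ is conditionally independent given $\mathcal{B}_n$, hence so is $\{X_Q : Q\in\mathcal{I}_j\}$. Setting $S_j=\sum_{Q\in\mathcal{I}_j}X_Q$, the event $\{|\sum_j S_j|\ge \kappa\sqrt{\int\mu_n\,d\eta}\}$ implies $|S_j|\ge (\kappa/K)\sqrt{\int\mu_n\,d\eta}$ for some $j$. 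Applying Lemma \ref{lem:HoeffdingJanson} to $\mathcal{I}_j$ with the uniform bound $R=(C+1)C_1'\,M\,2^{-sn}$ and using
\[
R^2\cdot\#\{Q\in\mathcal{I}_j : X_Q\neq 0\} \;\le\; R \sum_{Q\in\mathcal{I}_j}|X_Q| \;\le\; (C+1)^2 C_1'\,M\,2^{-sn}\int \mu_n\,d\eta,
\]
which is the essential variance-type estimate, I obtain
\[
\PP\!\left(|S_j|\ge (\kappa/K)\sqrt{\textstyle\int\mu_n\,d\eta}\,\Big|\,\mathcal{B}_n\right) \;\le\; 2\exp\!\left(-c\,\kappa^2\,2^{sn}\,M^{-1}\right),
\]
for a constant $c=c(C,C_1,d)>0$.

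A union bound over the $K$ classes gives the same bound with the constant $2K$ in front. Finally, the hypothesis $\kappa^2 2^{sn}M^{-1}\ge\delta$ is exactly what is needed to absorb the factor $2K$ into the exponential: writing $2K\exp(-c x)=\exp(-c'x)\cdot\exp\bigl((c'-c)x+\log(2K)\bigr)$ with $c'<c$, the second factor is $\le 1$ once $x\ge\delta$ provided $c-c'$ is chosen small enough in terms of $\delta$ and $K$. This yields the claimed bound $O(\exp(-\Omega_{C,C_1,\delta}(\kappa^2 2^{sn}M^{-1})))$. The main subtlety I expect is bookkeeping the variance-type estimate so that the scaling $\int\mu_n\,d\eta$ (rather than a crude $\|\eta\|^2$) appears in the denominator, since this is what matches the normalization $\kappa\sqrt{\int\mu_n\,d\eta}$ on the deviation side; the rest is a routine combination of (SI2)--(SI4) with Hoeffding.
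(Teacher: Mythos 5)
Your proposal does not prove the statement in question. The statement is Hoeffding's inequality itself (Lemma \ref{lem:HoeffdingJanson}): a tail bound for a finite sum of bounded, zero-mean, independent random variables. What you have written is instead a proof of Lemma \ref{lem:large-deviation} --- the paper's large-deviation estimate for SI-martingales --- and in the course of it you explicitly ``apply Lemma \ref{lem:HoeffdingJanson}'', i.e.\ you invoke as a black box exactly the inequality you were asked to establish. Relative to the task the argument is therefore circular: none of the steps you give (the decomposition over dyadic cubes, the splitting into $(C2^{-n})$-separated subfamilies via \eqref{SI:spatial-independence}, the variance bookkeeping) bears on the claim $\PP\left(\left|\sum_{i\in I}X_i\right|>\kappa\right)\le 2\exp\left(-\kappa^2/(2R^2\,\# I)\right)$ for abstract independent variables.

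For the record, the paper does not prove this lemma either; it quotes it directly from Hoeffding's paper \cite{Hoeffding63}. A self-contained proof is the standard exponential-moment argument: for $\lambda>0$, Hoeffding's lemma gives $\EE(e^{\lambda X_i})\le \exp(\lambda^2R^2/2)$ for each zero-mean $X_i$ with $|X_i|\le R$; by independence $\EE\exp\left(\lambda\sum_{i\in I}X_i\right)\le\exp(\lambda^2R^2\,\# I/2)$; Markov's inequality followed by optimizing at $\lambda=\kappa/(R^2\,\# I)$ yields $\PP\left(\sum_{i\in I}X_i>\kappa\right)\le\exp(-\kappa^2/(2R^2\,\# I))$, and applying the same bound to $\{-X_i\}$ gives the factor $2$. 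Separately, if your text were resubmitted as a proof of Lemma \ref{lem:large-deviation}, note that your ``variance-type estimate'' $R^2\cdot\#\{Q:X_Q\neq0\}\le R\sum_Q|X_Q|$ runs in the wrong direction (one only has $\sum_Q|X_Q|\le R\cdot\#\{Q:X_Q\neq0\}$, not the reverse); this difficulty --- that the individual bounds $|X_Q|$ vary widely across cubes --- is precisely why the paper stratifies $\mathcal{Q}_{n+1}$ into the levels $\mathcal{Q}_{n+1}^\ell$ according to the size of $\nu_n(\widehat{Q})$ and applies Hoeffding at each level with weights $(\ell-n)^{-2}$.
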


\begin{proof}[Proof of Lemma \ref{lem:large-deviation}]
By replacing $\kappa$ by $\kappa/\sqrt{\delta}$ we may assume that $\delta=1$. We condition on $\BB_n$, and write $d\nu_n= \mu_n \, d\eta$ and $Y_n=\nu_n(\R^d)=\int\mu_n\,d\eta$ for simplicity. The constants implicit in the $O$ notation may depend on $C, C_1$. We decompose $\mathcal{Q}_{n+1}$ into the families
\[
\mathcal{Q}_{n+1}^\ell = \{ Q\in \mathcal{Q}_{n+1}: C M 2^{-s\ell}< \nu_n(\widehat{Q}) \le C M 2^{s(1-\ell)} \}\,,
\]
where $\widehat{Q}\in\mathcal{Q}_n$ is the dyadic cube containing $Q$. Then $\mathcal{Q}_{n+1}^\ell$ is empty for all $\ell\le n$ (of course it is also empty for all but finitely many other $\ell$).
For each $Q\in\Q$, let $X_Q=\nu_{n+1}(Q)-\nu_n(Q)$. Then $\EE(X_Q)=0$ for all $Q\in\Q$, thanks to the martingale assumption \eqref{SI:martingale} (recall that we are conditioning on $\mathcal{B}_n$). Also, by \eqref{SI:quotients-bounded},
\[
|X_Q| = O(\nu_n(Q)) \le O(1) 2^{-s\ell} M \,\,\text{ for all } Q\in\Q_{n+1}^\ell\,.
\]
Moreover, since $Y_n = \sum_{Q\in\Q_n} \nu_n(Q)$,
\[
\#\Q_{n+1}^\ell = O(1) 2^{s\ell}M^{-1} Y_n\,.
\]

Thanks to \eqref{SI:spatial-independence}, we can split the random variables $\{X_Q\}_{Q\in\Q_{n+1}^\ell}$ into $O(1)$ disjoint families, such that the random variables inside each family are independent. By Hoeffding's inequality \eqref{eq:Hoeffding-Janson},
\[
\PP\left(\left|\sum_{Q\in\Q_{n+1}^\ell} X_Q\right|>\frac{\kappa\sqrt{Y_n}}{2(\ell-n)^2} \right) =O\left(\exp\left(-\Omega\left((\ell-n)^{-4}\kappa^2 2^{s\ell} M^{-1}\right)\right)\right)\,,
\]
for any $\kappa>0$. It follows that
\begin{align*}
\PP\left(\left|Y_{n+1}-Y_n\right|>\kappa\sqrt{Y_n} \right) &\le \sum_{\ell> n}\PP\left(\left|\sum_{Q\in\Q_{n+1}^\ell} X_Q\right|>\frac{\kappa\sqrt{Y_n}}{2(\ell-n)^2} \right) \\
&=O(1) \exp\left(-\Omega\left(\kappa^2 2^{s n}M^{-1}\right)\right),
\end{align*}
for any $\kappa>0$, where we use $\kappa^2 2^{sn} M^{-1}\ge 1$ for the last estimate. This is what we wanted to show.
\end{proof}

As a first consequence of Lemma \ref{lem:large-deviation}, we deduce that under a natural assumption $\PP(Y^t)>0$ and, in particular, the limit $\mu_\infty$ is non-trivial with positive probability.

\begin{lemma} \label{lem:Yt-survives}
Let $\eta\in\mathcal{P}_d$ satisfy $\eta(B(x,r)) \le C_1 r^s$ for all $x\in\R^d$ and some $C_1,s>0$. Let $( \mu_n)$ be an SI-martingale such that a.s. $\mu_n(x)\le 2^{\alpha n}$ for all $n,x$, where $\alpha<s$, and suppose that $\int \mu_0\,d\eta>0$.

Then the sequence $\int \mu_n\,d\eta$ converges a.s. to a non-zero random variable $Y$. Moreover,
\[
\PP\left(Y>M\right)=O(\exp(-\Omega(M)))\,,
\]
where the implicit constants are independent of $M$ but may depend on the remaining data.
\end{lemma}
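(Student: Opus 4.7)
The plan is to show that $(Y_n)$ is a bounded-tail martingale by combining the standard martingale convergence theorem with the exponential deviation estimate of Lemma~\ref{lem:large-deviation}, leveraging the gap $\beta := s-\alpha>0$ between the Frostman exponent of $\eta$ and the growth exponent of $\|\mu_n\|_\infty$.

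First, I would note that by \eqref{SI:martingale} applied inside $\int \cdot\, d\eta$, the process $(Y_n)_{n\ge 0}$ is a non-negative martingale with respect to $(\mathcal{B}_n)$ with $\EE(Y_n)=Y_0=\int\mu_0\,d\eta$, so $Y_n\to Y$ almost surely by the martingale convergence theorem, for some non-negative random variable $Y$ with $\EE(Y)\le Y_0$.

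To get the tail bound, I would apply Lemma~\ref{lem:large-deviation} at every scale with the choice
\[
\kappa_n := \sqrt{c_0(M+n)}\,2^{-\beta n/2},
\]
where $c_0>0$ is a small constant chosen at the end and $M\ge \delta/c_0$ (for $M$ of order $1$ the statement is trivial). The admissibility hypothesis \eqref{eq:kappaeq} reduces to $\kappa_n^2\,2^{sn}\|\mu_n\|_\infty^{-1}\ge \kappa_n^2\, 2^{\beta n}=c_0(M+n)\ge\delta$, which holds. Lemma~\ref{lem:large-deviation} and a union bound then give
\[
\PP\bigl(\exists n:\ |Y_{n+1}-Y_n|>\kappa_n\sqrt{Y_n}\bigr)\le \sum_n O\!\left(\exp(-\Omega(c_0(M+n)))\right)=O\!\left(\exp(-\Omega(c_0 M))\right).
\]
On the complementary ``good event,'' using the elementary estimate
\[
|\sqrt{Y_{n+1}}-\sqrt{Y_n}|=\frac{|Y_{n+1}-Y_n|}{\sqrt{Y_{n+1}}+\sqrt{Y_n}}\le \frac{|Y_{n+1}-Y_n|}{\sqrt{Y_n}}\le\kappa_n
\]
(valid when $Y_n>0$; if $Y_n=0$ then $Y_k=0$ for all $k\ge n$ by \eqref{SI:quotients-bounded}, so the inequality holds trivially), I would telescope to obtain
\[
\sqrt{Y}\le \sqrt{Y_0}+\sum_{n=0}^\infty \kappa_n\le \sqrt{Y_0}+O(\sqrt{c_0 M}),
\]
since $\sum_n\sqrt{M+n}\,2^{-\beta n/2}=O(\sqrt M)$ for $M\ge 1$. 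Squaring, $Y\le 2Y_0+O(c_0)M$ on the good event. Choosing $c_0$ small enough that the implicit constant in front of $M$ is smaller than $1/2$ makes $\{Y>M\}$ force the bad event once $M\ge 4Y_0$, which yields $\PP(Y>M)=O(\exp(-\Omega(M)))$, the desired bound.

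Finally, for the non-triviality statement, I would observe that the \emph{same} argument, truncated at step $n$, shows $\PP(Y_n>M)\le O(\exp(-\Omega(M)))$ uniformly in $n$. In particular $\sup_n\EE(Y_n^2)<\infty$, so $(Y_n)$ is uniformly integrable and converges to $Y$ in $L^1$; therefore $\EE(Y)=\lim_n \EE(Y_n)=Y_0>0$, which forces $Y\not\equiv 0$. The main delicate point is the joint calibration of the two competing requirements on $\kappa_n$: the sum $\sum_n\kappa_n$ must grow only like $\sqrt{c_0 M}$ so that the quadratic comparison closes, while the exponents $\kappa_n^2\,2^{\beta n}$ must grow at least linearly in $M$ (plus a summable tail in $n$) so that the union bound still decays like $e^{-\Omega(M)}$; the linear-in-$(M+n)$ ansatz for $\kappa_n^2\,2^{\beta n}$ is what makes both conditions simultaneously tight.
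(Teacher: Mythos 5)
Your proof is correct. Both your argument and the paper's hinge on the same engine, Lemma \ref{lem:large-deviation}, but you organize the three conclusions differently, and each difference is legitimate. For convergence, you invoke the martingale convergence theorem directly (the paper instead derives convergence from Borel--Cantelli applied to the increments with $\kappa=2^{-\lambda n}$). For the tail bound, you define a single global good event with $M$-dependent increments $\kappa_n=\sqrt{c_0(M+n)}\,2^{-\beta n/2}$ and telescope $\sqrt{Y_n}$; the paper instead runs a scale-by-scale argument, bounding the probability that $Y_{n+1}-Y_n$ jumps by more than $2^{(1-n)\lambda}M$ conditional on $Y_n\le M(1-2^{-n\lambda})$, and summing over $n$. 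The calibration you highlight (linear growth of $\kappa_n^2 2^{\beta n}$ in $M+n$ versus $\sum_n\kappa_n=O(\sqrt{c_0M})$) is exactly right, and the small circularity in $c_0$ is harmless since $c_0$ is fixed in terms of $\beta$ and the constants of Lemma \ref{lem:large-deviation} before $M$ is sent to infinity. The most genuinely different piece is non-degeneracy: you get uniform exponential tails for all $Y_n$, hence $\sup_n\EE(Y_n^2)<\infty$, uniform integrability, $L^1$-convergence, and $\EE(Y)=Y_0>0$; the paper instead compares $\PP(Y_{n_0}>c/2)$ (bounded below via $\EE(Y_{n_0})=c$ and the deterministic bound $Y_{n_0}\le 2^{\alpha n_0}\|\eta\|$) with the summed deviation probabilities to exhibit a positive-probability event on which $Y_n>c/4$ for all large $n$. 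Your route buys the stronger conclusion $\EE(Y)=Y_0$ and is arguably cleaner; the paper's route is shorter and more hands-on. Two minor points worth making explicit if you write this up: the case $Y_n=0$ in the square-root telescoping is handled exactly as you say via \eqref{SI:quotients-bounded}, and the constant in $\Omega_{C,C_1,\delta}$ of Lemma \ref{lem:large-deviation} must be fixed by fixing $\delta$ once and for all, absorbing small $M$ into the trivial bound, which you do.
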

\begin{proof}
Pick $0 < \lambda < (s-\alpha)/2$. Again write $Y_n=\int \mu_n\,d\eta$. Then, by Lemma \ref{lem:large-deviation},
\begin{equation}\label{eq:cor_large_deviation}
\PP(|Y_{n+1}- Y_n| > 2^{-\lambda n}\sqrt{Y_n}) = O\left(\exp(-2^{\Omega(n)})\right)\,.
\end{equation}
By the Borel-Cantelli lemma, $|Y_{n+1}-Y_n|< 2^{-\lambda n}\sqrt{Y_n}$ for all but finitely many $n$, so $Y_n$ converges a.s. to a random variable $Y$. Let $c=\EE(Y_0)=\int \mu_0\, d\eta>0$. Since $Y_n$ is a martingale, $\EE(Y_n)=c$ for all $n$ and therefore, using that $\mu_n(x)\le 2^{\alpha n}$, we get that $\PP(Y_n>\frac{c}{2}) \ge \tfrac{c}{2} 2^{-\alpha n}$. In particular, if $n_0$ is large enough, then recalling \eqref{eq:cor_large_deviation},
\[
\PP(Y_{n_0} > \tfrac{c}{2}) > \sum_{n=n_0}^\infty \PP(|Y_{n+1}- Y_n| > 2^{-\lambda n}\sqrt{Y_n})\,,
\]
which implies (taking $n_0$ suitably large) that $\PP(Y_n>c/4 \text{ for all } n\ge n_0)>0$. This gives the first claim.

For the tail bound, we use Lemma \ref{lem:large-deviation} to conclude that conditional on $Y_n\le M(1-2^{-n\lambda})$, we have
\[
\PP\left(Y_{n+1}-Y_n>2^{(1-n)\lambda}M\right)=O\left(\exp(-M 2^{\Omega(n)})\right)\,.
\]
Summing over all $n\ge 1$ then gives the claim.
\end{proof}

\begin{rem} \label{rem:survives}
In particular, applying the above to $\eta=\mu_0 dx$, we obtain that if $\alpha<d$, then the SI-martingale itself survives with positive probability. The meaning of the hypothesis $\alpha<s$ will be discussed in the next section, after Theorem \ref{thm:Holder-continuity}.
\end{rem}

\section{H\"{o}lder continuity of intersections}
\label{sec:continuity}

In this section we prove the main abstract result of the paper:

\begin{thm} \label{thm:Holder-continuity}
Let $(\mu_n)_{n\in\N}$ be an SI-martingale, and let $\{\eta_t\}_{t\in\Gamma}$ be a family of measures indexed by a metric space $(\Gamma,d)$.
We assume that there are positive constants $\alpha,s,\theta,\gamma_0,C$ such that the following holds:
\begin{enumerate}
\renewcommand{\labelenumi}{(H\arabic{enumi})}
\renewcommand{\theenumi}{H\arabic{enumi}}
\item \label{H:size-parameter-space} For any $\xi>0$, $\Gamma$ can be covered by $\exp(O_\xi(r^{-\xi}))$ balls of radius $r$ for all $r>0$.
\item \label{H:dim-deterministic-measures} The family $\{\eta_t\}$ has Frostman exponent $s$.
\item \label{H:codim-random-measure} Almost surely, $\mu_n(x)\le C\, 2^{\alpha n}$ for all $n\in \N$ and $x\in\R^d$.
\item \label{H:Holder-a-priori} Almost surely, there is a random integer $N_0$, such that
    \begin{equation} \label{eq:Holder-a-priori}
        \sup_{t,u\in\Gamma,t\neq u;n\ge N_0} \frac{\left|Y_n^t-Y_n^u\right|}{2^{\theta n}\,d(t,u)^{\gamma_0}} \le C\,.
    \end{equation}
\end{enumerate}

Further, suppose that $s>\alpha$. Then almost surely $Y_n^t$ converges uniformly in $t$, exponentially fast, to a limit $Y^t$. Moreover, the function $t\mapsto Y^t$ is H\"{o}lder continuous with exponent $\gamma$, for any
\begin{equation}\label{eq:quantitative_gamma}
\gamma < \frac{(s-\alpha)\gamma_0}{s-\alpha+2\theta}\,.
\end{equation}
\end{thm}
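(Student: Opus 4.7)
The plan is to combine a chaining argument over $\Gamma$ with the exponential large-deviation bound of Lemma \ref{lem:large-deviation}, using the a priori Hölder estimate \eqref{H:Holder-a-priori} to bridge between net points. Two parameters govern everything: a net resolution $\beta>0$ (net spacing at scale $n$ equal to $2^{-\beta n}$) and a large-deviation exponent $\lambda<(s-\alpha)/2$; both will be optimized only at the end.

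For each $n$, fix a $2^{-\beta n}$-net $\Gamma_n\subset\Gamma$; by \eqref{H:size-parameter-space}, $|\Gamma_n|\le\exp(O_\xi(2^{\xi\beta n}))$ for every $\xi>0$. Condition \eqref{H:codim-random-measure} gives $M:=\sup_x\mu_n(x)\le C 2^{\alpha n}$, and the hypothesis $\kappa^2 2^{sn}M^{-1}\ge\mathrm{const}\cdot 2^{(s-\alpha-2\lambda)n}$ of Lemma \ref{lem:large-deviation} with $\kappa=2^{-\lambda n}$ holds as soon as $\lambda<(s-\alpha)/2$, yielding
\[
\PP\bigl(|Y_{n+1}^t-Y_n^t|>2^{-\lambda n}\sqrt{Y_n^t}\,\big|\,\mathcal{B}_n\bigr)=O\!\left(\exp\!\left(-2^{\Omega(n)}\right)\right)
\]
for each fixed $t\in\Gamma_{n+1}$. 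Taking the union bound and choosing $\xi$ small enough that $\xi\beta$ is strictly less than the constant in the $\Omega(\cdot)$, the total failure probability at scale $n$ is superexponentially small; Borel-Cantelli then provides a random $N_1$ beyond which the above bound holds simultaneously for all $t\in\Gamma_{n+1}$.

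For arbitrary $t\in\Gamma$, let $t_n\in\Gamma_n$ be a closest net point and split
\[
Y_{n+1}^t-Y_n^t=(Y_{n+1}^t-Y_{n+1}^{t_{n+1}})+(Y_{n+1}^{t_{n+1}}-Y_n^{t_{n+1}})+(Y_n^{t_{n+1}}-Y_n^t).
\]
By \eqref{H:Holder-a-priori} the outer two terms are $O(2^{(\theta-\beta\gamma_0)n})$, while the previous step controls the middle term by $2^{-\lambda n}\sqrt{Y_n^{t_{n+1}}}$. Assume henceforth that $\beta\gamma_0>\theta$. A short bootstrap on $\Psi_n:=\sup_{t\in\Gamma}Y_n^t$, feeding $\Psi_{n+1}\le\Psi_n+C 2^{(\theta-\beta\gamma_0)n}+2^{-\lambda n}\sqrt{\Psi_n}$ into itself and noting that both increments are summable once $\lambda>0$, keeps $\Psi_n$ bounded by a deterministic constant. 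Therefore $|Y_{n+1}^t-Y_n^t|\le C 2^{-c_1 n}$ uniformly in $t$, with $c_1:=\min(\lambda,\beta\gamma_0-\theta)$, and $Y_n^t\to Y^t$ uniformly with exponential speed.

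For the Hölder estimate, given $t,u\in\Gamma$ with $d(t,u)=r$ small, pick $N$ with $2^{-\beta N}\asymp r$ and decompose $Y^t-Y^u=(Y^t-Y_N^t)+(Y_N^t-Y_N^u)+(Y_N^u-Y^u)$. The outer terms are $O(r^{c_1/\beta})$ by the previous paragraph, and \eqref{H:Holder-a-priori} bounds the middle one by $C 2^{\theta N}r^{\gamma_0}=O(r^{\gamma_0-\theta/\beta})$. Balancing $\lambda/\beta=\gamma_0-\theta/\beta$ dictates $\beta=(\lambda+\theta)/\gamma_0$ and yields Hölder exponent $\lambda\gamma_0/(\lambda+\theta)$; letting $\lambda\uparrow(s-\alpha)/2$ reaches the asserted \eqref{eq:quantitative_gamma}. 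The main delicate point is the bootstrap keeping $\sqrt{Y_n^{t_{n+1}}}$ under control in the large-deviation term; everything else is routine once the free parameter $\xi$ from \eqref{H:size-parameter-space} is exploited to absorb the exponential cost of the cover in the union bound.
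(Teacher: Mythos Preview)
Your argument is correct and follows essentially the same route as the paper's proof: apply Lemma~\ref{lem:large-deviation} on a net, bridge to arbitrary points via \eqref{H:Holder-a-priori}, bootstrap to bound $\sup_t Y_n^t$, and optimize the net scale against the large-deviation exponent to reach \eqref{eq:quantitative_gamma}. One small slip: $\Psi_n$ is bounded by a \emph{random} (a.s.\ finite) constant depending on $\Psi_{\max(N_0,N_1)}$, not a deterministic one, but this is all that is needed.
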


We remark that the special case of this theorem in which $(\mu_n)$ is the natural measure on planar fractal percolation, and $\{\eta_t\}$ is the family of length measures on lines making an angle at least $\e>0$ with the axes, was essentially proved by  Peres and Rams \cite{PeresRams14}, and we use some of their ideas.

Before presenting the proof, we make some comments on the hypotheses. Condition \eqref{H:size-parameter-space} says that the parameter space is ``almost'' finite dimensional. In most cases of interest, $\Gamma$ can in fact be covered by $O(r^{-N})$ balls of radius $r$ for some fixed $N>0$ (in other words, $\Gamma$ has finite upper box counting dimension, which clearly implies  \eqref{H:size-parameter-space}).

Hypotheses \eqref{H:dim-deterministic-measures} and \eqref{H:codim-random-measure} say that, in some appropriate sense, the deterministic measures $\eta_t$ have dimension (at least) $s$, and the random measures $\mu$ have dimension (at least) $d-\alpha$. The hypothesis $s>\alpha$ then says that the sum of these dimensions exceeds the dimension $d$ of the ambient space, which is a reasonable assumption if we want these measures to have nontrivial intersection. We will later see that in many examples $s\ge \alpha$ is a necessary condition even for the existence of $Y^t$, and often even $s>\alpha$ is necessary.

The a priori H\"{o}lder condition \eqref{H:Holder-a-priori} may appear rather mysterious: one needs to assume that the functions $Y_n$ are H\"{o}lder, with a constant that is allowed to increase exponentially in $n$, in order to conclude that the $Y_n$ are indeed uniformly H\"{o}lder.  As we will see, geometric arguments can often be used to establish \eqref{H:Holder-a-priori}, making the theorem effective in many situations of interest.

\begin{proof}[Proof of Theorem \ref{thm:Holder-continuity}]
Pick constants $D, B,\lambda,\xi>0$ such that
\begin{align} \label{eq:bound-A}
&\theta/\gamma_0 < D < B\,,\\
\label{eq:bound-lambda} &\lambda<\frac12(s-\alpha-B\xi)\,.
\end{align}
We observe that such choices are possible because $s-\alpha>0$. Also, let
\begin{equation}\label{eq:choice_gamma}
0<\gamma<\min\left(\gamma_0-\frac{\theta}{D}\,, \frac{\lambda}{D}\right)\,.
\end{equation}
Note that the right-hand side is positive thanks to \eqref{eq:bound-A}. By taking $\gamma_0-\frac{\theta}{D}=\frac{\lambda}{D}$ and $\lambda$ arbitrarily close to $(s-\alpha)/2$, we can make $\gamma$ arbitrarily close to the right-hand side of \eqref{eq:quantitative_gamma}. Thus, our task is to show that $Y_n^t$ converges uniformly and exponentially fast to a limit which is H\"{o}lder in $t$ with exponent $\gamma$.

For each $n$, let $\Gamma_n$ be a $(2^{-nB})$-dense family with $\exp(O(2^{B\xi n}))$ elements, whose existence is guaranteed by \eqref{H:size-parameter-space}.

We first sketch the argument. We want to estimate $X_{n+1}$ in terms of $X_n$, where $X_k=\sup_{t\neq u} X_k(t,u)$, and
\[
X_k(t,u) = \frac{|Y_k^t-Y_k^u|}{d(t,u)^\gamma}.
\]
If $d(t,u)\le 2^{-D n}$, we simply use the a priori H\"{o}lder estimate \eqref{eq:Holder-a-priori} to get a deterministic bound. Otherwise, we find $t_0,u_0$ in $\Gamma_n$ such that $d(t,t_0), d(u,u_0)<2^{-B n}$ and estimate
\[
|Y_{n+1}^t-Y_{n+1}^u| \le \I+\II+\III,
\]
where
\begin{align*}
\I &= |Y_n^t-Y_n^u|,\\
\II &= |Y_{n+1}^t-Y_{n+1}^{t_0}|+|Y_n^t-Y_n^{t_0}|+|Y_{n+1}^u-Y_{n+1}^{u_0}|+|Y_n^u-Y_n^{u_0}|,\\
\III &= |Y_{n+1}^{t_0}-Y_n^{t_0}|+|Y_{n+1}^{u_0}-Y_n^{u_0}|.
\end{align*}
The term $\I$ will be estimated inductively, for $\II$ we will use again the a priori estimate \eqref{eq:Holder-a-priori} and to deal with $\III$ we appeal to the fact that almost surely, there is $N_1\in\N$ such that
\begin{equation}\label{eq:bound_Z_n}
\max_{v\in\Gamma_n} |Y_{n+1}^v - Y_n^v| \le 2^{-\lambda n}\max(\overline{Y}_n,1) \quad\text{for all }n\ge N_1\,,
\end{equation}
where $\overline{Y}_n = \max_{t\in\Gamma} Y_n^t$.

We proceed to the details. Our first goal is to verify \eqref{eq:bound_Z_n}.
For a given $v\in\Gamma_n$, we know from Lemma \ref{lem:large-deviation} and our assumptions that
\begin{equation} \label{eq:appl-large-deviation}
\PP(|Y_{n+1}^v - Y_n^v|> 2^{-\lambda n} \sqrt{Y_n^v}) \le O(1)\exp\left(-\Omega(2^{(s-\alpha-2\lambda)n})\right).
\end{equation}
Observe that the application of Lemma \ref{lem:large-deviation} is justified, since  \eqref{eq:kappaeq} holds by \eqref{eq:bound-lambda}. Recalling that $\#\Gamma_n = \exp(O(2^{B \xi n}))$, and using \eqref{eq:bound-lambda}, we deduce from \eqref{eq:appl-large-deviation} that
\begin{align*}
\PP\left(\max_{v\in\Gamma_n} |Y_{n+1}^v - Y_n^v|> 2^{-\lambda n}\overline{Y}_n^{1/2}\right)&=O\left(\#\Gamma_n\right)\exp\left(-\Omega(2^{(s-\alpha-2\lambda)n})\right)\\
&\le\exp\left(O(2^{B\xi n})-\Omega(2^{(s-\alpha-2\lambda)n})\right)\\
&= O\left(\exp\left(-\Omega(2^{c n})\right)\right)\,,
\end{align*}
for $c=s-\alpha-2\lambda>0$. Since $x^{1/2}\le \max(x,1)$, and $\sum_n\exp(-\Omega(2^{c n}))<\infty$, \eqref{eq:bound_Z_n} follows from the Borel-Cantelli lemma.

For the rest of the proof, we fix $N=\max(N_0,N_1)$, where $N_0$ is such that \eqref{eq:Holder-a-priori} holds, and $N_1$ is such that \eqref{eq:bound_Z_n} is valid for all $n\ge N_1$.

If $n\ge N$ and $d(t,u)\le 2^{-D n}$ then, by \eqref{eq:Holder-a-priori},
\begin{equation} \label{eq:estimate-close}
|Y_{n+1}^t-Y_{n+1}^u|\le 2^{(n+1)\theta} d(t,u)^{\gamma_0} \le  O(1) d(t,u)^{\gamma_0-\theta/D} \le O(1) d(t,u)^\gamma\,.
\end{equation}

From now on we consider the case $d(t,u)>2^{-Dn}$. By definition,
\begin{equation} \label{eq:bound-I}
\I \le X_n d(t,u)^\gamma\,.
\end{equation}

Let $t_0,u_0\in\Gamma_n$ be $(2^{-Bn})$-close to $t,u$. Pick $n\ge N$. Using the H\"{o}lder bound \eqref{eq:Holder-a-priori}, we get $|Y_k^t-Y_k^{t_0}|\le 2^{k\theta} 2^{-\gamma_0 B n}$ for $k=n,n+1$, and likewise for $u,u_0$, whence
\begin{equation} \label{eq:bound-II}
\II \le O(1) 2^{-(\gamma_0 B- \theta-\gamma D)n}\, d(t,u)^\gamma\,.
\end{equation}
Note that due to \eqref{eq:bound-A} and \eqref{eq:choice_gamma}, the exponent $\gamma_0 B- \theta-\gamma D$ is positive.

We are left to estimating $\III$. We first claim that
\begin{equation}\label{eq:bound_C_N}
\sup_{n\ge N} \overline{Y}_n = O(\overline{Y}_N+1)<\infty\,.
\end{equation}
Let $n\ge N$. Using \eqref{H:Holder-a-priori} again to estimate $Y_{n+1}^t$ via $Y_{n+1}^{t_0}$, with $t_0\in\Gamma_n$, $d(t,t_0)\le 2^{-B n}$, we have
\begin{align*}
\overline{Y}_{n+1} &\le \left(\max_{v\in\Gamma_n} Y_{n+1}^v\right)+ O(1)2^{(\theta-B\gamma_0)n} \\
&\le \overline{Y}_n + \max(1,\overline{Y}_n) 2^{-\lambda n}+ O(1)2^{(\theta-B\gamma_0)n}\,.
\end{align*}
Recall that we are conditioning on \eqref{eq:bound_Z_n}. Since $\lambda>0$ and  $\theta-B\gamma_0<0$, this implies \eqref{eq:bound_C_N}.

Combining \eqref{eq:bound_Z_n} and \eqref{eq:bound_C_N}, we deduce that $\max_{v\in\Gamma_n} |Y_{n+1}^v - Y_n^v|\le O(\overline{Y}_N+1) 2^{-\lambda n}$ and, in particular,
\begin{equation} \label{eq:bound-III}
\III \le O(\overline{Y}_N+1)\, 2^{-(\lambda-\gamma D) n} \,d(t,u)^\gamma\,,
\end{equation}
where $\lambda-\gamma D>0$ by \eqref{eq:choice_gamma}.

Recapitulating \eqref{eq:bound-I}, \eqref{eq:bound-II} and \eqref{eq:bound-III}, we have shown that there is a constant $\e>0$ such that
\begin{equation} \label{eq:key-Holder-estimate}
|Y_{n+1}^t-Y_{n+1}^u| \le (X_n+ O(\overline{Y}_N+1) 2^{-\e n})d(t,u)^\gamma\quad\text{for all } n\ge N, t, u\in\Gamma\,,
\end{equation}
which immediately yields $\overline{X}:=\sup_n X_n<\infty$.

We are left to show that almost surely $Y_n^t$ converges uniformly, at exponential speed, since then we will have
\[
|Y^t-Y^u| = \lim_{n\to\infty} |Y_n^t-Y_n^u| \le \overline{X} d(t,u)^\gamma\,.
\]
Once again estimating $Y_m^t- Y_n^t$ via $Y_m^{t_0}-Y_n^{t_0}$ and using the a priori estimate \eqref{eq:Holder-a-priori}, we conclude that for all $t$, $\{ Y_n^t\}$ is a uniformly Cauchy sequence with exponentially decreasing differences, and this finishes the proof.
\end{proof}

\begin{rem}
Although condition \eqref{H:size-parameter-space} holds in all of our examples, for the conclusion of Theorem \ref{thm:Holder-continuity} (other than the actual value of the H\"{o}lder exponent) it is enough that  $\Gamma$ can be covered by $\exp(O(r^{-\xi}))$ balls of radius $r$, where $\xi$ satisfies
\begin{equation} \label{eq:relations-parameter}
\gamma_0(s-\alpha) > \xi\theta\,.
\end{equation}
Indeed, the proof works almost verbatim in this case. This allows substantially larger parameter spaces $\Gamma$.

Also, in \eqref{H:codim-random-measure}, we could allow the constant $C$ to be random with a suitably fast decaying tail. Again, as this condition holds with a deterministic $C$ in all our applications, we do not consider this modification here.
\end{rem}

In Section \ref{sec:lower-bound-dim-intersections} we will require a tail estimate for the random variable $\sup_{t\in\Gamma} Y^t$ in the setting of Theorem \ref{thm:Holder-continuity}. This can be easily gleaned from the proof, in terms of a tail estimate for the random variable $N_0$ in \eqref{H:Holder-a-priori}. Although we will  have no use for it, we also provide a tail estimate for the H\"{o}lder constant of $t\mapsto Y^t$.

\begin{cor} \label{cor:tail_estimate}
In the setting of Theorem \ref{thm:Holder-continuity}, let
\begin{align*}
\overline{Y} &= \sup_{t\in\Gamma} Y^t\,,\\
X &= \sup_{t\neq u\in \Gamma} \frac{|Y^t-Y^u|}{d(t,u)^\gamma}\,.
\end{align*}
Then there are constants $C,\delta>0$ such that
\[
\mathbb{P}(\overline{Y}>x), \mathbb{P}(X>x) \le \mathbb{P}(N_0>\log x/C)+ \exp(-x^{\delta})\,.
\]
\end{cor}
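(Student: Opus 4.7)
The strategy is to quantify every estimate in the proof of Theorem \ref{thm:Holder-continuity} in terms of the random time $N:=\max(N_0,N_1)$, where $N_1$ is the (random) first index from which the Borel--Cantelli consequence \eqref{eq:bound_Z_n} holds. That proof already established two deterministic master estimates on the event $\{n\ge N\}$: from \eqref{eq:bound_C_N}, $\sup_{n\ge N}\overline{Y}_n = O(\overline{Y}_N+1)$; and from iterating \eqref{eq:key-Holder-estimate} while summing the geometric series $\sum 2^{-\e n}$, $\overline{X}:=\sup_n X_n\le X_N+O(\overline{Y}_N+1)$. Since $\overline{Y}\le\sup_n \overline{Y}_n$ and $X\le \overline{X}$, this yields
$$\overline{Y}+X \le C_0\bigl(\overline{Y}_N+X_N+1\bigr).$$

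The next step is to bound $\overline{Y}_N$ and $X_N$ deterministically as explicit functions of $2^N$. Hypothesis \eqref{H:codim-random-measure} gives $\mu_N\le C\,2^{\alpha N}$; combined with the Frostman condition \eqref{H:dim-deterministic-measures} applied to a finite covering of the compact set $\supp\mu_0$ by unit balls, this yields $\overline{Y}_N\le C'\,2^{\alpha N}$. For $X_N$ I would split into the cases $d(t,u)\le 1$ and $d(t,u)>1$. In the first, the a priori estimate \eqref{eq:Holder-a-priori} gives $|Y_N^t-Y_N^u|/d(t,u)^\gamma\le C\,2^{\theta N} d(t,u)^{\gamma_0-\gamma}\le C\,2^{\theta N}$ since $\gamma<\gamma_0$; in the second, the trivial bound $|Y_N^t-Y_N^u|\le 2\overline{Y}_N$ combined with the fact that $\Gamma$ is totally bounded gives $|Y_N^t-Y_N^u|/d(t,u)^\gamma\le C'\,2^{\alpha N}$. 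Altogether $\overline{Y}_N+X_N\le C''\,2^{\beta N}$ with $\beta:=\max(\alpha,\theta)$, whence
$$\overline{Y}+X \le C'''\,2^{\beta N}.$$

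Finally, I would convert this pointwise bound into a tail estimate. Since $N\le N_0+N_1$, $\mathbb{P}(N>m)\le \mathbb{P}(N_0>m/2)+\mathbb{P}(N_1>m/2)$. The proof of Theorem \ref{thm:Holder-continuity} showed that \eqref{eq:bound_Z_n} fails at step $n$ with probability $O(\exp(-\Omega(2^{cn})))$ for some $c>0$, so $\mathbb{P}(N_1>m)=O(\exp(-\Omega(2^{cm})))$. Choosing $m=m(x):=\lfloor \log_2(x/C''')/\beta\rfloor$ so that $C'''\,2^{\beta m(x)}\le x$, we obtain
$$\mathbb{P}(\overline{Y}>x),\ \mathbb{P}(X>x) \le \mathbb{P}(N>m(x)) \le \mathbb{P}\bigl(N_0>\log x/C\bigr)+O\bigl(\exp(-x^\delta)\bigr),$$
with $C:=2\beta\log 2$ (absorbing $\log C'''$ into constants for $x$ large) and $\delta:=c/(2\beta\log 2)>0$, which is exactly the claimed inequality after renaming the implicit constants. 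No essential obstacle arises: the proof is pure bookkeeping, in that all three ingredients, namely the deterministic recursion, the a priori polynomial-in-$2^N$ control of $\overline{Y}_N$ and $X_N$, and the double-exponential tail for $N_1$, are already implicit in the proof of Theorem \ref{thm:Holder-continuity}.
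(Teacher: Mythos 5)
Your proposal is correct and follows essentially the same route as the paper's proof: bound $\overline{Y}$ and $X$ deterministically by $e^{O(N)}$ with $N=\max(N_0,N_1)$ using \eqref{eq:bound_C_N} and the iterated key estimate \eqref{eq:key-Holder-estimate}, then convert to a tail bound via $\mathbb{P}(N_0>N)+\mathbb{P}(N_1>N)$ and the double-exponential tail of $N_1$. The only difference is that you spell out the bound $\overline{Y}_N+X_N=O(2^{\beta N})$ via \eqref{H:codim-random-measure} and the Frostman condition, where the paper invokes \eqref{SI:quotients-bounded} and \eqref{eq:Holder-a-priori} more tersely; this is an immaterial variation.
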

\begin{proof}
As in the proof of Theorem  \ref{thm:Holder-continuity}, let $X_n=\sup_{t\neq u} |Y_n^t-Y_n^u| d(t,u)^{-\gamma}$ and $\overline{Y}_n=\sup_{t\in\Gamma} Y_n^t$. It follows from \eqref{eq:key-Holder-estimate} that $X_{n+1}\le X_n+O(\overline{Y}_N)2^{-\e n}$ for all $n\ge N=\max(N_0,N_1)$, where $\e>0$ is a deterministic constant, $N_0$ is such that \eqref{eq:Holder-a-priori} holds, and $N_1$ is such that \eqref{eq:bound_Z_n} is valid for all $n\ge N_1$. Likewise, it follows from \eqref{eq:bound_C_N} that $\overline{Y}=O(\overline{Y}_N+1)$.

Let $Z$ be either $X$ or $\overline{Y}$. We have seen that $Z\le O(X_N+\overline{Y}_N+1) \le O(e^{O(N)})$, where the second inequality is due to the growth condition \eqref{SI:quotients-bounded} in the definition of SI-martingale, and the a priori H\"{o}lder condition \eqref{H:Holder-a-priori}. This implies that, for some constant $C>0$,
\[
\mathbb{P}(Z> e^{CN}) \le \mathbb{P}(N_0>N)+\mathbb{P}(N_1>N)\,.
\]
Recall that $N_1=\max\{ n:E_n \text{ holds}\}$ (or $N_1=1$ if $E_n$ does not hold for any $n$), where
\[
\PP(E_n) = O\left(\exp(-\Omega(2^{cn}))\right) \quad\text{for some }c>0\,.
\]
Hence, $\mathbb{P}(N_1>N)\le O(1)\exp(-\Omega(2^{c N}))$. Taking $x= e^{CN}$ (as we may), this yields the result.
\end{proof}

As noted earlier, in many situations $d-\alpha$ equals the a.s. dimension of the random measure $\mu$, and thus the condition $s>\alpha$ simply means that $\dim \mu+\dim\eta_t>d$. When $\dim\mu+\dim\eta_t<d$, we can no longer expect $Y^{t}_n$ to converge to a continuous (or even finite) limit for most $t$.
In this case, the following variant of Theorem \ref{thm:Holder-continuity} is sometimes useful.

\begin{thm}\label{thm:small_dimension_projections}
Suppose that \eqref{H:dim-deterministic-measures}--\eqref{H:codim-random-measure} hold together with the following condition:
\begin{enumerate}
\renewcommand{\labelenumi}{(H\arabic{enumi})}
\renewcommand{\theenumi}{H\arabic{enumi}}
\setcounter{enumi}{4}
\item\label{H:finite_approx_family}
There is $\theta>0$ such that the following holds. For all $\xi>0$, there exist families $\Gamma_n\subset\Gamma$ with at most $\exp(O_\xi(2^{n\xi}))$ elements, such that for some random variable $N_0\in\N$,
\begin{equation} \label{eq:finite-family-controls-all}
\sup_{t\in\Gamma}Y_n^t\le\sup_{t\in\Gamma_n}Y_n^t+o(2^{\theta n})\quad\text{for all }n\ge N_0\,.
\end{equation}
\end{enumerate}
Suppose further that
\begin{equation}\label{dim_small}
0\le\alpha-s<\theta\,.
\end{equation}
Then, almost surely,
\[
\sup_{n\in\N,\,t\in\Gamma}2^{-\theta n}Y_n^t<\infty\,.
\]
\end{thm}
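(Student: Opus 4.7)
The plan is to bound $\overline{Y}_n := \sup_{t \in \Gamma} Y_n^t$ inductively by a multiple of $2^{\theta n}$. Hypothesis \eqref{H:finite_approx_family} reduces the supremum over $\Gamma$ to the sub-exponentially sized discrete sets $\Gamma_n$, and Lemma \ref{lem:large-deviation} combined with a union bound will control the martingale increments at scale $n+1$ for all $t \in \Gamma_{n+1}$ simultaneously. The strict inequality $\alpha - s < \theta$ provides the slack needed to beat the size of $\Gamma_{n+1}$, the natural fluctuation scale $\sqrt{Y_n^t}$, and the target growth $2^{\theta n}$ all at once.

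Choose $\eta, \xi > 0$ so that $c := \theta + s - \alpha - 2\eta - \xi > 0$; this is possible by \eqref{dim_small}. Set $\kappa_n := 2^{n\theta/2 - n\eta}$. For each sufficiently large $n$ and each $t \in \Gamma_{n+1}$, Lemma \ref{lem:large-deviation} applies (with the Frostman exponent $s$ from \eqref{H:dim-deterministic-measures} and $M = O(2^{\alpha n})$ from \eqref{H:codim-random-measure}), yielding
\[
\PP\!\left(\left.\left|Y_{n+1}^t - Y_n^t\right| \ge \kappa_n \sqrt{Y_n^t}\,\right|\,\mathcal{B}_n\right) = O\!\left(\exp\bigl(-\Omega(2^{(c+\xi) n})\bigr)\right).
\]
Since $|\Gamma_{n+1}| \le \exp(O(2^{n\xi}))$, a union bound followed by Borel--Cantelli produces a random $N_1$ such that, almost surely, for all $n \ge N_1$ and every $t \in \Gamma_{n+1}$,
\[
|Y_{n+1}^t - Y_n^t| \le \kappa_n \sqrt{Y_n^t}.
\]

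For the inductive step, set $N := \max(N_0, N_1)$. The crude deterministic bound $\overline{Y}_n \le O(2^{\alpha n})$, which follows from \eqref{H:codim-random-measure}, the boundedness of $\supp \mu_0$, and \eqref{H:dim-deterministic-measures}, ensures $\overline{Y}_N$ is a.s.\ finite. For $n \ge N$, combining the previous display, the trivial bound $Y_n^t \le \overline{Y}_n$, and hypothesis \eqref{H:finite_approx_family} yields
\[
\overline{Y}_{n+1} \le \sup_{t \in \Gamma_{n+1}} Y_{n+1}^t + o(2^{\theta(n+1)}) \le \overline{Y}_n + \kappa_n \sqrt{\overline{Y}_n} + o(2^{\theta(n+1)}).
\]
Writing $Z_n := \overline{Y}_n \cdot 2^{-\theta n}$, this becomes the recursion
\[
Z_{n+1} \le 2^{-\theta} Z_n + 2^{-\theta - n\eta} \sqrt{Z_n} + o(1),
\]
with contracting factor $2^{-\theta} < 1$ and error terms tending to $0$. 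A routine argument then shows $\sup_{n \ge N} Z_n < \infty$ almost surely, and combining this with the finiteness of $Z_n$ for $n < N$ gives the conclusion.

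The main obstacle, and the reason the hypotheses line up exactly as they do, is the tight coupling of three scales: the log-cardinality of $\Gamma_{n+1}$, the large-deviation exponent, and the growth budget $2^{\theta n}$. The gap $\theta - (\alpha - s) > 0$ has to cover simultaneously (i) the martingale fluctuation $\kappa_n \sqrt{\overline{Y}_n} \approx 2^{(\theta - \eta)n}$ when $\overline{Y}_n \lesssim 2^{\theta n}$, and (ii) the balance between $|\Gamma_{n+1}|$ and the exponent $\kappa_n^2 \, 2^{(s-\alpha)n}$ in Lemma \ref{lem:large-deviation}; the choice of $\eta, \xi$ with $c > 0$ is precisely what makes both compatible, and everything else is bookkeeping.
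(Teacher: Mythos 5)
Your proposal is correct and follows essentially the same route as the paper: reduce the supremum to the subexponential families $\Gamma_n$ via \eqref{H:finite_approx_family}, control the increments uniformly over $\Gamma_{n+1}$ with Lemma \ref{lem:large-deviation}, a union bound and Borel--Cantelli (the condition $\xi<\theta+s-\alpha$ making the bound summable), and then iterate the resulting recursion for $\overline{Y}_n$ to get growth $O(2^{\theta n})$. The only cosmetic difference is your extra decay factor $2^{-n\eta}$ in $\kappa_n$, which is harmless but unnecessary, since the contraction in the normalized recursion already comes from $2^{-\theta}<1$, exactly as in the paper.
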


\begin{proof}
Pick $\xi>0$ such that $\xi<\theta+s-\alpha$ and let $\Gamma_n\subset\Gamma$ be the collections given by \eqref{H:finite_approx_family}. Denote $\overline{Y}_n = \sup_{t\in\Gamma_n} Y_n^t$.
We claim that it is enough to show that
\begin{equation} \label{eq:borel-cantelli}
\sum_{n=1}^\infty \PP\left(\overline{Y}_{n+1}-\overline{Y}_n>\sqrt{2^{\theta n} \overline{Y}_n}\right)
<\infty\,.
\end{equation}
Indeed, if \eqref{eq:borel-cantelli} holds, then almost surely there is $N_1\ge N_0$ such that for $n\ge N_1$ we have
\[
\overline{Y}_{n+1}\le\sup_{t\in\Gamma}{Y}^t_{n+1} \le \overline{Y}_n + \sqrt{\overline{Y}_n 2^{\theta n}}+ o(2^{\theta n})\,,
\]
and this implies $\sup_{t\in\Gamma}Y^{t}_n=O\left(\max(1, \overline{Y}_{N_1}) 2^{\theta n}\right)$
for all $n\ge N_1$ and, in particular,
\[
\sup_{n\in\N, t\in\Gamma}2^{-\theta n}Y_{n}^t<\infty\,.
\]

Therefore fix $n$ and condition on $\mathcal{B}_n$. Pick $t\in\Gamma$.
It follows from
Lemma \ref{lem:large-deviation} that
\[
\PP\left(Y_{n+1}^t - Y_n^t >  \sqrt{ 2^{\theta n} Y_n^t}\right) =O\left(\exp(-\Omega(2^{n(\theta+s-\alpha)}))\right)\,.
\]
Recall that the use of Lemma \ref{lem:HoeffdingJanson} is justified
by \eqref{dim_small}, \eqref{H:dim-deterministic-measures}, \eqref{H:codim-random-measure}.

Applying the above estimate for each $t\in \Gamma_n$, we observe that
\begin{align*}
\PP\left(\overline{Y}_{n+1} - \overline{Y}_n \ge\sqrt{ 2^{\theta n} \overline{Y}_n} \right) &\le \exp\left(O(2^{n\xi})-\Omega(2^{n(\theta+s-\alpha)})\right)\\
&=O\left(\exp\left(-\Omega(2^{n(\theta+s-\alpha)})\right)\right)\,.
\end{align*}
Therefore, \eqref{eq:borel-cantelli} holds and the claim follows.
\end{proof}

\begin{rem}
It is straightforward to check that, together, \eqref{H:size-parameter-space} and \eqref{H:Holder-a-priori} imply \eqref{H:finite_approx_family} for any value of $\theta>0$. Hence, in particular, Theorem \ref{thm:small_dimension_projections} holds under the same assumptions of Theorem \ref{thm:Holder-continuity} (other than the sign of $\alpha-s$). However, there are some important examples in which \eqref{H:finite_approx_family} can be checked but \eqref{H:Holder-a-priori} fails: this is the case where $\mu_n$ is constructed by subdivision inside a polyhedral grid, and $\eta_t$ are Hausdorff measures on $k$-planes. Also in this case, \eqref{H:finite_approx_family} holds for any $\theta>0$. See Remark \ref{rem:dim-of-projs} (ii).
\end{rem}

\section{Classes of spatially independent martingales}
\label{sec:examples}

\subsection{Cut-out measures arising from Poisson point processes}
\label{subsec:Poissonian}

In this section we describe several classes of examples of spatially independent martingales. We start with cut-out measures driven by Poissonian point processes. This class generalizes the example in the introduction, in which balls generated by a Poisson point process were removed; the generalization consists in replacing balls by more general sets. In one dimension, the model essentially goes back to Mandelbrot \cite{Mandelbrot72}. We adapt our definition from \cite{NacuWerner11}. The Hausdorff dimension of these and related cut-out sets was calculated in \cite{Zahle84, Fitzsimmonsetal85, Rivero03, Thacker06, NacuWerner11}. The connectivity properties of Poissonian cut-outs were recently investigated in \cite{BromanCamia10}.

The following discussion is adapted from \cite[Section 2]{NacuWerner11}. Let $\mathcal{X}$ denote the class of all compact subsets of $\R^d$, endowed with the Hausdorff metric and the associated Borel $\sigma$-algebra. We consider (infinite but $\sigma$-finite) Borel measures $\mathbf{Q}$ on $\mathcal{X}$ satisfying the following properties for all Borel sets $\mathcal{A}\subset\mathcal{X}$:
\begin{enumerate}
\renewcommand{\labelenumi}{(M\arabic{enumi})}
\renewcommand{\theenumi}{M\arabic{enumi}}
\item \label{M:translation-invariance} $\mathbf{Q}$ is translation invariant, i.e. $\mathbf{Q}(\mathcal{A}) = \mathbf{Q}(\{\Lambda+t:\Lambda\in\mathcal{A}\})$ for all $t\in\mathbb{R}$.
\item \label{M:scale-invariance} $\mathbf{Q}$ is scale invariant, i.e. $\mathbf{Q}(\mathcal{A}) = \mathbf{Q}(\{s \Lambda:\Lambda\in\mathcal{A}\})$ for all $s>0$, where $s A = \{ s x: x\in A\}$.
\item \label{M:locally-finite} $\mathbf{Q}$ is locally finite, meaning that the $\mathbf{Q}$-measure of the family of all sets of diameter larger than $1$ that are contained in $[-1,1]^d$ is finite.
\item \label{M:zero-boundary} $\leb^d(\partial\Lambda)=0$ for $\mathbf{Q}$-almost all $\Lambda\in\mathcal{X}$.
\end{enumerate}

The following is proved in \cite[Lemma 1]{NacuWerner11}. (In \cite{NacuWerner11} the support of these distributions are curves rather than sets, but this does not make any difference in the proof of the claim. Also, in \cite{NacuWerner11} it is assumed that $d=2$, but the proof works in any dimension.)

\begin{lemma} \label{lem:structure-scale-translation-invariant-distr}
Write $\sigma=s^{-1}ds$ on $(0,\infty)$.
\begin{enumerate}
\item[\emph{(i)}] If $\mathbf{Q}_0$ is any measure on $\mathcal{X}$ such that $\int \diam(\Lambda)^d d\mathbf{Q}_0(\Lambda)<\infty$, then the distribution $\mathbf{Q}$ obtained as the push down of $\leb^d\times \sigma\times\mathbf{Q}_0$ under $(t,s,\Lambda)\mapsto s(\Lambda+t)$, satisfies the properties \eqref{M:translation-invariance}--\eqref{M:locally-finite}.
\item[\emph{(ii)}] Conversely, any $\mathbf{Q}$ satisfying \eqref{M:translation-invariance}--\eqref{M:locally-finite} can be obtained in this way starting from a corresponding $\mathbf{Q}_0$, which can also be required to be finite and supported on sets of diameter $1$.
\end{enumerate}
\end{lemma}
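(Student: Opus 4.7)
The plan is to prove (i) by a direct change-of-variables calculation with the pushforward integral, and (ii) by constructing a Borel measurable section for the action of the ``ax+b''-type group $G = \R^d \rtimes (0,\infty)$ on $\mathcal{X}$, which reduces the statement to the uniqueness of Haar measure on $\R^d$ and on $(0,\infty)$.

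For (i), the measure in question is
\[
\mathbf{Q}(\mathcal{A}) = \int_{\mathcal{X}}\int_0^\infty\int_{\R^d} \mathbf{1}_{\mathcal{A}}\bigl(s(\Lambda+t)\bigr)\, dt\,\frac{ds}{s}\, d\mathbf{Q}_0(\Lambda).
\]
Translation invariance \eqref{M:translation-invariance} follows by substituting $t\mapsto t-u/s$ for fixed $s$ and $\Lambda$; scale invariance \eqref{M:scale-invariance} follows by substituting $s\mapsto s/r$ together with the fact that $\sigma=s^{-1}ds$ is the Haar measure of the multiplicative group $(0,\infty)$. For local finiteness \eqref{M:locally-finite}, I would first integrate in $t$: the set of $t$ for which $s(\Lambda+t)\subset[-1,1]^d$ has $\leb^d$-measure at most $(2/s)^d$, and is nonempty only when $s\,\diam(\Lambda)\le 2\sqrt{d}$. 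Combining with the constraint $s\,\diam(\Lambda)>1$ needed for $\diam(s(\Lambda+t))>1$, one finds after integrating $s^{-1}ds$ on this range a contribution of order $\diam(\Lambda)^d$, and the hypothesis $\int\diam(\Lambda)^d\,d\mathbf{Q}_0(\Lambda)<\infty$ finishes (i).

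For (ii), I would build a Borel measurable cross-section for the $G$-action $(u,r)\cdot\Lambda=r(\Lambda+u)$ on $\mathcal{X}_+:=\{\Lambda:\diam(\Lambda)>0\}$. A combined use of \eqref{M:translation-invariance} and \eqref{M:scale-invariance} shows that the set of singletons carries no $\mathbf{Q}$-mass, so $\mathbf{Q}$ is concentrated on $\mathcal{X}_+$. For $\Lambda\in\mathcal{X}_+$, set $s(\Lambda)=\diam(\Lambda)$ and let $t(\Lambda)$ be the unique vector such that the normalized set $\Lambda^{*}:=s(\Lambda)^{-1}\Lambda-t(\Lambda)$ has diameter $1$ and some canonical point fixed at the origin (say, the lexicographically smallest element of $\Lambda^{*}$); denote the space of such normalized sets by $\mathcal{X}_1$. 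The map $\Psi:\R^d\times(0,\infty)\times\mathcal{X}_1\to\mathcal{X}_+$, $(t,s,\Lambda)\mapsto s(\Lambda+t)$, is then a Borel bijection, and its inverse is $\Lambda'\mapsto(t(\Lambda'),s(\Lambda'),(\Lambda')^{*})$. Pulling $\mathbf{Q}$ back through $\Psi$ and using that the $G$-action on the parameters reads $(u,r)\cdot(t,s,\Lambda)=(t+u/s,\,rs,\,\Lambda)$, invariance of $\mathbf{Q}$ translates into translation invariance of the $t$-slice for each $s$ and scale invariance of the $s$-marginal for each $\Lambda$. By the uniqueness of the relevant Haar measures, $\Psi^{-1}_{*}\mathbf{Q}$ must factor as $\leb^d\times\sigma\times\mathbf{Q}_0$ for some Borel measure $\mathbf{Q}_0$ on $\mathcal{X}_1$. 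Local finiteness \eqref{M:locally-finite} combined with the explicit computation from (i) applied to normalized $\Lambda$ (where $\diam\Lambda=1$) forces $\mathbf{Q}_0(\mathcal{X}_1)<\infty$, giving the ``finite, supported on sets of diameter $1$'' conclusion.

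The main obstacle I anticipate is not the algebraic invariance argument, which is essentially forced once the section is in place, but rather the Borel measurability of the section $\Lambda\mapsto(t(\Lambda),s(\Lambda),\Lambda^{*})$ on $\mathcal{X}_+$ endowed with the Hausdorff topology. This requires a careful (but standard) verification that the chosen canonical point, e.g.\ the lexicographically smallest element, depends measurably on $\Lambda$; once that is checked, the rest of the construction is routine. Note that hypothesis \eqref{M:zero-boundary} is not needed for the decomposition itself and is included only for later applications.
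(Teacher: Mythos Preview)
The paper does not give its own proof of this lemma; it simply cites \cite[Lemma 1]{NacuWerner11} and remarks that the argument there carries over to general $d$ and to compact sets in place of curves. So there is no in-paper argument to compare against.

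Your proposal is correct and is the standard approach to such structure results. Part (i) is a routine change-of-variables computation, and for part (ii) the section-plus-Haar-uniqueness argument is exactly what one expects. Two minor points. First, ruling out mass on singletons uses, besides \eqref{M:translation-invariance} and \eqref{M:scale-invariance}, the standing $\sigma$-finiteness of $\mathbf{Q}$ (otherwise the improper ``$\infty\cdot\leb^d$'' measure on $\{\{x\}:x\in\R^d\}$ would survive both invariances). Second, instead of the lexicographic minimum you can normalize by $t(\Lambda)=(\min_{x\in\Lambda}x_1,\ldots,\min_{x\in\Lambda}x_d)$, which is continuous in the Hausdorff metric and sidesteps the measurability verification you flagged as the main obstacle.
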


As a simple example, we can take $\mathbf{Q}_0= r\delta_\Lambda$ for some $r>0$ and $\Lambda\in \mathcal{X}$. If $\Lambda$ is a ball, then the resulting distribution $\mathbf{Q}$ is, additionally, rotationally invariant. We note that the constant $r$ plays a crucial role in the geometry of the cut-out set, to be defined next.

Given a distribution $\mathbf{Q}$
as above, we construct a Poisson point process with intensity $\mathbf{Q}$. We recall that this is a random countable collection $\mathcal{Y}= \{ \Lambda_j\}$ of sets in $\mathcal{X}$ satisfying the following properties:
\begin{enumerate}
\renewcommand{\labelenumi}{(PPP\arabic{enumi})}
\renewcommand{\theenumi}{PPP\arabic{enumi}}
\item \label{PPP:Poisson} For each Borel set $\mathcal{A}\subset \mathcal{X}$, the random variable $\#(\mathcal{A}\cap\mathcal{Y})$ (i.e. the number of elements in $\mathcal{A}\cap\mathcal{Y}$) has Poisson distribution with expectation $\mathbf{Q}(\mathcal{A})$. (If $\mathbf{Q}(\mathcal{A})=\infty$, then $\mathcal{A}\cap\mathcal{Y}$ is infinite almost surely.)
\item \label{PPP:independence} If $\{\mathcal{A}_i\}$ are pairwise disjoint Borel subsets of $\mathcal{X}$, then the random variables $\#(\mathcal{A}_i\cap \mathcal{Y})$ are independent.
\end{enumerate}

We can then form the random cut-out set
\[
\R^d\setminus \bigcup_{\diam(\Lambda_j)\le 1}\Lambda_j\,.
\]
Note that without the restrictions on the diameters, the limit set would be a.s. empty. Write $\PP$ for the induced distribution of $A$.

We note that $\PP$ inherits translation invariance from $\mathbf{Q}$  (it is not fully scale-invariant since we are imposing an upper bound on the removed sets). If $\mathbf{Q}$ is rotation invariant, then so is $\PP$. We contrast this with subdivision fractals such as fractal percolation, which have only very limited scale, translation and rotational symmetries: those arising from the filtration they are defined on (and this only in the most homogeneous cases, such as fractal percolation with a uniform parameter $p$).

If $\PP(\cup \{\Lambda_j\,:\,\diam(\Lambda_j)\le 1\}\neq\R^d)>0$ then, conditional on the cut-out set being nonempty, it is almost surely unbounded (this can be seen from \eqref{M:translation-invariance}). Hence in order to make this model fit into the framework of Section \ref{sec:setting}, we need to restrict attention to a bounded domain. This is enough to obtain an SI-martingale.

Let $\Delta_{a}^b$ denote the family of sets $\Lambda\in\mathcal{X}$ with diameter $a\le\diam(\Lambda)< b$. Moreover, for $\mathcal{Y}\subset\mathcal{X}$, let $\Delta_{a}^b(\mathcal{Y})$ be the union of the sets in $\Delta_{a}^b\cap\mathcal{Y}$.

\begin{lemma} \label{lem:Poisson-satisfies-properties}
Let $\Omega\subset\R^d$ be a bounded set. Let $\mathbf{Q}$ be a distribution satisfying \eqref{M:translation-invariance}--\eqref{M:locally-finite}.

Define $0\le \alpha=\alpha(\mathbf{Q})<\infty$ as
\begin{equation} \label{eq:def-alpha}
2^{-\alpha}= \PP(0\notin\Delta_{1/2}^1(\mathcal{Y}))\,,
\end{equation}
where $\mathcal{Y}$ is a realization of a Poisson point process with intensity $\mathbf{Q}$, and let $A_n= \Omega\setminus\Delta_{2^{-n}}^1(\mathcal{Y})$ (in particular, $A_0=\Omega$).

Then the sequence $\mu_n(x):=2^{n\alpha} \mathbf{1}_{A_n}(x)$ satisfies \allbasicconds.
\end{lemma}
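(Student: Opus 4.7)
The plan is to define the filtration $\mathcal{B}_n = \sigma(\mathcal{Y} \cap \Delta_{2^{-n}}^1)$ generated by those points of the Poisson process corresponding to cut-out sets of diameter in $[2^{-n},1)$. Since $A_n$ is determined exactly by these sets, $\mu_n$ is $\mathcal{B}_n$-measurable. Condition \eqref{SI:bounded} is then immediate, as $\mu_0 = \mathbf{1}_\Omega$ with $\Omega$ bounded. The pointwise growth bound \eqref{SI:quotients-bounded} follows from $A_{n+1} \subset A_n$: if $\mu_n(x)=0$ then $\mu_{n+1}(x)=0$, and otherwise $\mu_{n+1}(x) \le 2^{(n+1)\alpha} = 2^\alpha \mu_n(x)$, so one may take the constant equal to $2^\alpha$.

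The main content is the martingale identity \eqref{SI:martingale}. On $\{x \notin A_n\}$ both sides vanish, so the task reduces to showing $\PP(x \in A_{n+1} \mid \mathcal{B}_n) = 2^{-\alpha}$ on $\{x \in A_n\}$. Given $\mathcal{B}_n$ and $x \in A_n$, the event $\{x \in A_{n+1}\}$ coincides with $\{x \notin \Delta_{2^{-(n+1)}}^{2^{-n}}(\mathcal{Y})\}$, which depends only on $\mathcal{Y} \cap \Delta_{2^{-(n+1)}}^{2^{-n}}$. By \eqref{PPP:independence} this is independent of $\mathcal{B}_n$, and by the translation invariance \eqref{M:translation-invariance} and scale invariance \eqref{M:scale-invariance} of $\mathbf{Q}$ combined with \eqref{PPP:Poisson}, its unconditional probability equals $\PP(0 \notin \Delta_{1/2}^1(\mathcal{Y})) = 2^{-\alpha}$, as needed. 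A brief auxiliary argument using \eqref{M:locally-finite} and Lemma \ref{lem:structure-scale-translation-invariant-distr} shows $\mathbf{Q}(\{\Lambda \in \Delta_{1/2}^1 : 0 \in \Lambda\}) < \infty$, so that $\alpha$ is indeed finite.

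For the uniform spatial independence \eqref{SI:spatial-independence}, the key geometric observation is that any set in $\Delta_{2^{-(n+1)}}^{2^{-n}}$ has diameter strictly less than $2^{-n}$; if it meets a cube $Q \in \mathcal{Q}_{n+1}$ it must lie in the $2^{-n}$-neighbourhood of $Q$. Choosing $C$ sufficiently large in terms of $d$, if $\{Q_i\}$ is a $C 2^{-n}$-separated family in $\mathcal{Q}_{n+1}$ these neighbourhoods are pairwise disjoint, so the Borel sets $\mathcal{A}_i = \{\Lambda \in \Delta_{2^{-(n+1)}}^{2^{-n}} : \Lambda \cap Q_i \neq \varnothing\} \subset \mathcal{X}$ are pairwise disjoint, and \eqref{PPP:independence} then delivers the required conditional independence of the restrictions $\mu_{n+1}|_{Q_i}$ given $\mathcal{B}_n$. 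I expect the martingale step to be the main obstacle, since it is the only place where the definition of $\alpha$ enters nontrivially and where one must match a conditional probability at the dyadic scale $2^{-n}$ with the reference quantity defined at scales in $[1/2,1)$; the remaining conditions are essentially bookkeeping with the Poisson point process properties.
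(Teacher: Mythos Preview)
Your proof is correct and follows essentially the same route as the paper: the same filtration $\mathcal{B}_n=\sigma(\mathcal{Y}\cap\Delta_{2^{-n}}^1)$, the same reduction of \eqref{SI:martingale} to computing $\PP(x\notin\Delta_{2^{-(n+1)}}^{2^{-n}}(\mathcal{Y}))$ via translation and scale invariance, and the same disjointness-of-neighbourhoods argument for \eqref{SI:spatial-independence} (the paper makes the constant explicit as $C=3$). Your treatment is slightly more detailed than the paper's in places, but there is no substantive difference.
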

\begin{proof}
Properties \eqref{SI:bounded}
and \eqref{SI:quotients-bounded} are clear. Note that $\alpha$ is well defined thanks to \eqref{M:locally-finite}.
The martingale property \eqref{SI:martingale} holds with $\mathcal{B}_n$ equal to the $\sigma$-algebra generated by $\Delta_{2^{-n}}^1(\mathcal{Y})$ since, for any $x\in\Omega$,
\begin{align*}
\EE(\mu_{n+1}(x)\,|\,\mathcal{B}_n)=2^{\alpha}\mu_n(x)\PP(x\notin\Delta_{2^{-(n+1)}}^{2^{-n}}(\mathcal{Y}))=\mu_n(x)\,,
\end{align*}
using the scale and translation invariance of the distribution $\mathbf{Q}$.

Finally, to verify \eqref{SI:spatial-independence}, note that if $\{ B_i\}$ is a collection of Borel sets with
\[
\dist(B_i,B_j)>3 \times 2^{-n}\quad\text{ for all }i\neq j\,,
\]
then the random variables $\leb^d(A_{n+1}\cap B_i|\mathcal{B}_n)$ are independent by \eqref{PPP:independence}, since the collections
\[
\mathcal{X}_i = \{ \Lambda\in\mathcal{Y}:\diam(\Lambda)\le 2^{-n}, \Lambda\cap B_i\neq\varnothing \}
\]
are disjoint.
\end{proof}

\begin{defn}
If $(\mu_n)$ is a sequence as in Lemma \ref{lem:Poisson-satisfies-properties}, we will say that $(\mu_n)$ is a \textbf{Poissonian cutout martingale} and that $(A_n)$ is a \textbf{Poissonian cutout set}. Sometimes we will abuse notation and refer in this way to the limits $\mu_\infty, A:=\overline{\cap_{n\in\N}A_n}$. We take the closure in the definition of $A$ to ensure that $\supp\mu_{\infty}\subset A$. The class of Poissonian cutout martingales will be denoted $\poissonian$.
\end{defn}

The following is immediate from the definition of $\alpha$ and the translation and scale-invariance of $\mathbf{Q}$:
\begin{lemma} \label{lem:prob-x-in-An}
For any $x\in\Omega$, $\PP(x\in A_n) = 2^{-\alpha n}$ and, more generally, $\PP(x\notin \Delta_{2^{-n}}^{2^{-m}}(\mathcal{Y}))= 2^{(m-n)\alpha}$ if $n\ge m$.
\end{lemma}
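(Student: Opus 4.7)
The plan is to derive both claims as Poisson counting computations, using that $x\in A_n$ iff no set $\Lambda\in\mathcal{Y}$ with $2^{-n}\le\diam(\Lambda)<1$ contains $x$; since the first statement is just the second with $m=0$, I would only prove the second.

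First, fix $n\ge m$ and $x\in\Omega$ and let
\[
\mathcal{A}_x^{n,m}=\{\Lambda\in\mathcal{X}:x\in\Lambda,\ 2^{-n}\le\diam(\Lambda)<2^{-m}\}.
\]
Then $\{x\notin\Delta_{2^{-n}}^{2^{-m}}(\mathcal{Y})\}=\{\#(\mathcal{Y}\cap\mathcal{A}_x^{n,m})=0\}$, so by \eqref{PPP:Poisson},
\[
\PP(x\notin\Delta_{2^{-n}}^{2^{-m}}(\mathcal{Y}))=\exp(-\mathbf{Q}(\mathcal{A}_x^{n,m})).
\]
So the task reduces to showing $\mathbf{Q}(\mathcal{A}_x^{n,m})=(n-m)\alpha\log 2$.

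Next, by the translation invariance \eqref{M:translation-invariance} applied with $t=-x$, $\mathbf{Q}(\mathcal{A}_x^{n,m})=\mathbf{Q}(\mathcal{A}_0^{n,m})$, so we may assume $x=0$. I would then split the diameter range dyadically:
\[
\mathcal{A}_0^{n,m}=\bigsqcup_{k=m}^{n-1}\mathcal{A}_0^{k+1,k},\qquad \mathcal{A}_0^{k+1,k}=\{\Lambda:0\in\Lambda,\ 2^{-(k+1)}\le\diam(\Lambda)<2^{-k}\}.
\]
For each $k$, the map $\Lambda\mapsto 2^k\Lambda$ is a bijection from $\mathcal{A}_0^{k+1,k}$ onto $\mathcal{A}_0^{1,0}=\{\Lambda:0\in\Lambda,\ 1/2\le\diam(\Lambda)<1\}$ (using $0\in\Lambda\iff 0\in 2^k\Lambda$ and $\diam(2^k\Lambda)=2^k\diam(\Lambda)$), so the scale invariance \eqref{M:scale-invariance} yields $\mathbf{Q}(\mathcal{A}_0^{k+1,k})=\mathbf{Q}(\mathcal{A}_0^{1,0})$ for every $k$.

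Finally, by the defining relation \eqref{eq:def-alpha} and \eqref{PPP:Poisson} again,
\[
2^{-\alpha}=\PP(0\notin\Delta_{1/2}^1(\mathcal{Y}))=\exp(-\mathbf{Q}(\mathcal{A}_0^{1,0})),
\]
so $\mathbf{Q}(\mathcal{A}_0^{1,0})=\alpha\log 2$. Summing over the $n-m$ dyadic scales yields $\mathbf{Q}(\mathcal{A}_x^{n,m})=(n-m)\alpha\log 2$, hence $\PP(x\notin\Delta_{2^{-n}}^{2^{-m}}(\mathcal{Y}))=2^{(m-n)\alpha}$, as required. There is no substantial obstacle here: once the Poisson zero-count formula is applied, the proof is a one-line combination of the invariances \eqref{M:translation-invariance}--\eqref{M:scale-invariance} with the definition of $\alpha$. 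The only minor care needed is to check that the scaling map is a bijection on the relevant families (immediate) and that local finiteness \eqref{M:locally-finite} makes $\mathbf{Q}(\mathcal{A}_0^{1,0})$ finite, so that $\alpha<\infty$ and the exponential identity is valid.
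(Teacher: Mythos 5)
Your proof is correct and fills in exactly the argument the paper has in mind: the paper states the lemma without proof as ``immediate from the definition of $\alpha$ and the translation and scale-invariance of $\mathbf{Q}$,'' and your dyadic decomposition plus the Poisson zero-count identity is precisely that argument (it also parallels the computation in the paper's proof of Lemma \ref{lem:alpha}). No issues.
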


Recall from Lemma \ref{lem:structure-scale-translation-invariant-distr} that any $\mathbf{Q}$ satisfying \eqref{M:translation-invariance}--\eqref{M:locally-finite} can be obtained from a  finite measure $\mathbf{Q}_0$. The following explicit expression for $\alpha$ will be useful later.

\begin{lemma}\label{lem:alpha}
\begin{equation}\label{eq:alpha}
\alpha=\int \leb^d(\Lambda) \,d\mathbf{Q}_0(\Lambda)\,.
\end{equation}
\end{lemma}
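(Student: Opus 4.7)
The plan is to unpack the definition \eqref{eq:def-alpha} using the Poisson distribution of the point process, and then evaluate the resulting intensity by means of the representation of $\mathbf{Q}$ as the push-down of $\leb^d \times \sigma \times \mathbf{Q}_0$ given by Lemma \ref{lem:structure-scale-translation-invariant-distr}.

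First, let $\mathcal{A} = \{ \Lambda \in \mathcal{X} : 1/2 \le \diam(\Lambda) < 1,\, 0 \in \Lambda\}$. Then the event $\{0 \notin \Delta_{1/2}^1(\mathcal{Y})\}$ is precisely $\{\#(\mathcal{A} \cap \mathcal{Y}) = 0\}$, and by \eqref{PPP:Poisson} its probability equals $\exp(-\mathbf{Q}(\mathcal{A}))$. Therefore
\[
\alpha = \frac{\mathbf{Q}(\mathcal{A})}{\log 2}\,,
\]
and the goal reduces to showing $\mathbf{Q}(\mathcal{A}) = (\log 2)\int \leb^d(\Lambda)\,d\mathbf{Q}_0(\Lambda)$.

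Next, I would push through the representation from Lemma \ref{lem:structure-scale-translation-invariant-distr}(ii), writing
\[
\mathbf{Q}(\mathcal{A}) = \int_{\mathcal{X}} \int_0^\infty \int_{\R^d} \mathbf{1}[s(\Lambda + t) \in \mathcal{A}]\,dt\,\frac{ds}{s}\,d\mathbf{Q}_0(\Lambda).
\]
The indicator factors: the condition $1/2 \le \diam(s(\Lambda+t)) < 1$ becomes $s \in [1/(2\diam\Lambda), 1/\diam\Lambda)$, which is independent of $t$, while $0 \in s(\Lambda + t)$ is equivalent to $-t \in \Lambda$. Performing the $t$-integral yields $\leb^d(\Lambda)$ (using translation invariance of Lebesgue measure), and the $s$-integral equals $\int_{1/(2\diam\Lambda)}^{1/\diam\Lambda} ds/s = \log 2$, independent of $\Lambda$. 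Combining,
\[
\mathbf{Q}(\mathcal{A}) = (\log 2) \int \leb^d(\Lambda)\,d\mathbf{Q}_0(\Lambda),
\]
and dividing by $\log 2$ gives \eqref{eq:alpha}.

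There is no real obstacle here; the only thing to be careful about is the interchange of integrals (justified by Fubini, since $\mathbf{Q}_0$ is finite and supported on sets of diameter $1$ by Lemma \ref{lem:structure-scale-translation-invariant-distr}(ii), making $\leb^d(\Lambda) \le \leb^d(B(0,1))$ uniformly bounded), and the verification that $\mathcal{A}$ is indeed a Borel subset of $\mathcal{X}$ in the Hausdorff topology, which is standard since the diameter function and the membership relation $\{\Lambda : 0 \in \Lambda\}$ are Borel.
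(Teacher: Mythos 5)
Your proof is correct and follows essentially the same route as the paper's: both convert $2^{-\alpha}=\PP(\#(\mathcal{A}\cap\mathcal{Y})=0)=\exp(-\mathbf{Q}(\mathcal{A}))$ via \eqref{PPP:Poisson} and then evaluate $\mathbf{Q}(\mathcal{A})$ through the push-down representation of Lemma \ref{lem:structure-scale-translation-invariant-distr}, with the $t$-integral producing $\leb^d(\Lambda)$ and the $s$-integral producing $\log 2$. The added remarks on Fubini and measurability are fine but not essential.
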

\begin{proof}
Let $\mathcal{A}\subset\mathcal{X}$ be the compact sets containing $0$ of diameter in $[1/2,1)$. Then, using Lemma \ref{lem:structure-scale-translation-invariant-distr} and Fubini,
\begin{align*}
\mathbf{Q}(\mathcal{A}) &= \bigl(\leb^d\times\sigma\times\mathbf{Q}_0\bigr)\bigl\{ (t,s,\Lambda): 0\in s(\Lambda+t), \diam(s(\Lambda+t))\in [1/2,1)\bigr\}\\
&= \bigl(\leb^d\times\sigma\times\mathbf{Q}_0\bigr)\bigl\{ (t,s,\Lambda): -t\in\Lambda, s\in [(2\diam(\Lambda))^{-1},\diam(\Lambda)^{-1}) \bigr\}\\
&= \int \sigma\left([(2\diam(\Lambda))^{-1},\diam(\Lambda)^{-1})\right) \leb^d(\Lambda) \,d\mathbf{Q}_0(\Lambda)\\
&= \log(2) \int \leb^d(\Lambda) \,d\mathbf{Q}_0(\Lambda)\,.
\end{align*}
Equation \eqref{eq:alpha} follows since, by \eqref{PPP:Poisson},
\[
2^{-\alpha} = \PP(0\notin\Delta_{1/2}^1(\mathcal{Y})) = \PP(\#(\mathcal{A}\cap\mathcal{Y})=0) = \exp\left(-\log(2) \int \leb^d(\Lambda) \,d\mathbf{Q}_0(\Lambda)\right).
\]
\end{proof}

\begin{rem}\label{rem:alpha}
The following generalization of Lemma \ref{lem:alpha} will be used in the proof of Lemma \ref{lem:bound_A_n_nbhd} and also later in Sections \ref{sec:dim_of_intersections} and \ref{sec:lower-bound-dim-intersections}.
Suppose that we are given a Borel map $\Lambda\mapsto\Lambda'$, $\mathcal{X}_1\to\mathcal{X}$, where $\mathcal{X}_1$ are the compact sets of unit diameter. Let $\mathbf{Q}_0$ be supported on $\mathcal{X}_1$. For a realization $\mathcal{Y}=\{s_i(\Lambda_i+t_i)\}$ of the Poisson process as in Lemma \ref{lem:structure-scale-translation-invariant-distr} (i), we define 
\[A'_n=\R^d\setminus
\bigcup_{2^{-n}\le s_i<1}s_i((\Lambda_i)'+t_i)\,.\]
Then for all $x\in\R^d$, $n\in\N$, we have
\begin{equation*}
\PP\left(x\in A'_n\right)=2^{-\beta n}\,,
\end{equation*}
where 
\begin{equation}\label{eq:alpharoo}
\beta=\int \leb^d(\Lambda') \,d\mathbf{Q}_0(\Lambda)\,.
\end{equation}
This follows exactly as in the proof of Lemma \ref{lem:alpha} using Lemma \ref{lem:structure-scale-translation-invariant-distr} and \eqref{PPP:Poisson}-\eqref{PPP:independence}.
\end{rem}

Under the assumptions \eqref{M:translation-invariance}--\eqref{M:zero-boundary}, it holds that if $\alpha>d$, then $A=\emptyset$ almost surely, while if $\alpha< d$, then $\mu_\infty=\lim \mu_n$ is nontrivial with positive probability and, moreover,
\[
\dim_H(A)=\dim_B(A)=d-\alpha \quad\text{almost surely on } \mu_\infty\neq 0\,.
\]
This formula was obtained in some special cases in \cite{Thacker06} and \cite{NacuWerner11}, and the general case can be proved using similar ideas. We provide the details for the reader's convenience.

\begin{thm}\label{thm:dim_cut_out_set}
Let $\Omega$ be any bounded set with $\leb^d(\Omega)>0$, and let $\mathbf{Q}$ be a distribution satisfying \eqref{M:translation-invariance}--\eqref{M:zero-boundary} with $0\le\alpha< d$.
Then almost surely $\dim_B(A)\le d-\alpha$. Moreover, $\mu_\infty\neq 0$ with positive probability and, almost surely conditioned on $\mu_\infty\neq 0$,
\begin{enumerate}
\item $\dimloc(\mu_\infty,x)= d-\alpha$ for $\mu_\infty$-almost all $x\in A$, and
\item $\dim_H(A)=\dim_B(A)=d-\alpha$.
\end{enumerate}
\end{thm}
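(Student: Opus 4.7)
The plan has four ingredients. First, $\mathbb{P}(\mu_\infty\neq 0)>0$ follows directly from Lemma~\ref{lem:Yt-survives} applied with $\eta=\mathbf{1}_\Omega\,dx$, which has Frostman exponent $d>\alpha$; the same lemma also yields an exponential tail for $\|\mu_\infty\|$.

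Second, I would obtain the lower bounds $\ldimloc(\mu_\infty,x)\ge d-\alpha$ for $\mu_\infty$-almost every $x$ and $\dim_H(\supp\mu_\infty)\ge d-\alpha$ on $\{\mu_\infty\neq 0\}$ by the standard energy method. The key input is the two-point estimate
\[
\EE(\mu_n(x)\mu_n(y)) = 2^{2\alpha n}\,\PP(x,y\in A_n) \le C\,|x-y|^{-\alpha}\quad\text{for }|x-y|\ge 2^{-n},
\]
derived by applying \eqref{PPP:Poisson} to the family of cutout sets meeting $\{x,y\}$ and running the Fubini/inclusion--exclusion computation in the style of the proof of Lemma~\ref{lem:alpha}; the exponent $\alpha$ appears because the ``cost'' to cover both points is the scale range $\log(1/|x-y|)$ times $\int\leb^d(\Lambda_0)\,d\mathbf{Q}_0$. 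Since the bound is $n$-independent, Fatou gives $\EE\!\iint|x-y|^{-s}\,d\mu_\infty(x)\,d\mu_\infty(y)<\infty$ for every $s<d-\alpha$, so the $s$-energy of $\mu_\infty$ is almost surely finite, and standard Frostman-type arguments yield the claimed lower bounds after letting $s\uparrow d-\alpha$.

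Third, for $\overline{\dim}_B A\le d-\alpha$ almost surely I would estimate $\EE(N_n)$ where $N_n=\#\{Q\in\mathcal{Q}_n:\,Q\cap A\neq\varnothing\}$. Using \eqref{PPP:independence} to factor the Poisson process at scale $2^{-n}$ into independent small- and large-scale pieces, the event $\{Q\cap A\neq\varnothing\}$ requires (i) the small-scale cutout inside $Q$ to survive, which by the scale invariance of $\mathbf{Q}$ has a fixed probability $p>0$ independent of $n$, and (ii) some surviving point in $Q$ to avoid every large cutout set; the latter is $O(2^{-\alpha n})$ by translation invariance and Lemma~\ref{lem:prob-x-in-An}. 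Hence $\EE(N_n)=O(2^{(d-\alpha)n})$, and Markov plus Borel--Cantelli give $N_n\le 2^{(d-\alpha+\varepsilon)n}$ eventually a.s., for every $\varepsilon>0$. Combined with Step~2 this yields $\dim_H A=\overline{\dim}_B A=d-\alpha$ conditional on $\mu_\infty\neq 0$, which is (2). Finally, the matching upper bound $\udimloc(\mu_\infty,x)\le d-\alpha$ for $\mu_\infty$-a.e.\ $x$ is a pigeonhole consequence of the covering estimate: the total $\mu_\infty$-mass of cubes $Q\in\mathcal{Q}_n$ with $\mu_\infty(Q)<2^{-(d-\alpha+2\varepsilon)n}$ is at most $N_n\cdot 2^{-(d-\alpha+2\varepsilon)n}\le 2^{-\varepsilon n}$, which is summable in $n$, so by Borel--Cantelli $\mu_\infty$-a.e.\ $x$ lies in a cube of mass at least $2^{-(d-\alpha+2\varepsilon)n}$ at every large scale, giving $\udimloc(\mu_\infty,x)\le d-\alpha+2\varepsilon$ and, letting $\varepsilon\to 0$, part (1).

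The main technical obstacle I anticipate is the factorization in Step~3: cleanly separating the ``intrinsic'' scale-invariant survival of the small-scale process inside $Q$ from the single-point avoidance event controlled by Lemma~\ref{lem:prob-x-in-An} requires careful conditioning on the large-scale piece, and assumption \eqref{M:zero-boundary} is needed to prevent boundary layers of the big cutout sets from inflating the estimate. The correlation computation in Step~2 is comparatively clean, being a Fubini manipulation against the overlap quantity $\leb^d(\Lambda_0\cap(\Lambda_0-v))$ and ultimately reducing to the same integral that produced $\alpha$ in Lemma~\ref{lem:alpha}.
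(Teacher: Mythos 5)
Your proposal is correct and follows essentially the same route as the paper: survival from Lemma~\ref{lem:Yt-survives}, the lower bounds from the energy method with the two-point estimate $\PP(x,y\in A_n)=O(2^{-2\alpha n}|x-y|^{-\alpha})$, and the box-counting upper bound from a first-moment estimate whose key difficulty is exactly the one you flag, resolved in the paper by shrinking each cutout shape to its quantitative interior $\Lambda^\delta$ and invoking \eqref{M:zero-boundary} (Lemma~\ref{lem:bound_A_n_nbhd}). The only imprecision is that the avoidance probability for a whole cube is $O(2^{-(\alpha-\e)n})$ rather than $O(2^{-\alpha n})$, but your Borel--Cantelli step already absorbs this loss.
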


We first prove a lemma.

\begin{lemma}\label{lem:bound_A_n_nbhd}
Under the assumptions of Theorem \ref{thm:dim_cut_out_set}, given $\e>0$ there is $C\ge 1$ (independent of $n$) such that
\begin{equation}\label{eq:A_n_bound}
\PP\left(x\in A(2^{-n})\right)\le C\, 2^{-(\alpha-\e) n}\text{ for all }x\in\Omega(2^{-n}), n\in\N\,.
\end{equation}
Moreover, there is $C\ge 1$ such that, for all $x,y\in\Omega$ and all $n$,
\begin{equation}\label{eq:2_moment_bound}
\PP\left(x,y\in A_n\right)\le C \,2^{-2\alpha n}|x-y|^{-\alpha}\,.
\end{equation}
\end{lemma}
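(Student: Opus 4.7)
\smallskip\noindent\textbf{Proof plan.} Both bounds rely on the Poissonian void--probability identity $\PP(\mathcal{Y}\cap\mathcal{A}=\varnothing)=\exp(-\mathbf{Q}(\mathcal{A}))$ together with the decomposition $d\mathbf{Q}=d\leb^d(t)\,\frac{ds}{s}\,d\mathbf{Q}_0(\Lambda_0)$ from Lemma \ref{lem:structure-scale-translation-invariant-distr}(ii); each event of interest is first rewritten as the emptiness of a suitable Borel family of cutouts, and then $\mathbf{Q}$ is computed by unfolding the decomposition.

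For \eqref{eq:A_n_bound}, I would invoke Remark \ref{rem:alpha} applied to the scale-invariant map $\Lambda\mapsto\Lambda\ominus B(0,2^{-k})$, for a large integer $k$ to be chosen. The deterministic input is the inclusion $\{x\in A(2^{-n})\}\subseteq\{x\in A'_{n-k}\}$ for $n>k$: if $x\in A(2^{-n})$ then, since $A=\overline{\bigcap_m A_m}$, there is $z\in B^\circ(x,2^{-n})\cap\bigcap_m A_m$, which avoids every cutout $s_i(\Lambda_i+t_i)$ with $\diam\in(0,1]$. For any cutout with $s_i\in[2^{-n+k},1)$ one has $s_i\cdot 2^{-k}\ge 2^{-n}$, so the corresponding inner-parallel set $s_i((\Lambda_i\ominus B(0,2^{-k}))+t_i)=\{y:B(y,s_i 2^{-k})\subset s_i(\Lambda_i+t_i)\}$ cannot contain $x$; otherwise $z\in B(x,2^{-n})\subset B(x,s_i 2^{-k})$ would lie in the cutout, contradicting the choice of $z$. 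Remark \ref{rem:alpha} then gives
\[
\PP(x\in A(2^{-n}))\le\PP(x\in A'_{n-k})=2^{-\beta(n-k)},\quad\beta=\int_{\mathcal{X}_1}\leb^d(\Lambda_0\ominus B(0,2^{-k}))\,d\mathbf{Q}_0(\Lambda_0).
\]
By \eqref{M:zero-boundary}, $\leb^d(\Lambda_0\ominus B(0,2^{-k}))\uparrow\leb^d(\Lambda_0^\circ)=\leb^d(\Lambda_0)$ as $k\to\infty$ for $\mathbf{Q}_0$-a.e.\ $\Lambda_0$, so monotone convergence combined with Lemma \ref{lem:alpha} yields $\beta\to\alpha$. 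Given $\e>0$, fix $k$ with $\beta\ge\alpha-\e$; the stated bound follows with $C=2^{(\alpha-\e)k}$, the cases $n\le k$ being absorbed into $C$.

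For \eqref{eq:2_moment_bound}, apply the void identity directly to $\mathcal{B}_{x,y}=\{\Lambda:(x\in\Lambda\text{ or }y\in\Lambda),\,2^{-n}\le\diam\Lambda\le 1\}$. Inclusion--exclusion combined with the computation underlying Lemma \ref{lem:prob-x-in-An} gives $\mathbf{Q}(\mathcal{B}_{x,y})=2n\alpha\log 2-\mathbf{Q}(\{\Lambda:x,y\in\Lambda,\,2^{-n}\le\diam\le 1\})$, and the decomposition together with the change of variables $w=x/s-t$ expresses the subtracted term as
\[
\int_{\mathcal{X}_1}\int_{2^{-n}}^{1}\leb^d\bigl(\Lambda_0\cap(\Lambda_0-(y-x)/s)\bigr)\frac{ds}{s}\,d\mathbf{Q}_0(\Lambda_0).
\]
Since $\diam\Lambda_0=1$, this integrand vanishes for $s<|y-x|/2$ and is otherwise bounded by $\leb^d(\Lambda_0)$, bounding the integral by $\alpha\log(2/|y-x|)$ when $|y-x|\ge 2^{-n}$. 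Exponentiating gives $\PP(x,y\in A_n)\le 2^{\alpha}\cdot 2^{-2\alpha n}\,|x-y|^{-\alpha}$. The remaining case $|x-y|<2^{-n}$ is absorbed by the trivial bound $\PP(x,y\in A_n)\le\PP(x\in A_n)=2^{-\alpha n}\le 2^{-2\alpha n}|x-y|^{-\alpha}$.

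The main technical obstacle lies in Part 1: condition \eqref{M:zero-boundary} yields only the qualitative convergence $\beta\to\alpha$, with no effective rate, which forces the arbitrarily small loss $\e$ in the exponent. Without further regularity of the shape distribution $\mathbf{Q}_0$ this loss cannot be avoided at this level of generality. Part 2, by contrast, is a clean two-point Poissonian computation whose only geometric ingredient is the elementary observation that two unit-diameter sets displaced by a vector $v$ can overlap only when $|v|\le 2$.
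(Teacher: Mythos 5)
Your proof is correct on both counts. For \eqref{eq:A_n_bound} you follow essentially the paper's route: the paper erodes each shape to the quantitative interior $\Lambda^\delta=\Lambda\cap\{x:\dist(x,\partial\Lambda)\ge\delta\diam(\Lambda)\}$, checks the deterministic inclusion of $A((\delta/2)2^{-n})$ in the cutout set built from the eroded shapes, evaluates its void probability as $2^{-\beta n}$ via Remark \ref{rem:alpha}, and sends $\delta\to0$ using \eqref{M:zero-boundary}, Lemma \ref{lem:alpha} and monotone convergence — exactly your argument, up to the cosmetic difference that you erode by a fixed radius $2^{-k}$ at unit scale and absorb the cost in the index shift $n\mapsto n-k$ rather than in the erosion parameter. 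Your diagnosis that the $\e$-loss is forced by the purely qualitative nature of \eqref{M:zero-boundary} matches the paper.

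For \eqref{eq:2_moment_bound} you take a genuinely different route. The paper chooses $m$ with $2^{-m}<|x-y|\le 2^{-m+1}$ and bounds $\{x,y\in A_n\}$ by the intersection of the three events $\{x\in A_m\}$, $\{x\notin\Delta_{2^{-n}}^{2^{-m}}(\mathcal{Y})\}$, $\{y\notin\Delta_{2^{-n}}^{2^{-m}}(\mathcal{Y})\}$, which are independent because the first lives on a disjoint diameter range and the last two concern disjoint families of shapes (a cutout of diameter $<|x-y|$ cannot contain both points); Lemma \ref{lem:prob-x-in-An} then gives $2^{-\alpha m}\cdot 2^{2(m-n)\alpha}=O(2^{-2\alpha n}|x-y|^{-\alpha})$. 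You instead compute the intensity $\mathbf{Q}(\mathcal{B}_{x,y})$ of the two-point avoidance family directly, via inclusion--exclusion and the explicit product decomposition of $\mathbf{Q}$, and exponentiate. The two arguments rest on the same geometric fact — a shape of diameter $<|x-y|$ cannot contain both points, which in your version is the vanishing of $\leb^d\bigl(\Lambda_0\cap(\Lambda_0-(y-x)/s)\bigr)$ for small $s$ (indeed it already vanishes for $s<|y-x|$, though your weaker threshold suffices). The paper's version is shorter once Lemma \ref{lem:prob-x-in-An} is in hand; yours is self-contained, produces the explicit constant $2^\alpha$, and avoids the intermediate scale $m$. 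Your handling of the residual case $|x-y|<2^{-n}$ by the trivial one-point bound is also fine.
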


\begin{proof}
Given $\Lambda\in\mathcal{X}$ and $\delta>0$, let
\[
\Lambda^\delta = \Lambda \cap \{ x:\dist(x,\partial \Lambda) \ge \delta \diam(\Lambda)\}\,.
\]
Set
\[
B_n^\delta = \Omega(2^{-n})\setminus \bigcup_{\Lambda\in\Delta_{2^{-n}}^1\cap \mathcal{Y}}\Lambda^{\delta}\,.
\]
Then $A((\delta/2) 2^{-n})\subset A_n(\delta 2^{-n}) \subset B_n^\delta$ by definition. On the other hand, from Remark \ref{rem:alpha} we infer that $\PP(x\in B_n^\delta) = 2^{-\beta n}$ for each $x\in\Omega(2^{-n})$, where
\[\beta=\int \leb^d(\Lambda^\delta) \,d\mathbf{Q}_0(\Lambda)\,.\]

It follows from \eqref{M:zero-boundary} that $\leb^d(\Lambda^\delta)\to\leb^d(\Lambda)$ as $\delta\to 0$, for $\mathbf{Q}$ almost all $\Lambda$. By \eqref{eq:alpha}, \eqref{eq:alpharoo} and monotone convergence, we observe that $\beta\to\alpha$ as $\delta\to 0$. This yields the first claim.

For the second claim, fix $x,y\in\Omega$ and let $m\in\N$ such that $2^{-m}< |x-y|\le 2^{-m+1}$.
If $m>n$,
the claim is a direct conclusion of Lemma \ref{lem:prob-x-in-An}. For $m\le n$, we have
\[
\{x,y\in A_n\}\subset\{x\in A_m\}\cap\{x\notin\Delta_{2^{-n}}^{2^{-m}}(\mathcal{Y})\}\cap\{y\notin\Delta_{2^{-n}}^{2^{-m}}(\mathcal{Y})\}
\]
and these three events are independent. Together with Lemma \ref{lem:prob-x-in-An}, this gives the claim.
\end{proof}

\begin{proof}[Proof of Theorem \ref{thm:dim_cut_out_set}]
The fact that $\mu_\infty\neq 0$ with positive probability follows from Remark \ref{rem:survives}. Fix $\e>0$. We prove the remaining claims by verifying that there is $C>0$, such that
almost surely
\begin{equation}\label{eq:udimb}
\leb^d(A(2^{-n}))\le C\,2^{-n(\alpha-2\e)}\text{ for large enough }n
\end{equation}
and that almost surely, for $\mu_\infty$-almost all $x\in A$,
\begin{equation}\label{eq:ldimsmall}
\mu_\infty(B(x,r))\le r^{d-\alpha-\e}\text{ for sufficiently small }r>0\,.
\end{equation}
Equation \eqref{eq:udimb} implies that $\dim_H(A)\le\dim_B(A)\le d-\alpha$ and, moreover,
\[
\udimloc(\mu_\infty,x)\le d-\alpha \text{ for } \mu_\infty\text{-almost all } x\in A\,,
\]
whereas \eqref{eq:ldimsmall} gives the desired lower bounds.

Using Fubini and \eqref{eq:A_n_bound}, we have
\begin{align*}
\EE(\leb^d(A(2^{-n})))=\int_{x\in\Omega}\PP(x\in A(2^{-n}))\,dx =O\left(2^{-(\alpha-\e) n}\right)\,.
\end{align*}
Thus
\[
\EE\left(\sum_{n=1}^\infty\leb^d(A(2^{-n})) 2^{n(\alpha-2\e)}\right)=O\left( \sum_{n}2^{-n\e}\right)<\infty\,.
\]
Equation \eqref{eq:udimb} follows at once from Borel-Cantelli.

To prove \eqref{eq:ldimsmall}, we first estimate
\begin{align*}
&\EE\left(\int\int |x-y|^{\alpha-d+\e}\,d\mu_\infty(x)d\mu_\infty(y)\right)\\
&\le\EE\left(\lim_{n\rightarrow\infty}\int\int |x-y|^{\alpha-d+\e}\,d\mu_n(x)d\mu_n(y)\right)\\
&\le\liminf_{n\rightarrow\infty}\EE\left(\int\int |x-y|^{\alpha-d+\e}\,d\mu_n(x)d\mu_n(y)\right)& \text{(by Fatou)}\\
&=\liminf_{n\rightarrow\infty}2^{2\alpha n}\,\EE\left(\int_{A_n}\int_{A_n} |x-y|^{\alpha-d+\e}\,dx dy\right)\\
&=\liminf_{n\rightarrow\infty}2^{2\alpha n}\int_{\Omega}\int_{\Omega}|x-y|^{\alpha-d+\e}\PP\left(x,y\in A_n\right)\,dx dy&\text{(by Fubini)}\\
&\le\int_{\Omega}\int_{\Omega}|x-y|^{-d+\e}\,dx dy<\infty\,.&\text{(by \eqref{eq:2_moment_bound})}
\end{align*}
Thus, a.s. we have the energy estimate $\int\int |x-y|^{\alpha-d+\e}\,d\mu_\infty(x)d\mu_\infty(y)<\infty$ a.s. and, reducing $\e$ slightly, this implies \eqref{eq:ldimsmall}.
\end{proof}

\begin{rem}\label{rem:boxdimestimate}
It follows from \eqref{eq:alpha} that if instead of starting with $\mathbf{Q}_0$ we start with $r\mathbf{Q}_0$, $r>0$, then the corresponding $\alpha$ gets multiplied by $r$. Therefore by considering the family $\{r\mathbf{Q_0}, r>0\}$ we range over all possible dimensions of $A$ and $\mu_\infty$.
\end{rem}

\begin{rem}
Although it seems very plausible, at least under some assumptions on the measure $\mathbf{Q}$, we do not know if $\mu_\infty\neq 0$ almost surely conditioned on $A\neq\varnothing$.
\end{rem}

We give some concrete examples for future reference.

\begin{ex}\label{ex:balltype}\textbf{(Ball type cut-out measures.)}
Let $c_d=\leb^d(B^d(0,\tfrac12))$.
Fix $0<r<d/c_d$ and let $(\mu_n)$ be the random sequence generated by $\mathbf{Q}_0=r\delta_{B(0,1/2)}$, starting from the seed $\Omega=B(0,1)$, see Figure \ref{fig:ball_type}. Then the removed sets are balls, and we have $\dim_B(A)=\dim_H(A)=\dim(\mu_\infty)=d-\alpha$, almost surely on non-degeneracy of $\mu_\infty$, where  $\alpha=c_d r$.

We will denote this martingale by $\mu_n^{\text{ball}}$, or by $\mu_n^{\text{ball}(\alpha,d)}$ when we want to emphasize the value of $\alpha$ and the ambient dimension.
\end{ex}

\begin{ex}\label{ex:snowflake}\textbf{(Snowflake type cut-out measures.)}
Let $\Lambda_0\subset\R^2$ be the Von-Koch snowflake domain (see e.g. \cite[p. xix]{Falconer03} for the construction of the snowflake curve and Figure \ref{fig:tiling} for a picture) scaled so that $\diam(\Lambda_0)=1$. Let $c=\leb^d(\Lambda_0)$.

Let $(\mu_n)$ be the random sequence generated by $\mathbf{Q}_0=r\delta_{\Lambda_0}$, starting from the seed $\Omega=\Lambda_0$ with a parameter $0<r<2/c$. Then a.s. all the removed sets are scaled and translated copies of $\Lambda_0$ and we have $\dim_B(A)=\dim_H(A)=\dim(\mu_\infty)=d-\alpha$, almost surely on non-degeneracy of $\mu_\infty$, where  $\alpha=r c$.

We will denote this martingale by $\mu_n^{\text{snow}}$, or by $\mu_n^{\text{snow}(\alpha)}$ when we want to emphasize the value of $\alpha$.
\end{ex}

\begin{rem}
The above snowflake example is not rotationally invariant, but we can obtain a measure $\mathbf{Q}_0$ (and hence $\mathbf{Q}$) which is rotationally invariant by starting with $\Lambda_0$, and rotating it by a uniformly random angle. Then the law of the random measures $\mu_n$ is rotationally invariant as well. The Brownian loop soup is another class of planar $\poissonian$ (see \cite{NacuWerner11}) which is even conformally invariant,
\end{rem}

We finish this section by introducing a useful class of sequences $(\mu_n)$ which contains $\poissonian$ as a special case.

\begin{defn} \label{def:cutout-type}
We say that a sequence $(\mu_n)$ of random densities on $\R^d$ is of \textbf{$(\mathcal{F},\alpha,\zeta)$-cutout type}, where $\alpha,\zeta>0$ and $\mathcal{F}$ is a family of Borel sets $\Lambda\subset B(0,R)\subset \R^d$ (for some large $R>0$), if there are $C>0$ and a set $\Omega\in\mathcal{F}$ (the \textbf{seed} of the construction), such that
\begin{equation} \label{eq:cutout-type}
\mu_n = 2^{\alpha n}\mathbf{1}_{A_n}\,,
\end{equation}
where $A_n=\Omega\setminus \cup_{j=1}^{M_n} \Lambda_j^{(n)}$
for some random subset $\{ \Lambda_j^{(n)} \}_{j=1}^{M_n}$ of $\mathcal{F}$, and, moreover, there is a finite random variable $N_0$ such that $M_n\le C\, 2^{\zeta n}$ for all $n\ge N_0$.
\end{defn}

Clearly, if $\mu_n$ is an SI-martingale of $(\mathcal{F},\alpha,\zeta$)-cutout type, then $\mu_n(x)\le 2^{\alpha n}$ for all $n,x$. In other words, \eqref{H:codim-random-measure} holds. Let us see that, indeed, $\poissonian$ martingales are of cutout type.

\begin{lemma}\label{lem:Poisson_is_cut_out_type}
Let $(\mu_n)\in\poissonian$ with associated measure $\mathbf{Q}$, initial domain $\Omega$, and parameter $\alpha$. Then $(\mu_n)$ is of $(\mathcal{F},\alpha,d)$-cutout type for any $\mathcal{F}\subset\mathcal{X}$ that contains $\Omega$ and $\mathbf{Q}$-almost all $\Lambda\in\mathcal{X}$ such that $\diam(\Lambda)\le 1$ and $\Lambda\cap\Omega\neq\emptyset$.

Moreover, the random variable $N_0$ in the definition of cutout type satisfies
\[
\PP(N_0>N) \le 2\exp\left(-2^{Nd}\right)\,.
\]
for some $C,\delta>0$.
\end{lemma}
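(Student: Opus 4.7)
The plan is to realize the cut-outs $\Lambda_j^{(n)}$ as the elements of the Poisson process $\mathcal{Y}$ with diameter in $[2^{-n},1)$ that actually meet the seed $\Omega$ (those with $\Lambda\cap\Omega=\emptyset$ leave $A_n$ unchanged and can be dropped). By hypothesis, $\mathcal{F}$ contains $\mathbf{Q}$-almost every such set, so a.s.\ each retained cut-out lies in $\mathcal{F}$. With this choice we have exactly $\mu_n=2^{\alpha n}\mathbf{1}_{A_n}$ with $A_n=\Omega\setminus\bigcup_{j\le M_n}\Lambda_j^{(n)}$, so the only work is to control
\[
M_n \;=\; \#\{\Lambda\in\mathcal{Y}:\diam(\Lambda)\in[2^{-n},1),\,\Lambda\cap\Omega\neq\emptyset\}.
\]

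First I would estimate the intensity $\lambda_n=\EE M_n$. Using the parametrization of $\mathbf{Q}$ from Lemma \ref{lem:structure-scale-translation-invariant-distr}(ii), namely that $\mathbf{Q}$ is the push-forward of $\leb^d\times\sigma\times\mathbf{Q}_0$ by $(t,s,\Lambda)\mapsto s(\Lambda+t)$ with $\mathbf{Q}_0$ supported on sets of unit diameter, the condition $s(\Lambda+t)\cap\Omega\neq\emptyset$ translates to $t\in \tfrac{1}{s}\Omega-\Lambda$, a set of Lebesgue measure $O(s^{-d})$ (since $\Omega$ is bounded and $\Lambda\subset B(0,1)$). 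Hence
\[
\lambda_n \;\le\; \|\mathbf{Q}_0\|\int_{2^{-n}}^{1}O(s^{-d})\,\frac{ds}{s}\;=\;O(2^{nd}).
\]

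Next, because $M_n\sim\mathrm{Poisson}(\lambda_n)$ (by \eqref{PPP:Poisson}), the standard Chernoff bound gives, for a sufficiently large deterministic constant $C>0$,
\[
\PP\bigl(M_n>C\,2^{nd}\bigr)\;\le\;\exp\bigl(-2^{nd}\bigr),
\]
after absorbing the linear factor from the Poisson deviation inequality into $C$. I then define $N_0$ to be one plus the largest $n$ for which $M_n>C\,2^{nd}$ (and $N_0=1$ if no such $n$ exists); Borel--Cantelli ensures $N_0<\infty$ a.s. The tail estimate follows by a union bound together with a geometric-series argument, using that $2^{(n+1)d}-2^{nd}\ge 2^{nd}$ for $d\ge 1$:
\[
\PP(N_0>N)\;\le\;\sum_{n\ge N}\exp(-2^{nd})\;\le\;2\exp(-2^{Nd}).
\]

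The steps are essentially bookkeeping; the only point that needs attention is matching the constants so that the final tail has exactly the stated form $2\exp(-2^{Nd})$, which is why $C$ in the definition of cut-out type must be chosen large enough that the Poisson Chernoff exponent exceeds $1$. All other ingredients (the form of $\mathbf{Q}$, the Poisson property, and the bound on $\lambda_n$) are directly available from Lemma \ref{lem:structure-scale-translation-invariant-distr} and \eqref{PPP:Poisson}--\eqref{PPP:independence}, so no genuine obstacle is expected.
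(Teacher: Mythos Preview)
Your proposal is correct and follows essentially the same route as the paper's proof: identify $M_n$ as the Poisson count of shapes in $\Delta_{2^{-n}}^{1}\cap\mathcal{Y}$ hitting $\Omega$, bound its mean by $O(2^{nd})$ via the parametrization of Lemma~\ref{lem:structure-scale-translation-invariant-distr}(ii), apply a Poisson tail bound, and sum over $n\ge N$. You in fact spell out the final geometric-series step (using $2^{(n+1)d}-2^{nd}\ge 2^{nd}$) a bit more explicitly than the paper, which simply asserts the tail estimate.
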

\begin{proof}

Write $\mathcal{X}_\Omega$ for the sets in $\mathcal{X}$ with $\diam(\Lambda)\le 1$ that hit $\Omega$, and pick $\mathcal{F}\subset\mathcal{X}_\Omega$ such that $\Omega\in\mathcal{F}$ and $\mathbf{Q}(\mathcal{X}_\Omega\setminus\mathcal{F})=0$. Then the decomposition \eqref{eq:cutout-type} in the definition of cutout type is clear, with
\[
M_n = \#\left(\Delta_{2^{-n}}^{1}\cap \mathcal{Y}\cap \mathcal{X}_\Omega\right)\,.
\]
By \eqref{PPP:Poisson}, $M_n$ is a Poisson variable with mean $\lam_n:=\mathbf{Q}(\Delta_{2^{-n}}^{1}\cap\mathcal{F})$. Recalling the decomposition given by Lemma \ref{lem:structure-scale-translation-invariant-distr}(ii), it is easy to check that
\begin{align*}
\lam_n &\le \mathbf{Q}_0(\mathcal{X})(\sigma\times\leb^d)\{(s,t): 2^{-n}\le s<1, B(st, s)\cap\Omega\neq\varnothing\}\\
&= \int_{2^{-n}}^1 O_{\mathbf{Q},\Omega}(s^{-d}) d\sigma(s)\\
&= O_{\mathbf{Q},\Omega}(2^{dn})\,.
\end{align*}
Since Poisson variables have subexponential tails, there is a constant $C=C(\mathbf{Q},\Omega)>0$ such that
\begin{equation}\label{eq:M_n_bound}
\PP(M_n > C 2^{dn}) \le \exp(-2^{dn})\text{ for all }n.
\end{equation}
Let $E_n$ be the event that $M_n\le  C \,2^{nd}$. By the Borel-Cantelli lemma, almost surely there exists $N_0$ such that $E_n$ holds for all $n\ge N_0$. Moreover,
\[
\mathbb{P}(N_0\ge N) \le 2 \exp(-2^{Nd})\,,
\]
and this completes the proof.
\end{proof}

In practice, the family $\mathcal{F}$ will be clear from context. For example, for $\mu_n^{\text{ball}}$, $\mathcal{F}$ is the collection of all balls of radius $O(1)$, and for $\mu_n^{\text{snow}}$ it consists of homothetic copies of the snowflake domain.

\subsection{Subdivision fractals}
\label{sec:subdivision}

Our next class of examples includes many generalizations of fractal percolation and more general Mandelbrot's multiplicative cascades (see e.g. \cite{Mandelbrot74}, or \cite{Barral00} and references therein). Unlike standard percolation, we will allow certain dependencies and less homogeneity. We start by defining nested families which generalize the dyadic family. Let $F_0$ be a bounded set, which we call the {\em seed} of the construction, and let $\mathcal{F}_n$ be an increasing sequence of finite, atomic $\sigma$-algebras on $F_0$, with $\mathcal{F}_0=\{ F_0,\emptyset\}$. In the sequel we identify $\mathcal{F}_n$ with the collection of its atoms, which we assume to be Borel sets. Write $\mathcal{F}=\cup_{n=0}^\infty \mathcal{F}_n$.

The next lemma describes our general construction.

\begin{lemma}\label{lem:subdivision_satisfies_properties}
Let $\{ \mathcal{F}_n\}_{n=0}^\infty$ be an increasing filtration of finite $\sigma$-algebras as above. Let $\{ W_F\}_{F\in\mathcal{F}}$ be random variables such that the following holds for some (deterministic) $C\ge 1$.
\begin{enumerate}
\renewcommand{\labelenumi}{(SD\arabic{enumi})}
\renewcommand{\theenumi}{SD\arabic{enumi}}
\item \label{SD:measurability} The law of $W_F, F\in\mathcal{F}_{n+1}$ is measurable with respect to the $\sigma$-algebra $\mathcal{B}_{n}$ generated by $\{ W_F\}_{F\in\mathcal{F}_k,k\le n}$.
\item\label{SD:controlled-growth} Almost surely $W_F\in [0,C]$ for all $F\in\mathcal{F}$.
\item\label{SD:martingale} $\EE(W_F|\mathcal{B}_n)=1$ for all $F\in\mathcal{F}_{n+1}$.
\item\label{SD:independent} If $\{ F_j\}\subset \mathcal{F}_{n+1}$ and the $F_j$ are subsets of pairwise disjoint cells in $\mathcal{F}_n$, then $W_{F_j}|\mathcal{B}_n$ are independent.
\item\label{SD:finite_elements} For all $n$ and all $F\in\mathcal{F}_n$, there is a ball $B\subset F$ such that
$C^{-1} 2^{-n}\le \diam(B)\le \diam(F) \le C 2^{-n}$.
\end{enumerate}

We define a sequence $(\mu_n)$ as follows. Let $\mu_0=\mathbf{1}[F_0]$. For $n\ge 0$, set
\[
\mu_{n+1}(x)=W_F \mu_n(x)\,,
\]
where $F\in\mathcal{F}_{n+1}$ is the partition element containing $x$.

Then $(\mu_n)$ is an SI-martingale.
\end{lemma}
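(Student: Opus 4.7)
The plan is simply to verify the four axioms \eqref{SI:bounded}--\eqref{SI:spatial-independence} one at a time, taking $\mathcal{B}_n$ to be the $\sigma$-algebra generated by $\{W_F : F\in\mathcal{F}_k,\, k\le n\}$ as in \eqref{SD:measurability}. The first three axioms are essentially immediate from the construction, while \eqref{SI:spatial-independence} is the only part that uses genuine geometric input, and will be the main focus.

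For \eqref{SI:bounded}, the density $\mu_0=\mathbf{1}[F_0]$ is deterministic and, by \eqref{SD:finite_elements} applied with $n=0$, supported on the bounded set $F_0$. For \eqref{SI:quotients-bounded}, the pointwise recursion $\mu_{n+1}(x)=W_{F(x)}\mu_n(x)$, where $F(x)\in\mathcal{F}_{n+1}$ is the cell containing $x$, together with \eqref{SD:controlled-growth}, yields $\mu_{n+1}(x)\le C\mu_n(x)$. For the martingale property in \eqref{SI:martingale}, a trivial induction using \eqref{SD:measurability} shows $\mu_n$ is $\mathcal{B}_n$-measurable, and
\[
\EE\bigl(\mu_{n+1}(x)\,|\,\mathcal{B}_n\bigr) \;=\; \mu_n(x)\,\EE\bigl(W_{F(x)}\,|\,\mathcal{B}_n\bigr) \;=\; \mu_n(x),
\]
by \eqref{SD:martingale}, since $\mu_n(x)$ is $\mathcal{B}_n$-measurable and $F(x)\in\mathcal{F}_{n+1}$.

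The substantive step is \eqref{SI:spatial-independence}. The crucial observation is that on any set $S\subset F_0$, the restriction $\mu_{n+1}|_S$ is, conditional on $\mathcal{B}_n$, a deterministic function of the collection
\[
\mathcal{W}(S) \;:=\; \{\,W_F \,:\, F\in\mathcal{F}_{n+1},\ F\cap S\neq\varnothing\,\}.
\]
Let $C_0$ denote the constant appearing in \eqref{SD:finite_elements}. If $F\in\mathcal{F}_{n+1}$ touches a dyadic cube $Q\in\mathcal{Q}_{n+1}$, then $\diam(F)\le C_0 2^{-(n+1)}$, so $F$ lies in the $(C_0 2^{-(n+1)})$-neighbourhood of $Q$; consequently its parent cell $\widehat F\in\mathcal{F}_n$, which has diameter at most $C_0 2^{-n}$, lies in the $((C_0+1) 2^{-n})$-neighbourhood of $Q$. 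Choose $C':=4(C_0+1)$. Then for any $(C' 2^{-n})$-separated family $\mathcal{Q}\subset\mathcal{Q}_{n+1}$, the parent cells of the $\mathcal{F}_{n+1}$-elements touching distinct $Q,Q'\in\mathcal{Q}$ are disjoint elements of $\mathcal{F}_n$. Hypothesis \eqref{SD:independent} then implies that the collections $\{\mathcal{W}(Q)\}_{Q\in\mathcal{Q}}$ are mutually independent conditional on $\mathcal{B}_n$, and therefore so are $\{\mu_{n+1}|_Q\}_{Q\in\mathcal{Q}}$.

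The only nontrivial bookkeeping is the geometric step linking the diameter bound \eqref{SD:finite_elements} to a suitable separation constant so that the combinatorial independence in \eqref{SD:independent} transfers to the spatial independence required by \eqref{SI:spatial-independence}; everything else is a direct translation of \eqref{SD:measurability}--\eqref{SD:independent} into the $\mathcal{B}_n$-language of the SI-martingale definition.
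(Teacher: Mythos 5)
Your verification is correct and coincides with the routine check that the paper leaves to the reader: \eqref{SI:bounded}--\eqref{SI:quotients-bounded} follow directly from \eqref{SD:controlled-growth}, \eqref{SD:martingale}, \eqref{SD:finite_elements} and the definition of $\mathcal{B}_n$, and your geometric step correctly converts the combinatorial independence \eqref{SD:independent} into the spatial independence \eqref{SI:spatial-independence}. (One harmless slip in constants: a parent cell meeting the neighbourhood of $Q$ lies in the $(\tfrac{3}{2}C_0 2^{-n})$-neighbourhood of $Q$ rather than the $((C_0+1)2^{-n})$-neighbourhood, but since $4(C_0+1)>3C_0$ your choice of $C'$ still forces the relevant parent cells for distinct cubes of $\mathcal{Q}$ to be distinct, hence disjoint, so the argument stands.)
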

\begin{proof}
The routine verification is left to the reader.
\end{proof}

\begin{defn} \label{def:subdivision}
The martingales $(\mu_n)$ satisfying \eqref{SD:measurability}--\eqref{SD:finite_elements} above will be called \textbf{subdivision martingales}, and the class of all of them will be denoted $\subdivision$.
\end{defn}

Multiplicative cascades correspond to the case in which the $W_F$ are IID, and fractal percolation to the special case of this in which $W_F$ takes the values $0,p^{-1}$ with probabilities $1-p,p$. Subdivision martingales also include measures supported on inhomogeneous fractal percolation: here the $W_F$ are independent but not identically distributed, and $W_F$ takes values in $\{ 0, p_F^{-1}\}$ for some $p_F\in (0,1)$. We note that in all these cases we allow more general filtrations than the usual definitions (which are tied to the dyadic or $M$-adic grid).

\begin{defn}
Fractal percolation on the dyadic grid with survival probability $p$ will be denoted by $\mu_n^{\text{perc}}$ or, if we want to emphasize the parameter and the ambient dimension, by $\mu_n^{\text{perc}(\alpha,d)}$, where $\alpha=-\log_2 p$. See Figure \ref{fig:perco}.
\end{defn}

Fractal percolation in an arbitrary base $M\ge 2$ also fits into our context; we only have to ``stop'' the construction at certain steps so that after $n$ steps the cubes have size comparable to $2^{-n}$. We could also allow bases other than $2$ in the definition of SI-martingale, but as a similar stopping time construction allows to pass between different bases, we see no point in doing so.

In general, the boundaries of the elements in $\mathcal{F}_n$ may be very irregular. The following example in the plane has very similar geometric properties to $\mu_n^{\text{snow}}$.

\begin{ex} \label{ex:fractal-tiling}
Let $\Lambda_0\subset\R^2$ be the closure of the Von Koch snowflake domain scaled to have diameter one.
It is well known that $\Lambda_0$ is self-similar. More precisely, $\Lambda_0=\cup_{i=1}^7\Lambda_i$, where $\Lambda_1,\ldots,\Lambda_7$ are scaled and translated copies of $\Lambda_0$ (with contraction ratio $1/3$ for $i=1,\ldots,6$ and $1/\sqrt{3}$ for $\Lambda_7$), and the interiors of $\Lambda_1,\ldots,\Lambda_7$ are disjoint. See Figure \ref{fig:tiling}. Write $\Lambda_i = S_i \Lambda_0$ for suitable homotheties $S_i$, $i\in\{1,\ldots,7\}$. Inductively, the sets $S_{i_1}\cdots S_{i_n}(\Lambda_0)$ also tile $\Lambda_0$ for each $n$, but these families do not satisfy \eqref{SD:finite_elements}. Instead, we set
\[
\mathcal{F}_n = \left\{ S_{i_1}\cdots S_{i_k}(\Lambda_0): \diam(S_{i_1}\cdots S_{i_k}(\Lambda_0))\le 2^{-n}<\diam(S_{i_1}\cdots S_{i_{k-1}}(\Lambda_0))\right\}\,.
\]

Given $0<p<1$, we can define fractal percolation for this filtration, as explained above. Note that in order for \eqref{SI:martingale} to hold also on the boundaries of the sets $S_{i_1}\cdots S_{i_k}(\Lambda_0)$, we have to delete (parts of the) boundaries in a suitable way, so that each boundary point of some $F\in\mathcal{F}_n$ belongs to exactly one of the sets in $\mathcal{F}_n$. The way in which we do this is of no consequence in any of our later applications. We denote this SI-martingale by $\mu_n^{\text{snowtile}}$ or $\mu_n^{\text{snowtile}(\alpha)}$ for $\alpha=-\log_2 p$.
\end{ex}

\begin{figure}
   \centering
\resizebox{0.9\textwidth}{!}{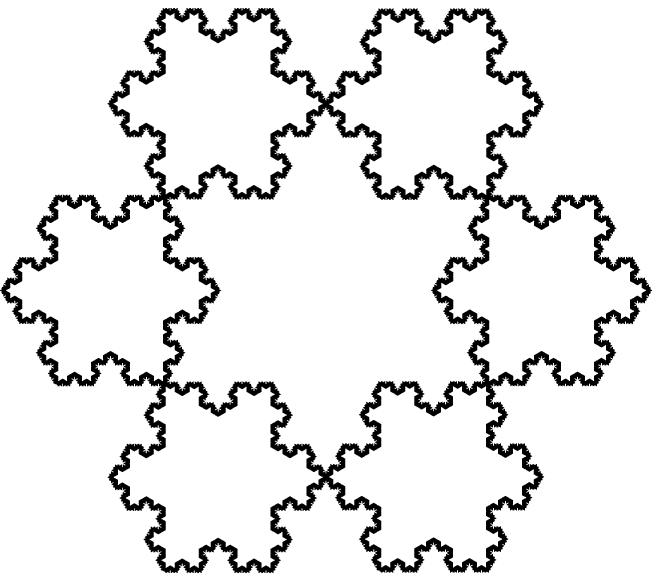
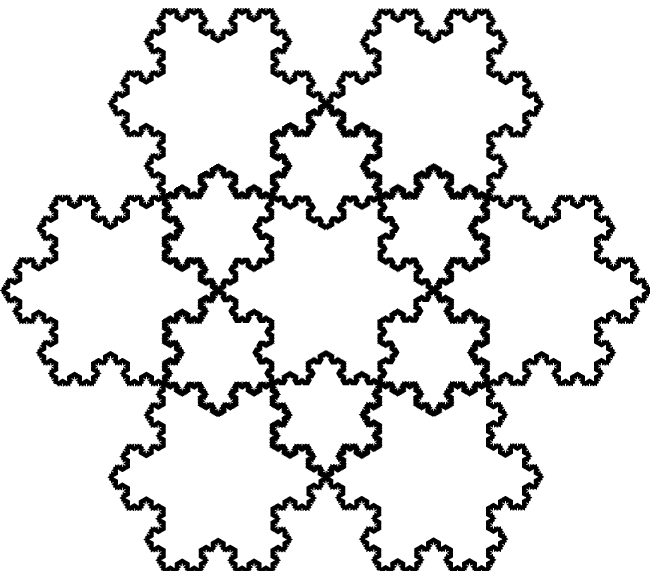}
\caption{The sets $\Lambda_1,\ldots,\Lambda_7$ in the tiling of the snowflake domain. On the right, the sets forming the family $\mathcal{F}_1$ are shown. Note that eg. $\Lambda_7$ does not belong to any $\mathcal{F}_n$ while the set $\Lambda$ as in the picture belongs to both $\mathcal{F}_1$ and $\mathcal{F}_2$.}
\label{fig:tiling}
\end{figure}

\begin{rem}\label{rem:tiling}
The above construction can be defined on any self-similar set $\Lambda_0\subset\R^d$ which has nonempty interior and satisfies the open set condition. There are many such $\Lambda_0$ with fractal boundaries, see e.g. \cite{Bandt91}. We denote this class of SI-martingales by $\mu_n^{\text{tile}}$.
\end{rem}

Definition \ref{def:subdivision} could be further relaxed in various directions. For example, the filtration itself could be allowed to be random. For simplicity we do not consider this here.

We finish by defining a class of sequences of random measures, which includes $\subdivision$ as a special case, but contains also other examples of interest.
\begin{defn} \label{def:cell-type}
Let $\mathcal{F}$ be a family of Borel sets $\Lambda\subset B(0,R)\subset \R^d$ (for some large $R>0$) and $\tau,\zeta>0$. We say that a sequence $(\mu_n)$ of random measures is of \textbf{$(\mathcal{F},\tau,\zeta)$-cell type}, if there is $C>0$ such that
\begin{equation} \label{eq:cell-type}
\mu_n=\sum_{j=1}^{M_n} c_j^{(n)} \mathbf{1}[F_j^{(n)}]\,,
\end{equation}
where:
\begin{enumerate}
\item $\left\{ F_j^{(n)} \right\}_{j=1}^{M_n}$ is a random subset of $\mathcal{F}$,
\item The random variables $c_j^{(n)}$ satisfy $0\le c_j^{(n)}\le C\,2^{\tau n}$ for all $j,n$ almost surely,
\item Almost surely there is $N_0$ such that $M_n \le C\, 2^{\zeta n}$ for all $n\ge N_0$.
\end{enumerate}
\end{defn}

It follows from \eqref{SD:finite_elements} that $\#F_n = O(2^{dn})$. Hence, if $(\mu_n)\in\subdivision$, then $(\mu_n)$ is of $(\mathcal{F},\tau,d)$-cell type, with $\mathcal{F}=\bigcup_n\mathcal{F}_n$, and a value of $\tau$ that depends on the particular sequence, but can be calculated in many specific instances. Moreover, in the case of subdivision martingales (or, more generally, when the cells $F_j^{(n)}$ have bounded overlapping), $\mu_n(x) \le C' 2^{\tau n}$ for all $n,x$ and some $C'>0$.  In particular, $\mu_n^{\text{perc}(\alpha,d)}$ is of $(\mathcal{F},\alpha,d)$-cell type, where $\mathcal{F}=\mathcal{Q}$ is the collection of dyadic cubes.

\subsection{Products of SI-martingales}
\label{subsec:products}

Let $\mu_n^{(1)},\ldots, \mu_n^{(\ell)}$ be independent SI-martingales on the same ambient space $\R^d$. Then
\[
\mu_n = \mu_n^{(1)}\cdots \mu_n^{(\ell)}
\]
is easily checked to also be an SI-martingale. Moreover, if $\mu_n^{(i)}$ is of $(\mathcal{F}_i,\alpha_i,\zeta_i)$-cutout type for each $i$, then $\mu_n$ is of $(\cup_i \mathcal{F}_i,\sum_i \alpha_i,\max_i \zeta_i)$-cutout type. Likewise, if $\mu_n^{(i)}$ is of $(\mathcal{F}_i,\tau_i,\zeta_i)$-cell type for each $i$, then $\mu$ is of $(\mathcal{F},\sum_i \tau_i,\prod_i\zeta_i)$-cell type, where
\[
\mathcal{F} = \{ F_1\cap\cdots\cap F_\ell: F_i \in\mathcal{F}_i \}\,.
\]
Since $\supp\mu_n=\cap_i \supp \mu_n^{(i)}$, products of SI-martingales are useful to study intersections of random Cantor sets. For example, intersections of fractal percolations constructed with different grid sizes, or with different (possibly rotated) coordinate systems, fall into our framework, and everything we prove for fractal percolation will in fact hold for these intersections as well.

Let us point here that taking \emph{Cartesian} products of SI-martingales does not yield an SI-martingale. This will be discussed in detail in Section \ref{sec:products} below.

\subsection{Further examples}
\label{subsec:further-examples}

Although in the rest of the paper we will focus on the classes of examples described in the previous sections, we conclude this section with a brief description of some further classes of spatially independent martingales, both to illustrate the generality of the definition and because they may be useful in some situations.

\begin{enumerate}
\item Let $Q=[0,1]^d$ and fix an integer $m\ge 2$ and a number $r\in (0,1)$ such that $m r^d<1$. Let $z_1,\ldots,z_m$ be points sampled independently and uniformly from $Q$, and let $Q_i$, $i\in\{1,\ldots,m\}$, be the closed cubes with centre $z_i$ and side-length $r$, where we think of $Q$ as a torus, so that some of the cubes may consist of several (Euclidean) disconnected parallelepipeds. Also note that the cubes $Q_i$ are allowed to overlap. We continue this construction inductively - even if $Q_i$ are disconnected, we think of them as a single cube for the purposes of the construction, and choose $m$ independent points $z_{ij}$ uniformly from $Q_i$, and form the sets $Q_{ij}$ which are cubes with centre $z_{ij}$ and side length $r^2$, now thinking of $Q_i$ as a torus. Continuing inductively, we obtain $m^n$ sets $Q_{i_1\ldots i_n}$ for any $n\ge 1$. Now set $\mu_0=\mathbf{1}_{[0,1]^d}$, and
    \[
    \mu_n = \frac{1}{(m r^d)^n} \sum_{i_1\ldots i_n} \mathbf{1}[Q_{i_1\ldots i_n}]\,.
    \]
    In order to adapt to the dyadic scaling, let $(\widetilde{\mu}_n)_n=(\mu_{k_n})_n$ for a nondecreasing sequence $(k_n)$ with $r^{k_n}=\Theta(2^{-n})$; this is easily checked to be an SI-martingale. Moreover, it is of $(\mathcal{F},\tau,\zeta)$-cell type, where $\mathcal{F}$ is the family of parallelepids with all sides of length $\le 1$,  $\tau=\log_2(m r^d)/\log_2 r$ and $\zeta=-\log_2 m/\log_2 r$.

    More generally, $m$ and $r$ could be random variables with suitable tail decay, and could depend on the generation $n$. This construction is related to the random sets in \cite{Korner08}.

\item Our next class of examples are (essentially) the Poissonian cylindrical pulses introduced by Barral and Mandelbrot in \cite{BarralMandelbrot02} and later used as models for many natural and economic phenomena. Let $\mathbf{Q}$ be a measure on $\mathcal{X}$ satisfying \eqref{M:translation-invariance}--\eqref{M:zero-boundary}, and let $W$ be a nonnegative, bounded random variable of unit expectation. Fix a domain $\Omega\subset\R^d$. Let $(\Lambda_j)$ be a realization of the Poisson point process with intensity $\mathbf{Q}$, and let $W_j$ be independent copies of $W$, and also independent of $(\Lambda_j)$. Finally, set
    \[
    \mu_n = \prod_{j: \diam(\Lambda_j) \in [2^{-n},1)}  P_j,\quad\text{where } P_j = W_j \mathbf{1}[\Lambda_j] + \mathbf{1}[\Omega\setminus\Lambda_j]\,.
    \]
    (The functions $P_j$ are the ``pulses''). Then $(\mu_n)$ is an SI-martingale; this can be seen as in Lemma \ref{lem:Poisson-satisfies-properties}. Unlike \cite{BarralMandelbrot02}, we have required $W$ to be bounded so that \eqref{SI:bounded} holds. On the other hand, in \cite{BarralMandelbrot02} the removed shapes are intervals in $\R$, while here we allow much greater generality. These measures do not fall neatly into the cutout-type or cell-type categories (they are in fact of cell-type but the decomposition is somewhat awkward). We believe our methods can still be applied to this class, but we do not pursue this here.

\item Various classes of random covering sets (see e.g. \cite{Kahane85, FanWu04, BarralFan05, Chenetal14}) may be studied by means of random measures that are closely related to SI-martingales. A priori, there is no natural SI-martingale supported on the random covering set $A$, but it is still possible to construct martingale measures supported on random Cantor sets $A'$ which capture a ``large'' proportion of $A$. In \cite{Chenetal14}, largely inspired by the present work and partly adapting our method, uniform Marstrand type projection theorems for ball-type random covering sets in $\R^d$ are obtained.
\end{enumerate}

\section{A geometric criterion for H\"{o}lder continuity}
\label{sec:geometric-Holder}

In Theorem \ref{thm:Holder-continuity}, the most difficult hypothesis to check is often the a priori H\"{o}lder condition \eqref{H:Holder-a-priori}. From this section on, we specialize to SI-martingales of cutout or cell type (recall Definitions \ref{def:cutout-type} and \ref{def:cell-type}), and to families $\{\eta_t\}_{t\in\Gamma}$ that have a mild additional structure. This setting is still general enough to allow us to deduce all of our geometric applications, yet reduces the verification of \eqref{H:Holder-a-priori} to a concrete, and in many cases simple or even trivial, geometric problem relating the shapes in $\mathcal{F}$ with a single measure $\nu$ (typically, we are left to proving a power bound for the $\nu$-measure of small neighbourhoods of the boundaries of shapes in $\mathcal{F}$).

We now describe the structure we will assume on the family $\{\eta_t\}_{t\in\Gamma}$ (this will be slightly generalized below). Consider a metric space $(\Gamma,d)$ (the ``parameter space'') and another space $\mathcal{M}$ (the ``reference space'') together with a measure $\nu$ on $\mathcal{M}$. Suppose for each $t\in \Gamma$ there is a ``projection map''  $\Pi_t:\mathcal{M}\to \R^d$, such that $\eta_t=\Pi_t\nu$.

We give some examples that illustrate the naturality of this setting; we will later come back to each of these (and suitable variants and generalizations) in more detail.
\begin{enumerate}
\item Let $0<k<d$, and let $\mathcal{M}\in\AA_{d,k}$ be a fixed $k$-plane endowed with the $k$-dimensional Hausdorff measure $\nu$, let $\Omega$ be any bounded domain, and let $\Gamma$ be the subset of $\iso_d$ of all $f$ such that $f\mathcal{M}\cap \Omega\neq\varnothing$. We set $\Pi_f=f$. Then $\{ \nu_f\}_{f\in\Gamma}$ consists of $k$-dimensional Hausdorff measures on $V$ for all $V\in\AA_{d,k}$ which intersect $\Omega$.
\item More generally, we can start with an arbitrary measure $\nu$ on $\R^d$, set $\mathcal{M}=\R^d$, and take $\Gamma$ as any totally bounded subset of $\aff_d$ (with $\Pi_f=f$).
\item\label{ex:Bernoulli} Fix an integer $m\ge 2$, and let $\mathcal{M}=\Sigma_m$ be the full-shift on $m$ symbols equipped with some Bernoulli measure $\nu$. Let $\Gamma$ be a totally bounded subset of $(\simi_d^c)^m$ with the induced metric, and given $g=(g_1,\ldots,g_m)\in \Gamma$ let $\Pi_g:\mathcal{M}\to\R^d$ be the projection map for the iterated function system $(g_1,\ldots,g_m)$. Then $\{\eta_g:g\in\Gamma\}$ is a family of self-similar measures.
\end{enumerate}

\begin{prop} \label{prop:cutout-measures-satisfy-Holder-a-priori}
Suppose $(\mu_n)$ is an SI-martingale, which is of $(\mathcal{F},\tau,\zeta)$-cutout type, or of $(\mathcal{F},\tau,\zeta)$-cell type.

As above, let $(\mathcal{M},\nu)$ be a metric measure space, let $\{\Pi_t:\mathcal{M}\to\R^d\}_{t\in\Gamma}$ be a family of maps parametrized by a metric space $\Gamma$, and set $\eta_t=\Pi_t\nu$.

Suppose that there are constants $0<\gamma_0,C<\infty$ such that the following holds:
\begin{equation}
\label{eq:Holder-shape}\nu(\Pi_t^{-1}(\Lambda)\Delta\Pi_u^{-1}(\Lambda))\le C\, d(t,u)^{\gamma_0} \quad\text{ for all } \Lambda\in\mathcal{F}\,,t,u\in\Gamma\,.
\end{equation}

Then
\[
\sup_{t\neq u\in\Gamma,n\ge N_0} \frac{\left|\int \mu_n d\eta_t -\int \mu_n d\eta_u\right|}{2^{(\tau+\zeta)n}d(t,u)^{\gamma_0}} \le C' < \infty\,,
\]
where $C'$ is a quantitative constant, and $N_0$ is the random variable in Definitions \ref{def:cutout-type}, \ref{def:cell-type}.
\end{prop}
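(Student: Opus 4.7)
The plan is to reduce the difference of integrals to a sum of symmetric‐difference terms of the form $\nu(\Pi_t^{-1}(\Lambda)\Delta\Pi_u^{-1}(\Lambda))$, one for each shape $\Lambda\in\mathcal{F}$ appearing in the decomposition of $\mu_n$, and then apply the hypothesis \eqref{eq:Holder-shape} to each one. The only random ingredient is the number of shapes, which is controlled by $M_n=O(2^{\zeta n})$ once $n\ge N_0$.

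The first key identity is: for any bounded Borel set $B\subset\R^d$,
\[
\int \mathbf{1}_B\,d\eta_t-\int \mathbf{1}_B\,d\eta_u=\nu(\Pi_t^{-1}(B))-\nu(\Pi_u^{-1}(B)),
\]
whose absolute value is bounded by $\nu\bigl(\Pi_t^{-1}(B)\Delta\Pi_u^{-1}(B)\bigr)$. For the cell case, this already does the job: writing $\mu_n=\sum_{j=1}^{M_n} c_j^{(n)}\mathbf{1}[F_j^{(n)}]$ with $c_j^{(n)}\le C\,2^{\tau n}$, summing over $j$, and invoking \eqref{eq:Holder-shape} on each $F_j^{(n)}\in\mathcal{F}$, one gets
\[
\Bigl|\int\mu_n\,d\eta_t-\int\mu_n\,d\eta_u\Bigr|\le \sum_{j=1}^{M_n} C\,2^{\tau n}\,C\,d(t,u)^{\gamma_0}\le C'\,2^{(\tau+\zeta)n}\,d(t,u)^{\gamma_0}
\]
for $n\ge N_0$, which is exactly the claim.

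For the cutout case, the extra step is a set–theoretic inequality for symmetric differences of complements. Writing $A_n=\Omega\setminus\bigcup_{j=1}^{M_n}\Lambda_j^{(n)}$, one checks directly that if $A=A_0\setminus\bigcup_j A_j$ and $B=B_0\setminus\bigcup_j B_j$, then
\[
A\Delta B\subseteq (A_0\Delta B_0)\cup\bigcup_j(A_j\Delta B_j),
\]
so, applying this with $A_0=\Pi_t^{-1}(\Omega)$, $A_j=\Pi_t^{-1}(\Lambda_j^{(n)})$ and analogously for $B$, we obtain
\[
\nu\bigl(\Pi_t^{-1}(A_n)\Delta\Pi_u^{-1}(A_n)\bigr)\le \nu\bigl(\Pi_t^{-1}(\Omega)\Delta\Pi_u^{-1}(\Omega)\bigr)+\sum_{j=1}^{M_n}\nu\bigl(\Pi_t^{-1}(\Lambda_j^{(n)})\Delta\Pi_u^{-1}(\Lambda_j^{(n)})\bigr).
\]
Since $\Omega\in\mathcal{F}$ and each $\Lambda_j^{(n)}\in\mathcal{F}$, hypothesis \eqref{eq:Holder-shape} bounds the right-hand side by $C(M_n+1)\,d(t,u)^{\gamma_0}$. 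Multiplying by the factor $2^{\tau n}$ in front of $\mathbf{1}_{A_n}$ and using $M_n\le C\,2^{\zeta n}$ for $n\ge N_0$ yields the same conclusion.

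I do not anticipate a real obstacle: the proof is essentially a bookkeeping exercise once one writes down the symmetric‐difference decomposition of $A_n$ in the cutout case. The only mildly delicate point is that it is cleaner to pass from $\bigl|\int\mu_n\,d\eta_t-\int\mu_n\,d\eta_u\bigr|$ to an $L^1(\nu)$–bound via $|\mathbf{1}_E(x)-\mathbf{1}_E(y)|$-style reasoning, and to remember to include the seed $\Omega$ itself as an element of $\mathcal{F}$ so that it can be handled on the same footing as the removed cutout shapes. The constant $C'$ depends only on $C$, on the constant from \eqref{eq:Holder-shape}, and on the constants in the cell/cutout definition, so it is quantitative as claimed.
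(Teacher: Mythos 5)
Your proposal is correct and follows essentially the same route as the paper: bound the difference of integrals by $2^{\tau n}$ times the $\nu$-measure of $\Pi_t^{-1}(A_n)\Delta\Pi_u^{-1}(A_n)$, decompose that symmetric difference into one term per shape (including the seed $\Omega\in\mathcal{F}$), apply \eqref{eq:Holder-shape} to each, and count shapes via $M_n=O(2^{\zeta n})$ for $n\ge N_0$. Your packaging of the cutout case as the clean inclusion $A\Delta B\subseteq(A_0\Delta B_0)\cup\bigcup_j(A_j\Delta B_j)$ is just a tidier statement of the explicit $A_i,B_i,D$ decomposition the paper writes out.
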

In other words, \eqref{H:Holder-a-priori} holds for the family $\{\eta_t\}_{t\in\Gamma}$ with $\theta=\tau+\zeta$.
\begin{proof}
Firstly we assume $(\mu_n)$ is of $(\mathcal{F},\tau,\zeta)$-cutout type. Let $E_n$ be the event that $M_n\le C\, 2^{\zeta n}$; then by assumption there is $N_0$ such that $E_n$ holds for all $n\ge N_0$. Keeping \eqref{eq:cutout-type} in mind, we only need to show that
\begin{equation} \label{eq:Holder-reference-measure}
\left|\Pi_t\nu(\Omega\setminus\cup_{i=1}^{M_n} \Lambda_i) -\Pi_u\nu(\Omega\setminus\cup_{i=1}^{M_n} \Lambda_i)\right|\le C (M_n+1) d(t,u)^{\gamma_0}\,,
\end{equation}
whenever $\{\Lambda_i\}_{i=1}^{M_n}\subset\mathcal{F}$.

Write
\begin{align*}
A_i&=\Pi_t^{-1}(\Lambda_i\cap\Omega)\setminus  \Pi_u^{-1}(\Lambda_i\cap\Omega)\,,\\
B_i&=\Pi_u^{-1}(\Lambda_i\cap\Omega)\setminus  \Pi_t^{-1}(\Lambda_i\cap\Omega)\,,\\
D&=\bigcup_{i=1}^{M_n}\Pi_t^{-1}(\Lambda_i\cap\Omega)\cap \Pi_u^{-1}(\Lambda_i\cap\Omega)\,.
\end{align*}
We can then decompose
\begin{align*}
\Pi_t\nu(\Omega\setminus\cup_{i=1}^{M_n} \Lambda_i)&=\nu(\Pi_t^{-1}\Omega)- \nu\left(D\right)-\nu\left(\cup_{i=1}^{M_n} A_i\setminus D\right),\\
\Pi_u\nu(\Omega\setminus\cup_{i=1}^{M_n} \Lambda_i)&=\nu(\Pi_u^{-1}\Omega)- \nu\left(D\right)-\nu\left(\cup_{i=1}^{M_n} B_i\setminus D\right),\\
\nu(\Pi_t^{-1}\Omega) - \nu(\Pi_u^{-1}\Omega) &= \nu\left(\Pi_t^{-1}(\Omega)\setminus\Pi_u^{-1}(\Omega)\right) -  \nu\left(\Pi_u^{-1}(\Omega)\setminus\Pi_t^{-1}(\Omega)\right),
\end{align*}
so that the left-hand side of \eqref{eq:Holder-reference-measure} gets bounded by
\begin{equation}\label{eq:noD}
\nu(\Pi_t^{-1}\Omega\Delta\Pi_u^{-1}\Omega)+ \sum_{i=1}^{M_n} \nu(A_i)+\nu(B_i),
\end{equation}
and \eqref{eq:Holder-reference-measure} follows from the hypothesis \eqref{eq:Holder-shape}. Note that e.g.
\begin{align*}
A_i \subset\left(\Pi_t^{-1}(\Lambda_i)\setminus\Pi_{u}^{-1}(\Lambda_i)\right)\cup\left(\Pi_t^{-1}(\Omega)\setminus\Pi_{u}^{-1}(\Omega)\right)\,.
\end{align*}

If instead $(\mu_n)$ is of cell type, the proof is even easier. Letting $E_n, N_0$ be as before, and taking $n\ge N_0$, we estimate
\begin{align*}
\left|\int \mu_n d\Pi_t\nu -\int \mu_n d\Pi_u\nu\right| &\le \left| \sum_{j=1}^{M_n} c_j\,\left(  \nu(\Pi_t^{-1}(F_j^{(n)})) - \nu(\Pi_u^{-1}(F_j^{(n)}))\right)   \right|\\
&\le M_n \max_j c_j \max_{F\in\mathcal{F}} | \nu(\Pi_t^{-1}(F_j^{(n)})) - \nu(\Pi_u^{-1}(F_j^{(n)})) | \\
&\le O(1) 2^{\zeta n} 2^{\tau n} d(t,u)^{\gamma_0}\,,
\end{align*}
yielding the result.
\end{proof}

\begin{rem}
The exponent $\tau+\zeta$ from the previous proposition is in general very far from optimal. We discuss possible avenues to improve the exponent in the case of Poissonian cutouts, but similar remarks are valid for other SI martingales. To begin with, most of the removed shapes are completely covered by other removed shapes so they can be ignored. In general, the measures $\Pi\nu_t$ have very small support, so that in fact much fewer than $M_n$ sets $\Lambda_j$ need to be considered for each of them. Next, condition \eqref{eq:Holder-shape} is not scale invariant, which roughly means that it does not take into account the size of $\Lambda$. Due to the scale invariance of $\mathbf{Q}$, for sets $\Lambda\in\mathcal{F}$ of small diameter, an estimate much better than \eqref{eq:Holder-shape} holds. Last, but not least, our estimate ignores all cancelation in $\nu(\cup_i A_i)-\nu(\cup_i B_i)$;  because of the independence in the construction, there is in fact a large amount of cancelation. In order to keep the exposition as general and simple as possible, and because finding the optimal exponent appears in any case to be quite difficult, we ignore these possible improvements.
\end{rem}

In some applications, it is convenient to allow the measure on $\mathcal{M}$ being projected to also depend on the parameter $t$. For example, instead of projecting a fixed Bernoulli measure to a family of self-similar sets as in \eqref{ex:Bernoulli} above, it is more natural to consider, for each IFS of similarities, the natural Bernoulli measure. Or one might want to prove a statement for \emph{all} Bernoulli measures simultaneously. We give a variant of Proposition \ref{prop:cutout-measures-satisfy-Holder-a-priori} for this purpose.

\begin{prop} \label{prop:cutout-measures-satisfy-Holder-a-priori-2}
Suppose $(\mu_n)$ is an SI-martingale, which is of $(\mathcal{F},\tau,\zeta)$-cutout type, or of $(\mathcal{F},\tau,\zeta)$-cell type.

Let $\{\eta_t\}_{t\in\Gamma}$ be a family of measures in $\R^d$ parametrized by $(\Gamma,d)$. Let $\mathcal{M}$ be a metric space, and assume that
for each pair $t,u\in\Gamma$, there are maps $\Pi_t,\Pi_u\,\colon \mathcal{M}\to\R^d$ and measures $\nu_t$, $\nu_u$ on $\mathcal{M}$ such that $\eta_t=\Pi_t\nu_t$, $\eta_u=\Pi_u\nu_u$.

Suppose that there are constants $0<\gamma_0,C<\infty$ such that the following holds:
\begin{equation}
\label{eq:Holder-shape-2} \nu_t(\Pi_t^{-1}\Lambda\Delta\Pi_u^{-1}\Lambda)\le C\, d(t,u)^{\gamma_0}\quad\text{for all $\Lambda\in\mathcal{F},t,u\in\Gamma$}\,,
\end{equation}
and (in the cutout type case) also
\begin{equation}\label{eq:Holder-measure}
\left|\nu_t\left(\cup_{i=1}^M\left(\Pi_{u}^{-1}(\Lambda_i\cap\Omega)\right)\right)-\nu_u\left(\cup_{i=1}^M\left(\Pi_{u}^{-1}(\Lambda_i\cap\Omega)\right)\right)\right|\le C M d(t,u)^{\gamma_0}\,,
\end{equation}
whenever $\Lambda_1,\ldots,\Lambda_M\in\mathcal{F}$ and $t,u\in\Gamma$.

Then
\[
\sup_{t\neq u\in\Gamma,n\ge N_0} \frac{\left|\int \mu_n d\eta_t -\int \mu_n d\eta_u\right|}{2^{(\tau+\zeta)n}d(t,u)^{\gamma_0}} \le C' < \infty\,,
\]
where $C'$ is a quantitative constant, and $N_0$ is the random variable in Definitions \ref{def:cutout-type}, \ref{def:cell-type}.
\end{prop}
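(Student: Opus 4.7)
The plan is to mimic the proof of Proposition \ref{prop:cutout-measures-satisfy-Holder-a-priori}, inserting a telescoping step to bridge the parameter-dependence of both the projection and the reference measure. Specifically, I would interpose the quantity $\nu_t \circ \Pi_u^{-1}$ between $\eta_t = \nu_t \circ \Pi_t^{-1}$ and $\eta_u = \nu_u \circ \Pi_u^{-1}$. The ``same measure, different projection'' difference is controlled by the geometric condition \eqref{eq:Holder-shape-2}, exactly as in the proof of Proposition \ref{prop:cutout-measures-satisfy-Holder-a-priori}, while the new ``same projection, different measure'' difference is handled by the stability condition \eqref{eq:Holder-measure}.

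In the cutout-type case, with $\mu_n = 2^{\tau n}\mathbf{1}_{A_n}$ and $A_n = \Omega \setminus \bigcup_{i=1}^{M_n}\Lambda_i$, the task becomes estimating $|\nu_t(\Pi_t^{-1}A_n) - \nu_u(\Pi_u^{-1}A_n)|$. Split this as $\I + \II$, where $\I = |\nu_t(\Pi_t^{-1}A_n) - \nu_t(\Pi_u^{-1}A_n)|$ and $\II = |\nu_t(\Pi_u^{-1}A_n) - \nu_u(\Pi_u^{-1}A_n)|$. The term $\I$ is bounded by $\nu_t(\Pi_t^{-1}A_n\,\Delta\,\Pi_u^{-1}A_n)$; an elementary set-theoretic manipulation (using $\Pi_\bullet^{-1}(E\setminus F) = \Pi_\bullet^{-1}E \setminus \Pi_\bullet^{-1}F$, just as in the proof of Proposition \ref{prop:cutout-measures-satisfy-Holder-a-priori}) dominates this symmetric difference by $\Pi_t^{-1}\Omega \,\Delta\, \Pi_u^{-1}\Omega$ together with $\bigcup_i (\Pi_t^{-1}\Lambda_i \,\Delta\, \Pi_u^{-1}\Lambda_i)$, whose $\nu_t$-measures are then controlled term by term by \eqref{eq:Holder-shape-2} (applied to $\Omega, \Lambda_1, \ldots, \Lambda_{M_n} \in \mathcal{F}$). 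This gives $\I \le C(M_n+1)\, d(t,u)^{\gamma_0}$.

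For $\II$, I would write $\Pi_u^{-1}A_n = \Pi_u^{-1}\Omega \setminus \bigcup_i \Pi_u^{-1}(\Lambda_i \cap \Omega)$ and, since the second set is contained in the first, expand
\[
\II \le \bigl|\nu_t(\Pi_u^{-1}\Omega) - \nu_u(\Pi_u^{-1}\Omega)\bigr| + \Bigl|\nu_t\bigl(\cup_i \Pi_u^{-1}(\Lambda_i \cap \Omega)\bigr) - \nu_u\bigl(\cup_i \Pi_u^{-1}(\Lambda_i \cap \Omega)\bigr)\Bigr|.
\]
Both summands are bounded via \eqref{eq:Holder-measure}: the first with $M=1$, $\Lambda_1=\Omega$, the second directly, yielding $\II \le C(M_n + 1)\, d(t,u)^{\gamma_0}$. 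Multiplying by $2^{\tau n}$ and using $M_n \le C\, 2^{\zeta n}$ on the event $\{n \ge N_0\}$ produces the claimed $O(2^{(\tau+\zeta)n}\, d(t,u)^{\gamma_0})$ bound. The cell-type case proceeds analogously and more simply: applying the same telescoping cell-by-cell to the representation $\mu_n = \sum_j c_j^{(n)} \mathbf{1}[F_j^{(n)}]$ and summing with $c_j^{(n)} \le C\, 2^{\tau n}$ and $M_n \le C\, 2^{\zeta n}$ gives the result. The main subtlety is the bookkeeping in $\II$: one must verify that the sets arising from the decomposition of $\Pi_u^{-1}A_n$ fit exactly into the class for which \eqref{eq:Holder-measure} has been postulated --- this is precisely why the hypothesis is written in terms of the intersected shapes $\Lambda_i \cap \Omega$ and their unions, and why the seed $\Omega$ must be accessible to the same condition through the $M=1$ instance.
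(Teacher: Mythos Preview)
Your proposal is correct and follows essentially the same approach as the paper: telescope through the intermediate quantity $\nu_t\circ\Pi_u^{-1}$, bound the ``same measure, different projection'' term $\I$ via the argument of Proposition~\ref{prop:cutout-measures-satisfy-Holder-a-priori} using \eqref{eq:Holder-shape-2}, and bound the ``same projection, different measure'' term $\II$ via \eqref{eq:Holder-measure}. Your treatment of $\II$ is in fact more explicit than the paper's, which simply says ``bound the second term using \eqref{eq:Holder-measure}'' without spelling out the decomposition into the $\Omega$ piece and the union piece; your observation that the $M=1$, $\Lambda_1=\Omega$ instance of \eqref{eq:Holder-measure} handles the seed term is exactly the missing bookkeeping.
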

\begin{proof}
The proof is a small modification of the proof of Proposition \ref{prop:cutout-measures-satisfy-Holder-a-priori}. We consider only the cutout type case, since the cell type case is similar but easier. Now \eqref{eq:Holder-reference-measure} becomes
\begin{equation*}
\left|\Pi_t\nu_t(\Omega\setminus\cup_{i=1}^{M_n} \Lambda_i) -\Pi_u\nu_u(\Omega\setminus\cup_{i=1}^{M_n} \Lambda_i)\right|\le C (M_n+1) d(t,u)^{\gamma_0}\,.
\end{equation*}
Writing $A=\Omega\setminus\cup_{i=1}^{M_n}\Lambda_i$, we bound the left-hand side by
\[
|\nu_t(\Pi_t^{-1}A)-\nu_t(\Pi_u^{-1}A)| + |\nu_t(\Pi_u^{-1}A)-\nu_u(\Pi_u^{-1}A)|\,,
\]
and, in turn, bound the first term as in Proposition \ref{prop:cutout-measures-satisfy-Holder-a-priori} (now using \eqref{eq:Holder-shape-2}), and the second term using \eqref{eq:Holder-measure}.
\end{proof}

\section{Affine intersections and projections}
\label{sec:affine}

\subsection{Main result on affine intersections}

We are now ready to start applying the machinery developed in the previous sections to the specific geometric problems that motivated this work. The next theorem provides a rich class of random fractal measures with the property that \emph{all} projections onto $k$-dimensional planes $V$ are absolutely continuous with a H\"{o}lder density, which is moreover also H\"{o}lder in $V$. To the best of our knowledge, no examples of singular measures with this property were known.

\begin{thm} \label{thm:linear-projections}
Let $(\mu_n)$ be an SI-martingale which either is of $(\mathcal{F},\tau,\zeta)$-cell type and satisfies $\mu_n(x)\le 2^{\alpha n}$ for all $n$ and $x$, or is of $(\mathcal{F},\alpha,\zeta)$-cutout type. Let $k\in\{1,\ldots,d-1\}$ with $\alpha<k$.

Suppose that there are constants $0<\gamma_0,C<\infty$ such that for all $V\in\mathbb{A}_{d,k}$, all $\varepsilon>0$ and any isometry $f$ which is $\varepsilon$-close to the identity, we have
\begin{equation} \label{eq:non-flat-shapes}
\mathcal{H}^k\left(V\cap \Lambda\setminus f(\Lambda)\right)\le C\, \varepsilon^{\gamma_0} \quad\text{for all } \Lambda\in\mathcal{F}.
\end{equation}
Fix $\gamma$ such that
\[
0<\gamma< \left\{
        \begin{array}{ll}
           \frac{(k-\alpha)\gamma_0}{(k-\alpha)+2(\alpha+\zeta)} & \text{ in the cutout-type case} \\
            \frac{(k-\alpha)\gamma_0}{(k-\alpha)+2(\tau+\zeta)} & \text{ in the cell-type case}
        \end{array}
          \right..
\]
Then there is a finite random variable $K$ such that:
\begin{enumerate}
\item[\emph{(i)}] The sequence $Y_n^V := \int_V \mu_n\, d\mathcal{H}^k$ converges uniformly over all $V\in\mathbb{A}_{d,k}$. Denote the limit by $Y^V$.
\item[\emph{(ii)}] $|Y^V-Y^W| \le K\, d(V,W)^\gamma$.
\item[\emph{(iii)}] For each $V\in\mathbb{G}_{d,d-k}$, the projection $P_V\mu_\infty$ is absolutely continuous. Moreover, if we denote its density by $f_V$, then the map $(x,V)\mapsto f_V(x)$ is H\"{o}lder continuous with exponent $\gamma$.
\end{enumerate}

\end{thm}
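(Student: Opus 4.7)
The strategy is to apply Theorem~\ref{thm:Holder-continuity} with parameter space $\Gamma$ the subset of $\mathbb{A}_{d,k}$ consisting of affine $k$-planes that meet a fixed large ball containing $\supp\mu_0$, paired with the measures $\eta_V=\mathcal{H}^k|_V$. Parts (i) and (ii) will then be direct translations of the conclusion of that theorem, and (iii) will follow by a Fubini-type argument identifying the density of $P_V\mu_\infty$ with $x\mapsto Y^{x+V^\perp}$.

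First I would verify hypotheses \eqref{H:size-parameter-space}--\eqref{H:Holder-a-priori} with $s=k$ and $\theta=\alpha+\zeta$ (in the cutout case) or $\theta=\tau+\zeta$ (in the cell case). Condition \eqref{H:size-parameter-space} is immediate because $\Gamma$ sits inside a smooth finite-dimensional manifold and so admits covers of polynomial complexity in $1/r$. Condition \eqref{H:dim-deterministic-measures} holds with Frostman exponent $k$ since $\mathcal{H}^k|_V(B(x,r))\le C r^k$ uniformly in $V$. Condition \eqref{H:codim-random-measure} is the standing assumption $\mu_n(x)\le 2^{\alpha n}$ (which is automatic in the cutout case). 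Finally, the assumption $\alpha<k$ provides the strict inequality $s>\alpha$ needed to invoke the theorem.

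The crux is verifying \eqref{H:Holder-a-priori}, and for this I would invoke Proposition~\ref{prop:cutout-measures-satisfy-Holder-a-priori}. Fix a reference $k$-plane $V_0\subset\R^d$ equipped with $\nu=\mathcal{H}^k|_{V_0}$, and for each $V\in\Gamma$ choose an isometry $f_V\in\iso_d$ with $f_V(V_0)=V$, arranging that $V\mapsto f_V$ is Lipschitz on $\Gamma$ (a standard local section of $\iso_d\to\mathbb{A}_{d,k}$ does the job on bounded sets). Set $\Pi_V:=f_V|_{V_0}$, so that $\eta_V=\Pi_V\nu$. The key computation is
\begin{equation*}
\nu\bigl(\Pi_V^{-1}\Lambda\,\Delta\,\Pi_W^{-1}\Lambda\bigr)
=\mathcal{H}^k\bigl(V\cap(\Lambda\,\Delta\,(f_V f_W^{-1})\Lambda)\bigr),
\end{equation*}
and, since $f_V f_W^{-1}$ is an isometry that is $O(d(V,W))$-close to the identity, the geometric hypothesis \eqref{eq:non-flat-shapes} bounds this by $C d(V,W)^{\gamma_0}$. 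Proposition~\ref{prop:cutout-measures-satisfy-Holder-a-priori} then supplies \eqref{H:Holder-a-priori} with the value of $\theta$ claimed above; plugging $s=k$ and this $\theta$ into \eqref{eq:quantitative_gamma} recovers precisely the range of H\"older exponents in the statement, establishing (i) and (ii).

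For (iii), given $V\in\mathbb{G}_{d,d-k}$ the fibre $P_V^{-1}(x)=x+V^\perp$ is an affine $k$-plane, so Fubini on the absolutely continuous measures $\mu_n$ shows that $P_V\mu_n$ has density $y\mapsto Y_n^{y+V^\perp}$. The uniform convergence $Y_n^W\to Y^W$ from (i) implies that these densities converge uniformly to the continuous function $(y,V)\mapsto Y^{y+V^\perp}$, while weak convergence $P_V\mu_n\to P_V\mu_\infty$ forces $P_V\mu_\infty$ to be absolutely continuous with precisely this density $f_V(y)$. Joint H\"older continuity in $(y,V)$ then follows from (ii) together with the fact that $(y,V)\mapsto y+V^\perp$ is Lipschitz from $\R^d\times\mathbb{G}_{d,d-k}$ into $\mathbb{A}_{d,k}$. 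The main technical obstacle is the verification of \eqref{H:Holder-a-priori} via Proposition~\ref{prop:cutout-measures-satisfy-Holder-a-priori}, but with the geometric hypothesis \eqref{eq:non-flat-shapes} in hand this reduces to the change-of-variables identity above.
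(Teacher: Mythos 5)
Your proposal is correct and follows essentially the same route as the paper: verify hypotheses \eqref{H:size-parameter-space}--\eqref{H:Holder-a-priori} with $s=k$ and $\theta=\alpha+\zeta$ (resp. $\tau+\zeta$), obtain \eqref{H:Holder-a-priori} from Proposition~\ref{prop:cutout-measures-satisfy-Holder-a-priori} together with \eqref{eq:non-flat-shapes} via the same change-of-variables identity, apply Theorem~\ref{thm:Holder-continuity}, and deduce (iii) by Fubini. The only cosmetic difference is that the paper parametrizes the family directly by isometries and compares metrics at the end, whereas you parametrize by planes via a local Lipschitz section of $\iso_d\to\mathbb{A}_{d,k}$; both work.
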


Before giving the short proof of the theorem, we make some remarks on the assumptions. Note that if $\mu\in\mathcal{P}_d$ is such that $\ldimloc(\mu,x)<d-k$ for a single point $x$, then no projection of $\mu$ onto a $(d-k)$-plane can have a bounded density (since the same would hold for the projection of $x$). Hence in order for the last part of the theorem to hold for a given measure, the spectrum of local dimensions must lie to the right of $d-k$. This is essentially what the assumption $\mu_n(x) = O(2^{\alpha n})$ is saying.

The assumption \eqref{eq:non-flat-shapes} asserts that the (boundaries of) the shapes from $\mathcal{F}$ are non-flat in a certain quantitative sense - for example, in the case of shapes with smooth boundary, it is enough that all principal curvatures have the same sign and are uniformly bounded away from zero. This non-flatness condition is natural - the fact that cubes have flat boundaries is precisely the reason why the theorem fails for fractal percolation, and it also fails for random measures in $\mathcal{PCM}$ when $\mathbf{Q}$ gives positive mass to shapes with a $k$-flat piece in the boundary.

\begin{proof}[Proof of Theorem \ref{thm:linear-projections}]
Let $\Upsilon$ be a bounded open set containing $\supp\mu_0$ (this is possible by \eqref{SI:bounded}).

Fix an affine $k$-plane $V_0$, and set $\nu=\mathcal{H}^k|_{V_0}$. Let $\Gamma$ be the set of isometries $f$ such that $fV_0\cap \Upsilon\neq\varnothing$; this is a bounded family, and the images $\{f V_0:f\in\Gamma\}$ yield all $k$-planes that intersect $\Omega$. Consider the family of measures $\{ f\nu\}_{f\in\Gamma}$.

Hypotheses \eqref{H:size-parameter-space}, \eqref{H:dim-deterministic-measures} and \eqref{H:codim-random-measure} of Theorem \ref{thm:Holder-continuity} hold either trivially or by assumption (with $s=k$), while \eqref{H:Holder-a-priori}, with $\theta=\alpha+\zeta$ in the cutout-type case and $\theta=\tau+\zeta$ in the cell-type case, follows from Proposition \ref{prop:cutout-measures-satisfy-Holder-a-priori} and the non-flatness assumption \eqref{eq:non-flat-shapes}. Since we also assume $\alpha<k$, we can apply Theorem \ref{thm:Holder-continuity} to deduce the first two assertions. To pass from the metric on isometries to the metric on $\mathbb{A}_{d,k}$, we use that
\[
d(V,W) = \Theta(\min\{d(f,g):f,g\in\text{ISO}_d, fV_0=V, gV_0=W\})\,.
\]

 It remains to show (iii). If $V$ is a linear $(d-k)$-plane and $t\in V$, let $V^\perp_t$ denote the affine $k$-plane orthogonal to $V$ passing through $t$. Let $B^\circ(x,r)\subset V$ denote the open ball of centre $x$ and radius $r$. Using Fubini's Theorem and (ii), we find that for all $x\in V$, $0<r<1$,
\begin{align*}
P_V\mu_\infty(B^\circ(x,r)) &\le \liminf_{n\to\infty} P_V\mu_n(B^\circ(x,r))\\
&=\liminf_{n\to\infty} \int_{B^\circ(x,r)} Y_n^{V^\perp_t} dt \le K'\, r^{d-k}\,,
\end{align*}
for some finite random variable $K'$; thus $P_V\mu_\infty$ is absolutely continuous. Moreover, thanks to the uniform convergence of $Y_n^{V^\perp_t}$, the density of $P_V\mu_\infty$ is $f_V(t)=Y^{V^\perp_t}$ for all $V\in\mathbf{G}(d,d-k)$ and $t\in V$. The claimed H\"{o}lder continuity is then immediate from (ii).
\end{proof}

\begin{cor} \label{cor:Holder-projections}
For any $1\le k<\alpha<d$, there is a measure $\mu$ of exact dimension $d-\alpha$ (i.e. $\dimloc(\mu,x)=d-\alpha$ for $\mu$ almost all $x$), whose support is a set of Hausdorff and box dimension $d-\alpha$, such that $P_V\mu$ is absolutely continuous for all $V\in\mathbb{G}_{d,d-k}$ and, denoting the density by $f_V$, the map $(V,x)\mapsto f_V(x)$ is H\"{o}lder with some quantitative exponent.
\end{cor}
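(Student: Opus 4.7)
The statement as worded is vacuous: if $1 \le k < \alpha < d$ then $\dim \mu = d-\alpha < d-k = \dim V$, so no nontrivial $\mu$ can have absolutely continuous projections onto $V \in \mathbb{G}_{d,d-k}$. I therefore read the inequality as a transcription error for $1 \le \alpha < k < d$, which puts us squarely in the scope of Theorem~\ref{thm:linear-projections} and is the regime for which this whole section is written. Everything below addresses the corollary under that reading; if instead one insists on $k < \alpha$, the conclusion is literally false for any $\mu \ne 0$.

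The plan is to take $(\mu_n) = (\mu_n^{\text{ball}(\alpha,d)})$, the Poissonian ball-cutout martingale of Example~\ref{ex:balltype}, with the intensity parameter $r$ chosen so that $\alpha = c_d r$ attains the prescribed value in $[1,d)$, and to set $\mu = \mu_\infty$ on the positive-probability event $\{\mu_\infty \ne 0\}$. By Lemma~\ref{lem:Poisson_is_cut_out_type}, $(\mu_n)$ is an SI-martingale of $(\mathcal{F},\alpha,d)$-cutout type, where $\mathcal{F}$ is the family of closed balls of diameter at most one contained in a suitable bounded domain. Theorem~\ref{thm:dim_cut_out_set} then yields, conditional on $\mu_\infty \ne 0$, that $\dim_H A = \dim_B A = d-\alpha$ and $\dimloc(\mu_\infty, x) = d-\alpha$ for $\mu_\infty$-a.e.\ $x$, supplying the first two clauses of the corollary.

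It remains to verify the non-flatness hypothesis \eqref{eq:non-flat-shapes} with $\gamma_0 = 1$. Given $\Lambda = B(x_0, r) \in \mathcal{F}$, an isometry $f$ with $d(f, \I_d) < \varepsilon$, and a $k$-plane $V$, the set $\Lambda \setminus f(\Lambda)$ is contained in the annular shell $\{x : r - O(\varepsilon) \le |x - x_0| \le r + O(\varepsilon)\}$, so $V \cap (\Lambda \setminus f(\Lambda))$ lies in the $O(\varepsilon)$-neighborhood, within $V$, of the $(k-1)$-dimensional sphere $V \cap \partial\Lambda$. The standard tubular neighborhood estimate for a smooth $(k-1)$-submanifold of $V$ then yields $\mathcal{H}^k(V \cap \Lambda \setminus f(\Lambda)) = O(\varepsilon)$, uniformly in $x_0$, $r$, and $V$. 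Hence \eqref{eq:non-flat-shapes} holds with $\gamma_0 = 1$, and Theorem~\ref{thm:linear-projections} applies (cutout case, $\tau = \alpha$, $\zeta = d$), producing the joint H\"older continuity in $(V,x)$ of the density $f_V(x)$ of $P_V\mu_\infty$ for all $V \in \mathbb{G}_{d,d-k}$ simultaneously, with a quantitative exponent $\gamma = \gamma(\alpha,k,d) > 0$.

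The main conceptual point is the uniform tubular-neighborhood estimate underlying \eqref{eq:non-flat-shapes}: it is immediate for Euclidean balls because their boundaries have bounded principal curvatures, and this is precisely why the same argument would fail for $\mu_n^{\text{perc}}$, whose cube faces are flat. In dimension $d = 2$ one could alternatively use $\mu_n^{\text{snow}}$ of Example~\ref{ex:snowflake} (after randomizing the rotation so that $\mathbf{Q}$ becomes rotationally invariant), since its snowflake boundary satisfies the same uniform non-flatness bound.
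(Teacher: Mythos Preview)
Your reading of the inequality as a typo for $\alpha<k$ is correct (the paper's own proof needs $\alpha<k$ to invoke Theorem~\ref{thm:linear-projections}), and your proof follows the paper's exactly: take $\mu=\mu_\infty^{\text{ball}(\alpha,d)}$, cite Theorem~\ref{thm:dim_cut_out_set} for the dimension statements, verify~\eqref{eq:non-flat-shapes} for balls, and apply Theorem~\ref{thm:linear-projections}.

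One small slip: your tubular-neighbourhood argument for $\gamma_0=1$ breaks down when $V$ is nearly tangent to $\partial\Lambda$. If $\Lambda=B(x_0,r)$, $\dist(x_0,V)=r-\delta$ with $0<\delta\le\varepsilon$, and $f$ translates $x_0$ by $\varepsilon$ perpendicularly away from $V$, then $V\cap f(\Lambda)=\varnothing$ and $V\cap(\Lambda\setminus f(\Lambda))=V\cap\Lambda$ is a full $k$-ball of radius $\Theta(\sqrt{r\delta})$; for $k=1$ this has $\mathcal{H}^1$-measure $\Theta(\sqrt{r\varepsilon})$, not $O(\varepsilon)$. The paper records $\gamma_0=k/2$, which covers this near-tangent case. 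The discrepancy is immaterial for the corollary itself, since Theorem~\ref{thm:linear-projections} only requires some $\gamma_0>0$.
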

\begin{proof}
Take $\mu=\mu_\infty^{\text{ball}(\alpha,d)}$. The fact that $\mu$ has exact dimension $d-\alpha$ follows from Theorem \ref{thm:dim_cut_out_set}. Since $\mathcal{F}$ consists of balls of diameter $\le 1$ that intersect the compact set $\supp\mu_0$, the non-flatness assumption \eqref{eq:non-flat-shapes} is seen to hold with $\gamma_0=k/2$. The remaining claims are then immediate from Theorem \ref{thm:linear-projections}.
\end{proof}

\begin{rem} \label{rem:affine-perc}
If the assumption \eqref{eq:non-flat-shapes} in Theorem \ref{thm:linear-projections} holds only for some subset $\mathcal{A}$ of affine planes, then so do the first two claims restricted to $\mathcal{A}$. If $\mathcal{A}=\{ V+t:V\in \mathcal{G},t\in\R^d\}$ for some $\mathcal{G}\subset\mathbb{G}_{d,k}$, then the last claim holds for $V^\perp\in\mathcal{G}$. In particular, this applies to $\mu_n^{\text{perc}(\alpha,d)}$ and $\mathcal{A}$ equal to the $k$-planes that make an angle at least $\e>0$ with any coordinate hyperplane. Since $\e$ is arbitrary, we recover the result of Peres and Rams \cite{PeresRams14} on projections of the fractal percolation measure in non-principal directions (and generalize it to arbitrary dimensions). More generally, if all shapes in $\mathcal{F}$ are polyhedra with faces parallel to a hyperplane in $\{ W_1,\ldots, W_\ell\}\subset\mathbb{G}_{d,d-1}$, then the last part of Theorem \ref{thm:linear-projections} holds for planes $V$ such that $V^\perp$ makes an angle at least $\e$ with all $W_i$. This applies, for example, to percolation constructed in a triangular or ``simplex'' grid, or to intersections of finitely many percolations constructed from grids of different sizes and orientations (recall Section \ref{subsec:products}).
\end{rem}

\subsection{Applications: non-tube-null sets and Fourier decay}

Let $B$ be the unit ball of $\R^d$. Recall that a set $E\subset B$ is \textbf{tube-null} if, for each $\e>0$, there are countably many lines $L_j$ and numbers $\delta_j$ such that $E\subset \cup_j L_j(\delta_j)$ and $\sum_j \delta_j^{d-1} <\e$. Motivated by the connections between tube-null sets and the localization problem for the Fourier transform in dimension $d\ge 2$, Carbery, Soria and Vargas \cite{CSV07} asked what is the infimum of the dimensions of sets in $\R^d$ which are \emph{not} tube null. In \cite{ShmerkinSuomala12}, we employed a subdivision type random construction to show that the answer is $d-1$. We did this by showing that the natural measure $\mu$ on such constructions has the property that for some random $K>0$, all its orthogonal projections onto hyperplanes are absolutely continuous with a density bounded by $K$. On the other hand, a simple argument shows that given a measure $\mu$ with this property, no set of positive $\mu$-measure can be tube-null, see \cite[Proof of Theorem 1.2]{ShmerkinSuomala12}. Hence, it follows from Theorem \ref{thm:linear-projections} that any set carrying positive mass for any limit martingale $\mu_\infty$ satisfying the assumptions of the theorem for $\alpha\in (0,1)$, also  cannot be tube null. This is the case, in particular, for $\mu_\infty^{\text{ball}(\alpha,d)}$.  Thus, we again find that the existence of non-tube null sets of small dimension is a rather general phenomenon due to a small number of underlying probabilistic and geometric properties.

We conclude this section with a short discussion of the sharpness of Corollary \ref{cor:Holder-projections}, and its relationship with Fourier dimension. Recall that given a finite measure $\mu$ on $\R^d$, its \textbf{Fourier dimension} is
\[
\dim_F\mu= \sup\{ 0\le\sigma\le d: \widehat{\mu}(x)=O_\sigma(|x|^{-\sigma/2}) \}\,,
\]
where $\widehat{\mu}(x)=\int_{\R^d}\exp(-2\pi i x\cdot y)\,d\mu(y)$.

It is well known that $\dim_F\mu\le \dim_H\mu$, where the Hausdorff dimension $\dim_H\mu$ is defined as the infimum of the Hausdorff dimensions of sets of positive $\mu$-measure. A large value of the Fourier dimension (especially in relation to the Hausdorff dimension) is a sort of ``pseudo-randomness'' indicator, see e.g. the discussion in \cite{LabaPramanik09}. The next lemma follows from a classical result relating H\"{o}lder smoothness with the decay of the Fourier transform.

\begin{lemma} \label{lem:Fourier-Holder-projections}
Let $\mu$ be a compactly supported measure on $\mathbb{R}^d$ with the following property: there are $\gamma, C>0$ such that for $\theta\in S^{d-1}$, the projection $\mu_\theta$ of $\mu$ onto a line in direction $\theta$ is absolutely continuous, and its density $f_\ell$ satisfies $|f_\ell(x)-f_\ell(y)|\le C\, |x-y|^\gamma$. Then $\dim_F\mu\ge 2\gamma$.
\end{lemma}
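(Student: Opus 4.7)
The plan is to combine two standard ingredients: the Fourier slice identity and the fact that $C^\gamma$ compactly supported functions on $\R$ have Fourier decay of order $|\xi|^{-\gamma}$.

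First I would recall the Fourier slice formula: for every $\theta\in S^{d-1}$ and every $r\in\R$,
\[
\widehat{\mu}(r\theta)=\int_{\R^d}e^{-2\pi i r\,\theta\cdot y}\,d\mu(y)=\int_\R e^{-2\pi i r t}\,d\mu_\theta(t)=\widehat{\mu_\theta}(r),
\]
where $\mu_\theta=(P_\theta)_{*}\mu$ is the push-forward of $\mu$ under $y\mapsto\theta\cdot y$. Since $\mu$ has compact support, each $\mu_\theta$ is supported in a bounded interval $I\subset\R$ whose length $L$ can be chosen independently of $\theta$. By hypothesis $\mu_\theta=f_\theta\,dt$ with $f_\theta\in C^\gamma(I)$ and uniform Hölder constant $C$, so $\widehat{\mu_\theta}=\widehat{f_\theta}$ and the problem reduces to a uniform $|r|^{-\gamma}$ estimate on the Fourier transform of the scalar functions $f_\theta$.

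The main step is the classical translation trick. For $r\neq 0$, since $e^{-i\pi}=-1$,
\[
-\widehat{f_\theta}(r)=\int_\R f_\theta(x)\,e^{-2\pi i r(x+1/(2r))}\,dx=\int_\R f_\theta\!\left(y-\tfrac{1}{2r}\right)e^{-2\pi i r y}\,dy,
\]
so, averaging with the original expression,
\[
2\,\widehat{f_\theta}(r)=\int_\R\left(f_\theta(y)-f_\theta\!\left(y-\tfrac{1}{2r}\right)\right)e^{-2\pi i r y}\,dy.
\]
Because $f_\theta$ is supported in an interval of length $L$ (after enlarging by $1/(2|r|)$ for the difference), the integrand vanishes outside a set of measure $\le L+1$, and the Hölder bound gives
\[
|2\,\widehat{f_\theta}(r)|\le (L+1)\,C\,(2|r|)^{-\gamma}.
\]
Combining with the slice identity yields $|\widehat{\mu}(r\theta)|=O(|r|^{-\gamma})$ with an implicit constant independent of $\theta$, i.e.\ $|\widehat{\mu}(\xi)|=O(|\xi|^{-\gamma})$ for all $\xi\in\R^d$. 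Hence for every $\sigma<2\gamma$ one has $\widehat{\mu}(\xi)=O(|\xi|^{-\sigma/2})$, so $\dim_F\mu\ge 2\gamma$.

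No serious obstacle is anticipated: the only point that needs a word of care is that the Hölder constant and the diameter of the support of $f_\theta$ are uniform in $\theta$, both of which follow from the hypotheses (compact support of $\mu$ and uniformity of $C$ in the statement).
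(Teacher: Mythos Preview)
Your proof is correct and follows essentially the same approach as the paper: reduce to a one-dimensional estimate via the Fourier slice identity $\widehat{\mu}(r\theta)=\widehat{\mu_\theta}(r)$, then invoke the classical fact that a compactly supported $\gamma$-H\"older function on $\R$ has Fourier transform decaying like $|r|^{-\gamma}$. The only difference is cosmetic: the paper cites this last fact as a theorem of Zygmund (referencing Grafakos), whereas you prove it directly with the standard translation-by-$1/(2r)$ trick.
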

\begin{proof}
It is a classical result of Zygmund that the Fourier transform of a compactly supported function $f$ on $\R^d$ which is H\"{o}lder continuous with exponent $\gamma$ and constant $C$, satisfies $|\widehat{f}(x)| \le O_d(C)|x|^{-\gamma}$, see e.g. \cite[Theorem 3.2.9]{Grafakos08}. (The result is stated there for Fourier series, but the conclusion for the Fourier transform follows easily by considering appropriate dilates of the function.)  Since
\[
\widehat{\mu}(x\theta) = \widehat{\mu}_\theta(x\theta)\quad\text{for all }\theta\in S^{d-1}, x\in\R\,,
\]
Zygmunds's result and the hypotheses of the lemma imply that $|\widehat{\mu}(x)| = O_{d}(C) |x|^{-\gamma}$, as claimed.
\end{proof}

This lemma shows that, given $\mu\in\mathcal{P}_d$ with $\dim_H\mu=s<2$, the best possible uniform regularity for its projections is H\"{o}lder with exponent $s/2$. In particular, it cannot happen that all projections are Lipschitz with a uniform Lipschitz constant. In this sense, Corollary \ref{cor:Holder-projections} is sharp when $d=2,k=1$, up to the value of the H\"{o}lder exponent. However, we do not know what happens in higher dimensions, nor do we know whether, even in the plane, it can happen that all the projections of a measure of dimension less than $2$ are Lipschitz. On the positive side, we get the following immediate consequence of Theorem \ref{thm:linear-projections} and Lemma \ref{lem:Fourier-Holder-projections}.

\begin{cor} \label{cor:positive-Fourier-dim}
Under the assumptions of Theorem \ref{thm:linear-projections} with $k=1$,
\[
\dim_F(\mu_\infty) \ge \frac{2(1-\alpha)\gamma_0}{(1-\alpha)+2(\alpha+\zeta)}
\]
in the cutout-type case, and likewise with $2(\tau+\zeta)$ in place of $2(\alpha+\zeta)$ in the cell-type case.
\end{cor}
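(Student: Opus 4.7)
The plan is to obtain the Fourier dimension bound as a direct combination of Theorem \ref{thm:linear-projections}(iii) with Lemma \ref{lem:Fourier-Holder-projections}. Under the hypotheses of the corollary (with $k=1$, so in particular $\alpha<1$), Theorem \ref{thm:linear-projections} produces, almost surely, a finite random constant $K$ such that for every exponent
\[
\gamma < \gamma_{\max} := \frac{(1-\alpha)\gamma_0}{(1-\alpha)+2(\alpha+\zeta)}
\]
(and analogously with $\tau+\zeta$ replacing $\alpha+\zeta$ in the cell-type case), the projection $P_V\mu_\infty$ is absolutely continuous for every $V\in\mathbb{G}_{d,d-1}$, with density $f_V$ that is jointly H\"older in $(x,V)$ with exponent $\gamma$ and constant $K$.

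In the planar setting $d=2$, which is where the corollary is most directly applicable as stated, $\mathbb{G}_{2,1}$ parametrizes lines through the origin and $f_V$ is precisely the density of the one-dimensional projection of $\mu_\infty$ required by the hypothesis of Lemma \ref{lem:Fourier-Holder-projections}. Feeding this into that lemma with $C=K(\omega)$ gives $\dim_F\mu_\infty \ge 2\gamma$ almost surely, and then letting $\gamma \uparrow \gamma_{\max}$ yields the claimed bound in both the cutout-type and cell-type cases. For higher ambient dimension the same strategy applies after substituting $k=d-1$ in Theorem \ref{thm:linear-projections}, which is what actually produces the one-dimensional projections required by Lemma \ref{lem:Fourier-Holder-projections}; the corollary is stated with $k=1$ because the resulting H\"older exponent formula literally matches the $(1-\alpha)$ numerator in the displayed bound.

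The only point requiring a modicum of care is the uniformity of the H\"older constant across all directions simultaneously, which is precisely what the joint $(x,V)$-H\"older continuity in Theorem \ref{thm:linear-projections}(iii) provides: Lemma \ref{lem:Fourier-Holder-projections} needs a single H\"older constant valid for every $\theta\in S^{d-1}$, and this uniformity is built into the conclusion of the projection theorem. There is no substantive obstacle beyond correctly transcribing the H\"older exponent; the corollary is a clean splice of the two cited results, with the conceptual content residing entirely in Theorem \ref{thm:linear-projections}.
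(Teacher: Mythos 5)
Your core argument --- feed the jointly H\"older densities from Theorem \ref{thm:linear-projections}(iii) into Lemma \ref{lem:Fourier-Holder-projections} and let $\gamma$ increase to the supremum of admissible exponents --- is exactly the paper's route: the paper gives no proof beyond declaring the corollary an immediate consequence of those two results. For $d=2$, where $k=1$ and $d-k=1$ coincide so that Theorem \ref{thm:linear-projections}(iii) really does produce the line projections that Lemma \ref{lem:Fourier-Holder-projections} requires, your write-up is correct, including the point about the uniformity of the H\"older constant over directions.

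Your treatment of $d>2$ is, however, wrong. You propose to obtain the one-dimensional projections by invoking Theorem \ref{thm:linear-projections} with $k=d-1$, and then assert that the resulting H\"older exponent ``literally matches the $(1-\alpha)$ numerator''. It does not: with $k=d-1$ the admissible exponents are $\gamma<(d-1-\alpha)\gamma_0/\bigl((d-1-\alpha)+2(\alpha+\zeta)\bigr)$, the hypothesis $\alpha<k$ relaxes to $\alpha<d-1$, and the non-flatness condition \eqref{eq:non-flat-shapes} must now be verified for $(d-1)$-planes with $\mathcal{H}^{d-1}$ --- all different from the stated hypotheses ``with $k=1$''. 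So for $d>2$ your argument proves a differently conditioned bound with a different exponent, not the one claimed. To get the stated bound one keeps $k=1$, so that Theorem \ref{thm:linear-projections}(iii) controls the projections onto hyperplanes $W\in\mathbb{G}_{d,d-1}$, and uses the hyperplane analogue of Lemma \ref{lem:Fourier-Holder-projections}: for $\xi\in W$ one has $\widehat{\mu}(\xi)=\widehat{P_W\mu}(\xi)$, and Zygmund's estimate (the cited \cite[Theorem 3.2.9]{Grafakos08} holds in any dimension) applied to the compactly supported $\gamma$-H\"older density $f_W$ on $W\cong\R^{d-1}$ gives $|\widehat{\mu}(\xi)|=O(K|\xi|^{-\gamma})$ with $K$ uniform in $W$ by the joint H\"older continuity. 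Since every nonzero frequency lies in some hyperplane through the origin, this yields $\dim_F\mu_\infty\ge 2\gamma$ with the $k=1$ exponent, as claimed.
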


 In general this is  far from sharp. In Section \ref{subsec:Salem} we show that some SI-martingales $\mu_\infty$ are in fact Salem measures, that is, $\dim_F(\mu_\infty)=\dim_H(\mu_\infty)$. However, the class for which we can prove this is rather restricted, and is different than the class to which the corollary applies. We remark that having positive Fourier dimension is already an important feature of a measure. If $\dim_F \mu>0$ it follows, for example, that sufficiently high convolution powers of $\mu$ become arbitrarily smooth.

\section{Fractal boundaries and intersections with algebraic curves}
\label{sec:algebraic}

\subsection{Intersections with algebraic curves}
Theorem \ref{thm:linear-projections} deals with intersections with (Hausdorff measure on) planes and linear projections. A natural next step is to replace planes with algebraic varieties, and linear projections by polynomial projections. However, if the shapes in $\mathcal{F}$ are balls, then in general there will be no continuity with respect to intersections with (surface measure on) spheres: consider $\mu_n^{\text{ball}}$. In general, if the boundaries of the removed shapes are algebraic, then one cannot hope for continuity with respect to algebraic objects. In the plane, we overcome this issue by removing shapes with a fractal boundary instead (we employ Von Koch snowflakes to provide concrete examples, although it will be clear that many other similar shapes, including many domains with a finite union of unrectifiable self-similar arc as boundaries, could be chosen instead). We believe that a suitable analogue of the Von Koch snowflake should exist in any dimension $d\ge 2$, but have not been able to prove it.

We start by introducing families of curves that include algebraic curves as special cases. We denote by $[x,y]$ the line segment joining $x$ and $y$, and by $[x,y]_\e$ the $(\e |x-y|)$-neighbourhood of $[x,y]$. In this section, we denote the length of a segment $J$ by $|J|$.

\begin{defn}
Let $h:(0,1)\to (0,1)$ be a function. A Jordan arc $V\subset\R^2$ is \textbf{$h$-linearly approximable} if for every $x,y\in V$ and every $\e>0$, the part of the curve $V_{x,y}$ joining $x,y$ has a subcurve $V_{u,v}$ such that
\begin{enumerate}
\renewcommand{\labelenumi}{(LA\arabic{enumi})}
\renewcommand{\theenumi}{LA\arabic{enumi}}
\item\label{LA1} $V_{u,v}\subset [u,v]_\e$,
\item\label{LA2} $h(\e)\mathcal{H}^1(V_{x,y})\le |u-v|$.
\end{enumerate}
\end{defn}

The following geometric lemma will help us verify condition \eqref{eq:Holder-shape-2} when $\eta_t$ are length measures on linearly approximable curves, and the boundaries of shapes in $\Lambda$ satisfy the conditions \eqref{K1} and \eqref{K2} in the lemma.

\begin{lemma}\label{lem:porous}
Let $\Lambda\subset\R^2$ be a Jordan domain, bounded by a closed Jordan curve $\pi$. Given $x,y\in\pi$, we denote by $\pi_{x,y}$ the component of $\pi\setminus\{x,y\}$ with smaller diameter. Suppose that $\pi$ has the following properties for some $1<C<\infty$:
\begin{enumerate}
\renewcommand{\labelenumi}{(K\arabic{enumi})}
\renewcommand{\theenumi}{K\arabic{enumi}}
\item\label{K1} $\diam(\pi_{x,y})\le C|x-y|$ for all $x,y\in\pi$.
\item\label{K2} $\pi_{x,y}\not\subset [x,y]_{1/C}$ for all $x,y\in\pi$.
\end{enumerate}
Then there is $0<\gamma_1=\gamma_1(C,h)<1$ such that for each $h$-linearly approximable curve $V$ of length at most $1$, and each $0<\varepsilon<1$, $V\cap\{x\in\Lambda\,:d(x,\partial\Lambda)<\varepsilon\}$ can be covered by $O_{C,h}(\varepsilon^{-\gamma_1}$) balls of radius $\varepsilon$.
\end{lemma}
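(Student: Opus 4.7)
The plan is to reduce the problem to an intersection bound between line segments and $\pi$, and then use the linear approximability of $V$ to piece together such estimates.

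First I would establish a uniform porosity property of $\pi$: there exists $\kappa=\kappa(C)>0$ such that every ball $B(z,r)$ with $z\in\pi$ contains a ball of radius $\kappa r$ disjoint from $\pi$. Taking points $x,y\in\pi$ on opposite sides of $z$ with $|x-y|\asymp r$, condition \eqref{K1} gives $\diam(\pi_{x,y})\le C|x-y|$, while \eqref{K2} produces a point of $\pi_{x,y}$ at distance at least $|x-y|/C$ from the chord $[x,y]$; on the opposite side of $[x,y]$ from this bump, the Jordan arc $\pi_{x,y}$ is absent, yielding a ball of definite size inside $B(z,r)$ disjoint from $\pi$.

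Second, using this non-flatness, I would prove the following line intersection bound: there exist $\beta=\beta(C)>0$ and $C'=C'(C)$ such that for every line segment $J$ of length $r$ and every $0<\varepsilon<r$, the set $J\cap\pi(\varepsilon)$ can be covered by at most $C'(r/\varepsilon)^{1-\beta}$ balls of radius $\varepsilon$. The argument is recursive across dyadic scales: given a subinterval $I\subset J$ of length $r'\gg\varepsilon$ with $I\cap\pi\ne\varnothing$, let $x,y\in I\cap\pi$ be extremal; since $[x,y]\subset J$, \eqref{K2} forces a point of $\pi_{x,y}$ at distance $\ge |x-y|/C$ from $J$, so the portion of $I$ within $\varepsilon$ of $\pi_{x,y}$ concentrates near $x$ and $y$, leaving a definite fraction $c=c(C)>0$ of $I$ free of $\pi(\varepsilon)$. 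Iterating this dichotomy from scale $r$ down to $\varepsilon$ produces a Cantor-type structure and yields the claimed $(r/\varepsilon)^{1-\beta}$ bound.

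Third, I would combine the line bound with the $h$-linear approximability of $V$. Apply the defining property to $V$ with parameter $\varepsilon$ to extract a subarc $V_{u,v}\subset[u,v]_\varepsilon$ with $|u-v|\ge h(\varepsilon)\mathcal{H}^1(V)$; since the tube around the chord has width at most $\varepsilon$, projecting $V_{u,v}\cap\pi(\varepsilon)$ onto $[u,v]$ lands in $[u,v]\cap\pi(2\varepsilon)$, which by the previous step is covered by $O(\varepsilon^{-(1-\beta)})$ balls of radius $O(\varepsilon)$. Lifting back covers $V_{u,v}\cap\pi(\varepsilon)$ by $O(\varepsilon^{-(1-\beta)})$ balls of radius $\varepsilon$. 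The two remaining subarcs of $V\setminus V_{u,v}$ have total arclength at most $(1-h(\varepsilon))\mathcal{H}^1(V)$; recursing on each and summing contributions across a depth of recursion tuned to the decay rate of $h$ yields a total bound of $O(\varepsilon^{-\gamma_1})$ for some $\gamma_1=\gamma_1(C,h)<1$.

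The main obstacle is the line intersection bound, which carries the central geometric content: extracting quantitative non-flatness from \eqref{K2} uniformly at all scales requires a careful recursive argument, and the resulting exponent $\beta$ should depend only on $C$. A secondary obstacle lies in balancing the recursion in the last step so that the branching of subarcs produced by iterated linear approximability does not overwhelm the $(r/\varepsilon)^{1-\beta}$ savings; the final exponent $\gamma_1$ then depends quantitatively on both $\beta(C)$ and the modulus $h$.
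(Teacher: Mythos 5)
Your overall architecture (a porosity property of $\pi$ derived from \eqref{K1}--\eqref{K2}, then a transfer to $V$ via linear approximability) is the same as the paper's, but both halves as written have genuine gaps. In the line intersection bound, you apply \eqref{K2} only to a single extremal pair $x,y\in I\cap\pi$ and conclude that ``the portion of $I$ within $\varepsilon$ of $\pi_{x,y}$ concentrates near $x$ and $y$.'' This does not follow: \eqref{K2} produces \emph{one} point of $\pi_{x,y}$ far from the chord, but the arc can still shadow the entire subinterval $I$ within distance $\varepsilon$ before and after its excursion (and the complementary arc $\pi\setminus\pi_{x,y}$ may also meet $I$). The correct route, which is how the paper proves segment porosity, is a contradiction argument that crucially uses \eqref{K1}: if $\pi$ meets each of $m+1$ equally spaced balls of radius $|J|/m$ along a segment $J$, then \eqref{K1} traps each short arc $\pi_{y_j,y_{j+1}}$ in an $O_C(|J|/m)$-neighbourhood of $J$, hence the concatenated arc $\pi_{y_0,y_m}$ lies in $[y_0,y_m]_{O_C(1/m)}$ with $|y_0-y_m|=\Omega_C(|J|)$, contradicting \eqref{K2} for $m$ large. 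Your step-one ball porosity has the same defect (the arc need not be ``absent'' on the side opposite the bump), though it is not load-bearing for the rest.

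The second gap is in the final recursion: you apply linear approximability at the \emph{target} scale, extracting a chordal piece of length $\ge h(\varepsilon)\mathcal{H}^1(V)$ and recursing on the two complementary arcs. To reduce the remaining length below $\varepsilon$ you need roughly $k\asymp h(\varepsilon)^{-1}\log(1/\varepsilon)$ levels, producing $2^{k}=\varepsilon^{-\Omega(1/h(\varepsilon))}$ arcs; unless $h$ is bounded below, this branching destroys any power saving, and no tuning of the depth fixes it. The paper avoids this by evaluating $h$ only at a \emph{fixed} argument $\varrho/2$ determined by the porosity constant: each subcurve of length $n^{-k}$ is split into $n$ subcurves of length $n^{-k-1}$, linear approximability at the fixed scale produces a sub-subcurve of length $\ge n^{-k-1}$ inside a thin tube, porosity discards one of the $n$ pieces, and the count $(n-1)^k$ at scale $n^{-k}$ gives $\gamma_1=\log(n-1)/\log n<1$ with all constants independent of $\varepsilon$. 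You would need to restructure your third step along these lines for the exponent to come out strictly less than $1$.
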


\begin{proof}
Conditions \eqref{K1} and \eqref{K2} imply that $\pi$ is uniformly porous in the following sense: There is $\varrho=\varrho(C)>0$ such that whenever $J\subset\R^2$ is a line segment, we can find $x\in J$ with $B(x,\varrho|J|)\cap \pi=\emptyset$.

Before verifying this porosity condition, let us show how it can be used to prove the claim of the Lemma.
Without loss of generality, we may assume that $V$ is a curve of length $1$, so (abusing notation slightly)
let $V\colon[0,1]\rightarrow \R^2$ be an arclength parametrization of $V$.
Let $\varepsilon=\varrho/2$ and $\delta=h(\varepsilon)$, and choose integers $n_0>1/\delta$ and $n>n_0/\e$.

Now that we have fixed the parameters $n$ and $n_0$, we first observe that \eqref{LA1} and \eqref{LA2} imply that $V_{u_0,v_0}\subset [u_0,v_0]_{\e}$ for some subcurve $V_{u_0,v_0}\subset V$ of length $\mathcal{H}^1(V_{u_0,v_0})\ge 1/n_0$.

Decomposing $V$ into $n$ subcurves of length $1/n$, and applying the porosity condition to $J=[u_0,v_0]$, we observe that $V_{u,v}\cap \pi(1/n)=\emptyset$, where $V_{u,v}$ is one of these $n$ subcurves, see Figure \ref{fig:porosity}. To summarize, we have shown that we may cover $V\cap \pi(1/n)$ with $n-1$ subcurves of length $1/n$.

\begin{figure}
   \centering
\resizebox{0.9\textwidth}{!}{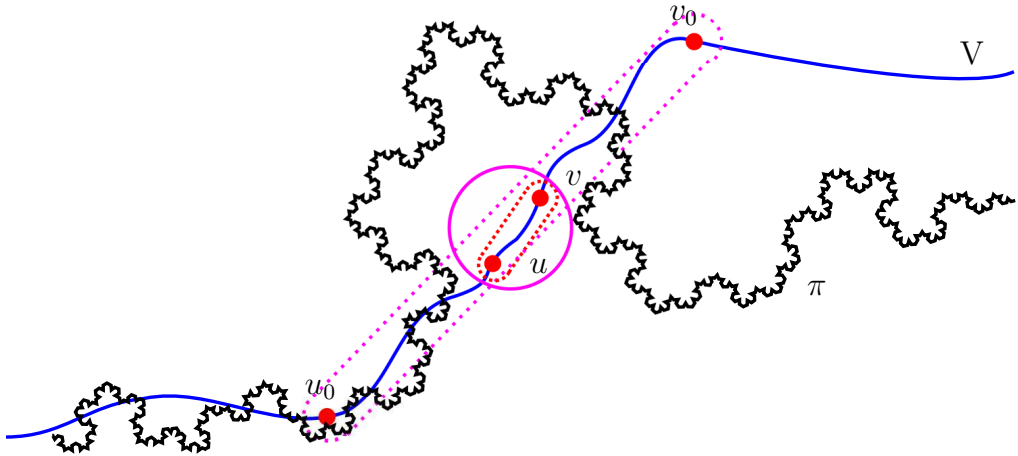}
\caption{Illustration for the proof of Lemma \ref{lem:porous}. The ball $B=B(x,\varrho|J_{u_0,v_0}|)$ shown in the picture satisfies $V_{u,v}(1/n)\subset B\subset\R^2\setminus\pi$.}
\label{fig:porosity}
\end{figure}

We then continue inductively. Replacing $V$ by these $n-1$ subcurves, we observe that $V\cap \pi(n^{-2})$ can be covered by $(n-1)^2$ curves of length $n^{-2}$, and more generally, for each $k\in\N$, $V\cap \pi(n^{-k})$ may be covered by $(n-1)^k$ curves of length $n^{-k}$. This gives the claim for $\gamma_1=\log(n-1)/\log n<1$.

It remains to verify the uniform porosity condition. Let $m$ be a sufficiently large integer to be determined later. Suppose the porosity condition does not hold with the parameter $\varrho=1/m$. Then we can find a line segment $J$ and equally spaced points $x_0,x_1,\ldots,x_m$ on $J$ and $y_j\in B(x_j,|J|/m)\cap\pi$ for each $j$. By condition \eqref{K1}, each $\pi_{y_j,y_{j+1}}$ is within distance $O_C(|J|/m)$ of $[x_j,x_{j+1}]$ and, since $\pi_{y_0,y_m}\subset\bigcup_{j=0}^{m-1}\pi_{y_j,y_{j+1}}$, and  $|y_0-x_0|, |y_m-x_m|\le|J|/m$, also
\[
\pi_{y_0,y_m}\subset [y_0,y_m]_{O_C(|J|/m)}\,.
\]
But $|y_0-y_m|=\Omega_C(|J|)$, so we arrive at a contradiction with \eqref{K2}, provided $m$ is chosen large enough.
\end{proof}

\begin{rem}
We discuss briefly the geometric assumptions in Lemma \ref{lem:porous}. The condition \eqref{K1} says that $\pi$ is a quasicircle (see e.g. \cite{Gehring82}). It is well known that the Von Koch snowflake curve is a quasicircle. In addition to this standard example, there are many other domains $\Lambda$ with piecewise self-similar boundary satisfying \eqref{K1} (see \cite{Aseevetal03}). Further examples (not necessarily self-similar) can be found as generalized snowflakes following the construction in \cite{Rohde01}.

Condition \eqref{K2} is a quantitative unrectifiability property of the boundary $\pi$ which is easily seen to hold for the snowflake curve. More generally, \eqref{K2} is valid for any self-similar curve which is not a line segment. Indeed, if $\pi$ is such a curve with endpoints $x_0$ and $y_0$, there is $C'<\infty$ and $z\in\pi_{x_0,y_0}$ with
\begin{equation*}
z\notin[x_0,y_0]_{1/C'}\,.
\end{equation*}
Now for any distinct points $x,y\in\pi$, there is $f\in\simi^{c}_2$ with contraction ratio $\Omega(\diam(\pi_{x,y}))$ such that $f(\pi)\subset\pi_{x,y}$. Then \eqref{K2} holds for $f(\pi)$ with the  constant $C'$ and
 consequently for $\pi_{x,y}$ with a slightly larger constant $C=O(C')$.
\end{rem}

Let $k\in\N$ and denote by $\mathcal{K}_k$ the family of (real) algebraic curves in $\R^2$ of degree at most $k$. For future use, we also set $\mathcal{K}=\cup_{k\in\N}\mathcal{K}_k$. Given $V\in\mathcal{K}_k$, let $\eta_V$ be the length measure on $V$.

Since our goal is to obtain an analogue of Theorem \ref{thm:linear-projections} for algebraic curves, we need a suitable metric on the space of curves in $\mathcal{K}_k$ which meet a fixed large ball $\Upsilon$.

\begin{defn}\label{def:metric-algebraic}
Let $\Upsilon$ be a fixed large ball. For $V,W\in\mathcal{K}_k$, 
let
\[
d(V,W)=\sup\left\{\left|\int L\,d\eta_V-\int L\,d\eta_W\right|\right\}\,,
\]
where the supremum is over all Lipschitz functions $L$ supported on $\Upsilon$ and with Lipschitz constant at most one.
\end{defn}

It is easy to see that $(\eta_V,\eta_W)\mapsto d(V,W)$ does define a metric among the measures $\eta_V|_\Upsilon$, $V\in\mathcal{K}_k$, and this metric induces the weak topology, see \cite[\S 14.12]{Mattila95}. It is not quite a metric on $\mathcal{K}_k$ as $d$ does not distinguish $V,W\in\mathcal{K}_k$ if $V$ and $W$ are contained in $\R^2\setminus\Upsilon$, or if the set $(V\Delta W)\cap\Upsilon$ is finite. For simplicity, we will slightly abuse notation and stick to the notation $d(V,W)$; the ball $\Upsilon$ and the corresponding identifications will always be clear from context. We also note that the metric $d(V,W)$ is closely related to the $1$-Wasserstein metric, although the latter is usually defined only on probability measures.

When $k=1$, the metric $d(V,W)$ is easily seen to be comparable to the standard metric between affine lines which hit $\Upsilon$. We remark that for other natural choices of metric with this property, one cannot hope to even get continuity of $V\mapsto \int_V \mu_n\,d\eta_V$, already for $k=2$. For example, Hausdorff metric (on the intersections $V\cap \Upsilon)$ does not work, because the Hausdorff distance between two bounded algebraic curves can be very small even if their lengths are far away from each other; consider, for example, a line segment and a thin ellipsoid. Using the normalized coefficients of the defining polynomials does not work either, since two real algebraic curves may be geometrically far away even if their coefficients are very close; consider for example $(x^2+\e)y=0$ and $x^2 y=0$. Nevertheless, if one restricts
attention to a small neighbourhood of a polynomial for which $0$ is a regular value, then it can be shown (see \eqref{eq:d-dominated-by-coeff-diff}) that $d$ is dominated by the difference between the coefficients. This
observation is crucial in our applications to polynomial projections in Section \ref{sec:poly_projections}.

\begin{thm}\label{thm:alg_curves}
Let $(\mu_n)$ be an SI-martingale in $\R^2$ of $(\mathcal{F},\alpha,\zeta)$-cutout type, or of $(\mathcal{F},\tau,\zeta)$-cell type and suppose $\mu_n(x) \le C\, 2^{\alpha n}$, for some $\alpha\in (0,1)$. Furthermore, assume that each $\Lambda\in\mathcal{F}$ satisfies \eqref{K1} and \eqref{K2} with some uniform constant $C$.

Then there are a quantitative constant $\gamma$ and a.s. finite random variable $K>0$ such that a.s.:
\begin{enumerate}
\item[\emph{(i)}]\label{claim:ac1} The sequence $Y_n^V:=\int \mu_n \,d\eta_V$ converges uniformly over all $V\in\mathcal{K}_k$; denote the limit by $Y^{V}$.
\item[\emph{(ii)}] $|Y^V-Y^W|\le K\, d(V,W)^\gamma$ for all $V,W\in\mathcal{K}_k$,
\end{enumerate}
where $d$ is as defined above with respect to any open ball $\Upsilon=B^\circ(0,R)$ containing $\supp\mu_0$.
\end{thm}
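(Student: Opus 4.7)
The plan is to apply Theorem \ref{thm:Holder-continuity} with parameter space $\Gamma = \{V \in \mathcal{K}_k : V \cap \Upsilon \neq \varnothing\}$ (equipped with the metric $d$ of Definition \ref{def:metric-algebraic}) and the family $\{\eta_V\}_{V \in \Gamma}$. Hypothesis \eqref{H:codim-random-measure} is given. For \eqref{H:dim-deterministic-measures} with $s = 1$, I would invoke the classical fact that $\eta_V(B(x,r)) \le C_k\, r$ for every degree-$k$ algebraic curve, which follows from the Cauchy--Crofton formula together with B\'ezout's theorem (a.e. line meets $V$ in at most $k$ points). For \eqref{H:size-parameter-space}, I would identify $\mathcal{K}_k$ with normalized coefficients in a fixed finite-dimensional space and show that, away from a neighbourhood of degenerate polynomials, the coefficient map is quantitatively H\"older into $(\Gamma,d)$; this gives polynomial covering numbers and hence \eqref{H:size-parameter-space} with arbitrarily small $\xi$.

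The heart of the argument is the a priori H\"older estimate \eqref{H:Holder-a-priori}. First I would show that every $V \in \mathcal{K}_k$ is $h$-linearly approximable for some $h$ depending only on $k$: on the smooth part of $V$ the curvature is bounded in a quantitative way (in terms of $k$) so that suitable small subarcs are close to their chords, while near singular points local Puiseux expansions allow one to treat each analytic branch separately and still extract a subarc with a chord-approximation property. Combined with the assumptions \eqref{K1} and \eqref{K2} on the shapes in $\mathcal{F}$, Lemma \ref{lem:porous} then yields the key uniform boundary estimate
\[
\eta_V\bigl(\{x \in \R^2 : \dist(x, \partial\Lambda) < \varepsilon\}\bigr) \le C\,\varepsilon^{\,1 - \gamma_1}
\]
for every $V \in \Gamma$, every $\Lambda \in \mathcal{F}$, and some $\gamma_1 = \gamma_1(k, \mathcal{F}) \in (0,1)$.

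To convert this into a bound on $|Y_n^V - Y_n^W|$, consider a typical cutout representation $A_n = \Omega \setminus \bigcup_{i=1}^{M_n} \Lambda_i$. I would approximate $\mathbf{1}_{A_n}$ by a $\varepsilon^{-1}$-Lipschitz function $L_\varepsilon$ supported in $\Upsilon$ that equals $\mathbf{1}_{A_n}$ outside the $\varepsilon$-neighbourhood of $\partial\Omega \cup \bigcup_i \partial\Lambda_i$. Testing the definition of $d(V,W)$ against $L_\varepsilon$ and summing the boundary bound above over each of the $M_n + 1$ boundary pieces yields
\[
|\eta_V(A_n) - \eta_W(A_n)| \le \varepsilon^{-1}\, d(V,W) + C(M_n + 1)\,\varepsilon^{\,1 - \gamma_1},
\]
and optimizing $\varepsilon$ gives $|\eta_V(A_n) - \eta_W(A_n)| \le C(M_n + 1)\, d(V,W)^{\gamma_0}$ with $\gamma_0 = (1-\gamma_1)/(2-\gamma_1)$. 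Since $Y_n^V = 2^{\alpha n}\eta_V(A_n)$ and $M_n \le C 2^{\zeta n}$ for $n \ge N_0$, this is exactly \eqref{H:Holder-a-priori} with $\theta = \alpha + \zeta$; the cell-type case proceeds in parallel via \eqref{eq:cell-type}, yielding $\theta = \tau + \zeta$. As $s = 1 > \alpha$ by hypothesis, Theorem \ref{thm:Holder-continuity} then delivers (i) and (ii).

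The main obstacle I expect is the uniform $h$-linear approximability of algebraic curves near singular points of arbitrary multiplicity $\le k$, which requires a careful quantitative local analysis (effectively a uniform desingularization statement of a kind not present elsewhere in the paper). A secondary difficulty is the covering estimate \eqref{H:size-parameter-space} in the metric $d$, where one must control the behaviour of length measures as the defining polynomial approaches a degenerate limit, e.g. as isolated components collapse or escape to infinity.
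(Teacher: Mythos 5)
Your overall architecture is the paper's: reduce to Theorem \ref{thm:Holder-continuity}, and obtain the a priori estimate \eqref{H:Holder-a-priori} by combining Lemma \ref{lem:porous} with a Lipschitz regularization of $\mathbf{1}_{A_n}$ tested against the metric $d$; your optimization in $\varepsilon$ reproduces the paper's computation (the paper simply takes $\varepsilon^{1/2}$ and gets $\gamma_0=(1-\gamma_1)/2$, with $\theta=\alpha+\zeta$, resp.\ $\tau+\zeta$). The gap is in the two geometric inputs you defer. First, your route to \eqref{H:size-parameter-space} via normalized coefficients cannot work as stated: the map from coefficients to $(\Gamma,d)$ is \emph{not} continuous at degenerate polynomials --- the paper's own example $(x^2+\e)y=0$ versus $x^2y=0$ shows two coefficient-close polynomials whose zero sets carry length measures at definite $d$-distance --- and excluding ``a neighbourhood of degenerate polynomials'' leaves part of $\mathcal{K}_k$ uncovered, whereas the theorem is claimed for all of $\mathcal{K}_k$. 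A covering bound for the full space must be obtained by other means. Second, the uniform $h$-linear approximability of algebraic curves, which you correctly identify as the hypothesis needed to invoke Lemma \ref{lem:porous}, is genuinely missing from your argument; the Puiseux/desingularization route you sketch would need a uniformity over all curves of degree $\le k$ that is far from automatic.

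Both gaps are closed simultaneously by the device the paper actually uses: by B\'ezout's theorem, $V\cap\Upsilon$ splits into $O_k(1)$ arcs, each the graph of a function $y=f(x)$ or $x=f(y)$ with $f'$ monotone and $|f'|\le 1$. Such convex/concave graphs of bounded slope are $h$-linearly approximable with $h(\e)=c\e$ (subdivide according to the variation of $f'$), so singular points of $V$ never need to be analyzed --- they are just endpoints of finitely many graph pieces. The same decomposition yields \eqref{H:size-parameter-space}: the family of such graphs has the required covering numbers in the \emph{Hausdorff} metric by \cite[Lemma 5.2]{ShmerkinSuomala12}, and a short computation (comparing arclength parametrizations of two Hausdorff-close monotone graphs) shows that Hausdorff closeness of the pieces forces $d$-closeness of the length measures, so the covering bound transfers to $(\Gamma,d)$. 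I recommend replacing your coefficient parametrization and Puiseux analysis by this decomposition; with that substitution the rest of your argument goes through.
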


\begin{proof}
Note that the result is an immediate consequence of Theorem \ref{thm:Holder-continuity}; thus it suffices to check that all its assumptions are met. We only have to check \eqref{H:size-parameter-space} and \eqref{H:Holder-a-priori}, as the remaining hypotheses hold trivially.

Condition \eqref{H:size-parameter-space} is essentially proved in \cite{ShmerkinSuomala12}, except that in that paper the space consists of
the subcurves of curves in $\mathcal{K}_k$ which are also graphs of convex/concave functions with uniformly bounded derivative, and the metric is the Hausdorff metric.

Let us show how \eqref{H:size-parameter-space} for
$\Gamma=\{\eta_V\}_{V\in\mathcal{K}_k}$
can be derived from the corresponding result in \cite{ShmerkinSuomala12}. To begin with, we note that (say, by Bezout's theorem) for any $V\in\mathcal{K}_k$, $V\cap\Upsilon$ may be split into $M=O_k(1)$ (possibly degenerate) disjoint curves, which are graphs of functions $y=f(x),J\to\R$ or $x=f(y), J\to\R$, where $J\subset[-R,R]$ is an interval (depending on $f$) so that the derivative of $f$ is monotone on $J$ and satisfies $|f'|\le 1$. Let us denote by $\mathcal{G}^x_k$ the family of such graphs $y=f(x)$, and by $\mathcal{G}^y_k$ the graphs $x=f(y)$. We define an embedding $e\colon\mathcal{K}_k\to (\mathcal{G}^x_k)^M\times(\mathcal{G}^y_x)^M=: X$ as follows: Suppose $V\cap\Upsilon$ is a disjoint union of graphs of $V_1,\ldots,V_M\in\mathcal{G}^x_k$ and $V_{M+1},\ldots,V_{2M}\in\mathcal{G}^y_k$. Define $e(V)=(V_1,\ldots,V_{2M})$. This expression is not unique, but we make a choice for each $V\in\mathcal{K}^\Upsilon_{k}$. Also, most of the sets $V_i$ are actually empty and thus it is practical to consider $\varnothing$ as an element of $\mathcal{G}^x_k, \mathcal{G}^y_k$.

By \cite[Lemma 5.2]{ShmerkinSuomala12},  $\mathcal{G}^x_k$ and $\mathcal{G}^y_k$ satisfy \eqref{H:size-parameter-space} where the metric is the Hausdorff metric. If on $X$ we consider the $\ell_\infty$ metric $d_X$ (where each $\mathcal{G}^x_k,\mathcal{G}^y_k$ is endowed with the Hausdorff metric), it follows that $X$ satisfies \eqref{H:size-parameter-space} as well.

Now suppose that $V,W\in\mathcal{K}_k$ with $e(V)=(V_1,\ldots, V_{2M}), e(W)=(W_1,\ldots,W_{2M})$, such that for each $1\le i\le 2M$, the curves $V_i,W_i$ are $\e$-close in the Hausdorff metric. Suppose (for notational simplicity) that $i\le M$ and let $f\colon I_i\to\R$, $g\colon J_i\to\R$ be the functions with graphs $V_i,W_i$.

Using that $|f'|,|g'|$ are uniformly bounded, it follows that
\begin{equation}\label{eq:Hausdorff1}
|I_i\Delta J_i|+\sup_{x\in I_i\cap J_i}|f(x)-g(x)|=O(\e)\,.
\end{equation}
Denote by $\tilde{f}, \tilde{g}$ the restrictions of $f$ and $g$ to $I_i\cap J_i$, and by $\widetilde{V}_i\subset V_i,\widetilde{W}_i\subset W_i$ the graphs of $\tilde{f},\tilde{g}$ respectively. Let $\Pi_V^i\colon(0,\ell(\widetilde{V}_i))\to\widetilde{V}_i$, $\Pi_W^i\colon(0,\ell(\widetilde{W}_i))\to\widetilde{W}_i$ be continuous arclength parametrizations increasing in the $x$ variable.
A simple calculation (see \cite[Lemma 5.1]{ShmerkinSuomala12}) implies that
\begin{equation}\label{eq:C_1bound}
|\ell(\widetilde{V}_i)-\ell(\widetilde{W}_i)|=O(\e)
\end{equation}
and combining this with \eqref{eq:Hausdorff1} yields
\begin{equation}\label{eq:hausdorff2}
\sup_{0< t<\ell_i}
|\Pi_V^{i}(t)-\Pi_W^{i}(t)|=O(\e)\,,
\end{equation}
where $\ell_i=\min(\ell(\widetilde{V}_i),\ell(\widetilde{W}_i))$.

Let $L$ be a $1$-Lipschitz function supported on $\Upsilon$. Using \eqref{eq:Hausdorff1}--\eqref{eq:hausdorff2}, we have
\begin{align*}
&\left|\int_{V_i} L\,d\eta_V-\int_{W_i}L\,d\eta_W\right|=O(\e)+\left|\int_{t=0}^{\ell(V_i)} L(\Pi_{V}^i(t))\,dt-\int_{t=0}^{\ell(W_i)} L(\Pi_{W}^i(t))\,dt \right|\\
&\le O(\varepsilon)+\int_{t=0}^{\ell_i} \left|L(\Pi_{V}^i(t))-L(\Pi_{W}^i(t))\right|\,dt=O(\varepsilon)\,.\\
\end{align*}
Since
\[\left|\int_{V} L\,d\eta_V-\int_{W}L\,d\eta_W\right|\le\sum_{i=1}^{2M}\left|\int_{V_i} L\,d\eta_V-\int_{W_i}L\,d\eta_W\right|\]
and $M=O_k(1)$, we arrive at $d(V,W)=O(\varepsilon)$.

To summarize, we have shown that \eqref{H:size-parameter-space} holds in the product space $(X,d_X)$
and found an embedding $e\colon\Gamma\to X$ such that $e^{-1}\colon e(\Gamma)\to\Gamma$ is onto and Lipschitz, where $\Gamma=\{\eta_V|_\Upsilon\,:\,V\in\mathcal{K}_k\}$. This shows that \eqref{H:size-parameter-space} holds also on our original space $(\Gamma,d)$.

It remains to verify \eqref{H:Holder-a-priori}. We could apply Proposition \ref{prop:cutout-measures-satisfy-Holder-a-priori-2}, but it would be unnecessarily technical to construct the required parametrizations, so we give a direct proof instead. We assume that $(\mu_n)$ is of $(\mathcal{F},\tau,\zeta)$-cutout type, the proof in the cell type case is almost identical. Let $E_n$ be the event that $M_n\le C 2^{\zeta n}$. Then, a.s. there is $N_0$ such that $E_n$ holds for all $n\ge N_0$.

Given $n\ge N_0$, let $\Lambda_i\in\mathcal{F}$, $1\le i\le M_n$ such that $A_n=\Omega\setminus\cup_{i=1}^{M_n}\Lambda_i$. Pick $V,W\in\mathcal{K}_k$ with $d(V,W)<\varepsilon$. Denote $\Lambda_0=\Omega$. Lemma \ref{lem:porous} implies that for each $i=0,\ldots, M_n$, both $V\cap\{x\in\Lambda_i\,:d(x,\partial\Lambda_i)<\varepsilon^{1/2}\}$ and $W\cap\{x\in\Lambda_i\,:d(x,\partial\Lambda_i)<\varepsilon^{1/2}\}$ can be covered by $O_{C,k}(\varepsilon^{-\gamma_1/2}$) balls of radius $\varepsilon^{1/2}$ for some $0<\gamma_1=\gamma_1(C,k)<1$. Let $U_i$ be the union of these balls and $U=\cup_{i=0}^{M_n}U_i$.

Let $R=(V\cup W)\setminus (U\cup A_n)$ and
\[
L(x) = \frac{2^{\alpha n} \dist(x,R)}{\dist(x,R)+\dist(x,A_n)}\,.
\]
Then $L$ equals $\mu_n$ on $(V\cup W)\setminus U$, and $0\le L(x) \le 2^{\alpha n}$ for all $x$. Moreover, since the distance between $R$ and $A_n$ is $\Omega(\e^{1/2})$, a calculation shows that $L$ is Lipschitz with constant $O(\varepsilon^{-1/2}2^{\alpha n})$. Now
\begin{equation}\label{eq:Lipest1}
\begin{split}
&\int_V\mu_n\,d\eta_V-\int_W\mu_n\,d\eta_W\\
&=\int_{V}L\,d\eta_V-\int_{W}L\,d\eta_W
+\int_{U\cap V}(\mu_n-L)\,d\eta_V-\int_{U\cap W}(\mu_n-L)\,d\eta_W\,.
\end{split}
\end{equation}
But $\mathcal{H}^1(U\cap V),\mathcal{H}^1(U\cap W)=O(2^{\zeta n}\varepsilon^{(1-\gamma_1)/2})$ and $|\mu_n-L|\le 2^{n\alpha}$ so that
\begin{equation}
\left|\int_{U}(\mu_n-L)\,d\eta_V-\int_{U}(\mu_n-L)\,d\eta_W\right|=O\left(2^{(\zeta+\alpha)n}\varepsilon^{(1-\gamma_1)/2}\right)\,.
\end{equation}
On the other hand, the definition of $d$ yields
\begin{equation}\label{eq:Lipest3}
\left|\int_{V}L\,d\eta_V-\int_{W}L\,d\eta_W\right|=O\left(\varepsilon^{1/2} 2^{n\alpha}\right)\,.
\end{equation}
Putting \eqref{eq:Lipest1}--\eqref{eq:Lipest3} together, we have shown \eqref{H:Holder-a-priori} with $\gamma_0=\tfrac12(1-\gamma_1)$ and $\theta=\zeta+\alpha$.
\end{proof}

\begin{rem}
The only properties of $\mathcal{K}_k$ that were used in the proof are the size bound \eqref{H:size-parameter-space}, and the fact that the curves $V\cap\Upsilon$ have bounded length and are unions of $O(1)$ $h$-linearly approximable curves, for a function $h$ independent of $V\in\mathcal{K}_k$. Thus the theorem actually applies to much more general curve families.
\end{rem}

\subsection{Polynomial projections}
\label{sec:poly_projections}

In the linear setting of Theorem \ref{thm:linear-projections}, we were able to obtain a result on projections as a rather simple corollary of the corresponding result for intersections. A conceptually similar but more technical argument is available in the current setting, and allows us to prove the following, based on a variant of Theorem \ref{thm:alg_curves}.

\begin{thm} \label{thm:polynomial-projections}
  Let $(\mu_n)$ be an SI-martingale as in the statement of Theorem \ref{thm:alg_curves}. Then almost surely, for every non-constant polynomial $P\colon\R^2\to\R$,
  the image measure $P\mu_\infty$ is absolutely continuous and has a piecewise locally H\"{o}lder continuous density. The H\"{o}lder exponent and the number of discontinuities are quantitative in terms of $\deg P$ (and the parameters appearing in the assumptions on $(\mu_n)$).

  In particular, almost surely conditioned on $\mu_\infty\neq 0$, the projections $P(\supp\mu_\infty)$ have nonempty interior for all non-constant polynomials $P\colon\R^2\to\R$.
\end{thm}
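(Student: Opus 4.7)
The proof proceeds via the coarea formula and a Lipschitz-weighted version of Theorem \ref{thm:alg_curves}. Since $\mu_n$ is an $L^\infty$ density, the coarea formula shows that $P\mu_n$ is absolutely continuous with density
\[
g_n(t) \;=\; \int_{V_t} \frac{\mu_n(x)}{|\nabla P(x)|}\,d\mathcal{H}^1(x), \qquad V_t := \{P = t\}.
\]
The set $C_P$ of critical values of $P$ is finite, with cardinality bounded in terms of $\deg P$ (by B\'ezout applied to the system $\partial_x P = \partial_y P = 0$). Thus $\R \setminus C_P$ is a finite union of open intervals, and our task on each component $I$ is to show that $g_n|_I$ converges a.s.\ uniformly on compact subsets to a locally H\"older continuous limit $g_\infty$, and that $P\mu_\infty|_I$ coincides with $g_\infty\,dt$.

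Fix a compact subinterval $I' \subset I$. By continuity of $\nabla P$ and compactness, there is $\delta > 0$ with $|\nabla P| \ge \delta$ on $U := P^{-1}(I') \cap \Upsilon$, where $\Upsilon$ is an open ball containing $\supp\mu_0$. Choose a globally Lipschitz $L\colon\R^2 \to [0, 1/\delta]$ with compact support in $\Upsilon$ that coincides with $1/|\nabla P|$ on $U$ (a standard cutoff). For $t \in I'$ the level set $V_t \cap \supp\mu_n$ lies in $U$, so $g_n(t) = \int_{V_t} \mu_n L\,d\mathcal{H}^1$.

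The crux is a Lipschitz-weighted extension of Theorem \ref{thm:alg_curves}: for the parametrized family of weighted measures $\tilde\eta_V := L\,\mathcal{H}^1|_V$ indexed by $V \in \mathcal{K}_k$, the functionals $\tilde Y_n^V := \int_V \mu_n L\,d\mathcal{H}^1$ converge a.s.\ uniformly to a limit $\tilde Y^V$ that is H\"older continuous in $V$ with respect to the metric $d$ of Definition \ref{def:metric-algebraic}. Inspecting the proof of Theorem \ref{thm:alg_curves}, this is routine: the measures $\tilde\eta_V$ still have Frostman exponent $1$ since $L$ is bounded; \eqref{H:size-parameter-space} concerns $\mathcal{K}_k$ alone; and the a priori H\"older bound \eqref{H:Holder-a-priori} is obtained by the same porosity/cutoff argument of Lemma \ref{lem:porous} and \eqref{eq:Lipest1}--\eqref{eq:Lipest3}, with the cutoff function there replaced by its product with $L$, which is again Lipschitz with quantitative constants.

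The implicit function theorem applied at each regular point of $P$ gives that, for $s, t \in I'$ and any $1$-Lipschitz $\phi$ supported in $\Upsilon$,
\[
\Bigl|\int \phi\,d\eta_{V_s} - \int \phi\,d\eta_{V_t}\Bigr| \;\le\; C_{I',P}\,|s-t|,
\]
so $d(V_s, V_t) \le C_{I',P}|s-t|$ in the sense of Definition \ref{def:metric-algebraic}. Combining, $g_n(t) = \tilde Y_n^{V_t}$ converges uniformly on $I'$ to the locally H\"older function $g_\infty(t) := \tilde Y^{V_t}$. Since this holds on every compact subinterval of $\R \setminus C_P$ and $P\mu_n \to P\mu_\infty$ weakly, the restriction $P\mu_\infty|_{\R \setminus C_P}$ is absolutely continuous with piecewise locally H\"older density $g_\infty$. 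Finally, by Theorem \ref{thm:dim_cut_out_set} (and its analogue for cell-type martingales, which holds by a parallel energy estimate using $\alpha < 1$), a.s.\ $\ldimloc(\mu_\infty, x) \ge 2 - \alpha > 1$ for $\mu_\infty$-a.e.\ $x$; a standard Vitali covering argument then yields $\mu_\infty(V) = 0$ for every Borel $V \subset \R^2$ with $\dim_H V \le 1$, simultaneously. In particular $\mu_\infty(V_{t_0}) = 0$ for each critical level of every polynomial $P$, so $P\mu_\infty(C_P) = 0$ and $P\mu_\infty$ is absolutely continuous on all of $\R$.

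For the last assertion, if $\mu_\infty \ne 0$ then $g_\infty$ is not identically zero; being continuous on each piece it is strictly positive on some open sub-interval $J$, whence $J \subseteq \overline{P(\supp\mu_\infty)}$, giving nonempty interior. The main obstacle is the Lipschitz-weighted extension of Theorem \ref{thm:alg_curves}: the bookkeeping is routine but requires care in tracking how multiplication by $L$ affects the Lipschitz constant of the auxiliary test function constructed in the porosity argument. A secondary point is uniformity over all polynomials $P$, since the cutoff $L$ depends on $P$; this is handled by applying the weighted theorem to a countable dense family of polynomials and extending by continuity of the coefficients-to-density map (or by formulating the weighted theorem uniformly in a bounded family of Lipschitz weights, which the same proof delivers), while the dimension estimate on $\mu_\infty$ handles the critical values uniformly in $P$ in a single stroke.
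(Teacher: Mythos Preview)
Your approach is structurally close to the paper's---both use the coarea formula to reduce to a weighted version of Theorem~\ref{thm:alg_curves}---but the parametrizations differ in a way that matters, and the issue you flag as ``secondary'' is in fact where the bulk of the technical content lies.

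The paper does not fix a Lipschitz weight $L$ and then parametrize by curves in $\mathcal{K}_k$. Instead it proves a separate intermediate result (Proposition~\ref{prop:modified-alg-curves}): for each $P\in\widetilde{\cP}_k^{\text{reg}}$ there is a neighbourhood $U$ of $P$ in the \emph{finite-dimensional} space of polynomials such that, writing $\eta_Q=|\nabla Q|^{-1}\,d\mathcal{H}^1|_{Q^{-1}(0)}$, the functionals $Y_n^Q=\int\mu_n\,d\eta_Q$ converge uniformly over $Q\in U$ to a limit that is H\"older in the coefficients of $Q$. The weight is thereby absorbed into the parameter, and (H1) is automatic. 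The geometry needed to verify (H4) and to compare nearby level sets is supplied by Lemma~\ref{lem:isotopic-curves}, which constructs an explicit $C^1$ isotopy between $Q_1^{-1}(0)$ and $Q_2^{-1}(0)$ via the gradient flow; this is also what gives the Lipschitz bound $d(V_{Q_1},V_{Q_2})=O(|Q_1-Q_2|)$ that you obtain by a separate implicit-function argument. One then covers the regular polynomials by countably many such neighbourhoods $U_{k,j}$ and conditions once on the resulting full-probability event, after which every individual $P$ is handled deterministically.

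Your route---fixing $L=L(P,I')$ and applying a weighted Theorem~\ref{thm:alg_curves}---is correct for a single $P$, but the a.s.\ event depends on $L$, so uniformity over uncountably many $P$ is a genuine gap. Your option (b), ``uniformly in a bounded family of Lipschitz weights,'' does not go through as stated: a sup-norm ball in the space of Lipschitz functions is infinite-dimensional and fails (H1). What does work is to let the weight vary over the \emph{finite-dimensional} family $\{|\nabla Q|^{-1}:Q\text{ near }P\}$---but that is precisely Proposition~\ref{prop:modified-alg-curves}. Your option (a), density plus continuity, is plausible, but establishing the required continuity of $P\mapsto g_\infty$ again needs essentially the content of Lemma~\ref{lem:isotopic-curves}.

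For the absence of atoms at critical values, the paper argues more directly from Theorem~\ref{thm:alg_curves}: the uniform bound $\sup_{n,V}Y_n^V<\infty$ combined with a coarea/translation argument (as in the proof of Theorem~\ref{thm:dim_alg_curves}) gives $\mu_n(V(\delta))=O_k(\delta)$ uniformly in $n$ and $V\in\mathcal{K}_k$, hence $\mu_\infty(V)=0$ for every algebraic curve. Your dimension-based argument is valid, but the energy estimate you invoke for cell-type martingales is not recorded in the paper and should be stated (it does follow from spatial independence and the bound $\mu_n\le C\,2^{\alpha n}$ via the standard second-moment computation).
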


Before proving this theorem, we present some preliminary results. Let $\widetilde{\cP}_k$ be the family of non-constant polynomials $\R^2\to\R$
of degree $\le k$.
Also write $\widetilde{\cP}_k^{\text{reg}}$ for the polynomials in $\widetilde{\cP}_k$ for which $0$ is a regular value (that is, $\nabla P(x)\neq 0$ for all $x\in P^{-1}(0)$). In the following we identify elements of $\widetilde{\cP}_k$ with their coefficients, and in this way see it as a subset of some Euclidean space. We also use the notation $|P-Q|$ for the Euclidean distance of the coefficients.

\begin{lemma} \label{lem:isotopic-curves}
Let $P\in\widetilde{\cP}_k^{\text{reg}}$, and let $\Upsilon$ be an open ball such that $P^{-1}(0)\cap \Upsilon\neq\varnothing$.

Then there exist neighbourhoods $U$ of $P$ (in $\widetilde{\cP}_k$) and $U'$ of $P^{-1}(0)\cap\Upsilon$, and a $C^1$ map $G:U\times U\times U'\to\R^d$  such that for all $Q_1,Q_2\in U$:
\begin{enumerate}
\item $G(Q_1,Q_1,\cdot)$ is the identity on $Q_1^{-1}(0)\cap U'$,
\item $G(Q_1,Q_2,\cdot)|_{Q_1^{-1}(0)\cap U'}$ is a diffeomorphism onto its image,\\ and $Q_2^{-1}(0)\cap\Upsilon\subset G(Q_1,Q_2,Q_1^{-1}(0)\cap U')$,
\item $Q_2(G(Q_1,Q_2,u))=0$ whenever $u\in Q_1^{-1}(0)\cap\Upsilon$.
\end{enumerate}
\end{lemma}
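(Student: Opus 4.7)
The natural strategy is to exploit the fact that $P^{-1}(0)$ is a smooth $1$-submanifold of $\R^2$ (since $0$ is a regular value of $P$) and to work in tubular coordinates. First, I would pick an open set $\Upsilon_+ \supset \overline{\Upsilon}$ and set $\Sigma := P^{-1}(0) \cap \Upsilon_+$. Since $P \in \widetilde{\cP}_k^{\text{reg}}$, compactness of $P^{-1}(0)\cap\overline{\Upsilon}$ yields a constant $c>0$ with $|\nabla P| \ge c$ on an open neighborhood of it. Setting $N := \nabla P / |\nabla P|$, for some $\e_0>0$ the map $\Phi(u,t) := u + t\,N(u)$ is a $C^\infty$ diffeomorphism from $\Sigma \times (-\e_0,\e_0)$ onto an open neighborhood $V$ of $\Sigma$ in $\R^2$.

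For $Q$ in a neighborhood $U_0 \subset \widetilde{\cP}_k$ of $P$, I would apply the parametrized implicit function theorem to the $C^\infty$ equation $F(u,t,Q) := Q(\Phi(u,t)) = 0$. Since $\partial_t F(u,0,P) = |\nabla P(u)| \ge c$, shrinking $U_0$ and $\e_0$ gives a $C^1$ map $(u,Q) \mapsto s_Q(u)$, defined on $\Sigma \times U_0$, with $s_P \equiv 0$ and $Q(\Phi(u, s_Q(u))) = 0$; moreover $Q^{-1}(0) \cap V = \{\Phi(u, s_Q(u)) : u \in \Sigma\}$. A further shrinking of $U_0$ (using that $|P|$ is bounded below on the compact set $\overline{\Upsilon}\setminus V$, plus continuous dependence of $Q$ on its coefficients) ensures that $Q^{-1}(0) \cap \overline{\Upsilon} \subset V$ for all $Q\in U_0$. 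I would then define
\[
G(Q_1,Q_2,\Phi(u,t)) := \Phi\bigl(u,\, t + s_{Q_2}(u) - s_{Q_1}(u)\bigr),
\]
on $U \times U \times U'$ with $U := U_0$ and $U' := V$.

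The three properties are then immediate. For (1), the shift vanishes when $Q_1 = Q_2$. For (3), any $u \in Q_1^{-1}(0) \cap U'$ can be written uniquely as $\Phi(u_0, s_{Q_1}(u_0))$ for some $u_0\in\Sigma$, whence $G(Q_1,Q_2,u) = \Phi(u_0, s_{Q_2}(u_0)) \in Q_2^{-1}(0)$. For (2), the change of coordinates $u_0 \mapsto \Phi(u_0, s_{Q_1}(u_0))$ is a $C^1$ diffeomorphism between $\Sigma$ and $Q_1^{-1}(0) \cap U'$ (smoothness is inherited from $\Phi$ and $s_{Q_1}$), and in these coordinates $G(Q_1,Q_2,\cdot)$ is the identity on $\Sigma$ followed by the parametrization $u_0 \mapsto \Phi(u_0, s_{Q_2}(u_0))$ of $Q_2^{-1}(0) \cap V$; so $G(Q_1,Q_2,\cdot)$ is a diffeomorphism from $Q_1^{-1}(0) \cap U'$ onto $Q_2^{-1}(0) \cap V$, and the latter contains $Q_2^{-1}(0) \cap \Upsilon$ by our arrangement.

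The main technical point is the coherent choice of $U$ and $V$ so that, simultaneously for every $Q\in U$, the full zero set $Q^{-1}(0)\cap\Upsilon$ lies inside the tubular neighborhood $V$ and is captured as a graph $t = s_Q(u)$ over $\Sigma$. This is a compactness argument: $\overline{\Upsilon}\setminus V$ is compact and disjoint from $P^{-1}(0)$, so $|P|$ is bounded below there; uniform convergence of $Q\to P$ on compact sets (in the coefficient norm) transfers this lower bound to $Q$ near $P$. Everything else is a standard parametrized IFT plus the tubular neighborhood theorem.
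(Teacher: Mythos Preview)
Your proof is correct and follows a somewhat different route from the paper's. The paper constructs $G$ via the gradient flow of $Q_1$: it lets $F(Q,u_0,t)$ solve $x'(t)=\nabla Q(x(t))$, $x(0)=u_0$, and then uses the implicit function theorem on $Q_2(F(Q_1,u,t))=0$ to extract $t=t(Q_1,Q_2,u)$, setting $G(Q_1,Q_2,u)=F(Q_1,u,t(Q_1,Q_2,u))$. Your approach instead fixes once and for all the tubular coordinates $\Phi(u,t)=u+tN(u)$ about the base curve $\Sigma=P^{-1}(0)$, solves for each $Q$ as a graph $t=s_Q(u)$ over $\Sigma$, and shifts in the $t$-coordinate. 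Conceptually both flow from $Q_1^{-1}(0)$ to $Q_2^{-1}(0)$ along a transverse direction and invoke the implicit function theorem; the difference is that the paper's transverse direction depends on $Q_1$, while yours is frozen at $P$. Your version is slightly more elementary (no ODE theory for smooth dependence on parameters is needed, only the tubular neighborhood theorem and IFT), and the resulting $G$ is globally defined on $U'$ rather than only near the zero sets. The paper's version is perhaps more intrinsic to $Q_1$, but this buys nothing for the application. Your compactness argument ensuring $Q^{-1}(0)\cap\overline{\Upsilon}\subset V$ for $Q$ near $P$ is exactly the right ingredient and is implicit in the paper's sketch as well.
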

\begin{proof}
The idea is to define $G$ so that for each $u\in Q_1^{-1}(0)$, $G(Q_1,Q_2,u)$ is the point where the gradient flow $dx/dt=\nabla Q_1(x)$ started at $u$ hits $Q_2^{-1}(0)$. To be more precise, let $F(Q,u_0, \cdot)$ be the solution to the ODE $x'(t)=\nabla Q(x(t))$ with initial value $x(0)=u_0$. Since $0$ is a regular value of $P$ and $\overline{\Upsilon}$ is compact, this is a well defined $C^1$ function in some domain $\widetilde{U}\times \widetilde{U}'\times (-\delta,\delta)$, where $\widetilde{U}$ is a neighbourhood of $P$ and $\widetilde{U}'$ is a neighbourhood of $P^{-1}(0)\cap\overline{\Upsilon}$. In turn, using the implicit function theorem, the fact that $0$ is a regular value of $P$, and compactness, it follows that $Q_2(F(Q_1,u,t))=0$ defines an implicit $C^1$ function $t=t(Q_1,Q_2,u)$ for $(Q_1,Q_2,u)$ in some neighbourhood $U\times U\times U'$ as in the statement. The lemma follows with $G(Q_1,Q_2,u)=F(Q_1,u,t(Q_1,Q_2,u))$.
\end{proof}

Using this lemma, we can establish the following variant of Theorem \ref{thm:alg_curves}.

\begin{prop} \label{prop:modified-alg-curves}
Let $(\mu_n)$ be an SI-martingale satisfying the assumptions of Theorem \ref{thm:alg_curves}.

Let $P\in\widetilde{\cP}_k^{\text{reg}}$ such that $P^{-1}(0)\cap \supp\mu_0\neq\varnothing$. Then there exists a neighborhood $U$ of $P$ in $\widetilde{\cP}_k^{\text{reg}}$ such that, letting
\[
 \eta_Q = \frac{1}{|\nabla Q|} \,d\mathcal{H}^1|_{Q^{-1}(0)}\,,
\]
the following hold a.s.:
\begin{enumerate}
\item[\emph{(i)}] The sequence $Y_n^Q:=\int \mu_n \,d\eta_Q$ converges uniformly over all $Q\in U$; denote the limit by $Y^{Q}$.
\item[\emph{(ii)}] $|Y^{Q_1}-Y^{Q_2}|\le K\, |Q_1-Q_2|^{\gamma_k}$ for all $Q_1,Q_2\in U$,
\end{enumerate}
where $\gamma_k>0$ is a deterministic constant and $K$ is a finite random variable.
\end{prop}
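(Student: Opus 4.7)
The strategy is to apply Theorem \ref{thm:Holder-continuity} directly, with parameter space $\Gamma = U$ (a small neighborhood of $P$ in $\widetilde{\cP}_k^{\text{reg}}$ equipped with the coefficient metric $|Q_1-Q_2|$), using Proposition \ref{prop:cutout-measures-satisfy-Holder-a-priori-2} to verify the a priori Hölder hypothesis \eqref{H:Holder-a-priori}. Three of the four hypotheses of Theorem \ref{thm:Holder-continuity} are essentially free: \eqref{H:size-parameter-space} holds trivially since $U$ is finite-dimensional, \eqref{H:codim-random-measure} is assumed, and \eqref{H:dim-deterministic-measures} holds with Frostman exponent $s=1$, because shrinking $U$ if needed we can ensure $|\nabla Q|$ is uniformly bounded below on $\Upsilon$ and the curves $Q^{-1}(0)\cap\Upsilon$ are uniformly smooth 1-manifolds, so $\eta_Q(B(x,r))=O(r)$ uniformly in $Q$. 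Since $\alpha<1$, the crucial inequality $s>\alpha$ is satisfied.

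To supply a reference space for Proposition \ref{prop:cutout-measures-satisfy-Holder-a-priori-2}, I invoke Lemma \ref{lem:isotopic-curves} to obtain the $C^1$ diffeomorphism $G(P,Q,\cdot)\colon P^{-1}(0)\cap U'\to Q^{-1}(0)$, defined on a neighborhood $U'$ of $P^{-1}(0)\cap\overline{\Upsilon}$. I set $\mathcal{M}=P^{-1}(0)\cap U'$, let $\Pi_Q(u)=G(P,Q,u)$, and choose $\nu_Q$ on $\mathcal{M}$ so that $\Pi_Q\nu_Q=\eta_Q$. By the area formula,
\[
d\nu_Q(u)\;=\;\frac{J_Q(u)}{|\nabla Q(\Pi_Q(u))|}\,d\mathcal{H}^1|_{\mathcal{M}}(u),
\]
where $J_Q(u)$ is the tangential Jacobian of $G(P,Q,\cdot)$ at $u$. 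Because $G$ is $C^1$ jointly in its arguments and $|\nabla Q|$ is bounded away from $0$ on $\Upsilon\cap Q^{-1}(0)$ for $Q$ in a suitably small $U$, the density of $\nu_Q$ is uniformly bounded above and below, and is Lipschitz in $Q$ uniformly in $u$.

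For hypothesis \eqref{eq:Holder-shape-2}, fix $\Lambda\in\mathcal{F}$ and $Q_1,Q_2\in U$. If $u\in\Pi_{Q_1}^{-1}(\Lambda)\Delta\Pi_{Q_2}^{-1}(\Lambda)$, then since $|\Pi_{Q_1}(u)-\Pi_{Q_2}(u)|=O(|Q_1-Q_2|)$, the point $\Pi_{Q_1}(u)$ lies within distance $O(|Q_1-Q_2|)$ of $\partial\Lambda$. Consequently $\Pi_{Q_1}(u)\in Q_1^{-1}(0)\cap\partial\Lambda(C|Q_1-Q_2|)$. Algebraic curves of degree $\le k$ are $h$-linearly approximable for a function $h=h_k$ (as used in the proof of Theorem \ref{thm:alg_curves}), and $\partial\Lambda$ satisfies \eqref{K1}, \eqref{K2}, so Lemma \ref{lem:porous} yields
\[
\mathcal{H}^1\!\left(Q_1^{-1}(0)\cap\partial\Lambda(\varepsilon)\right)\;=\;O(\varepsilon^{1-\gamma_1})
\]
for some $\gamma_1=\gamma_1(k,\mathcal{F})<1$. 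Pulling back by $\Pi_{Q_1}$ (which has bounded Jacobian) and using the uniform bound on the density of $\nu_{Q_1}$ gives \eqref{eq:Holder-shape-2} with $\gamma_0=1-\gamma_1>0$. Condition \eqref{eq:Holder-measure} is easier: since $\nu_{Q_1}$ and $\nu_{Q_2}$ share the reference measure $\mathcal{H}^1|_{\mathcal{M}}$ and their densities differ pointwise by $O(|Q_1-Q_2|)$, we have $|\nu_{Q_1}(E)-\nu_{Q_2}(E)|=O(|Q_1-Q_2|)$ for every Borel $E\subset\mathcal{M}$, which is stronger than needed.

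All hypotheses of Theorem \ref{thm:Holder-continuity} are therefore verified, with $\theta=\alpha+\zeta$ or $\theta=\tau+\zeta$ as appropriate, and the conclusion of the proposition follows with a quantitative Hölder exponent
\[
\gamma_k<\frac{(1-\alpha)(1-\gamma_1)}{(1-\alpha)+2\theta}.
\]
The main technical step is the verification of \eqref{eq:Holder-shape-2}, which hinges on two ingredients that must be combined carefully: the $C^1$-regularity of the flow map $G$ from Lemma \ref{lem:isotopic-curves}, and the porosity estimate for fractal boundaries from Lemma \ref{lem:porous}. Everything else is bookkeeping.
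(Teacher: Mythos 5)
Your proof is correct, but it takes a different route from the paper's. The paper first uses Lemma \ref{lem:isotopic-curves} to establish the single estimate \eqref{eq:d-dominated-by-coeff-diff}, namely $d(V_{Q_1},V_{Q_2})=O(|Q_1-Q_2|)$ for the Wasserstein-type metric of Definition \ref{def:metric-algebraic}; it then inherits \eqref{H:size-parameter-space} from the embedding argument already carried out for $(\mathcal{K}_k,d)$ in Theorem \ref{thm:alg_curves}, and verifies \eqref{H:Holder-a-priori} by repeating that theorem's Lipschitz-interpolation argument with an extra term (see \eqref{eq:Lipest2}) accounting for the weight $|\nabla Q|^{-1}$. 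You instead work directly in the coefficient metric on $U$ (where \eqref{H:size-parameter-space} is trivial) and feed the flow map $G(P,Q,\cdot)$ of Lemma \ref{lem:isotopic-curves} into the reference-space framework of Proposition \ref{prop:cutout-measures-satisfy-Holder-a-priori-2}, checking \eqref{eq:Holder-shape-2} via the porosity Lemma \ref{lem:porous} and \eqref{eq:Holder-measure} via the Lipschitz dependence of the densities $J_Q/|\nabla Q\circ\Pi_Q|$ on $Q$. Your route is more modular and even yields a slightly better a priori exponent ($\gamma_0=1-\gamma_1$ rather than $(1-\gamma_1)/2$, since you avoid the $\e^{1/2}$-splitting in the test-function argument); the paper's route has the side benefit of proving \eqref{eq:d-dominated-by-coeff-diff}, which is invoked in the discussion following Definition \ref{def:metric-algebraic} to justify that $d$ is dominated by the coefficient distance near regular polynomials. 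Two small points worth making explicit: the covering bound from Lemma \ref{lem:porous} converts to the length bound $\mathcal{H}^1(Q^{-1}(0)\cap\partial\Lambda(\e)\cap\Lambda)=O(\e^{1-\gamma_1})$ only because a degree-$\le k$ curve meets each ball of radius $\e$ in length $O_k(\e)$ (via the decomposition into $O_k(1)$ Lipschitz graphs), and the symmetric difference must be split so that each of $\Pi_{Q_1}(u)$, $\Pi_{Q_2}(u)$ is sent to the \emph{inner} $\e$-neighbourhood of $\partial\Lambda$ for the corresponding curve, which is the set Lemma \ref{lem:porous} actually controls; both of these are straightforward and consistent with how the paper itself uses the lemma.
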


\begin{proof}
Let $\Upsilon$ be an open ball as in Lemma \ref{lem:isotopic-curves} and let $U,U'$ be the neighbourhoods provided by Lemma \ref{lem:isotopic-curves}, which we can take to be compact. 
The first key observation is that
 \begin{equation} \label{eq:d-dominated-by-coeff-diff}
  d(V_{Q_1},V_{Q_2}) = O(|Q_1-Q_2|) \quad\text{for } Q_1,Q_2\in U\,,
 \end{equation}
 where the implicit constant depends on $U, U'$ (and $d$ is as in the Definition \ref{def:metric-algebraic}). This is a consequence of Lemma \ref{lem:isotopic-curves}. Indeed, fix $Q_1, Q_2\in U$, write $\e=|Q_1-Q_2|$, and let $L$ be a $1$-Lipschitz map supported on $\Upsilon$. Write $G(u)=G(Q_1,Q_2,u)$ for simplicity. By the second claim of Lemma \ref{lem:isotopic-curves},
\begin{align}\label{eq:Gchangeofvar}
\int_{V_{Q_2}} L\,d\mathcal{H}^1=\int_{V_{Q_1}\cap U'}L(G(u))|DG(u)|\,d\mathcal{H}^1(u)\,.
\end{align}
It follows from the first part of Lemma \ref{lem:isotopic-curves} that $G$ is $O(\e)$-close to the identity in the $C^1$ metric on $V_{Q_1}\cap U'$. Thus $|G(u)-u|=O(\varepsilon)$ and $|DG(u)|=1\pm O(\varepsilon)$ for $u\in V_{Q_1}\cap U'$. As $L$ is Lipschitz, with Lipschitz constant $1$ and $|L|=O(1)$, we have
\[\left|L(G(u))|G'(u)|-L(u)\right|=O(\varepsilon)\]
for $u\in V_{Q_1}\cap U'$ so that \eqref{eq:Gchangeofvar} yields
\begin{align*}
\left|\int_{V_{Q_2}} L\,d\mathcal{H}^1-\int_{V_{Q_1}} L\,d\mathcal{H}^1\right|=O(\e)\,,
\end{align*}
and \eqref{eq:d-dominated-by-coeff-diff} follows.

The result now follows as in the proof of Theorem \ref{thm:alg_curves} once we verify \eqref{H:Holder-a-priori}. To that end, we proceed as in the proof of Theorem \ref{thm:alg_curves} replacing \eqref{eq:Lipest1} by
\begin{align}\label{eq:Lipest2}
&\int_{V_{Q_1}}|\nabla Q_1|^{-1}\mu_n\,d\mathcal{H}^1-
\int_{V_{Q_2}}|\nabla Q_2|^{-1}\mu_n\,d\mathcal{H}^1\\
&=\int_{V_{Q_1}}|\nabla Q_1|^{-1}L\,d\mathcal{H}^1-\int_{V_{Q_2}}|\nabla Q_1|^{-1}L\,d\mathcal{H}^1+\int_{V_{Q_2}}L\left(|\nabla Q_1|^{-1}-|\nabla Q_2|^{-1}\right)\,d\mathcal{H}^1\notag\\
&\,\,\,+\int_{U\cap V_{Q_1}}|\nabla Q_1|^{-1}(\mu_n-L)\,d\mathcal{H}^1-\int_{U\cap V_{Q_2}}|\nabla Q_2|^{-1}(\mu_n-L)\,d\mathcal{H}^1\,.
\notag
\end{align}
Since $|\nabla Q_1|^{-1}L$ and $|\nabla Q_2|^{-1}L$ are $O(1)$-Lipschitz, and
\[\left|\nabla Q_1|^{-1}-|\nabla Q_2|^{-1}\right|=O(\e)\,,\]
for $x\in U'$, the $O(\e^{\gamma_0})$ bound for \eqref{eq:Lipest2} is obtained as in the proof of Theorem \ref{thm:alg_curves}.
\end{proof}

\begin{proof}[Proof of Theorem \ref{thm:polynomial-projections}]
 The core of the proof is to show that if $P\in\widetilde{\cP}_k$, then $P\mu_\infty$ is locally $\gamma_k$-H\"{o}lder continuous outside of $O_k(1)$ points of possible discontinuity. Once this is done, absolute continuity follows from the fact that $P\mu_\infty$ has no atoms, which is a consequence of Theorem \ref{thm:alg_curves} (which in particular implies that $Y^V$ is finite for $V=P^{-1}(t)\cap\Upsilon$). The statement on nonempty interior of the sets $P(\supp\mu_\infty)$ is immediate from the piecewise continuity of the density of $P\mu_\infty$.

 Cover the space of polynomials $P\in \widetilde{\cP}_k$  with no critical points in $P^{-1}(0)\cap \supp\mu_0$ such that $P^{-1}(0)\cap\supp\mu_0\neq\varnothing$ by countably many open sets $U_{k,j}$ such that Proposition \ref{prop:modified-alg-curves} applies in each of these neighbourhoods. We condition on the full probability event that the conclusion of Proposition \ref{prop:modified-alg-curves} holds for all $k$ and $j$.

 Fix $P\in\widetilde{\cP}_k$. Since the set $\text{crit}(P)$ of critical values of $P$ is finite (and has $O_k(1)$ elements), we will be done once we establish that $P\mu_\infty$ is absolutely continuous with a $\gamma_k$-H\"{o}lder continuous density in a neighbourhood of each $t_0\in\R\setminus \text{crit}(P)$. If we fix such $t_0$, then either $P^{-1}(t_0)\cap \supp\mu_0=\varnothing$, in which case there is nothing to do, or $P-t\in U_{j,k}$ for some $j$ and $t$ in a compact neighbourhood $I$ of $t_0$.

 After these reductions, the proof is similar to that of Theorem \ref{thm:linear-projections}(iii), except that we need to use the co-area formula instead of Fubini. Fix $t\in I^\circ$. For small $r>0$, the co-area formula (see \cite[Lemma 1 in Section 3.4]{EvansGariepy92}) gives that for all $n$,
\begin{align*}
P\mu_n(t-r,t+r)&=\int_{P^{-1}([t-r,t+r])}\mu_n(x) dx\\
&=\int_{P^{-1}([t-r,t+r])}\mu_{n}|\nabla P|^{-1}|\nabla P|\\
&=\int_{u=t-r}^{t+r} \int_{P^{-1}(u)}\mu_n |\nabla P|^{-1}\,d\mathcal{H}^1\, du\\
&=\int_{u=t-r}^{t+r}Y_{n}^{P-u}\,du\,,
\end{align*}
where $Y_n^{P-u}$ is as in the statement of Proposition \ref{prop:modified-alg-curves}. Since we are assuming the conclusion of this proposition holds, the maps $u\mapsto Y_n^{P-u}$ are uniformly bounded on $I$, and converge uniformly to $Y^{P-u}$. This implies that $P\mu_\infty|_{I^\circ}$ is absolutely continuous with density $Y^{P-u}$. Since this density is H\"{o}lder continuous with exponent $\gamma_k$, we are done.
\end{proof}

\begin{rem} \label{rem:poly-projections-percolation}
 Theorem \ref{thm:polynomial-projections} does not hold for $\mu_n^{\text{perc}}$ or any construction involving shapes with flat pieces. Indeed, the principal projections of fractal percolation are \emph{not} piecewise continuous. However, Theorem \ref{thm:polynomial-projections} does hold for $\mu_n^{\text{perc}}$ if we consider only polynomials of degree $\ge 2$ for which $\partial_x P$ and $\partial_y P$ are not identically zero. We skip the details, but explain the required changes briefly: We first observe that for any such polynomial $P$, $P-u$ has at most $O_{\deg P}(1)$ factors of the form $x-c$ or $y-c$ (since $x-c$ is a factor of $\partial_y P$ whenever it is  a factor of $P$, and likewise for $y-c$).

 After this, the only change required in the proofs comes when we verify condition \eqref{H:Holder-a-priori}, where instead of Lemma \ref{lem:porous} we use the following fact, which follows from Bezout's Theorem: for any algebraic curve $V=P^{-1}(0)\cap\Upsilon$ of degree $k$ which does not contain a factor of the form $x-c$ or $y-c$, its intersection with the $\e$-neighbourhood of a vertical or horizontal line can be covered by $C\e^{-\gamma_1}$ balls of radius $\e$ (where $\gamma_1\in (0,1)$ depends only on $k$ and $C<\infty$ is independent of the line and is uniform in a neighbourhood of $V$).

 In particular, it still holds that if $\alpha<1$, then a.s. conditioned on non-extinction, all polynomial projections of $\supp\mu_\infty^{\text{perc}(2,\alpha)}$ have nonempty interior: linear projections are dealt with already in \cite{RamsSimon13} and higher degree ones follow from these considerations.
\end{rem}

\section{Intersections with self-similar sets and measures}
\label{sec:self-similar}

The previous two sections deal with the natural volume (Hausdorff measure) on affine subspaces and algebraic curves. In this section, we apply Theorem \ref{thm:Holder-continuity} in a different direction: we consider intersections with parametrized families of self-similar sets and measures. We recall some basic definitions below, and refer the reader to \cite[Chapter 9]{Falconer03} for further background.

Let $m\ge 2$, and let $\Sigma_m$ be the full shift on $m$ symbols. Given $F=(f_1,\ldots,f_m)\in(\simi_{d}^c)^m$, let $\Pi_F$ denote the induced projection map $\Sigma_m\to\R^d$ (recall the notation from Section \ref{sec:notation}). We introduce some standard notation for words. Let $\Sigma_m^*$ denote the set of finite words with entries in $\{1,\ldots,m\}$. For $\iii\in\Sigma_m^*$, write $|\iii|$ for its length, and denote
\[
[\iii]=\{\jjj\in\Sigma_m\,:\,\jjj\text{ begins with }\iii\}\,.
\]
The juxtaposition of $\iii_1,\ldots,\iii_n\in\Sigma_m^*$ will be denoted $(\iii_1\ldots\iii_n)$.

If $F\in (\simi_{d}^c)^m$ and $\iii=(i_1\ldots i_n)\in\Sigma_m^*$, we write $F_\iii= F_{i_1}\circ\cdots\circ F_{i_n}$.
The similarity ratio of a similarity map $f$ will be denoted $\rho(f)$.

If $F\in (\simi_d^c)^m$, its \textbf{similarity dimension} $\dim_S(F)$ is the only positive solution $s$ to $\sum_{i=1}^m \rho(F_i)^s=1$. One always has a bound $\dim_H(E_F)\le \dim_S(F)$, with equality if the open set condition holds.

On $(\simi_{d}^c)^m$, we always consider the metric
\[d_1(F,G)=|\Pi_F-\Pi_G|_\infty\,.\]
Let
\[
P_m=\left\{(p_1,\ldots,p_m)\,:\,p_i>0\,,\, \sum_{i=1}^{m}p_i=1\right\}\,,
\]
denote the collection of probability vectors of length $m$ endowed with the maximum metric $d_2(p,p')=\max_{i}|p_i-p'_i|$. Define a metric $d$ on $(\simi_{d}^c)^m\times P_m$ as $d((F,p),(G,p'))=d_1(F,G)+d_2(p,p')$. Given $t=(F,p)\in\Gamma$, let $\nu_t$ be the product measure $p^\infty$  on $\Sigma_m$, i.e. $\nu_t$ satisfies
\[
\nu_t[i_1\ldots i_l]=\prod_{k=1}^l p_{i_k}\,.
\]
Let $\eta_t$ be the self-similar measure $\eta_t=\Pi_F(\nu_t)$. If $F$ satisfies the open set condition, we say that $\eta_t$ is the \textbf{natural measure} if $p_i=\rho(F_i)^s$, where $s=\dim_S(F)$. In this case it is well known that $\eta_t(B(x,r))=O_F(r^s)$ for all $x\in\R^d, r>0$.

\begin{thm}\label{thm:self_sim}
Suppose $(\mu_n)$ is an SI-martingale, which either is of $(\mathcal{F},\tau,\zeta)$-cell type with $\mu_n\le C 2^{\alpha n}$, or is of $(\mathcal{F},\alpha,\zeta)$-cutout type.

Let $\Gamma$ be a bounded subset of $(\simi_d^{c})^m\times P_m$ whose elements consist of contractive similitudes with contraction ratios bounded uniformly away from $0$ and $1$, and
such that for some $s>\alpha$,
\begin{equation}\label{eq:dim_selfsimilar_measure}
\eta_t(B(x,r))=O(r^s)\text{ for all }x\in\R^d, t\in\Gamma, 0<r<1\,.
\end{equation}

Assume furthermore that for some $0<\gamma_1<s$, all $0<\varepsilon<1$, all $(F,p)\in\Gamma$, and all $\Lambda\in\mathcal{F}$, the set  $\partial\Lambda(\varepsilon)\cap E_F$ can be covered by $O(\varepsilon^{-\gamma_1})$ balls of radius $\varepsilon$.

Then there are a quantitative $\gamma>0$ and a finite random variable $K$ (the latter depending on the implicit constants in the $O(\cdot)$ notation) such that:
\begin{enumerate}
\item[\emph{(i)}]\label{claim:ss1} The sequence $Y_n^t:=\int\mu_n(x)\,d\eta_t$ converges uniformly over all $t\in\Gamma$; denote the limit by $Y^t$.
\item[\emph{(ii)}] $|Y^t-Y^u|\le K\, d(t,u)^\gamma$ for all $t,u\in\Gamma$.
\end{enumerate}
\end{thm}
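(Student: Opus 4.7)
The plan is to apply Theorem \ref{thm:Holder-continuity} to the pairing of $(\mu_n)$ with $\{\eta_t\}_{t\in\Gamma}$. Hypotheses \eqref{H:size-parameter-space}--\eqref{H:codim-random-measure} are immediate: \eqref{H:size-parameter-space} holds since $\Gamma$ is a bounded subset of a finite-dimensional smooth manifold; \eqref{H:dim-deterministic-measures} is assumption \eqref{eq:dim_selfsimilar_measure}; \eqref{H:codim-random-measure} is the structural bound on $(\mu_n)$. Since $s>\alpha$ is also assumed, everything reduces to verifying the a priori H\"{o}lder condition \eqref{H:Holder-a-priori}.

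The first ingredient I would establish is a Wasserstein estimate: $W_1(\eta_t,\eta_u)\le C\,d(t,u)$ uniformly on $\Gamma$, where $W_1$ denotes the $1$-Wasserstein metric. Starting from the self-similarity identities $\eta_t=\sum_i p_i F_i\eta_t$ and $\eta_u=\sum_i p'_i G_i\eta_u$, the elementary bounds $W_1(F_i\eta_t,F_i\eta_u)=\rho(F_i)W_1(\eta_t,\eta_u)$, $W_1(F_i\eta_u,G_i\eta_u)\le\sup_{x\in E_G}|F_i(x)-G_i(x)|=O(d_1(F,G))$, and $W_1\bigl(\sum_i p_i\mu_i,\sum_i p'_i\mu_i\bigr)=O(d_2(p,p'))$ (valid because the supports $F_i E_F$ all lie in a common compact set) combine to yield
\begin{equation*}
W_1(\eta_t,\eta_u)\le O(d(t,u))+\rho_{\max}W_1(\eta_t,\eta_u).
\end{equation*}
Since $\rho_{\max}<1$ uniformly on $\Gamma$, the desired bound follows.

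Next, for each $\e>0$ and $n\in\N$, let $L_\e(x)=\min(1,\dist(x,\R^d\setminus A_n)/\e)$, so that $L_\e$ is $(1/\e)$-Lipschitz, $0\le L_\e\le 1$, and $|L_\e-\mathbf{1}_{A_n}|\le\mathbf{1}_{\partial A_n(\e)}$. Because $\partial A_n\subset\partial\Omega\cup\bigcup_{i\le M_n}\partial\Lambda_i$, covering each $\partial\Lambda_i(\e)\cap E_F$ by $O(\e^{-\gamma_1})$ balls of radius $\e$ and applying the Frostman bound \eqref{eq:dim_selfsimilar_measure} gives $\eta_r(\partial A_n(\e))=O((M_n+1)\e^{s-\gamma_1})$ for $r\in\{t,u\}$. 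Combining with the Kantorovich--Rubinstein duality $|\int L_\e\,d\eta_t-\int L_\e\,d\eta_u|\le\e^{-1}W_1(\eta_t,\eta_u)$ yields
\begin{equation*}
|\eta_t(A_n)-\eta_u(A_n)|\le O((M_n+1)\e^{s-\gamma_1})+O(\e^{-1}d(t,u)).
\end{equation*}
Choosing $\e$ of order $(d(t,u)/(M_n+1))^{1/(s-\gamma_1+1)}$ and recalling $M_n=O(2^{\zeta n})$ for $n\ge N_0$, multiplication by $2^{\alpha n}$ verifies \eqref{H:Holder-a-priori} with $\gamma_0=(s-\gamma_1)/(s-\gamma_1+1)$ and $\theta=\alpha+\zeta/(s-\gamma_1+1)$. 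The cell-type case is analogous: apply the same smoothing to each cell $F_j^{(n)}$, sum over $j\le M_n$, and use $c_j^{(n)}\le C\,2^{\tau n}$; this replaces $\alpha$ by $\tau$ in $\theta$.

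With \eqref{H:Holder-a-priori} verified, Theorem \ref{thm:Holder-continuity} delivers both the uniform convergence of $Y_n^t$ to $Y^t$ and the H\"{o}lder continuity of $t\mapsto Y^t$. The main obstacle, in my view, is that both the attractor $E_F$ and the Bernoulli weights $p^\infty$ vary with $t=(F,p)\in\Gamma$, so one cannot pull everything back to a single reference measure on $\Sigma_m$ and invoke Proposition \ref{prop:cutout-measures-satisfy-Holder-a-priori} directly; indeed, distinct Bernoulli measures are generically mutually singular, which precludes a naive comparison on the symbolic space. The Wasserstein contraction argument circumvents this by comparing $\eta_t$ and $\eta_u$ directly as measures on $\R^d$, and exploits the self-similar structure in the only place where it is genuinely needed.
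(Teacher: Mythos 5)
Your proof is correct, and the outer frame---reduce everything to Theorem \ref{thm:Holder-continuity}, with \eqref{H:size-parameter-space}--\eqref{H:codim-random-measure} immediate and only the a priori H\"{o}lder condition \eqref{H:Holder-a-priori} requiring work---is the same as the paper's. Where you genuinely diverge is in how \eqref{H:Holder-a-priori} is verified. The paper routes through Proposition \ref{prop:cutout-measures-satisfy-Holder-a-priori-2}: condition \eqref{eq:Holder-shape-2} is checked essentially as in your boundary estimate (cover $\partial\Lambda(\delta)\cap E_F$ by $O(\delta^{-\gamma_1})$ balls and apply the Frostman bound), but the comparison between the two Bernoulli measures $\nu_t,\nu_u$---precisely the mutual-singularity obstacle you identify---is handled symbolically via condition \eqref{eq:Holder-measure}: the pullback set $\cup_i\Pi_u^{-1}(\Lambda_i\cap\Omega)$ is approximated by $O(2^{Cn})$ cylinders of depth $n\approx\log(1/\delta)$, the difference $|\nu_t[\iii]-\nu_u[\iii]|=O(\delta|\log\delta|)$ is summed cylinder by cylinder, and the symbolic boundary is absorbed using the same covering hypothesis. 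Your replacement---the fixed-point estimate $W_1(\eta_t,\eta_u)=O(d(t,u))$ obtained from the self-similarity identity together with $\rho_{\max}<1$, combined with Kantorovich--Rubinstein duality against the $(1/\e)$-Lipschitz mollification $L_\e$ of $\mathbf{1}_{A_n}$---treats the variation in $F$ and in $p$ in one stroke, entirely on the Euclidean side, and avoids the cylinder bookkeeping; it is in fact closer in spirit to the paper's proof of Theorem \ref{thm:alg_curves}, where the metric on curves is exactly a Wasserstein-type metric and the same Lipschitz-smoothing device appears. The exponents you extract ($\gamma_0=(s-\gamma_1)/(s-\gamma_1+1)$, $\theta=\alpha+\zeta/(s-\gamma_1+1)$) differ from the paper's, but the theorem only asserts the existence of some quantitative $\gamma>0$, so this is immaterial; the one cosmetic slip is in the cell-type case, where the per-cell optimization actually yields $\theta=\tau+\zeta$ rather than a $\zeta/(s-\gamma_1+1)$ term, again with no effect on the conclusion.
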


\begin{proof}
It is straightforward to check that \eqref{H:size-parameter-space} holds and, since \eqref{H:dim-deterministic-measures} and \eqref{H:codim-random-measure} are part of the assumptions, the claim follows from Theorem \ref{thm:Holder-continuity} if we can verify \eqref{H:Holder-a-priori}. To that end, we will apply Proposition \ref{prop:cutout-measures-satisfy-Holder-a-priori-2}, and thus it is enough to check that \eqref{eq:Holder-shape-2} and \eqref{eq:Holder-measure} are satisfied.

Fix $\delta>0$, and let $t=(F,p),u=(G,q)\in\Gamma$ with $d(t,u)<\delta$. It follows from the definition of $d_1$ that
$E_F\cap \Pi_F(\Pi_F^{-1}(\Lambda)\setminus\Pi_G^{-1}(\Lambda))$ and $E_F\cap \Pi_F(\Pi_G^{-1}(\Lambda)\setminus\Pi_F^{-1}(\Lambda))$ are both contained in $\partial\Lambda(\delta)\cap E_F$, and can thus be covered by $O(\delta^{-\gamma_1})$ balls of radius $\delta$. Combining this with the Frostman condition \eqref{eq:dim_selfsimilar_measure}, we get
\[
\nu_F\left(\Pi_F^{-1}(\Lambda)\Delta\Pi_G^{-1}(\Lambda)\right)\le 2\eta_t(E_F\cap\partial\Lambda(\delta))=O(\delta^{s-\gamma_1})\,.
\]
Thus \eqref{eq:Holder-shape-2} holds with $\gamma_0=s-\gamma_1$.

To prove \eqref{eq:Holder-measure}, assume first that $F=G$. Denote $\Pi=\Pi_F=\Pi_G$, and let $J=\cup_{i=1}^M\Pi^{-1}(\Lambda_i\cap\Omega)$.  Since all the similitudes involved are uniformly contracting, it follows by a simple covering argument on $\Sigma_m$ that there is a constant $C<\infty$, and for each $n$, a collection $\Xi=\Xi_n$ of $O(2^{C n})$ finite words $\iii$ such that the cylinder sets $[\iii]\subset J$, $\iii\in\Xi$,
are pairwise disjoint, $\diam(F_\iii(E_F))=O(2^{-n})$ for all $\iii\in\Xi$ and further
\begin{equation}\label{eq:away_from_bdr}
\Pi(J\setminus\cup_{\iii\in\Xi}[\iii])\subset\partial\Omega(2^{-n})\cup\bigcup_{i=1}^M \partial \Lambda_i(2^{-n})\,.
\end{equation}
Let $n$ be the smallest integer such that
$n>\tfrac{\log_2\delta}{-2C}$.

From $d_2(p,q)\le\delta$ and the definition of $d_2$, it follows by simple calculus that
$|\nu_t([\iii])-\nu_u([\iii])|=O(n)\delta=O(\delta|\log\delta|)$ for all $\iii\in\Xi$. Thus
\begin{equation}\label{eq:inner_part}
\left|\nu_u(\cup_{\iii\in\Xi}[\iii])-\nu_t(\cup_{\iii\in\Xi}[\iii])\right|\,\le\,\sum_{\iii\in\Xi}|\nu_u[\iii]-\nu_t[\iii]|\le O\left(2^{Cn}|\log\delta|\delta\right)=O(\delta^{\gamma})\,,
\end{equation}
for any $\gamma<\tfrac12$.

By our assumptions, each of the sets $\partial\Lambda_i(2^{-n})\cap E_F$ as well as $\partial\Omega(2^{-n})\cap E_F$ may be covered by $O(2^{n\gamma_1})$ balls of radius $2^{-n}$. Together with \eqref{eq:dim_selfsimilar_measure} and \eqref{eq:away_from_bdr}, this yields
\begin{align*}
&\left|\nu_u(J\setminus \cup_{\iii\in\Xi}[\iii])-\nu_t(J\setminus\cup_{\iii\in\Xi}[\iii])\right|\le\left|\eta_u\Pi(J\setminus \cup_{\iii\in\Xi}[\iii])|+|\eta_t\Pi(J\setminus\cup_{\iii\in\Xi}[\iii])\right|\\
&\le\eta_u(\partial\Omega(2^{-n}))+\eta_t(\partial\Omega(2^{-n}))+
\sum_{i=1}^M\eta_u(\partial\Lambda_i(2^{-n}))+\eta_t(\partial\Lambda_i(2^{-n}))\\
&=O(M)2^{n(\gamma_1-s)}=O(M)\delta^{(s-\gamma_1)/(2C)}\,.
\end{align*}
Together with \eqref{eq:inner_part}, this implies \eqref{eq:Holder-measure} in the case $F=G$. If $p=q$, \eqref{eq:Holder-measure} is trivially satisfied, and the general case then follows by the triangle inequality.
\end{proof}

\begin{rems}\label{rem:d-1}
\begin{enumerate}[(i)]
\item By choosing $k$-dimensional cubes $E_V\subset V$ such that $\supp\mu_0\cap V\subset E_V$ as the self-similar sets, we can view Theorem \ref{thm:linear-projections} as a special case of Theorem \ref{thm:self_sim}. This illustrates the generality of Theorem \ref{thm:self_sim}.
\item Theorem \ref{thm:self_sim} and its proof remain valid if we replace $(\simi_{d}^c)^m$ by any totally bounded family (in the metric $|\Pi_F-\Pi_G|_\infty$) of uniformly contractive IFSs satisfying \eqref{H:size-parameter-space} and the Frostman condition \eqref{eq:dim_selfsimilar_measure}.
\item If the boundaries of $\Lambda\in\mathcal{F}$ satisfy a uniform box-dimension estimate
\[
\text{Each }\partial\Lambda(\e)\text{ is covered by }C\e^{-\gamma_1}\text{ balls of radius }\e\,,
\]
then the same condition holds trivially for the coverings of $E_F\cap\partial\Lambda(\e)$ as well, and
Theorem \ref{thm:self_sim} holds provided that $s>\max(\gamma_1,\alpha)$. In particular,
 Theorem \ref{thm:self_sim} can be applied to $\mu_n^{\text{perc}(\alpha,d)}$ and $\mu_n^{\text{ball}(\alpha,d)}$ provided $s>\max(d-1,\alpha)$, and to $\mu_n^{\text{snow}(\alpha)}$ if $s>\max(\tfrac{\log 4}{\log 3},\alpha)$.
\end{enumerate}
\end{rems}

In the following proposition, we verify that the covering condition  in Theorem \ref{thm:self_sim} holds for all values of $s$ when $E_F$ is not contained in a hyperplane, and $\Lambda$ is either a circle or a polyhedron. This will allow us to deduce consequences on the intersections of $\mu_n^{\text{ball}}$ and $\mu_n^{\text{perc}}$ with such self-similar sets.
\begin{prop} \label{prop:inter-nbhd-sss}
Let $F\in(\simi_{d}^c)^m$ be an IFS such that $E_{F}$ is not contained in a hyperplane.

Then there exist $\gamma\in (0,\dim_S(F))$, $C>0$, and a neighbourhood $\mathcal{G}$ of $F$ in $(\simi_{d}^c)^m$, such that for each $G\in\mathcal{G}$, if $S$ is a sphere or a hyperplane,
then $S(\e)\cap E_G$ can be covered by $C\, \e^{-\gamma}$ balls of radius $\e$.
\end{prop}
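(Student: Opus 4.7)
The plan is to combine a quantitative non-degeneracy of $E_F$ (with respect to the combined family of spheres and hyperplanes) with self-similarity, and then iterate. Writing $\rho_{\max}(G):=\max_i \rho(G_i)$ and letting $\overline{N}_G(\e)$ denote the supremum, over all spheres and hyperplanes $S\subset\R^d$, of the minimum number of balls of radius $\e$ needed to cover $S(\e)\cap E_G$, the goal is a recursive inequality
\[
\overline{N}_G(\e)\le (m^k-1)\,\overline{N}_G(\e/\rho_{\max}(G)^k)\qquad\text{for all }\e\le\delta_0,
\]
where $k$ is a fixed integer and $\delta_0>0$ is a constant produced in the first step; iterating then gives $\overline{N}_G(\e)\le C\e^{-\gamma}$ with $\gamma=\log(m^k-1)/(k\log(1/\rho_{\max}(G)))$, and the strict bound $\gamma<\log m/\log(1/\rho_{\max}(F))\le\dim_S(F)$ follows from $m^k-1<m^k$ and the defining relation $\sum_i\rho(F_i)^{\dim_S(F)}=1$ together with $\rho(F_i)\le\rho_{\max}(F)$.

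The crux is producing $k$ and $\delta_0$. Here the hypothesis on $E_F$ is used (reading ``hyperplane'' in the M\"obius sense, so as to exclude also spheres, since otherwise $E_F\subset S_0$ would contradict the conclusion for $S=S_0$). I would select $d+2$ points $x_0,\ldots,x_{d+1}\in E_F$ with $x_0,\ldots,x_d$ affinely independent and $x_{d+1}$ off the unique sphere (or hyperplane) through $x_0,\ldots,x_d$. A compactness argument then furnishes $\delta_1>0$ with $\max_j d(x_j,S)>\delta_1$ for every sphere or hyperplane $S$: a violating sequence $S_n=S(c_n,r_n)$ falls into one of three regimes, each giving a contradiction --- $r_n\to 0$ forces all $x_j$ into a ball shrinking to $c_n$, contradicting distinctness; $r_n$ bounded away from $0$ and $\infty$ yields (along a subsequence) a limit sphere through all $x_j$, contradicting the choice of $x_{d+1}$; and $r_n\to\infty$, after normalising by $(|x-c_n|^2-r_n^2)/(2r_n)$, produces a limit hyperplane through every $x_j$, contradicting affine independence of $x_0,\ldots,x_d$. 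Writing $x_j=\Pi_F(\omega_j)$ and taking $k$ so large that each cylinder $F_{\iii_j}(E_F)$, $\iii_j=\omega_j|_k$, has diameter less than $\delta_1/2=:\delta_0$, we conclude that for any sphere or hyperplane $S$, at least one of these $d+2$ cylinders is disjoint from $S(\delta_0)$; hence at most $m^k-1$ of the $m^k$ generation-$k$ cylinders meet $S(\delta_0)$.

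The recursion then follows from self-similarity plus the fact that similitudes preserve the class of spheres-and-hyperplanes: for $\e\le\delta_0$ we have $S(\e)\subset S(\delta_0)$, so $S(\e)\cap E_F\subset\bigcup_{\iii\in T}F_\iii(E_F)$ with $\#T\le m^k-1$, and each $S(\e)\cap F_\iii(E_F)$ pulls back via $F_\iii^{-1}$ to $(F_\iii^{-1}S)(\e/\rho(F_\iii))\cap E_F$, whose $(\e/\rho(F_\iii))$-covering number equals the $\e$-covering number of the original; bounding $\rho(F_\iii)\le\rho_{\max}(F)^k$ gives the displayed recursion. For the perturbation to $G$ in a small neighbourhood $\mathcal{G}$ of $F$, the data $\Pi_G(\omega_j)$, $\rho(G_i)$ and $\diam G_{\iii_j}(E_G)$ depend continuously on $G$ in the metric $d_1$, and the quantitative non-degeneracy of the $d+2$ points is an open condition, so the whole argument carries over with slightly worse constants $\delta_0,C$ but the same exponent $\gamma$. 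The main obstacle is the unified compactness argument for sphere-or-hyperplane non-degeneracy, since one must simultaneously control spheres of small, moderate and large radius; once that is in hand, the self-similar recursion is routine.
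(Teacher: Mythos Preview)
Your overall strategy --- find a uniform gap between $E_F$ and every sphere/hyperplane at some level, then iterate using that similitudes preserve the class of spheres-and-hyperplanes --- is the same as the paper's, and the compactness argument you sketch for producing $\delta_1$ is fine. There are, however, two genuine gaps.

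\textbf{The inequality for $\gamma$ is backwards.} You claim $\gamma<\log m/\log(1/\rho_{\max}(F))\le\dim_S(F)$. But from $\sum_i\rho(F_i)^{\dim_S(F)}=1$ and $\rho(F_i)\le\rho_{\max}(F)$ one gets $m\,\rho_{\max}(F)^{\dim_S(F)}\ge 1$, hence $\dim_S(F)\le\log m/\log(1/\rho_{\max}(F))$, not the reverse. So when the contraction ratios are unequal your exponent $\gamma=\log(m^k-1)/(k\log(1/\rho_{\max}))$ can exceed $\dim_S(F)$ (take e.g.\ $m=2$, $\rho_1=1/2$, $\rho_2=1/4$: then $\dim_S(F)\approx 0.694$ while your $\gamma\to 1$ as $k\to\infty$). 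The paper avoids this by a finer counting: rather than bounding the branching factor by $m^k-1$ and scaling by $\rho_{\max}^k$, it iterates at level $1$ (after replacing $F$ by an iterate so that one first-level cylinder misses each $S$), relabels so that $\rho_m$ is smallest, sets $\gamma$ by $\sum_{i=1}^{m-1}\overline\rho_i^\gamma=1$, and counts leaves in the subshift $\Delta$ of sequences that avoid the forbidden symbol at each step via the product inequality $\sum_{\iii\in\Delta_\e}\prod_j\rho_{i_j}^\gamma\le 1$. This $\gamma$ is automatically $<\dim_S(F)$. Your recursion can be salvaged the same way, but the crude $(m^k-1)$ versus $\rho_{\max}^k$ bookkeeping cannot.

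\textbf{The hypothesis is about affine hyperplanes only.} You write that you read ``hyperplane'' in the M\"obius sense so as to exclude spheres, but the statement assumes only that $E_F$ lies in no affine hyperplane. That $E_F$ then also lies in no sphere is a nontrivial fact about self-similar sets (Bandt--Kravchenko), which the paper cites; you need either to invoke it or to prove it, rather than fold it into the hypothesis.

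A smaller point: the passage from $\sum_{\iii\in T}\overline N_G(\e/\rho(G_\iii))$ to $(m^k-1)\,\overline N_G(\e/\rho_{\max}^k)$ tacitly uses that $\overline N_G$ is decreasing, which is not obvious (for a single $S$ it can jump up). This is fixable by iterating branch-by-branch with a stopping time as the paper does, but as written your displayed recursion is not justified.
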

\begin{proof}

Let $\mathcal{S}$ denote the collection of all spheres and hyperplanes of $\R^d$.
To begin with, we note that for each $S\in\mathcal{S}$, there exist $\delta>0$ and
$\iii\in\Sigma_m^*$ such that
$\dist(F_\iii(E_F), S)>2\delta$. Recall that a self-similar set not contained in a hyperplane cannot be contained in a sphere (see e.g. \cite[Theorem 1]{BandtKravchenko11}).
A straightforward compactness argument shows that $\delta$ and $|\iii|$ can be taken uniform in $S$. By iterating the original IFS, we can assume (for notational simplicity) that $|\iii|=1$ for all spheres and hyperplanes. Notice that the claim for the original IFS follows from the corresponding claim for the iterated one.

Since $G\mapsto E_G$ is continuous in the Hausdorff metric, one can find a neighbourhood $\mathcal{G}$ of $F$, such that for any $S\in\mathcal{S}$ and any $G\in\mathcal{G}$, there is $i\in\{1,\ldots,m\}$ such that $G_i(E_G)\cap S=\varnothing$.
Let $\overline{\rho}_i = \sup_{G\in\mathcal{G}} \rho(G_i)$, and relabel so that $\overline{\rho}_m = \min_{i=1}^m \overline{\rho}_i$. By making $\mathcal{G}$ smaller if needed, we may and do assume that the only positive solution $\gamma$ to $\sum_{i=1}^{m-1} \overline{\rho}_i^\gamma=1$ satisfies $\gamma<\dim_S(F)$.

Write $N(E,\e)$ for the minimum number of balls of radius $\e$ required to cover the bounded set $E\subset\R^d$. An easy geometric argument shows that if, given a bounded set $E$, it holds that $N(E\cap S,\e) \le C\,\e^{-\gamma}$ for all $S\in\mathcal{S}$, and some $\e,C>0$ independent of $H$, then $N(S(\e)\cap E,\e) \le C' \e^{-\gamma}$ for all $S\in\mathcal{S}$, where  $C'=C'(C,d)$.
Hence it will be enough to show that
\[
 N(E_G\cap S) = O(\e^{-\gamma})
\]
for all $G\in\mathcal{G}$, and all
$S\in\mathcal{S}$.
Fix $G\in\mathcal{G}$ and $S\in\mathcal{S}$ for the rest of the proof.

The key observation is that for any word $\iii\in\{1,\ldots,m\}^n$, there is some $j=j(\iii)\in\{1,\ldots,m\}$ such that $G_\iii G_j(E_G)\cap S=\varnothing$. Indeed, this is equivalent to $G_j(E_G)\cap G_{\iii}^{-1}(S)=\varnothing$, which is known to hold for some $j\in\{1\ldots,m\}$ as $G_{\iii}^{-1}(S)\in\mathcal{S}$.
Using this observation, we define a subset $\Delta$ of the full shift $\Sigma_m$ as follows: if $\iii\in\Sigma_m$, then $\iii\in\Delta$ if and only $i_{n+1}\neq j(i_1\ldots i_n)$ for all $n$. Then we have $E_G\cap S\subset \Pi_G(\Delta)$.

Write $\Delta^*$ for all finite words which are beginnings of words in $\Delta$. Given $\e>0$, let $\Delta_\e$ be the set of finite words $\iii=(i_1,\ldots,i_n)\in\Delta^*$ such that $\overline{\rho}_{i_1}\cdots \overline{\rho}_{i_n}<\e$ but $\overline{\rho}_{i_1}\cdots \overline{\rho}_{i_{n-1}}\ge \e$. Note that if $(i_1,\ldots,i_n)\in\Delta_\e$, then $\overline{\rho}_{i_1}\cdots \overline{\rho}_{i_n}\ge\overline{\rho}_m \e$. Also, the cylinders $\{[\iii]:\iii\in\Delta_\e\}$ form a disjoint minimal cover of  $\Delta$. These facts, together with an inductive argument in the length of the longest word in $\Delta_\e$, imply that
\[
\#\Delta_\e (\overline{\rho}_m \e)^\gamma  \le \sum_{\iii\in\Delta_\e} \left(\overline{\rho}_{i_1}\cdots \overline{\rho}_{i_{|\iii|}}\right)^\gamma  \le 1\,.
\]
On the other hand, $\Pi_G(\Delta)$ is covered by the cylinders $\{ \Pi_G([\iii]):\iii\in\Delta_\e\}$, each of which has diameter $O(\e)$. Since $E_G\cap S\subset \Pi_G(\Delta)$, the proof is complete.
\end{proof}

The following lemma will allow us to deal with self-similar sets with no separation assumptions.

\begin{lemma} \label{lem:reduction-to-ssc}
Let $F\in (\simi_d^c)^m$ be an IFS such that $E_F$ is not contained in a hyperplane and hits a nonempty open set $U$. Then for every $\e>0$ there exists a finite subset $\Delta_\e$ of $\Sigma_m^*$ such that, writing $F_0=\{ F_\iii:\iii\in\Delta_\e\}$, the set $E_{F_0}$ is not contained in a hyperplane, hits $U$, and further $\dim_S F_0=\dim_H F_0>\dim_H F-\e$. Moreover, $F_0$ satisfies the strong separation condition.
\end{lemma}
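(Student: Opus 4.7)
The plan is to construct $\Delta_\e$ in two stages: first produce a sub-IFS with strong separation and similarity dimension close to $\dim_H E_F$, then enrich it so that $E_{F_0}$ both spans $\R^d$ affinely and meets $U$, while controlling the dimension loss. Throughout, write $s := \dim_H E_F$, which is strictly positive since $E_F$ is not contained in a hyperplane.

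For the first stage, I would produce a finite $\Delta_0 \subset \Sigma_m^*$ containing a distinguished word $\iii^*$ such that $F_0^{(1)} := \{F_\iii : \iii \in \Delta_0\}$ satisfies SSC, $\dim_S F_0^{(1)} > s - \e/2$, and $F_{\iii^*}(E_F) \subset U$. The existence of a sub-IFS with SSC and nearly maximal similarity dimension follows from a Moran-cover argument classical for IFSs of similarities: for each $t < s$, $\mathcal{H}^t(E_F) = \infty$, so at a sufficiently fine scale $\delta > 0$ one can cover $E_F$ by cylinders $F_\iii(E_F)$ with contraction ratios $\rho(F_\iii) \asymp \delta$ whose total $t$-weight is bounded below by an absolute constant; a Vitali-type selection yields a pairwise-disjoint subcollection retaining a constant fraction of this weight, and by taking $t$ close to $s$ and $\delta$ small, the resulting similarity dimension can be made to exceed $s - \e/2$. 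Since $E_F \cap U \neq \varnothing$, a single sufficiently deep word $\iii^*$ with $F_{\iii^*}(E_F) \subset U$ may be adjoined without spoiling SSC. By Schief's theorem $\dim_H E_{F_0^{(1)}} = \dim_S F_0^{(1)}$.

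For the second stage, pick $d+1$ affinely independent points $x_0, \ldots, x_d \in E_F$ with codings $\iota_0, \ldots, \iota_d \in \Sigma_m$. For $n$ sufficiently large the cylinders $F_{\iota_j|_n}(E_F)$ are pairwise disjoint and each lies in an arbitrarily small neighbourhood of $x_j$. Fix such an $n$ and an integer $L \ge 1$ to be chosen below, and set
\[
\Delta_\e := \bigl\{ (\iii\, \iota_j|_n) : \iii \in \Delta_0^L,\ j = 0, \ldots, d \bigr\},
\]
where $\Delta_0^L$ denotes $L$-fold concatenations of words in $\Delta_0$. Pairwise disjointness of the cylinders $F_{(\iii\, \iota_j|_n)}(E_F) \subset F_\iii(E_F)$ holds across distinct $\iii \neq \iii'$ in $\Delta_0^L$ by SSC of $F_0^{(1)}$ (inherited by its iterates), and across $j$ for fixed $\iii$ by our choice of $n$; hence $F_0 := \{F_\iota : \iota \in \Delta_\e\}$ satisfies SSC. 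Taking $\iii = (\iii^*)^L$, the fixed point of $F_{(\iii^*)^L\, \iota_j|_n}$ lies in $F_{(\iii^*)^L}(E_F) \subset F_{\iii^*}(E_F) \subset U$ (iterating $F_\iota(E_F) \subset E_F$), so $E_{F_0} \cap U \neq \varnothing$. For any fixed $\iii \in \Delta_0^L$, the fixed points of $F_{\iii\, \iota_j|_n}$ approximate the affinely independent points $F_\iii(x_j)$ to within $O(\rho(F_{\iii\, \iota_j|_n}))$; for $n$ large the perturbations preserve affine independence, giving $d+1$ affinely independent points in $E_{F_0}$, so $E_{F_0}$ is not in a hyperplane.

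Finally, the similarity dimension $s' := \dim_S F_0$ solves $A(s')^L B(s') = 1$, where $A(s) := \sum_{\iii \in \Delta_0} \rho(F_\iii)^s$ and $B(s) := \sum_{j=0}^{d} \rho(F_{\iota_j|_n})^s$. At $s_1 := \dim_S F_0^{(1)}$ one has $A(s_1) = 1$ and $B(s_1) < 1$ (for $n$ large); a Taylor expansion of $L \log A(s) + \log B(s)$ around $s_1$ yields $s_1 - s' \sim |\log B(s_1)|/(L |A'(s_1)|)$ as $L \to \infty$, with fixed positive constants, so choosing $L$ large enforces $s' > s_1 - \e/2 > s - \e$; Schief's theorem then gives $\dim_H E_{F_0} = s'$. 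The principal obstacle is the first stage: producing a sub-IFS with SSC and $\dim_S > s - \e/2$ requires a careful Moran-cover plus Vitali selection, balancing the dimensional weight against the disjointness requirement. The remaining steps are essentially bookkeeping.
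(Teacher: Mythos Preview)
Your argument is correct and arrives at the same conclusion, but the architecture differs from the paper's. The paper works at a single scale: it first fixes a word $\jjj_0$ with $F_{\jjj_0}(E_F)\subset U$ and, via a compactness argument, a finite set $\Xi=\{\iii_1,\ldots,\iii_M\}$ of words with $\{F_{\iii_k}(E_F)\}$ pairwise disjoint and such that every hyperplane misses at least one $F_{\iii_k}(E_F)$. It then takes a maximal pairwise-disjoint family $\overline{\Delta}_r$ of cylinders at scale $r$ (with $\#\overline{\Delta}_r\gtrsim r^{-(s-\e)}$ by the box-dimension equality for self-similar sets), and sets $\Delta_\e=\{\jjj_0\,\iii_{k(i)}\,\jjj_i:\jjj_i\in\overline{\Delta}_r\}$, where the $\iii_{k(i)}$ prefixes simultaneously enforce SSC across groups and guarantee the non-hyperplane property, and the outer $\jjj_0$ prefix forces $E_{F_0}\subset U$. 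The dimension loss from the two prefixes is absorbed by taking $r$ small. Your approach instead first produces $\Delta_0$ with cylinder-disjointness and large similarity dimension, then iterates $\Delta_0$ $L$ times and \emph{postfixes} by truncated codings of $d{+}1$ affinely independent points to force the non-hyperplane property; the dimension loss from the postfix is controlled by the implicit-function estimate on $A(s)^L B(s)=1$ as $L\to\infty$. The paper's route is a bit shorter (no iteration, no Taylor expansion), while yours is more modular and makes the role of affine independence explicit.

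Two small points of imprecision in your write-up are worth tightening. First, what you use in Stage~2 is not merely ``SSC of $F_0^{(1)}$'' but the stronger cylinder-disjointness $F_\iii(E_F)\cap F_{\iii'}(E_F)=\varnothing$ for distinct $\iii,\iii'\in\Delta_0$; this is precisely what your Vitali selection delivers, but SSC alone (disjointness of images of $E_{F_0^{(1)}}$) would not suffice since $E_F\supsetneq E_{F_0^{(1)}}$. Second, ``adjoining a sufficiently deep $\iii^*$ without spoiling SSC'' needs a word of care: a cylinder much smaller than the $\Delta_0$-scale may sit inside one of the selected cylinders. The clean fix is to choose $\iii^*$ \emph{at the same Moran scale} $\delta$ with $F_{\iii^*}(E_F)\subset U$ (such words exist once $\delta$ is small) and start the greedy disjoint selection with it; this costs nothing in the cardinality estimate.
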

\begin{proof}
This is well known if we do not impose the extra requirements of not being contained in a hyperplane and hitting $U$; the proof is a variant of the standard argument. Since $E_F$ meets the open set $U$, there is $\jjj_0\in \Sigma_m^*$ such that $F_{\jjj_0}(E_F)\subset U$. Also, since $E_F$ is not contained in the hyperplane, arguing by compactness as in the proof of Proposition \ref{prop:inter-nbhd-sss}, we find a finite subset $\Xi=\{\iii_1,\ldots,\iii_M\}\subset\Sigma_m^*$ such that:
\begin{equation}\label{eq:outH}
\text{For each hyperplane }H\text{ there is }\iii\in\Xi\text{ with }F_\iii(E_F)\cap H=\varnothing\,.
\end{equation}
Moreover, we may assume that the sets $F_\iii(E_F), \iii\in\Xi$ are pairwise disjoint by iterating the following fact: If $F_\iii(E_F)\neq F_\jjj(E_F)$, then there are words $\iii',\jjj'\in\Sigma_{m}^*$ such that $F_{\iii \iii'}(E_F)\cap F_{\jjj \jjj'}(E_F)=\varnothing$.

For each nonempty $\iii\in\Sigma_m^*$, let $\iii^{-}$ be $\iii$ with the last coordinate removed. For each $r\in (0,1)$, let $\overline{\Delta}_r$ be a subset of
\[
\{ \iii\in\Sigma_m^*: \rho(F_\iii)<r, \rho(F_{\iii^-})\ge r\}
\]
such that $\{F_\iii(E_F)\}_\iii\in\overline{\Delta}_r$ is pairwise disjoint, and is maximal with this property. Then it is easy to check that
\begin{equation}
\label{eq:dim_delta}
\#\overline{\Delta}_r=\Omega_\e(r^{-(s-\e)})\,,
\end{equation}
where $s=\dim_B(E_F)=\dim_H(E_F)$ (recall that Hausdorff and box dimensions always coincide for self-similar sets, see \cite[Corollary 3.3]{Falconer97}). Enumerate $\overline{\Delta}_r = \{ \jjj_i\}_{i=1}^{M_r}$. In light of \eqref{eq:dim_delta}, by taking $r$ small enough, we may assume that $M_r\ge M$.
Let $\overline{\Delta}'_r$ be the family obtained by replacing $\jjj_i$ by $\iii_i \jjj_i$ for $i\in\{1,\ldots,M\}$, and replacing $\jjj_i$ by $\iii_1\jjj_i$ for $i\in\{M+1,\ldots,M_r\}$. Finally, define $F_0^r = \{ F_{\jjj_0\iii}: \iii\in  \overline{\Delta}'_r\}$.

It follows that $F_0^r$ is contained in $U$, is not contained in any hyperplane, satisfies the strong separation condition, contains $\Omega_\e(r^{-(s-\e)})$ maps, and the contraction ratio of each map is $\Omega(r)$ (where the implicit constant depends on $\rho(F_{\jjj_0})$ and $\min\{ \rho(\iii):\iii\in\Xi\}$). A straightforward calculation that we omit allows us to conclude that if $r$ is small enough then $\dim_S(F_0^r)>s-2\e$.  Thus, taking $F_0=F_0^r$ for suitably small $r>0$ finishes the proof.
\end{proof}

We obtain the following consequence on the intersections with self-similar sets. We state it only for $\mu_n^{\text{ball}}$ and $\mu_n^{\text{perc}}$, although it will be clear from the proof that it applies to many other SI-martingales. The size of these intersections will be investigated more carefully in Theorems \ref{thm:dim_intersection_selfsim} and \ref{thm:dim_lower_bound}.

\begin{thm} \label{thm:intersection-compact-sss}
 Let $(\mu_n)$ be either $\mu_n^{\text{ball}(\alpha,d)}$ or $\mu_n^{\text{perc}(\alpha,d)}$.

 Let $\Gamma$ be any compact subset of $(\simi_d^{c})^m$ such that, for all $F\in \Gamma$, the self-similar set $E_F$ is not contained in a hyperplane, hits the interior of $\supp\mu_0$, and has Hausdorff dimension $>\alpha$. Then there is a positive probability that $\supp\mu_\infty \cap E_F\neq \varnothing$ for all $F\in\Gamma$.
\end{thm}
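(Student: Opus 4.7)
The strategy is to produce, for each $F\in\Gamma$, a natural self-similar measure $\eta_F$ supported on a sub-self-similar set of $E_F$ of dimension strictly greater than $\alpha$, then invoke Theorem~\ref{thm:self_sim} to get a continuous random function $F\mapsto Y^F\ge 0$ on $\Gamma$, and finally show that $\PP(\inf_{F\in\Gamma}Y^F>0)>0$. Since $Y^F>0$ implies $\supp\eta_F\cap\supp\mu_\infty\ne\varnothing$, and $\supp\eta_F\subset E_F$, this yields the theorem.

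\textbf{Step 1 (reduction to SSC sub-IFSs).} For each $F\in\Gamma$, Lemma~\ref{lem:reduction-to-ssc} (applied with $U=\supp\mu_0^\circ$, and with $\varepsilon<\inf_{F\in\Gamma}\dim_H E_F-\alpha$, the latter infimum being positive by compactness and continuity of $F\mapsto\dim_H E_F$) produces a finite word set $\Delta_F$ such that the sub-IFS $F_0(F)=\{F_\iii:\iii\in\Delta_F\}$ satisfies the strong separation condition, $E_{F_0(F)}\subset E_F\cap U$, $E_{F_0(F)}$ is not contained in a hyperplane, and $\dim_S F_0(F)=\dim_H E_{F_0(F)}>\alpha$. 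Proposition~\ref{prop:inter-nbhd-sss} then furnishes a neighbourhood $\mathcal{G}(F)$ of $F_0(F)$ and an exponent $\gamma_1(F)<\dim_S F_0(F)$ such that, uniformly over $G\in\mathcal{G}(F)$ and every sphere or hyperplane $S$, the set $S(\varepsilon)\cap E_G$ is covered by $O(\varepsilon^{-\gamma_1(F)})$ balls of radius $\varepsilon$. This is precisely the covering hypothesis required by Theorem~\ref{thm:self_sim} in both cases under consideration, since for $\mu_n^{\text{ball}}$ each $\partial\Lambda$ is a sphere, while for $\mu_n^{\text{perc}}$ each $\partial\Lambda$ is a finite union of hyperplane pieces.

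\textbf{Step 2 (compact covering and Theorem~\ref{thm:self_sim}).} Keeping the word set $\Delta_F$ fixed, the assignment $F'\mapsto F_0(F')$ via the \emph{same} words is continuous on $\Gamma$, and the open conditions (SSC, $\dim_S F_0(F')>\alpha$, $E_{F_0(F')}\subset U$, $E_{F_0(F')}$ not in a hyperplane, and $F_0(F')\in\mathcal{G}(F)$) persist in a $\Gamma$-neighbourhood $U_F$ of $F$. By compactness, cover $\Gamma$ by finitely many such $U_{F^{(1)}},\ldots,U_{F^{(k)}}$. On each $U_{F^{(i)}}$ form the family of natural self-similar measures $\eta_{F'}$ on $E_{F_0(F')}$; Theorem~\ref{thm:self_sim} applies (with a uniform Frostman exponent $s_i>\alpha$), yielding a.s.\ uniform convergence of $Y_n^{F'}=\int\mu_n\,d\eta_{F'}$ to a H\"older continuous limit $Y^{F'}$ on $U_{F^{(i)}}$. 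Pasting these together gives a single (a.s.\ continuous) function $F\mapsto Y^F\ge 0$ on all of $\Gamma$.

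\textbf{Step 3 (uniform positivity---the main obstacle).} For each fixed $F$, Lemma~\ref{lem:Yt-survives} gives $\PP(Y^F>0)>0$, because $\EE Y^F=\int\mu_0\,d\eta_F\ge c_0>0$ uniformly in $F$ (by compactness of $\Gamma$ and continuity of $F\mapsto\eta_F$). The crux is to upgrade this pointwise survival to a positive-probability \emph{simultaneous} survival across $\Gamma$. The plan is to combine two ingredients. Firstly, fix a finite $\delta$-net $\{G_1,\ldots,G_N\}\subset\Gamma$ and work on the event that each piecewise H\"older constant of $F\mapsto Y^F$ is bounded by some $K_0$ (this event has probability tending to $1$ as $K_0\to\infty$, by Corollary~\ref{cor:tail_estimate}); on this event, $\min_j Y^{G_j}>K_0\delta^\gamma$ already forces $\inf_{F\in\Gamma}Y^F>0$. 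Secondly, a second-moment estimate, of exactly the type used in Lemma~\ref{lem:bound_A_n_nbhd} (bounding $\EE(\mu_n(x)\mu_n(y))$ by $O(|x-y|^{-\alpha})$ for $\mu_n^{\text{ball}}$ and analogously for $\mu_n^{\text{perc}}$), gives $\EE((Y^F)^2)\le C\iint|x-y|^{-\alpha}\,d\eta_F(x)\,d\eta_F(y)<\infty$ uniformly, since the Frostman exponent of $\eta_F$ exceeds~$\alpha$. With uniform control of $\EE Y^F$ from below and $\EE(Y^F)^2$ from above, a Paley--Zygmund-style argument applied to a suitable combined functional of $(Y^{G_1},\ldots,Y^{G_N})$ (exploiting the uniform correlation bound from Cauchy--Schwarz together with the inductive/renewal structure of the SI-martingale at a fixed scale $n_0$) should yield $\PP(\min_j Y^{G_j}>K_0\delta^\gamma)>0$, completing the proof. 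The genuine technical difficulty is in this last step: bare pointwise survival plus H\"older continuity is not enough, and one has to leverage the explicit probabilistic structure (Poissonian or subdivision) to couple the survival events at the finitely many net points $G_j$.
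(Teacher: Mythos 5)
Your Steps 1 and 2 follow the paper's route: reduce to strongly separated sub-IFSs via Lemma \ref{lem:reduction-to-ssc}, obtain the covering condition from Proposition \ref{prop:inter-nbhd-sss}, and apply Theorem \ref{thm:self_sim} together with Lemma \ref{lem:Yt-survives} on a finite cover of $\Gamma$. The gap is in Step 3, which you correctly identify as the crux but do not close. The tools you propose --- a uniform second-moment bound on $Y^F$ and Cauchy--Schwarz --- only give \emph{upper} bounds on the correlations $\EE(Y^{G_i}Y^{G_j})$, and a Paley--Zygmund argument applied to any combined functional of $(Y^{G_1},\ldots,Y^{G_N})$ requires a lower bound on the expectation of that functional (e.g.\ of $\min_j Y^{G_j}$ or $\prod_j Y^{G_j}$), which is exactly the joint-survival information you are trying to establish. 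Pointwise survival plus H\"older continuity plus finite second moments is in general compatible with $\inf_{F\in\Gamma}Y^F=0$ almost surely; some form of \emph{positive association} between the survival events is genuinely needed, and your appeal to the ``inductive/renewal structure'' is not a proof of it.

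The missing ingredient is the FKG/Harris inequality, which is what the paper uses and is the reason the theorem is stated only for $\mu_n^{\text{ball}}$ and $\mu_n^{\text{perc}}$ rather than for general SI-martingales. Both processes are driven by a Poisson point process (for percolation, with a discrete intensity, $\mathcal{Y}$ being the family of removed cubes), and each $Y^F$ is a \emph{decreasing} function of the realization: deleting fewer shapes enlarges every $A_n$ and hence increases every $Y^F_n$. Consequently the event $E_i=\{Y^F>0 \text{ for all } F\in\mathcal{G}_i\}$ is decreasing, and FKG gives $\PP\bigl(\bigcap_i E_i\bigr)\ge\prod_i\PP(E_i)$ for a finite subcover $\{\mathcal{G}_i\}$ of $\Gamma$. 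That each $\PP(E_i)>0$ follows by combining Lemma \ref{lem:Yt-survives} (so $\PP(Y^F>0)>0$ at the centre $F$ of $\mathcal{G}_i$) with the a.s.\ continuity from Theorem \ref{thm:self_sim}: on the event $\{Y^F>0\}$, continuity forces $Y>0$ on some ball $B(F,1/k)$, so $\{Y^F>0\}\subset\bigcup_k\{Y>0\text{ on }\Gamma\cap B(F,1/k)\}$ up to a null set and some fixed $k$ yields a positive-probability event. This last point also repairs a small imprecision in your Step 2, where the neighbourhoods are chosen only so that $Y$ is continuous on them, not so that the event ``$Y>0$ throughout'' has positive probability.
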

\begin{proof}

 We recall a version of the FKG inequality valid for $\mu_n^{\text{ball}}$ and $\mu_n^{\text{perc}}$. A random variable $X$ depending on the realization of a Poisson point process is called \textbf{decreasing} if $X(\mathcal{Y}')\ge X(\mathcal{Y})$ whenever $\mathcal{Y},\mathcal{Y}'$ are two realizations of the process with $\mathcal{Y}'\subset\mathcal{Y}$. Then $\EE(X_1 X_2)\ge \EE(X_1)\EE(X_2)$ whenever $X_1,X_2$ are both decreasing, see e.g. \cite[Lemma 2.1]{Janson84}. Recall that fractal percolation can also be interpreted as a Poisson point process (with a discrete intensity measure) where $\mathcal{Y}$ corresponds to the family of ``removed cubes''.

 Thanks to the FKG inequality  and the compactness of $\Gamma$,  it is enough to show that for each $F\in \Gamma$, there is a neighbourhood $\mathcal{G}$ of $F$ such that the claim holds for $\mathcal{G}$ in place of $\Gamma$. Indeed, the indicator of the claimed event holding (for $\mathcal{G}$) is clearly decreasing.

 Moreover, thanks to Lemma \ref{lem:reduction-to-ssc} we may assume without loss of generality that $F$ satisfies the strong separation condition. Indeed, any neighbourhood of the IFS $\{ F_\iii:\iii\in \Delta_\e\}$, where $\Delta_\e$ is the set given by Lemma \ref{lem:reduction-to-ssc}, contains the IFS $\{ G_\iii:\iii\in\Delta_\e\}$ for $G$ in a neighbourhood of $F$.

 Thus, from now we fix $F$ satisfying the strong separation condition so that, in particular, $\dim_H(E_F)=\dim_S(F)$. Let $\mathcal{G}$ be the neighbourhood given by Proposition \ref{prop:inter-nbhd-sss}. By making $\mathcal{G}$ smaller if needed, we may assume that all $G\in\mathcal{G}$ satisfy the strong separation condition as well. Let $s=\inf\{\dim_H E_G:G\in\mathcal{G}\}$; by making $\mathcal{G}$ even smaller, we may assume $s>\alpha$. For $G\in\mathcal{G}$, let $\nu_G$ be the natural self-similar measure. Thanks to the strong separation condition, these measures satisfy a uniform Frostman bound $\nu_G(B(x,r)) = O(r^{\dim_H E_G}) = O(r^s)$. Recalling Proposition \ref{prop:inter-nbhd-sss}, the proof is finished by virtue of Lemma \ref{lem:Yt-survives} and Theorem \ref{thm:self_sim}.
\end{proof}

\begin{rem}
Compactness of $\Gamma$ is clearly crucial since the limit sets in question are nowhere dense so, for example, they will not hit many self-similar sets of sufficiently small diameter. The assumption that the self-similar set hits $\supp\mu_0$ is also trivially necessary.
\end{rem}

\section{Dimension of projections: applications of Theorem \ref{thm:small_dimension_projections}}
\label{sec:dim-of-projections}

In this section we present applications of Theorem \ref{thm:small_dimension_projections}, focusing on orthogonal projections. In particular, we obtain sufficient conditions for the orthogonal projections of $\mu_\infty$ onto all planes $V\in\mathbb{G}_{d,d-k}$ to have dimension $d-\alpha$, provided $d-\alpha<d-k$.

\begin{thm}\label{thm:cor_dim_small}
Suppose that the assumptions of Theorem \ref{thm:small_dimension_projections} hold with $s=k$ and $\theta$ arbitrarily close to $\alpha-k$, for the family $\Gamma=\{ \mathcal{H}^k|_V: V\in \AA_{d,k}, V\cap \supp\mu_0\neq\varnothing\}$.
Then, almost surely
\begin{equation}
\ldimloc(P_W\mu_\infty,x)\ge d-\alpha\,.
\end{equation}
for all $W\in\mathbb{G}_{d,d-k}$ and $P_W\mu_\infty$-almost all $x\in W$.
\end{thm}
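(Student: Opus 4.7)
The plan is to combine the uniform density bound on $P_W\mu_n$ supplied by Theorem \ref{thm:small_dimension_projections} with a cube-by-cube concentration estimate that transfers this bound to $P_W\mu_\infty$ at scale $2^{-n}$. Given $\e>0$, I apply Theorem \ref{thm:small_dimension_projections} with $\theta=\alpha-k+\e$ to obtain, almost surely, a finite random $K=K_\e$ such that $Y_n^V\le K\,2^{\theta n}$ for every $n$ and every $V\in\AA_{d,k}$ meeting $\supp\mu_0$. By Fubini, each measure $P_W\mu_n$ is absolutely continuous with density $t\mapsto Y_n^{V_t}$, where $V_t=W^\perp+t$; in particular $\|P_W\mu_n\|_\infty\le K\,2^{\theta n}$ uniformly in $W\in\mathbb{G}_{d,d-k}$, and hence $P_W\mu_n(B^\circ(t,r))\le C_\e\,2^{\theta n} r^{d-k}$ for every ball.

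The main auxiliary step is the following cube bound: almost surely there exist a finite random $C_1$ and $N_0\in\N$ such that, for every $n\ge N_0$ and every $Q\in\mathcal{Q}_n$ meeting $\supp\mu_0$,
\[
\mu_\infty(Q)\le C_1\, n\,\mu_n(Q).
\]
To prove this I would condition on $\BB_n$ and view $\mu_\infty(Q)$ as the total mass of the (suitably rescaled) SI-martingale restarted inside $Q$ from the initial datum $\mu_n|_Q$, so that $\EE(\mu_\infty(Q)\mid \BB_n)=\mu_n(Q)$. A conditional version of Lemma \ref{lem:Yt-survives}, applied with $\eta=\leb^d|_Q$ (whose Frostman exponent $d$ exceeds $\alpha$), then yields an exponential tail $\PP(\mu_\infty(Q)>M\mu_n(Q)\mid\BB_n)\le \exp(-\Omega(M))$ with constants uniform in $Q$. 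Choosing $M=Cn$ with $C$ large enough so that $2^{nd}\exp(-\Omega(Cn))$ is summable, a union bound over the $O(2^{nd})$ relevant cubes and Borel--Cantelli give the cube bound for all $n\ge N_0$.

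Granted the cube bound, fix $W\in\mathbb{G}_{d,d-k}$, $t\in W$ and $r\in(0,1)$, and pick $n$ with $2^{-n-1}<r\le 2^{-n}$. Writing $T=P_W^{-1}(B^\circ(t,r))$ and $T'=\bigcup\{Q\in\mathcal{Q}_n:Q\cap T\neq\varnothing\}$ (a fattening of $T$ contained in $P_W^{-1}(B^\circ(t,C_d r))$ for a dimensional constant $C_d$), the cube bound gives
\[
P_W\mu_\infty(B^\circ(t,r))=\mu_\infty(T)\le C_1 n\sum_{Q\cap T\neq\varnothing}\mu_n(Q)=C_1 n\,\mu_n(T')\le C_1 n\,P_W\mu_n(B^\circ(t,C_d r)).
\]
Applying the Fubini bound to $B^\circ(t,C_d r)$ yields $P_W\mu_n(B^\circ(t,C_d r))\le C_2\,2^{\theta n} r^{d-k}\le C_3\, r^{d-\alpha-\e}$, so absorbing the $n=O(\log(1/r))$ factor produces
\[
P_W\mu_\infty(B^\circ(t,r))\le C_4\, r^{d-\alpha-2\e}
\]
simultaneously for every $W\in\mathbb{G}_{d,d-k}$, $t\in W$ and every sufficiently small $r$. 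This forces $\ldimloc(P_W\mu_\infty,t)\ge d-\alpha-2\e$ at every $t\in\supp P_W\mu_\infty$; letting $\e$ run over a countable sequence tending to $0$ completes the proof.

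The hardest point is the cube concentration bound: Lemma \ref{lem:Yt-survives} is stated for the unconditional total mass of a globally defined SI-martingale, and its use inside a cube $Q$ requires interpreting the restricted process $\{\mu_{n+j}|_Q\}_{j\ge 0}$ as a rescaled SI-martingale with initial seed $\mu_n|_Q$, checking that the implicit constants in the tail $\exp(-\Omega(M))$ can be taken uniform over $Q\in\mathcal{Q}_n$ and $n$, and ensuring summability of the exceptional sets. For both the cutout-type and cell-type models this follows from the spatial independence \eqref{SI:spatial-independence} together with the scale/translation invariance of the underlying distribution (Poissonian case) or the bounded branching structure (subdivision case), but it requires some careful bookkeeping.
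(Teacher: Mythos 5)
Your overall strategy -- transfer the uniform bound on $Y_n^V$ to $\mu_\infty$ via a multiplicative concentration estimate that loses only a factor of $n$ -- is the same as the paper's, and your final Fubini/covering computation is fine. The genuine gap is in the cube concentration bound, which you rightly flag as the hardest point: the claimed uniform tail $\PP(\mu_\infty(Q)>M\mu_n(Q)\mid\BB_n)\le\exp(-\Omega(M))$ is \emph{false} for cutout-type SI-martingales. The problem is that $\mu_n(Q)$ carries no information about how many independent degrees of freedom the restricted process has inside $Q$. Take $\mu_n^{\text{ball}}$ and a cube $Q\in\mathcal{Q}_n$ for which $A_n\cap Q$ is a single connected piece of measure $\delta 2^{-dn}$ with $\delta$ minuscule, contained in a ball of radius $\rho\ll 2^{-n}$. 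For scales between $2^{-n}$ and $\rho$ the survival of this piece is governed by essentially one Poisson event per scale, so $\PP(\mu_m(Q)=2^{(m-n)\alpha}\mu_n(Q))\approx 2^{-(m-n)\alpha}$; taking $M=2^{(m-n)\alpha}$ gives $\PP(\mu_\infty(Q)\gtrsim M\mu_n(Q))=\Omega(M^{-1})$, a polynomial rather than exponential tail, with $M$ ranging up to $(2^{-n}/\rho)^{\alpha}$, which is unbounded. Equivalently: after rescaling $Q$ to the unit cube, the restarted martingale has a seed with arbitrarily small total mass, and the implicit constants in the tail bound of Lemma \ref{lem:Yt-survives} genuinely depend on that mass, so they cannot be taken uniform over $Q$ and $n$. (For $\mu_n^{\text{perc}}$ your bound does hold, since $\mu_n(Q)\in\{0,\beta_n2^{-dn}\}$ and self-similarity applies; but the theorem covers cutout martingales, which are the main motivation here.)

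The natural repair -- replacing $\mu_n(Q)$ by $\max(\mu_n(Q),2^{(\alpha-d)n})$ in the cube bound, which does restore a usable tail -- destroys your final summation: the tube $P_W^{-1}(B^\circ(t,2^{-n}))\cap\supp\mu_0$ meets $\Theta(2^{kn})$ cubes of $\mathcal{Q}_n$, so the floor terms alone contribute $\Theta(2^{kn})\cdot 2^{(\alpha-d)n}=\Theta(2^{(k+\alpha-d)n})$, which exceeds the target $O(2^{(\theta+k-d)n})\approx O(2^{(\alpha-d)n})$ by the factor $2^{kn}$. This is why the paper runs the Hoeffding/martingale concentration argument directly on the whole tube $V(2^{-n})$ rather than cube by cube (equation \eqref{eq:mu_n_claim}): there the correct ``generic'' normalization $2^{n(-d+k+\theta)}$ is exactly the Fubini bound supplied by Theorem \ref{thm:small_dimension_projections}, and the tube splits at each finer scale into enough spatially independent pieces to yield the tail $\exp(-f(M)2^{n(k+\theta-\alpha)})$, which beats the $O(1)^n$ union bound over a $2^{-n}$-net of tubes. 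To fix your proof you would need to replace the cube-by-cube step by this tube-level concentration (or otherwise exploit cancellation across the many low-mass cubes in a tube, which again amounts to treating the tube as a whole).
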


\begin{proof}
Let $\theta>\alpha-k$. It follows from Theorem \ref{thm:small_dimension_projections} that, almost surely,
\begin{equation*}
\sup_{n\in\N,V\in\mathbb{A}_{d,k}}2^{-\theta n}Y_n^{V}<\infty\,.
\end{equation*}
Thus, a.s. there is $K<\infty$ such that for all $W\in\mathbb{G}_{d,d-k}$, $x\in W$, $n\in\N$,
\begin{align}\label{eq:mu_n_bound}
P_W\mu_n(B(x,2^{-n}))=\int_{y\in B(x,2^{-n})\subset W}Y_n^{V_y}\,dy\le K 2^{n(-d+k+\theta)}\,,
\end{align}
where $V_y$ is the affine $k$-plane orthogonal to $W$ passing through $y$.

We claim that for all $n\in\N$, $M<\infty$ and each $V\in\mathbb{A}_{d,k}$, we have
\begin{equation}\label{eq:mu_n_claim}
\begin{split}
&\PP\left(\mu_\infty(V(2^{-n}))>M\max(\mu_n(V(2^{-n})),2^{n(-d+k+\theta)})\right)\\
&\le\exp(-f(M)2^{n(k+\theta-\alpha}))\,,
\end{split}
\end{equation}
where $f(M)\longrightarrow\infty$ as $M\rightarrow\infty$. Before proving \eqref{eq:mu_n_claim}, let us show that this indeed implies our claim. For each $n$, there is a finite subset of $\mathbb{A}_{d,k}$ with at most $O(1)^n$ elements such that each $P_W^{-1}(B(x,2^{-n}))\cap\supp\mu_0$ is covered by $O(1)$ of the tubes $V(2^{-n})$, where $V$ are from this family. It thus follows from \eqref{eq:mu_n_claim} that
\begin{align*}
&\PP\left(\mu_\infty(V(2^{-n}))> M\max(\mu_n(V(2^{-n})),2^{n(-d+k+\theta)})\text{ for some }V,n\right)\\
&\le\sum_{n=1}^\infty O(1)^n\exp(-f(M)2^{n(k+\theta-\alpha)})\longrightarrow 0\quad\text{ as } M\longrightarrow\infty.
\end{align*}
Combining this with \eqref{eq:mu_n_bound} yields that a.s. there is $K'<\infty$ such that
\[P_W\mu_\infty(B(x,2^{-n}))\le K'\, 2^{n(-d+k+\theta)}\]
for all $n\in\N$, $W\in\mathbb{G}_{d,d-k}$ and $x\in W$. This implies that
\[
\ldimloc(P_W\mu_\infty,x)\ge d-k-\theta \quad\text{ for all } W\in\mathbb{G}_{d,d-k} \text{ and } P_W\mu_\infty\text{-almost all } x\in W\,.
\]
Letting $\theta\to \alpha-k$ along a subsequence concludes the proof.

It thus remains to prove \eqref{eq:mu_n_claim}. For this, we condition on $\mathcal{B}_n$, and let
\[
\mu_n(V(2^{-n}))= L 2^{n(-d+k+\theta)}\,.
\]
For $m\ge n$, let $Z_m=\min(1,L^{-1})2^{n(d-k-\theta)}\mu_{m}(V(2^{-n}))$ and
\[
\kappa_m=a^{-1}(m+1-n)^{-2}\log M\,,
\]
where $a=\sum_{j=1}^\infty j^{-2}$. Applying Hoeffding's inequality as in the proof of Lemma \ref{lem:large-deviation}, we get
\[
\PP\left(Z_{m+1}-Z_m\ge\kappa_m\sqrt{Z_m}\right)=O\left(\exp\left(-\Omega(\log M)\tfrac{1}{(m+1-n)^4}2^{n(-d+k+\theta)} 2^{m(d-\alpha)}\right)\right)\,.
\]
Finally, summing over all $m\ge n$ yields \eqref{eq:mu_n_claim}.
\end{proof}

\begin{cor}\label{cor:dim_projections}
Let $(\mu_n)$ be an SI-martingale that is of $(\mathcal{F},\alpha,\zeta)$-cutout type, or of $(\mathcal{F},\tau,\zeta)$-cell type with $\mu_n(x)\le C 2^{n\alpha}$,  for some $\alpha\ge k$. Suppose that
there are constants $0<\gamma_0,C<\infty$ such that for all $V\in\mathbb{A}_{d,k}$, all $\varepsilon>0$ and any isometry $f$ which is $\varepsilon$-close to the identity, we have:
\begin{equation} \label{eq:non-flat-dim}
\mathcal{H}^k\left(V\cap \Lambda\setminus f^{-1}(\Lambda)\right)\le C\, \varepsilon^{\gamma_0} \quad\text{for all } \Lambda\in\mathcal{F}.
\end{equation}
Then, almost surely,
\begin{align}\label{eq:P_mu_dim}
\ldimloc(P_W\mu,x)\ge d-\alpha \text{ for all } W\in \mathbb{G}_{d,d-k} \text{ and $P_W\mu$-almost all $x\in W$}.
\end{align}
In particular, almost surely on $\mu_\infty\neq0$,
\[
\dim_H(P_W A)\ge d-\alpha\quad\text{for all }W\in\mathbb{G}_{d,d-k}\,.
\]
\end{cor}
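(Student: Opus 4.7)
The plan is to reduce the statement directly to Theorem \ref{thm:cor_dim_small}, whose hypotheses will be verified via Proposition \ref{prop:cutout-measures-satisfy-Holder-a-priori} together with the remark immediately following Theorem \ref{thm:small_dimension_projections}. Concretely, fix an affine $k$-plane $V_0$, choose a bounded open neighbourhood $\Upsilon \supset \supp\mu_0$, and set $\Gamma = \{ f\in\iso_d : fV_0 \cap \Upsilon \neq \varnothing\}$ with $\nu = \mathcal{H}^k|_{V_0}$ and $\Pi_f = f$, so that $\{f\nu : f\in\Gamma\}$ sweeps out precisely the $k$-dimensional Hausdorff measures on affine $k$-planes intersecting $\supp\mu_0$. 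This is the setting in which Theorem \ref{thm:cor_dim_small} needs to be applied.

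The verification of the hypotheses of Theorem \ref{thm:small_dimension_projections} with $s=k$ is then essentially bookkeeping. Condition \eqref{H:dim-deterministic-measures} holds with Frostman exponent $s=k$ because each $f\nu$ is $k$-dimensional Hausdorff measure on an affine $k$-plane. Condition \eqref{H:codim-random-measure} is assumed outright. For \eqref{H:finite_approx_family}, I invoke the remark after Theorem \ref{thm:small_dimension_projections}: it suffices to check \eqref{H:size-parameter-space} and \eqref{H:Holder-a-priori}, after which \eqref{H:finite_approx_family} holds for \emph{every} $\theta>0$. Condition \eqref{H:size-parameter-space} is immediate since $\Gamma$ is a bounded subset of the finite-dimensional manifold $\iso_d$. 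For \eqref{H:Holder-a-priori}, I apply Proposition \ref{prop:cutout-measures-satisfy-Holder-a-priori} (in either the cutout-type or cell-type case): its shape-regularity hypothesis \eqref{eq:Holder-shape} asks for
\[
\nu(\Pi_f^{-1}(\Lambda) \Delta \Pi_g^{-1}(\Lambda)) = \mathcal{H}^k\bigl(V_0 \cap (f^{-1}\Lambda \Delta g^{-1}\Lambda)\bigr) \le C\, d(f,g)^{\gamma_0}
\]
uniformly in $\Lambda \in \mathcal{F}$, and this is exactly what \eqref{eq:non-flat-dim} provides (after translating from the parameter $f^{-1}g$, which is an isometry $\varepsilon$-close to the identity with $\varepsilon = O(d(f,g))$).

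With all hypotheses of Theorem \ref{thm:small_dimension_projections} satisfied for any $\theta > \alpha - k \ge 0$, Theorem \ref{thm:cor_dim_small} applies and yields \eqref{eq:P_mu_dim}: almost surely, for every $W \in \mathbb{G}_{d,d-k}$ and $P_W\mu_\infty$-almost every $x \in W$,
\[
\ldimloc(P_W\mu_\infty, x) \ge d - \alpha.
\]
From here the final assertion on $\dim_H(P_W A)$ is standard. By the mass distribution principle, the local-dimension bound implies $\dim_H\mu_\infty^W \ge d-\alpha$ for every $W$ where $\mu_\infty^W := P_W\mu_\infty$ is non-trivial. On the event $\{\mu_\infty \neq 0\}$ we have $P_W\mu_\infty \neq 0$ for every $W$, since $\|P_W\mu_\infty\| = \|\mu_\infty\| > 0$. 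Because $\supp(P_W\mu_\infty) \subset P_W(\supp\mu_\infty) \subset P_W A$, we conclude $\dim_H(P_W A) \ge d-\alpha$ simultaneously for all $W \in \mathbb{G}_{d,d-k}$, as claimed.

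There is no serious obstacle in this proof: the work was done upstream, and the only point requiring minor care is identifying the non-flatness hypothesis \eqref{eq:non-flat-dim} with the Hölder condition \eqref{eq:Holder-shape} under the parametrization by $\iso_d$ acting on a fixed reference plane; once that identification is made, the conclusion is a direct application of Theorems \ref{thm:small_dimension_projections} and \ref{thm:cor_dim_small}.
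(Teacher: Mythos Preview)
Your proof is correct and follows essentially the same approach as the paper's: verify \eqref{H:size-parameter-space}--\eqref{H:codim-random-measure} directly, obtain \eqref{H:Holder-a-priori} from Proposition~\ref{prop:cutout-measures-satisfy-Holder-a-priori} via the non-flatness hypothesis \eqref{eq:non-flat-dim}, deduce \eqref{H:finite_approx_family} for every $\theta>0$ from the remark after Theorem~\ref{thm:small_dimension_projections}, and then apply Theorem~\ref{thm:cor_dim_small} together with the mass distribution principle. Your explicit spelling-out of the parametrization by $\iso_d$ and of the identification between \eqref{eq:non-flat-dim} and \eqref{eq:Holder-shape} is more detailed than the paper's version, but the logic is the same.
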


\begin{proof}
Conditions \eqref{H:size-parameter-space}--\eqref{H:codim-random-measure} from Theorem \ref{thm:Holder-continuity} are trivially true or part of the assumptions. Moreover, \eqref{H:Holder-a-priori} follows from Proposition \ref{prop:cutout-measures-satisfy-Holder-a-priori}, and assumption \eqref{H:finite_approx_family} in Theorem \ref{thm:small_dimension_projections} also holds, for any $\theta>0$, as an immediate consequence of \eqref{H:size-parameter-space} and \eqref{H:Holder-a-priori}. Recall that for any set $E\subset\R^d$, supporting a nonzero measure satisfying $\ldimloc(\mu,x)\ge s$ almost everywhere, it holds that $\dim_H(E)\ge s$, see e.g. \cite[Proposition 4.9]{Falconer03}. Thus, the claims follows from Theorem \ref{thm:cor_dim_small}.
\end{proof}

\begin{rems} \label{rem:dim-of-projs}
\begin{enumerate}[(i)]
\item In particular, the above Corollary applies to $\mu_n=\mu_n^{\text{ball}}$. Combined with Theorem \ref{thm:linear-projections}, this implies that a.s. on $\mu_\infty\neq 0$,
\[\dim_H(P_W\mu_\infty)=\dim_H(P_W A)=\min(\dim W,s)\,,\]
simultaneously for all linear subspaces $W\subset\R^d$, where $s=d-\alpha$ is the almost sure dimension of $\mu_\infty$ and $A$ (recall Theorem \ref{thm:dim_cut_out_set}).
\item
The hypothesis \eqref{eq:non-flat-dim} fails when the boundaries of $\Lambda\in\mathcal{F}$ contain $k$-flat pieces. In particular, for $\mu_n^{\text{perc}}$, Corollary \ref{cor:dim_projections} can be applied only to projections whose fibres are not parallel to any coordinate hyperplane.
 However, if the boundaries of $\Lambda\in\mathcal{F}$ contain flat pieces only in a finite set of exceptional directions, it is often possible to verify \eqref{H:finite_approx_family} directly, even if \eqref{H:Holder-a-priori} fails (and this is the reason why we stated Theorem \ref{thm:small_dimension_projections} in this generality). In particular, this is true for $\mu_n^{\text{perc}}$, see \cite[Proposition 3.3]{ShmerkinSuomala12} or \cite[Corollary 3.4]{CChen14}. Hence we recover (and generalize to arbitrary dimensions) the result of Rams and Simon \cite[Theorem 2]{RamsSimon14} on the dimension of projections of fractal percolation sets.
\item The above results are also valid for nonlinear projections for which there is a natural foliation of the fibres as in \eqref{eq:mu_n_bound}. In particular, arguing as in the proof of Theorem \ref{thm:polynomial-projections} for the local existence of such foliations at a.e. point, it follows that a.s. on $\mu_\infty^{\text{snow}(\alpha)}\neq 0$, all polynomial images of $\mu_{\infty}^{\text{snow}(\alpha)}$ are of dimension $\min(1,2-\alpha)$, and likewise for $\mu_\infty^{\text{perc}(\alpha,2)}$, further extending the results of Rams and Simon.
\end{enumerate}
\end{rems}

\section{Upper bounds on dimensions of intersections}

\label{sec:dim_of_intersections}

In the previous sections, we have shown that many random sets and measures satisfy strong quantitative Marstrand-Mattila type theorems for all orthogonal projections.

We now turn to the dual problem of understanding the dimension of intersections with affine planes, and with more general parametrized families of sets. Recall from the discussion in the introduction that for many random sets $A$ in $\R^d$, it is known that  if $V\subset\mathbb{R}^d$ is any Borel set, then $\dim_H(A\cap V)\le\max(0,\dim_H(A)+\dim_H(V)-d)$ almost surely, and $\dim_H(A\cap V)=\max(0,\dim_H(A)+\dim_H(V)-d)$ with positive probability.
This is the case for random similar images of a fixed set $A_0$ of positive Hausdorff measure in its dimension, see \cite[Sect. 10]{Mattila95}, and for the limit sets of fractal percolation \cite{Hawkes81}. In this section,  we show that for fractal percolation, and other random sets supporting SI-martingales, the upper bound $\dim_H(A\cap V_t)\le\max(0,\dim_H(A)+\dim_H(V_t)-d)$ holds a.s. for \emph{all} $t\in\Gamma$ for suitable parametrized families $V_t$. In Section \ref{sec:lower-bound-dim-intersections}, we will deal with the lower bounds.

The projection results in the previous sections are valid for fairly general SI-martingales. In this section, the geometric assumptions on the shapes $\Lambda\in\mathcal{F}$ are the same, but we specialize to the processes in $\subdivision$ or $\poissonian$. The reason is that, in order to estimate box-counting dimension, we need to know that $A_n$ is a good approximation to the neighborhood $A(2^{-n})$,  in a suitable sense.

\subsection{Uniform upper bound for  box dimension}

Suppose that $(\mu_n)\in\mathcal{PCM}$ or $(\mu_n)\in\mathcal{SM}$ with $\mu_n=\Theta( 2^{\alpha n}\mathbf{1}_{A_n})$ and suppose that each $\eta_t$, $t\in\Gamma$ is $s$-Ahlfors regular (that is, $\eta_t(B(x,r))=\Theta(r^s)$ for $x\in\supp\eta_t$). A heuristic calculation suggests that if $M_{n,t}$ is the number of dyadic cubes in $\mathcal{D}_n$ needed to cover $A_n\cap\supp \eta_t$, then $M_{n,t}\lesssim 2^{n(s-\alpha)}Y^{t}_n$. Since Theorem \ref{thm:Holder-continuity} asserts that intersections behave in a regular way, it seems plausible that $\dim_B(\supp\eta_t\cap A)\le s-\alpha$ for all $t\in\Gamma$. Below we verify this under fairly mild additional geometric conditions.

For Poissonian martingales, we will need to approximate the removed shapes from inside in a suitable way. The reason is that $A_n$ may contain many ``thin'' parts so that it is not possible to relate it to neighbourhoods $A(\Omega(2^{-n}))$ in any straightforward way, while if we run the process slightly reducing the size of the removed shapes, then we will have a direct inclusion: see \eqref{eq:A_n^beta} below. This motivates the following definition.  We use the notation
\begin{equation} \label{eq:quantitative-interior}
\Lambda^\kappa:=\{x\in\Lambda\,:\,d(x,\partial\Lambda)\ge\kappa\diam(\Lambda)\}
\end{equation}
for the $\kappa$-quantitative interior of $\Lambda$.

\begin{defn} \label{def:inner-approximation}
Let $\mathbf{Q}_0$ be a measure on the sets in $\mathcal{X}$ of unit diameter. We say that a family of Borel maps $\Lambda\mapsto\Lambda_\varrho$ defined on $\supp\mathbf{Q}_0$ for all $0<\varrho<1$ provides a \textbf{regular inner approximation} if for each $0<\varrho<1$ the following holds for $\mathbf{Q}_0$ almost all $\Lambda$:
\begin{enumerate}[(i)]
\item There is $\beta=\beta(\varrho)>0$ such that $\Lambda_\varrho\subset\Lambda^\beta$.
\item $\leb^d(\Lambda_\varrho)\ge (1-\varrho)\leb^d(\Lambda)$.
\end{enumerate}
Given a regular inner approximation for $\mathbf{Q}_0$, we use the following notation:
Let $\mathcal{Y}=\{s_i(\Lambda_i+t_i)\}$ be a realization of a Poisson point process constructed from $\mathbf{Q}_0$ as in Lemma \ref{lem:structure-scale-translation-invariant-distr}. We define 
\begin{equation}\label{eq:Anrho_def}
A_n^\varrho=\Omega\setminus
\bigcup_{2^{-n}\le s_i<1}s_i((\Lambda_i)_\varrho+t_i)
\end{equation}
and 
\begin{equation}\label{eq:munrho_def}
\mu_{n,\varrho}=2^{n\alpha_\varrho}\mathbf{1}[A^\varrho_{n}]\,,
\end{equation}
where $\alpha_\varrho$ is chosen so that
\[2^{-n \alpha_\varrho}=\PP(x\in A_n^\varrho)\,,\]
for all $x\in\Omega$ and $n\in\N$. 
\end{defn}

Recall from Remark \ref{rem:alpha} that such $\alpha_\varrho$ is well defined. It is easy to see that $(\mu_{n,\varrho})$ defines an SI-martingale of $(\mathcal{F_\varrho},\alpha_\varrho,d)$-cutout type, where $\mathcal{F}_\varrho$ consists of $\Omega$ and the shapes $s(\Lambda_\varrho+t)$. 

Regular inner approximations always exist (just take suitable quantitative interiors), but in order to be useful, the shapes $\Lambda_\varrho$ have to satisfy the same geometric assumptions as the original shapes $\Lambda\in\mathcal{F}$. 

We note that $(\mu_{n,\varrho})$ is essentially the $\mathcal{PCM}$ obtained from the push-forward of $\mathbf{Q}_0$ under $\Lambda\mapsto \Lambda_{\varrho}$ although this is not exactly the case, since the sets $s(\Lambda_\rho+t)$ of diameter $\le 1$ such that $s(\Lambda +t)$ has diameter $>1$ are never removed. Nevertheless, we can still apply our results from the earlier sections to $(\mu_{n,\varrho})$ as long as the shapes $\Lambda_\varrho$ satisfy the required geometric assumptions.

We can now state the uniform upper bounds for affine intersections.
\begin{thm}\label{thm:dim_linear-intersections}
Suppose that in the setting of Theorem \ref{thm:linear-projections},
\begin{enumerate}
\item[\emph{(i)}] Suppose $(\mu_n)\in\mathcal{SM}$ is such that $\alpha<k$, \eqref{eq:non-flat-shapes} holds, and for some constants $C_1,C_2>0$,
\[
C_1\mu_n(x)\le 2^{\alpha n}\mathbf{1}_{A_n}(x)\le C_2\mu_n(x)\,.
\]
Then, there is a finite random variable $K$ such that
\[
\mathcal{H}^k(A(2^{-n})\cap V)\le K\, 2^{-\alpha n}\text{ for all }V\in\mathbb{A}_{d,k}\,.
\]
\item[\emph{(ii)}]
Suppose $(\mu_n)$ is a $\mathcal{PCM}$ is such that $\alpha<k$, and there is a regular inner approximation $(\Lambda_\varrho)$ with the property that for any small $\varrho>0$, the non-flatness condition \eqref{eq:non-flat-shapes} holds for the initial domain $\Omega$ and for the sets $\Lambda_\varrho$ for $\mathbf{Q}_{0}$-almost all $\Lambda\in\mathcal{F}$ (with constants $C,\gamma_0$ possibly depending on $\varrho$).

Then for each $\varepsilon>0$, there is a finite random variable $K$ such that
\[
\mathcal{H}^k(A(2^{-n})\cap V)\le K\, 2^{(\e-\alpha) n}\text{ for all }V\in\mathbb{A}_{d,k}\,.
\]
\end{enumerate}
In particular, in either case a.s. $\overline{\dim}_B(V\cap A)\le k-\alpha$ for all $V\in\mathbb{A}_{d,k}$.
\end{thm}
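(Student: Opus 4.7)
The plan is to reduce both parts to the uniform bound on $Y_n^V$ supplied by Theorem~\ref{thm:linear-projections}, and then convert bounds on $\mathcal{H}^k(V \cap A_n)$ (sharp intersections with $A_n$) into bounds on $\mathcal{H}^k(V \cap A(2^{-n}))$ (neighborhood intersections), using a cell decomposition in the subdivision case and an inner-approximation comparison in the Poissonian case.

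For part (i), the hypothesis $C_1 \mu_n \le 2^{\alpha n} \mathbf{1}_{A_n} \le C_2 \mu_n$ puts $(\mu_n)$ in the cell-type framework of Theorem~\ref{thm:linear-projections} with $\mu_n \le C_1^{-1} 2^{\alpha n}$, so that theorem yields a finite random $K$ with $Y_n^V = \int_V \mu_n\,d\mathcal{H}^k \le K$ uniformly in $n$ and $V \in \mathbb{A}_{d,k}$. The lower inequality then gives $\mathcal{H}^k(V \cap A_n) \le C_1^{-1} K\, 2^{-\alpha n}$ for every $V$. Applying this uniform estimate on each parallel plane $V+w$, $|w| < 2^{-n}$, and integrating via Fubini yields $\leb^d(A_n \cap V(2^{-n})) = O(K\, 2^{-\alpha n - n(d-k)})$. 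Since $A_n$ is a disjoint union of cells $F \in \mathcal{F}_n$ each of volume $\gtrsim 2^{-nd}$ by \eqref{SD:finite_elements}, at most $N_n = O(K\, 2^{n(k-\alpha)})$ of them can meet $V(2\cdot 2^{-n})$. Combining $A \subset A_n$ with the trivial bound $\mathcal{H}^k(F(2^{-n}) \cap V) = O(2^{-nk})$ for each cell, we conclude $\mathcal{H}^k(A(2^{-n}) \cap V) \le N_n \cdot O(2^{-nk}) = O(K\, 2^{-\alpha n})$, proving (i).

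For part (ii) no cell decomposition is available, and we work instead with the auxiliary cutout martingale $(\mu_{n,\varrho})$ from \eqref{eq:Anrho_def}--\eqref{eq:munrho_def}. The key geometric observation is the inclusion $A_n(\beta(\varrho) \cdot 2^{-n}) \subset A_n^\varrho$: if $x \in A_n$ and some $z \in s_i((\Lambda_i)_\varrho + t_i) \subset (s_i(\Lambda_i + t_i))^\beta$ satisfied $|z-x| < \beta\, 2^{-n}$ with $s_i \ge 2^{-n}$, then $x$ would lie within $\beta \cdot s_i = \beta \diam(s_i(\Lambda_i + t_i))$ of that shape's $\beta$-interior and hence in $s_i(\Lambda_i + t_i)$ itself, contradicting $x \in A_n$ (boundary effects near $\partial\Omega$ are absorbed in constants). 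The assumed non-flatness of $\Lambda_\varrho$ lets Theorem~\ref{thm:linear-projections} apply to $(\mu_{n,\varrho})$ with exponent $\alpha_\varrho \le \alpha < k$, producing a random $K_\varrho$ with $\mathcal{H}^k(V \cap A_n^\varrho) \le K_\varrho\, 2^{-\alpha_\varrho n}$ uniformly in $V$. Setting $n' = n - \lceil \log_2(1/\beta(\varrho)) \rceil$ and using $A \subset A_{n'}$ together with the inclusion above gives $\mathcal{H}^k(A(2^{-n}) \cap V) \le \mathcal{H}^k(A_{n'}^\varrho \cap V) \le K_\varrho'\, 2^{-\alpha_\varrho n}$. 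By Remark~\ref{rem:alpha}, $\alpha_\varrho = \int \leb^d(\Lambda_\varrho)\, d\mathbf{Q}_0$, so $\alpha_\varrho \to \alpha$ as $\varrho \to 0$ by dominated convergence; choosing $\varrho$ with $\alpha - \alpha_\varrho < \varepsilon$ establishes (ii).

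The final box-dimension assertion follows from the standard packing estimate $N(E, 2^{-n}) \le C\, 2^{nk}\mathcal{H}^k(E(2^{-n}) \cap V)$ for bounded $E \subset V$, where $N$ is the minimum number of balls of radius $2^{-n}$ covering $E$; applied with $E = V \cap A$, this yields $\overline{\dim}_B(V \cap A) \le k - \alpha$ at once in case (i) and, after letting $\varepsilon \downarrow 0$, also in case (ii). The principal obstacle is the passage from the sharp intersection $V \cap A_n$, which Theorem~\ref{thm:linear-projections} controls directly, to the $(2^{-n})$-neighborhood intersection $V \cap A(2^{-n})$, relevant for box dimension; this is delicate because $A(2^{-n})$ can be transversally much larger than $A$. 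The cell structure in (i) and the inner-approximation comparison in (ii) are introduced precisely to control this thickening, with the Poissonian case paying the small price $2^{\varepsilon n}$ for replacing $\alpha$ by $\alpha_\varrho$.
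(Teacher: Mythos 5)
Your proof is correct and follows essentially the same route as the paper's: in (i) the uniform bound on $Y_n^V$ from Theorem \ref{thm:linear-projections} is converted, via Fubini and the volume lower bound from \eqref{SD:finite_elements} for cells of $\mathcal{F}_n$, into a count of cells meeting a thin tube around $V$; in (ii) the inclusion $\Omega\cap A_n(\beta 2^{-n})\subset A_n^\varrho$ together with Theorem \ref{thm:linear-projections} applied to $(\mu_{n,\varrho})$ gives the bound, with $\alpha_\varrho\to\alpha$ as $\varrho\to 0$. Two cosmetic points: the estimate $\mathcal{H}^k(V\cap A_n)\le O(K)\,2^{-\alpha n}$ uses the \emph{upper} inequality $2^{\alpha n}\mathbf{1}_{A_n}\le C_2\mu_n$ (not the lower one, which instead serves to verify \eqref{H:codim-random-measure}), and in (ii) one should also record, as the paper does, that the non-flatness condition must be checked for all rescalings $s\Lambda_\varrho$ with $0<s\le 1$, which follows from the unit-scale case by a short computation.
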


\begin{proof}
We first show (i). It follows from Theorem \ref{thm:linear-projections} that a.s.
\[
K:=\sup_{n\in\N, V\in\mathbb{A}_{d,k}}Y^V_{n} <\infty\,.
\]
Fix $V\in\mathbb{A}_{d,k}$, $n\in\N$ and suppose that $A(2^{-n})\cap V$ intersects $M$ dyadic cubes of length $2^{-n}$. It follows from \eqref{SD:finite_elements} that for $r=O(2^{-n})$, the set $A_n\cap V(r)$ contains $\Omega(M)$ disjoint elements $F\in\mathcal{F}_n$ whence, using Fubini's Theorem and \eqref{SD:finite_elements} again,
\[
O_K(2^{n(k-d)})\ge \int_{d(y,V)<r}Y^{V_y}_n\,d\mathcal{H}^{d-k}|_{V^\perp}(y)=\mu_n(V(r))\ge \Omega(M) 2^{n(\alpha-d)}\,,
\]
where $V_y$ is the affine $k$-plane parallel to $V$ passing through $y\in V^{\perp}$. This gives the uniform bound $M=O_K(2^{n(k-\alpha)})$ implying the claim (i).

To prove (ii), let $\varrho>0$.  
For any realization $\{s_i(\Lambda_i+t_i)\}$ of $\mathcal{Y}$, we have
\begin{equation}\label{eq:A_n^beta}
\Omega\cap A_n(\beta 2^{-n})\subset A_n^\varrho\,,
\end{equation}
recall that
\begin{align*}
A_n&=\Omega\setminus
\bigcup_{2^{-n}\le s_i<1}s_i(\Lambda_i+t_i)\,,\\
A_n^\varrho&=\Omega\setminus
\bigcup_{2^{-n}\le s_i<1}s_i((\Lambda_i)_\varrho+t_i)\,.
\end{align*}
It follows from our assumptions and Theorem \ref{thm:linear-projections} that a.s. for the SI-martingale $(\mu_{n,\varrho})$, there is $K<\infty$ with $Y_n^{V}\le K$ for all $V\in\mathbb{A}_{d,k}$, $n\in\N$. Note that in order to apply Theorem \ref{thm:linear-projections}, we have to verify \eqref{eq:non-flat-shapes} for the shapes $s\Lambda_\varrho$ for all $s>0$ and $\mathbf{Q}_0$-almost all $\Lambda\in\mathcal{F}$ (the translations are irrelevant since \eqref{eq:non-flat-shapes} is clearly translation invariant). The assumptions imply this for $s=1$ and assuming $\gamma_0<k$ as we may, a simple calculation gives this for all $0<s<1$. 

Fix $V\in\mathbb{A}_{d,k}$. If $x\in A(\beta 2^{-n-1})\cap V$, it follows from \eqref{eq:A_n^beta} that $V\cap B(x,\beta 2^{-n-1})\subset A_{n}^\varrho$.
Then the maximal number $M$ of disjoint balls $B(x, \beta 2^{-n-1})$ with center in $A(\beta 2^{-n-1})\cap V$ satisfies $M 2^{n(\alpha_\varrho-k)}=O_\beta(Y_n^{V})=O_\beta(K)$, and hence
\begin{equation}\label{eq:alpha_e}
M=O\left(2^{n(k-\alpha_\varrho)}\right)\,.
\end{equation}
But $\alpha_\varrho\ge\alpha-\varrho$ using \eqref{eq:alpha}, \eqref{eq:alpharoo} and condition (ii) in the definition of regular inner approximation, so we are done.
\end{proof}

Note that for $\mu_n^{\text{ball}}$ we can simply define $B_\varrho=B^{c(d)\varrho}$ for all balls $B$, but for general $\mathcal{PCM}$ we cannot usually use scaled copies of $\Lambda\in\mathcal{F}$ to construct $\Lambda_\varrho$, since they may fail to fit inside $\Lambda$. Nor can we use $\Lambda^\varrho$ directly, since the non-flatness condition \eqref{eq:non-flat-shapes} might get destroyed. In Lemma \ref{lem:koch_epsilon} below, we construct suitable regular inner approximations for $\mu_n^{\text{snow}}$.

For $d=2$ we have the following variant of Theorem \ref{thm:dim_linear-intersections} for real algebraic curves (recall the notation from Section \ref{sec:algebraic}).

\begin{thm}\label{thm:dim_alg_curves}
In the setting of Theorem \ref{thm:alg_curves},
\begin{enumerate}
\item[\emph{(i)}]  Let $k\in\N$ and $(\mu_n)\in\mathcal{SM}$ such that $\alpha<1$, $\mu_n=\Theta(2^{\alpha n}\mathbf{1}_{A_n})$ and \eqref{K1} and \eqref{K2} hold for $\Lambda\in\mathcal{F}$ with a uniform constant $C$. Then, a.s. there is $K<\infty$, such that
\[\mathcal{H}^1(A(2^{-n})\cap V)\le K\, 2^{-\alpha n}\text{ for all }V\in\mathcal{K}_{k}\,.\]
\item[\emph{(ii)}] Suppose $(\mu_n)$ is a $\mathcal{PCM}$ such that $\alpha<1$, and there is a regular inner approximation $(\Lambda_\varrho)$ with the property that for any small $\varrho>0$, the quantitative unrectifiability conditions \eqref{K1} and \eqref{K2} hold for the initial domain $\Omega$ and for $\Lambda_\varrho$ for $\mathbf{Q}_{\varrho}$-almost all $\Lambda$ with a uniform constant $C$(that may depend on $\varrho$).

Then for each $\varepsilon>0$ and each $k\in\N$, almost surely, there is a random $K<\infty$ such that
\[\mathcal{H}^1(A(2^{-n})\cap V)\le K\, 2^{(\e-\alpha) n}\text{ for all }V\in\mathcal{K}_{k}\,.\]
\end{enumerate}
In particular, in both cases a.s. $\overline{\dim}_B(V\cap A)\le 1-\alpha$ for all $V\in \mathcal{K}$.
\end{thm}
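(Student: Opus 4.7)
The plan is to mirror the argument for Theorem \ref{thm:dim_linear-intersections}, substituting Theorem \ref{thm:alg_curves} for Theorem \ref{thm:linear-projections}. Fix an open ball $\Upsilon$ slightly larger than $\supp\mu_0$ (large enough that $(\supp\mu_0)(1)\subset\Upsilon$). Theorem \ref{thm:alg_curves} then provides an a.s. finite random variable $K$ with
\[
\sup_{n\in\N,\, V\in\mathcal{K}_k,\, V\cap\Upsilon\neq\varnothing} Y_n^V \le K,
\]
the uniformity in $n$ following from the uniform convergence together with the H\"older continuity of the limit on the compact parameter space. The strategy for part (i) is a cell-counting argument: using $A\subset A_n$ and \eqref{SD:finite_elements},
\[
V \cap A(2^{-n}) \subset \bigcup_{F \in \mathcal{F}_n \cap A_n} V \cap F(2^{-n}),
\]
and a B\'ezout-type bound (already used in \cite[Lemma 5.2]{ShmerkinSuomala12}) yields $\mathcal{H}^1(V\cap B) = O_k(r)$ for any ball $B$ of radius $r$. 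Counting the number $N$ of cells $F\in\mathcal{F}_n\cap A_n$ with $F\cap V(2^{-n})\neq\varnothing$ via disjointness and the area bound $\mathcal{L}^2(F)=\Omega(2^{-2n})$, and invoking $\mu_n \ge C_2^{-1}\, 2^{n\alpha}\,\mathbf{1}_{A_n}$, the task reduces to the tube estimate $\mu_n(V(C\,2^{-n})) = O_K(2^{-n})$, which then gives
\[
\mathcal{H}^1(V\cap A(2^{-n})) \le O_K\!\left(2^{n(1-\alpha)}\right)\cdot \mu_n(V(C\,2^{-n})) = O_K(2^{-n\alpha}).
\]

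The main obstacle is the tube estimate: the Fubini argument that worked in the linear case is unavailable here because parallel curves of an algebraic curve of degree $k$ are generally not algebraic of degree $k$. My plan is to circumvent this by averaging over translates. For any $V\in\mathcal{K}_k$ meeting $\supp\mu_0$ and any $|t|\le 1$, the curve $V+t$ also lies in $\mathcal{K}_k$ and meets $\Upsilon$, so the uniform bound gives $Y_n^{V+t}\le K$. Swapping orders of integration,
\[
\int_{B(0,r)} Y_n^{V+t}\, dt = \int_{\R^2} \mu_n(y)\,\mathcal{H}^1(V\cap B(y,r))\, dy \le K\pi r^2.
\]
For $y\in V(r/2)$ outside the $2r$-neighbourhood of the $O_k(1)$ endpoints and singular points of $V\cap\Upsilon$, local one-dimensionality of the algebraic curve gives $\mathcal{H}^1(V\cap B(y,r))\ge c_k r$, so the contribution of this ``good'' region to $\mu_n(V(r/2))$ is $O_K(r)$. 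The complementary ``bad'' region has Lebesgue measure $O_k(r^2)$ and $\mu_n$-mass at most $\|\mu_n\|_\infty\cdot O_k(r^2) = O(2^{n\alpha}r^2)$, which is $O(r)$ when $r=C\,2^{-n}$ thanks precisely to the hypothesis $\alpha<1$.

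For part (ii) the plan parallels the Poissonian case of Theorem \ref{thm:dim_linear-intersections}(ii). Given $\varepsilon>0$, choose $\varrho>0$ small. The regular inner approximation $(\Lambda_\varrho)$ yields the auxiliary SI-martingale $(\mu_{n,\varrho})$ of \eqref{eq:munrho_def}, whose removed shapes satisfy \eqref{K1}--\eqref{K2} by assumption (with constants depending on $\varrho$), so Theorem \ref{thm:alg_curves} applies to $(\mu_{n,\varrho})$. Running the part (i) argument on $(\mu_{n,\varrho})$ (using $\alpha_\varrho$ in place of $\alpha$) and invoking the inclusion \eqref{eq:A_n^beta} gives $\mathcal{H}^1(V\cap A(\beta\,2^{-n})) = O_K(2^{-n\alpha_\varrho})$; this is legitimate because $\alpha_\varrho<1$ once $\varrho$ is small, since $\alpha_\varrho\to\alpha<1$ as $\varrho\to 0$ by \eqref{eq:alpha}--\eqref{eq:alpharoo} and monotone convergence. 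Choosing $\varrho$ so that $\alpha_\varrho\ge\alpha-\varepsilon$ yields the stated estimate. Finally, the uniform box-dimension bound $\overline{\dim}_B(V\cap A)\le 1-\alpha$ for all $V\in\mathcal{K}$ follows from the rectifiability of $V$, which gives $N(V\cap A,2^{-n}) = O_k(2^n)\cdot\mathcal{H}^1(V\cap A(2^{-n}))$.
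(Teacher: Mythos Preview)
Your approach is correct and follows the same overall structure as the paper (reduce to a tube estimate $\mu_n(V(O(2^{-n})))=O_K(2^{-n})$, then count cells/balls), but the route to the tube estimate is genuinely different. The paper decomposes $V\cap\Upsilon$ into $O_k(1)$ graphs $W\in\mathcal{G}_k$ of functions with $|f'|\le 1$ and, for each such $W$, applies the coarea formula along the foliation by \emph{one-dimensional vertical} translates $W_h=W+(0,h)$; since $W_h$ is again (a piece of) a degree-$k$ curve, the uniform bound $Y_n^{W_h}\le K$ gives $\mu_n(\widetilde{W}_\delta)=O_K(\delta)$ with no further splitting. Your \emph{two-dimensional} translate averaging avoids the graph decomposition entirely, at the price of the good/bad splitting and of using $\alpha<1$ already in the tube estimate (harmless, since Theorem~\ref{thm:alg_curves} needs it anyway). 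Two minor corrections: first, the ``bad'' set where $\mathcal{H}^1(V\cap B(y,r))\not\ge c\,r$ should also include the $r$-neighbourhoods of the $O_k(1)$ (by Harnack) connected components of $V$ of diameter $<r$, not only singular points and endpoints --- this still has area $O_k(r^2)$, so your estimate survives. Second, in part (ii) the phrase ``running the part (i) argument on $(\mu_{n,\varrho})$'' needs one adjustment: the cell-counting step uses the $\mathcal{SM}$ filtration, which $(\mu_{n,\varrho})$ lacks. The fix is immediate and is what the paper does as well: once the tube estimate $\mu_{n,\varrho}(V(O(2^{-n})))=O_K(2^{-n})$ is in hand, a direct packing argument (disjoint balls of radius $\Theta(2^{-n})$ centred on $V\cap A(\beta' 2^{-n})$ lie, via \eqref{eq:A_n^beta}, inside $A_n^\varrho\cap V(O(2^{-n}))$ and each has area $\Omega(2^{-2n})$) gives the required count $O_K(2^{n(1-\alpha_\varrho)})$.
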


\begin{proof}
The proof is a minor modification of the proof of Theorem \ref{thm:dim_linear-intersections} using Theorem \ref{thm:alg_curves} in place of Theorem \ref{thm:linear-projections}.
As in the proof of Theorem \ref{thm:alg_curves}, let $\mathcal{G}_k$ denote the subcurves of curves in $\mathcal{K}_k$ that are graphs of functions $f(x)=y$ or $x=f(y), J\to\R$, where $J\subset\R$ is some interval and the derivative of $f$ is monotone on $J$ and satisfies $|f'|\le 1$.

To prove (i), denote $Y^{W}_n=\int_{W}\mu_n\,d\mathcal{H}^1$ for $W\in\mathcal{G}_k$. It follows from Theorem \ref{thm:alg_curves} that a.s. there is $K<\infty$ such that
\begin{equation}\label{eq:Q_kbound}
\sup_{W\in\mathcal{G}_k, n\in\N}Y^{W}_n\le K\,.
\end{equation}

Let $W\in\mathcal{G}_k$ be the graph of $x\mapsto f(x)$, where $f$ is defined on an interval $J\subset\R$, as above. Given $\delta>0$, denote $\widetilde{W}_\delta=\{(x,y)\,:\,x\in J,\,|y-f(x)|<2\delta\}$. The coarea formula yields
\begin{align*}
\mu_n(\widetilde{W}_\delta)=\int_{h=-2\delta}^{2\delta}
\int_{(x,y)\in W_h}\phi(x,y)\mu_n(x,y)\,d\mathcal{H}^1(x,y)\,dh=O_K(\delta)\,,
\end{align*}
where $W_h=W+(0,h)$ and $\phi(x,y)=(1+(f'(x))^2)^{-1/2}\le 1$, and the right-most inequality follows from \eqref{eq:Q_kbound}.

But for any $V\in\mathcal{K}_k$, $\delta>0$, the neighbourhood $V(\delta)$ is covered by $O_k(1)$ such $\widetilde{W}_\delta$ with $W\in\mathcal{G}_k$, and this implies the uniform bound $\mu_n(V(2^{-n}))=O_{k,K}(2^{-n})$. Arguing as in the proof of Theorem \ref{thm:dim_linear-intersections}, this shows that $V\cap A(2^{-n})$ can intersect at most $O_{k,K}(2^{n(1-\alpha)})$ of the elements $F\in\mathcal{F}_n$, which is claim (i).

Likewise, in the proof of (ii), we find that a.s. there is $K<\infty$ such that
\[\int_{W}\mu_{n,\varrho}\,d\mathcal{H}^1\le K\,,\]
for all $n\in\N$, $W\in\mathcal{G}_k$. Note that since \eqref{K1}, \eqref{K2} are scale invariant, the assumptions of Theorem \ref{thm:alg_curves} are met for the SI-martingale $(\mu_{n,\varrho})$.
Applying the coarea formula as above, and the definition of $\mu_{n,\varrho}$ (recall \eqref{eq:Anrho_def}, \eqref{eq:munrho_def})
this implies that any $V\cap A(2^{-n})$, with $V\in\mathcal{K}_k$, can intersect at most $O_{k,K}\left(2^{n(1-\alpha+\varrho)}\right)$ disjoint balls of radius $2^{-n}$.
\end{proof}

\begin{rem}
If the $\mathcal{SM}$ martingale in the claim (i) of Theorem \ref{thm:dim_linear-intersections} (resp. Theorem \ref{thm:dim_alg_curves}) satisfies the weaker assumption that $C_1 2^{\alpha n}\mathbf{1}_{A_n}(x)\le\mu_n(x)\le C_2 2^{\beta n}\mathbf{1}_{A_n}(x)$ for some $\beta<k$ ($k=1$ in Theorem \ref{thm:dim_alg_curves}), then the proof still shows that $\dim_B(A\cap V)\le k-\alpha$ for all $V\in\mathbb{A}_{d,k}$ (resp. all $V\in\mathcal{K}$).
\end{rem}

In the next lemma we show that for the von Koch snowflake $\Lambda$ it is possible to construct sets $\Lambda_\varrho$ satisfying the claim (ii) of Theorem \ref{thm:dim_alg_curves}. Thus, $\mu_n^{\text{snow}(\alpha)}$ satisfies the assumptions of Theorem \ref{thm:dim_alg_curves} for $\alpha<1$ (and so does automatically $\mu_n^{\text{snowtile}}$, recall Example \ref{ex:snowflake}).

\begin{lemma}\label{lem:koch_epsilon}
In the setting of Lemma \ref{lem:porous}, let $\Lambda\subset\mathbb{R}^2$ be the Von Koch snowflake domain. There are $C>0$ and $\gamma_1\in (0,1)$ such that the following holds: for each $\beta>0$, there is a domain $\Lambda_\beta$ with
\[
\Lambda^{C\beta}\subset\Lambda_\beta\subset\Lambda^\beta\,,
\]
such that for each $0<\varepsilon<1$, $V\cap\{x\in\Lambda_\beta\mid d(x,\partial\Lambda_\beta)<\varepsilon\}$ may be covered by $C\varepsilon^{-\gamma_1})$ balls of radius $\varepsilon$.
\end{lemma}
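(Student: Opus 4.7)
The plan is to define $\Lambda_\beta$ as a scaled copy of $\Lambda$ around its centroid $c_0$. By the $D_3$-symmetry, $\Lambda$ is star-shaped with respect to $c_0$, and the quantities $r_-:=\min_{x\in\partial\Lambda}|x-c_0|$ and $r_+:=\max_{x\in\partial\Lambda}|x-c_0|$ satisfy $0<r_-\le r_+<\infty$. Fix a large constant $c>0$ (to be chosen), and set $T_\beta(x):=c_0+(1-c\beta)(x-c_0)$ and $\Lambda_\beta:=T_\beta(\Lambda)$. Since $\partial\Lambda_\beta$ is a similar image of the Koch snowflake curve, and since conditions \eqref{K1} (quasicircle property) and \eqref{K2} (quantitative unrectifiability) are scale-invariant and are classical properties of that curve, $\partial\Lambda_\beta$ satisfies them with constants independent of $\beta$. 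The covering assertion will then follow from Lemma \ref{lem:porous} applied to $\Lambda_\beta$, producing $\gamma_1\in(0,1)$ and an implicit constant both independent of $\beta$ and $\varepsilon$.

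It remains to verify the inclusions $\Lambda^{C\beta}\subset\Lambda_\beta\subset\Lambda^\beta$. The outer inclusion with $C=2cr_+$ is a routine calculation: if $d(y,\partial\Lambda)\ge C\beta$ then $|T_\beta^{-1}(y)-y|\le 2c\beta r_+=C\beta$, so $T_\beta^{-1}(y)\in\Lambda$ and hence $y\in\Lambda_\beta$. The inclusion $\Lambda_\beta\subset\Lambda^\beta$, equivalently $d(T_\beta(x),\partial\Lambda)\ge\beta$ for every $x\in\partial\Lambda$, is the substantive step. I plan to establish it via a scale-invariant inner cone condition with near-radial axis: there are constants $\alpha_0,c_1>0$ such that for every $x\in\partial\Lambda$ and every sufficiently small $r>0$,
\[
\bigl\{x+tv\,:\,0<t<c_1 r,\ v\cdot\tfrac{c_0-x}{|c_0-x|}>\cos\alpha_0\bigr\}\subset\Lambda.
\]
Given this, since $T_\beta(x)-x=c\beta(c_0-x)$ lies along the cone axis with length in $[c\beta r_-,c\beta r_+]$, elementary plane geometry gives $d(T_\beta(x),\partial\Lambda)\ge c\beta r_-\sin\alpha_0$, and choosing $c\ge\max\{(r_-\sin\alpha_0)^{-1},\,r_+/c_1\}$ delivers the inclusion.

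The main obstacle is therefore the scale-invariant near-radial inner cone condition. Every corner of $\partial\Lambda$, introduced at any level of the Koch construction, is either a convex peak with interior angle $60^\circ$ or a concave valley base with interior angle $240^\circ$; in either case the inner wedge has opening at least $60^\circ$, and the smooth pieces in between trivially admit $180^\circ$ of inner angle. I would reduce via $D_3$-symmetry to a fundamental sixth of $\partial\Lambda$, and then exploit the exact $1/3$-self-similarity of the side curve to reduce the estimate at a level-$n$ feature to a verification at finitely many level-$1$ configurations. A compactness/continuity argument over these configurations, combined with the fact that the radial direction to $c_0$ is strictly interior to the corresponding wedge (which can be checked explicitly at each of the finitely many corner types, as in the direct calculation that at the level-$1$ valley base $B=(1/3,0)$ of the snowflake over the bottom triangle side the radial direction and the inner bisector both point at $60^\circ$ from horizontal), produces uniform $\alpha_0>0$ and $c_1>0$. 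With the cone condition in hand, the scaling estimate above completes the proof.
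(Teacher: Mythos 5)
You propose $\Lambda_\beta=T_\beta(\Lambda)$, a homothetic copy of $\Lambda$ shrunk toward the centroid $c_0$, and reduce the inclusion $\Lambda_\beta\subset\Lambda^\beta$ to a ``scale-invariant inner cone condition with near-radial axis.'' That cone condition is false for the Koch snowflake, and in fact your construction itself fails, for arbitrarily small $\beta$ and for every choice of $c$. Normalize $\Lambda$ to be generated by the triangle with vertices $(0,0),(1,0),(1/2,\sqrt3/2)$, so $c_0=(1/2,\sqrt3/6)$, and look at the level-$2$ peak $x_0=(1/3,-\sqrt3/9)$ (the outward bump erected on the middle third of the segment from $(1/3,0)$ to $(1/2,-\sqrt3/6)$). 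The two boundary edges leave $x_0$ in directions $0^\circ$ and $60^\circ$, so the guaranteed interior wedge at $x_0$ is exactly $[0^\circ,60^\circ]$, and the two Koch arcs emanating from $x_0$ are confined to the wedges $[-30^\circ,0^\circ]$ and $[60^\circ,90^\circ]$, touching the extreme directions of each wedge at a geometric sequence of distances from $x_0$ (self-similarity of the arc under the ratio-$1/3$ homothety centred at $x_0$). But $\arg(c_0-x_0)=\arctan(5\sqrt3/3)\approx 70.9^\circ$ lies strictly inside $(60^\circ,90^\circ)$. Applying the intermediate value theorem to the angle function $w\mapsto\arg(w-x_0)$ along the sub-arc joining the $k$-th scaled copy of the far endpoint (at angle $60^\circ$, distance $\Theta(3^{-k})$) to the $k$-th bump tip (at angle $90^\circ$, distance $\Theta(3^{-k})$), one finds boundary points $w_k$ on the ray from $x_0$ toward $c_0$ with $|w_k-x_0|=\Theta(3^{-k})$. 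Consequently there is no interior cone at $x_0$ with axis $c_0-x_0$ at any scale; and choosing $\beta_k$ so that $c\beta_k|x_0-c_0|=|w_k-x_0|$ gives $T_{\beta_k}(x_0)=w_k\in\partial\Lambda$, hence $T_{\beta_k}(x_0)\notin\Lambda^{\beta_k}$ and $\Lambda_{\beta_k}\not\subset\Lambda^{\beta_k}$. (The same failure occurs at the other level-$2$ peaks, e.g.\ at $(1/6,-\sqrt3/18)$ the wedge is $[60^\circ,120^\circ]$ while the centroid direction is $\approx 49.1^\circ$; your check at the level-$1$ valley base does not detect this because the discrepancy first appears at level $2$.) Note also that $D_3$-symmetry does not imply star-shapedness, and the snowflake is not uniformly star-shaped about any point, so no choice of centre rescues the homothety idea.

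The underlying point is that the correct direction in which to push a boundary point inward is the \emph{local} inward direction (the bisector of the local wedge), which at deep peaks is essentially unrelated to the direction toward any fixed centre. This is why the paper's construction is not a global similarity: it takes the level-$n$ polygonal approximation $\widetilde\Lambda_n$ of $\Lambda$ with $3^{-n}\approx\beta$, forms its inner parallel curve at distance $\approx 3^{-n}$ (so each point is pushed along the local normal), straightens the resulting corners, and then replaces each straight segment by a small copy of the snowflake arc so that the quantitative unrectifiability needed for the covering bound is restored; the covering estimate is then proved by a two-scale case analysis ($\varepsilon\ge 3^{-n}$ versus $\varepsilon<3^{-n}$) rather than by the clean scale-invariance your (invalid) construction would have provided. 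If you want to salvage your approach you would have to replace the radial vector field by a field aligned with the local inward normals of a finite-scale approximation — which is, in essence, the paper's proof.
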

\begin{proof}
Recall that the boundary of $\Lambda$ consists of three copies of the snowflake curve, with endpoints forming the vertices of an equilateral triangle of unit side-length. Fix $n\in\N$, and let $\widetilde{\Lambda}_n$ be the level $n$ approximation of $\Lambda$. We will construct several curves $\pi_j$; they depend on $n$, but we do not display this dependence explicitly. Let $\pi_0$ be the boundary of $\widetilde{\Lambda}_n$. Thus, $\pi_0$ consists of $3\times 4^{n}$ line segments of length $3^{-n}$. Now $\pi_1=\{d(x,\pi_0)=3^{-n-2}\}\cap\widetilde{\Lambda}_n$ consists of $3\times 4^{n}$ line segments together with some circular arcs. Let $\pi_2$ be the curve obtained from $\pi_1$ by removing the circular parts and replacing them by continuing the straight parts of $\pi_1$ up to the point where they meet (see Figure \ref{fig:inner_domain}).
Then $\pi_2$ consists of $3\times 4^n$ line segments of length $\Theta(3^{-n})$. Divide each of these into $10$ equally long sub-segments. Suppose $J=[a,b]$ is one of these segments, and let $\Lambda_J$ be a similar copy of the snowflake curve with endpoints $a$ and $b$. Finally, let $\pi_3$ be the union of all these $\Lambda_J$, and let $\Lambda'_n$ be the region bounded by $\pi_3$.

\begin{figure}
   \centering
\resizebox{0.9\textwidth}{!}{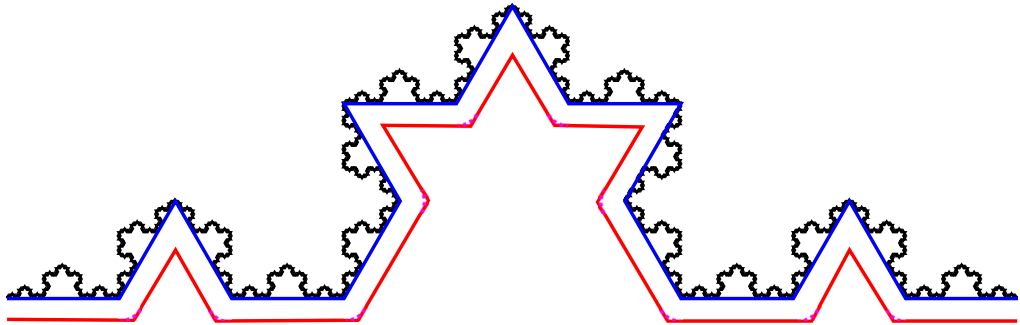}
\caption{Construction of $\pi_0$, $\pi_1$, $\pi_2$. In the last step, the circular arcs in $\pi_1$ are replaced by continuing the straight parts of $\pi_1$ up to the point where they meet.}
\label{fig:inner_domain}
\end{figure}

It follows directly from the construction of $\Lambda$, $\Lambda'_n$ that
\[
\Lambda^{3^{-n-1}}\subset\Lambda'_n\subset\Lambda^{3^{-n-3}}\,.
\]
It remains to show that given $\e>0$, $V\cap\{x\in\Lambda'_n\mid d(x,\partial\Lambda'_n)<\varepsilon\}$ may be covered by $O(\varepsilon^{-\gamma_1})$ balls of radius $\varepsilon$.

To that end, we consider two different cases. If $\varepsilon\ge 3^{-n}$, then
\[
\partial \Lambda'_n(\varepsilon)\subset\partial \Lambda_n(2\varepsilon)\subset\partial\Lambda(3\varepsilon)\,,
\]
and since (applying Lemma \ref{lem:porous} to $\Lambda$), there is $\gamma_1\in (0,1)$ such that $V\cap\partial\Lambda(3\varepsilon)$ may be covered by $O(\varepsilon^{-\gamma_1})$ balls of radius $\varepsilon$, the same therefore holds for $V\cap\partial \Lambda'_n(\varepsilon)$.

If $\varepsilon<3^{-n}$, we first apply the previous observation to cover $V\cap \partial \Lambda'_n(3^{-n})$ by $O(3^{\gamma_1 n})$ balls of radius $3^{-n}$. Let $B$ be one of these balls. Then $3B\cap \partial\Lambda'_n$ is covered by $O(1)$ scaled and translated copies $\Lambda_J$ of the snowflake curve of diameter $\Theta(3^{-n})$. By scaling and Lemma \ref{lem:porous},  each $\Lambda_J(\varepsilon)$, may be covered by $O(3^{-n\gamma_1}\varepsilon^{-\gamma_1})$ balls of radius $\varepsilon$. Adding up, $\partial \Lambda'_n(\varepsilon)$ is covered by $O(3^{n\gamma_1})O((3^n\varepsilon)^{-\gamma_1})=O(\varepsilon^{-\gamma_1})$ balls of radius $\varepsilon$.
\end{proof}

\begin{rems}
\begin{enumerate}[(i)]
\item Lemma \ref{lem:koch_epsilon} appears to be valid for any domain satisfying the assumptions of Lemma \ref{lem:porous}. Indeed, Rohde \cite{Rohde01} shows that each quasicircle $\pi$ is bi-Lipschitz equivalent to a curve $\pi'$ constructed from a process similar to the construction of the standard snowflake, and it appears that the above proof works for each such $\pi'$ as well.
\item Perhaps surprisingly, Theorem \ref{thm:dim_alg_curves}(ii) holds for $\mu_n^{\text{ball}}(\alpha,2)$ as well. Indeed, a simple modification of the proof of Lemma \ref{lem:koch_epsilon} applied to $\pi_\varrho=S(0,1-3\varrho)$ (divide $\pi_\varrho$ into arcs of length $\Theta(\varrho)$ and replace each of these arcs by a similar copy of the snowflake curve) implies that for a fixed $\gamma_1\in(0,1)$, the assumptions in the second part of Theorem \ref{thm:dim_alg_curves} are satisfied for $\Lambda=B(0,1)\subset\R^2$. Note that the constant $C$ in the conclusion of Lemma \ref{lem:koch_epsilon} blows up as $\varrho\to 0$ in this case, but this is allowed in Theorem \ref{thm:dim_alg_curves}.
\end{enumerate}
\end{rems}

\subsection{Upper bounds for intersections with self-similar sets}

We now turn to intersections with fractal objects, and focus on self-similar sets and $\mu_n^{\text{ball}}$ for simplicity.

\begin{thm}\label{thm:dim_intersection_selfsim}
For $\mu_n^{\text{ball}(\alpha,d)}$, almost surely, the random cut-out set $A$ satisfies
\[
\overline{\dim}_B(E\cap A)\le\dim_H E-\alpha
\]
for each self-similar set $E$ with $\dim_H E>\alpha$ satisfying the open set condition.
\end{thm}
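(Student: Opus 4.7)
The plan is to combine Theorem \ref{thm:self_sim} (uniform H\"{o}lder continuity of intersections with self-similar measures) with the regular inner approximation technique from Theorem \ref{thm:dim_linear-intersections}(ii) and the covering estimate of Proposition \ref{prop:inter-nbhd-sss}.

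First, write $E = E_F$ for some $F\in(\simi_d^c)^m$ satisfying the OSC, so $s_F := \dim_S F = \dim_H E > \alpha$, and assume first that $E$ is not contained in any affine hyperplane. Under OSC the natural self-similar measure $\eta_F$ is $s_F$-Ahlfors regular. Using Proposition \ref{prop:inter-nbhd-sss}, I would choose a neighbourhood $\mathcal{G}$ of $F$ and $\gamma_1 < s_F$ such that $S(\e)\cap E_G$ can be covered by $O(\e^{-\gamma_1})$ balls of radius $\e$ for every $G\in\mathcal{G}$ and every sphere $S\subset\R^d$; by shrinking $\mathcal{G}$ if needed, I may also ensure that all $G\in\mathcal{G}$ satisfy OSC with uniform constants and that $s_G > \max(\gamma_1,\alpha)+\delta_0$ for some $\delta_0>0$.

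Next I would introduce the inner approximation $B(x,r)_\varrho = B(x,(1-\varrho')r)$ with $\varrho' = 1-(1-\varrho)^{1/d}$, which satisfies Definition \ref{def:inner-approximation}; a direct computation from \eqref{eq:alpharoo} shows $\alpha_\varrho = (1-\varrho)\alpha \to \alpha$ as $\varrho\to 0$. Crucially, the shrunken shapes remain balls, so their boundaries are still spheres and the covering estimate above continues to apply. Choosing $\varrho>0$ small enough that $\alpha_\varrho < s_G$ uniformly on $\mathcal{G}$, and taking the self-similar probability vector $p_G$ associated with $\eta_G$, the hypotheses of Theorem \ref{thm:self_sim} are satisfied for the SI-martingale $(\mu_{n,\varrho})$ of Definition \ref{def:inner-approximation} and the family $\{(G,p_G) : G\in\mathcal{G}\}$. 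Hence, almost surely, there is a random $K<\infty$ such that
\[
Y_n^G := \int \mu_{n,\varrho}\, d\eta_G \le K, \qquad G\in\mathcal{G},\; n\in\N.
\]
Specializing to $G=F$, this gives $\eta_F(A_n^\varrho) = 2^{-n\alpha_\varrho}Y_n^F \le K\,2^{-n\alpha_\varrho}$. Applying the inclusion $\Omega\cap A(\beta 2^{-n}) \subset A_n^\varrho$ from \eqref{eq:A_n^beta} with $\beta = \beta(\varrho)>0$, take any maximal $(\beta 2^{-n-1})$-separated subset $\{x_i\}_{i=1}^{M_n}$ of $E\cap A(\beta 2^{-n-1})$. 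The disjoint balls $B(x_i,\beta 2^{-n-1}/2)$ are contained in $A_n^\varrho$, and by Ahlfors regularity
\[
M_n\cdot \Omega_\beta(2^{-ns_F}) \le \eta_F(A_n^\varrho) \le K\,2^{-n\alpha_\varrho},
\]
so $M_n = O(2^{n(s_F-\alpha_\varrho)})$, whence $\overline{\dim}_B(E\cap A) \le s_F - \alpha_\varrho$. Letting $\varrho\to 0$ along a countable sequence gives $\overline{\dim}_B(E\cap A) \le s_F - \alpha = \dim_H E - \alpha$ for this particular $E$.

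To obtain a single almost sure event on which the conclusion is valid for every admissible $E$ simultaneously, I would cover each parameter space $(\simi_d^c)^m$ by countably many neighbourhoods $\mathcal{G}_{m,k}$ on which the uniform analysis above runs, and intersect the corresponding full-probability events over $m$, $k$, and a countable sequence $\varrho_\ell\to 0$. For self-similar sets contained in a proper affine subspace $V\subset\R^d$, the Bandt--Kravchenko rigidity invoked in Proposition \ref{prop:inter-nbhd-sss} still rules out that $E_F$ lies in any sphere of $V$; the same proof therefore gives the required sphere-covering estimate inside $V$, since a sphere of $\R^d$ meets $V$ in either a point or a sphere of $V$, and an induction on $\dim V\in\{0,1,\ldots,d\}$ handles the remaining case. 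The main obstacle I expect is the bookkeeping in this countable-union step: the constants in Proposition \ref{prop:inter-nbhd-sss} and Theorem \ref{thm:self_sim} depend on the IFS in a discontinuous way through $\gamma_1$ and the uniform Ahlfors/OSC constants, so some care is needed to choose the $\mathcal{G}_{m,k}$ compatibly with the application of Theorem \ref{thm:self_sim}. A secondary technical point is the low-dimensional version of Proposition \ref{prop:inter-nbhd-sss} just mentioned, which needs to be stated explicitly for self-similar sets lying in proper affine subspaces.
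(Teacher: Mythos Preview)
Your approach is essentially the paper's: inner approximation of the removed balls, application of Theorem~\ref{thm:self_sim} to $(\mu_{n,\varrho})$, Ahlfors regularity of $\eta_F$ to bound the packing number of $E\cap A$ via $\eta_F(A_n^\varrho)$, and a countable-union argument over the parameter space with $\varrho\downarrow 0$. The paper organises the countable union slightly differently, parametrising by $(m,s,\gamma,C)$ and taking a countable dense set of $(s,\gamma)$ with $\alpha<\gamma<s\le d$ together with $C_k\uparrow\infty$, but the content is the same.

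The one place where your sketch diverges is the case where $E_F$ lies in a proper affine subspace $V$. Your reduction ``a sphere of $\R^d$ meets $V$ in a sphere of $V$'' is not quite enough, because what you need is a covering of $\partial B(\e)\cap V$, not of $(\partial B\cap V)(\e)$, and these differ badly when the ball is nearly tangent to $V$: in that regime $\partial B(\e)\cap V$ can be a full ball of radius $\approx\e^{1/2}$ rather than a thin annulus. The paper handles this with a dichotomy in a small parameter $\zeta>0$: if the centre of $B$ is within $r-\e^\zeta$ of $V$ (almost tangential), then $\partial B(\e)\cap V$ sits in a ball of radius $O(\e^{\zeta/2})$ and Ahlfors regularity of $E_F$ gives $O(\e^{-\dim_H E_F(1-\zeta/2)})$ balls; otherwise (transversal), $\partial B(\e)\cap V\subset(\partial B\cap V)(O(\e^{1-\zeta/2}))$ and Proposition~\ref{prop:inter-nbhd-sss} inside $V$ yields $O(\e^{-\gamma_1(1-\zeta/2)})$ balls. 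Choosing $\zeta$ small enough makes both exponents strictly below $\dim_H E_F$. Your inductive idea can be completed along these lines, but the tangential case must be treated separately.
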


\begin{proof}
The proof is again very similar to the proofs of Theorems \ref{thm:dim_linear-intersections} and \ref{thm:dim_alg_curves}, so we will skip some details.

Fix $m\in\N$, $s\in (\alpha,d]$, $C\in (1,\infty)$ and  $\gamma\in (0,s)$, and consider the subset $\Gamma=\Gamma(m,s,\gamma,C)$ of $(\simi_{d}^c)^m\times P_m$ such that:
\begin{enumerate}[(a)]
 \item $F$ satisfies the open set condition,
 \item $t=(F,p)\in(\simi_{d}^c)^m\times P_m$ for which $\eta_t$ is the natural self-similar measure,
 \item All contraction ratios of $F_i$ are between $C^{-1}$ and $1-C^{-1}$,
 \item $E_F\cap B(0,1)\neq\emptyset$,
 \item the Frostman condition $\eta_t(B(x,r))\le C\,r^s$ for all $x\in\R^d$ and $r\in (0,1)$ holds,
 \item The neighbourhood $\partial B(\e)\cap E_F$ can be covered by $C \e^{-\gamma}$ balls of radius $\e$ for all balls $B$ and $\e>0$.
\end{enumerate}

For $0<\varrho<1$, let $A_{n}^\varrho$ and $\mu_{n,\varrho}$ be as  in \eqref{eq:Anrho_def}, \eqref{eq:munrho_def} and note that we may take $\Lambda_i=B(0,\tfrac12)$ and $(\Lambda_i)_\varrho=B(0,\tfrac12-\beta)$, where $\mathcal{L}^d(B(0,\tfrac12-\beta))=(1-\varrho)\mathcal{L}^d(B(0,\tfrac12))$ so that $\alpha_\varrho=\alpha-\varrho$ (recall \eqref{eq:alpharoo}).

Theorem \ref{thm:self_sim} shows that almost surely, for some random $K=K(\varrho)<\infty$,
\begin{equation}\label{eq:not_too_many_balls}
\sup_{n\in\N, t\in\Gamma}2^{(\alpha-\varrho) n}\eta_t(A_{n}^\varrho)\le K\,.
\end{equation}
For $t=(F,p)\in\Gamma$, denote $q=q(F)=\dim_H(E_F)$. Recalling that $A_n(\beta 2^{-n})\subset A_n^{\varrho}$ (see \eqref{eq:A_n^beta}) and that $\eta_t(B(x,r))=\Omega(r^{q})$ for $x\in E_F$, $0<r<1$, it follows that $\eta_t(B(x,2^{-n})\cap A_n^{\varrho})=\Omega(2^{-n q})$ for each $x\in E_F\cap A_n(\beta 2^{-n-1})$. Combining this with \eqref{eq:not_too_many_balls}, we infer that for all  $n\in\N$ there can be at most $O_{F,\varrho}(2^{n(q-\alpha+\varrho)})$ disjoint balls of radius $2^{-n}$ with center in $E_F\cap A(\beta 2^{-n-1})$. Letting $\varrho\downarrow0$ along a subsequence, this implies that a.s. $\dim_B(E_F\cap A)\le \dim E_F-\alpha$ for all $(F,p)\in\Gamma$.

We claim that for every $F\in(\simi_{d}^c)^m$ satisfying the open set condition, there are $C<\infty, \gamma\in (0,\dim_H E_F)$ such that for any ball $B$ of diameter $\le 1$, the annular neighbourhood $\partial B(\e)\cap E_F$ can be covered by $C \e^{-\gamma}$ balls of radius $\e$. If $E_F$ is not contained in a hyperplane, this is shown in Proposition \ref{prop:inter-nbhd-sss}. Otherwise, let $H$ be an affine plane of minimal dimension containing $E_F$, and fix $\e>0$. Let $B$ be a ball of radius $r\le 1$, whose centre is at distance $r-\delta$ from $H$. We consider two cases, depending on a small parameter $\zeta>0$ to be chosen later.
\begin{enumerate}
\item $\delta<\e^\zeta$ (the ``almost tangential'' case). In this case, $\partial B(\e)\cap H$ is contained in a ball in $H$ of radius $O(\e^{\zeta/2})$. Hence, since $E_F$ is Ahlfors-regular, $\partial B(\e)\cap E_F$ can be covered by $O(\e^{-(\dim_H(E_F)(1-\zeta/2))})$ balls of radius $\e$.
\item $\delta\ge\e^\zeta$ (the ``transversal'' case). Here $\partial B(\e)\cap H \subset (\partial B\cap H)(O(\e^{1-\zeta/2}))$, so we can appeal to Proposition \ref{prop:inter-nbhd-sss} applied in $\R^{\dim H}$ to find some $\gamma_1\in (0,\dim_H(E_F))$ such that $\partial B(\e)\cap E_F$ can be covered by $O(\e^{-\gamma_1(1-\zeta/2)})$ balls of radius $\e$.
\end{enumerate}
The claim follows from these cases by taking $\zeta$ sufficiently small in terms of $\gamma_1$ and $\dim_H(E_F)$.

Hence, repeating the argument for a countable dense subset of $\{ (s,\gamma): \alpha<\gamma<s\le d \}$, a sequence $C_k\uparrow\infty$, and all $m\in\N$, yields that a.s, $\overline{\dim}_B(E\cap A)\le\dim E-\alpha$ for all self-similar sets $E$ satisfying the open set condition such that $\dim_H E>\alpha$.
\end{proof}

\begin{rem}
If the ``expected dimension'' ($k-\alpha$ in Theorem \ref{thm:dim_linear-intersections}, $1-\alpha$ in Theorem \ref{thm:dim_alg_curves} and $\dim E_F-\alpha$ in Theorem \ref{thm:dim_intersection_selfsim}) is $\le0$, we can use Theorem \ref{thm:small_dimension_projections} in place of Theorem \ref{thm:Holder-continuity} in the arguments above, to conclude that a.s. $\dim_B(A\cap V_t)=0$ for all such $t\in\Gamma$. For example, the cutout set $A$ for $\mu_n^{\text{ball}(\alpha,d)}$ intersects each open set condition self-similar set of dimension at most $\alpha$ in a set of box-dimension $0$.
\end{rem}

\section{Lower bounds for the dimension of intersections, and dimension conservation}
\label{sec:lower-bound-dim-intersections}

\subsection{Affine and algebraic intersections}
In the previous section, we have obtained upper bounds of the type:
\[
\eta_t(A(2^{-n}))\le K_\e\, 2^{n(s-\alpha+\e)}\quad\text{for all }t\in\Gamma\,,
\]
for some (random) $K_\e<+\infty$. Such bounds imply that almost surely $\dim_B (A\cap\supp\eta_t)\le s-\alpha$ for all $t\in\Gamma$ simultaneously. Since $A$ is a Cantor type set, we cannot in general hope for a lower bound $\dim(A\cap\supp\eta_t)\ge s-\alpha$, for all $t$, but it seems natural to ask whether such dimension lower bound holds for all $t$ such that $Y^t>0$ which, together with Lemma \ref{lem:Yt-survives} and Theorem \ref{thm:Holder-continuity},  would imply that, with positive probability, $\dim(\supp \eta_t\cap A)=s-\alpha$ for a nonempty open set of parameters $t\in\Gamma$. We will see that this is true for various classes of SI-martingales and families $\{\eta_t\}$.

We start by considering intersections with affine planes and algebraic curves. Intersections with self-similar sets can be handled in a similar way, but there are some additional technical complications so we address these later in this section. We first consider a class of Poissonian martingales, and discuss the situation for subdivision martingales afterwards.

\begin{thm}\label{thm:dim_lower_bound}
Let $(\mu_n)$ be a $\mathcal{PCM}$ constructed from the measure $\mathbf{Q}_0$ from an initial domain $\Omega$, with a regular inner approximation $(\Lambda_\varrho)$.
\begin{enumerate}
\item[\emph{(i)}]
Suppose the non-flatness condition \eqref{eq:non-flat-shapes} holds for $\Omega$, and for the sets $\Lambda$, $\Lambda_\varrho$ for $\mathbf{Q}$-almost all $\Lambda$
for all sufficiently small $\varrho>0$ (for some $\gamma_0, C$ that may depend on $\varrho$), and also that $\alpha<k\in \{1,\ldots,d-1\}$.

Write $Y^V=\lim_{n\to\infty} \int_V \mu_n\,d\mathcal{H}^k$. Then a.s.
\[
\dim_H(A\cap V)=\dim_B(A\cap V)=k-\alpha
\]
for all $V\in\mathbb{A}_{d,k}$ such that $Y^V>0$.
\item[\emph{(ii)}]
Suppose conditions \eqref{K1} and \eqref{K2} hold 
for $\Omega$, and for the sets $\Lambda$, $\Lambda_\varrho$ for $\mathbf{Q}$-almost all $\Lambda$
for all $0<\varrho<1$, in each case with uniform constants (that may depend on $\varrho$). Assume also that $\alpha<1$.

For any algebraic curve $V$, write $Y^V=\lim_{n\to\infty} \int_V \mu_n\,d\mathcal{H}^1$. Then a.s.
\[
\dim_H(A\cap V)=\dim_B(A\cap V)=1-\alpha
\]
for all algebraic curves $V$ such that $Y^V> 0$.
\end{enumerate}
\end{thm}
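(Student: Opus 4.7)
The upper bounds $\overline{\dim}_B(A\cap V) \le k-\alpha$ (resp.\ $1-\alpha$) are furnished by Theorems \ref{thm:dim_linear-intersections} and \ref{thm:dim_alg_curves}, so only the matching lower bounds on $\dim_H$ remain. The plan is to apply Frostman's energy method to the intersection measure $\mu^V_\infty$, defined as the a.s.\ weak limit of $d\mu^V_n = \mu_n\, d\eta_V$. By construction, $\supp \mu^V_\infty \subset V \cap \supp \mu_\infty \subset V\cap A$ and $\|\mu^V_\infty\| = Y^V$, so on the event $\{Y^V>0\}$ it is a nontrivial measure on $V\cap A$; it then suffices to show that
\[
I_s(\mu^V_\infty) := \iint |x-y|^{-s}\,d\mu^V_\infty(x)\,d\mu^V_\infty(y) < \infty
\]
for every $s < k-\alpha$ (resp.\ $s<1-\alpha$), since by the classical energy--dimension relation this forces $\dim_H(\supp\mu^V_\infty) \ge s$, and letting $s$ tend to the desired exponent yields the result.

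For each fixed $V$, the expectation of $I_s(\mu^V_\infty)$ is controlled exactly as in the proof of Theorem \ref{thm:dim_cut_out_set}: Fatou and Fubini combined with the two-point bound \eqref{eq:2_moment_bound} give
\[
\EE[I_s(\mu^V_\infty)] \le \liminf_n 2^{2\alpha n}\iint_{V\times V}|x-y|^{-s}\PP(x,y\in A_n)\,d\eta_V(x)\,d\eta_V(y) \le C \iint_{V\times V} |x-y|^{-s-\alpha}\,d\eta_V \otimes d\eta_V.
\]
This deterministic integral converges whenever $s+\alpha$ is strictly below the Hausdorff dimension of $V$: in case (i), this is immediate from the $k$-Ahlfors regularity of $\eta_V$, while in case (ii) one uses that a degree-$k$ algebraic curve decomposes into boundedly many $1$-regular Lipschitz graphs, giving the same bound for $s+\alpha<1$. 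This provides the lower bound a.s.\ for each individual $V$ with $Y^V>0$.

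The main obstacle is upgrading this per-$V$ statement to one that holds simultaneously over the \emph{random} set $\{V\in\Gamma : Y^V>0\}$, since a countable-dense argument alone cannot transfer dimension from one intersection measure $\mu^{V_n}_\infty$ to a different one $\mu^V_\infty$. The plan is to establish the \emph{uniform} energy bound $\sup_{V\in\Gamma} I_s(\mu^V_\infty) < \infty$ a.s.\ for each $s<k-\alpha$. I would do so by dyadically decomposing the kernel $|x-y|^{-s} \asymp \sum_{j\ge 0} 2^{sj}\mathbf{1}[|x-y|\sim 2^{-j}]$ and controlling each piece $Z^V_{n,j} = \iint 2^{sj}\mathbf{1}[|x-y|\sim 2^{-j}]\mu_n(x)\mu_n(y)\,d\eta_V(x)\,d\eta_V(y)$ uniformly in $V$ by treating the product $(x,y)\mapsto\mu_n(x)\mu_n(y)$ on $\R^{2d}$ as an auxiliary random process to which the machinery of Theorem \ref{thm:Holder-continuity} can be adapted: the parametrized test measures $2^{sj}\mathbf{1}[|x-y|\sim 2^{-j}]\,d\eta_V\otimes d\eta_V$ have Frostman exponent $2k-s$ uniformly in $j$, the martingale/independence properties transfer to the product, and the two-point estimate \eqref{eq:2_moment_bound} is precisely the input needed to run Lemma \ref{lem:large-deviation} with the correct exponent. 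Summing the resulting uniform-in-$V$ geometric bounds in $j$ yields the uniform energy estimate. As a technically lighter alternative, one can work with the inner approximation $\mu_{n,\varrho}$: the uniform H\"older continuity of $V\mapsto Y^V_\varrho$ from Theorems \ref{thm:linear-projections} and \ref{thm:alg_curves} together with $\alpha_\varrho\to\alpha$ (via \eqref{eq:alpha}, \eqref{eq:alpharoo}) permits one to carry out the uniform energy bound at the level of $\mu_{n,\varrho}$ and then pass to the limit $\varrho\downarrow 0$. Once the uniform bound is in hand, Frostman's lemma applied simultaneously over $\{V : Y^V>0\}$ closes the proof.
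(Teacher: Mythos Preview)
Your per-$V$ energy argument is correct and standard. The genuine gap is in the uniformity step. The assertion that ``the martingale/independence properties transfer to the product'' is false: for self-products $\nu_n(x,y)=\mu_n(x)\mu_n(y)$, both \eqref{SI:martingale} and \eqref{SI:spatial-independence} fail. Near the diagonal $\EE(\nu_{n+1}(x,y)\mid\mathcal B_n)\neq \nu_n(x,y)$, and even off the diagonal, if $Q_1\times Q_2$ and $Q_1\times Q_3$ both meet the support of your test measure then the restrictions of $\nu_{n+1}$ to these two product cubes are not conditionally independent. For your test measures supported on $(V\times V)\cap\{|x-y|\sim 2^{-j}\}$ with $V$ a $k$-plane and $k\ge 2$, a single $\R^d$-cube $Q$ meeting $V$ has $\sim 2^{(n-j)(k-1)}$ partners $Q'$ on $V$ at distance $\sim 2^{-j}$, so the dependency graph has unbounded degree and Lemma \ref{lem:large-deviation} cannot be run. (The paper handles a self-product only in Section \ref{subsec:self-convolutions}, for a very specific family of $d$-planes uniformly transversal to the diagonal, and even then needs a modified large-deviation lemma; the general case is deferred to a follow-up paper.) Your ``lighter alternative'' is also incomplete: H\"older continuity of $V\mapsto Y^V_\varrho$ controls total masses but says nothing about the spatial distribution of $\mu^V_\infty$, which is exactly what an energy bound needs.

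The paper's argument is structurally different and avoids products entirely. It exploits the scale and translation invariance of $\mathbf{Q}$ (a feature special to $\mathcal{PCM}$) to show that for any $2^{-n_0}$-scaled copy $B$ of $\Omega$, the localized quantity $X_1(B)=\sup_V 2^{n_0(k-\alpha)}\limsup_m \mu^V_m(B)$ has the same tail as the global $\sup_V Y^V$, with constants independent of $B$ and $n_0$ (Lemma \ref{lem:tails_scaled}, via Corollary \ref{cor:tail_estimate} and the $N_0$-tail from Lemma \ref{lem:Poisson_is_cut_out_type}). A Borel--Cantelli over a covering by $O(2^{dn})$ such copies at each scale $n$ then gives, almost surely, the uniform local mass bound $\mu^V_m(B(x,2^{-n}))=O(2^{n(\alpha-k+\varrho)})$ for all $V$, all $x$, and all large $m$ simultaneously. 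Coupling this with the packing bound $X_2^\varrho<\infty$ (from the inner approximation) yields
\[
\sum_i \mu^V_m(B(x_i,2^{-n}))^2 = O\bigl(2^{n(k-\alpha+\varrho)}\cdot 2^{2n(\alpha-k+\varrho)}\bigr)=O\bigl(2^{n(\alpha-k+3\varrho)}\bigr)
\]
uniformly in $V$, which is a direct lower bound on the correlation dimension $\dim_2(\mu^V_\infty)\ge k-\alpha-3\varrho$ whenever $\mu^V_\infty\neq 0$. Letting $\varrho\downarrow 0$ finishes. The point is that the uniformity is obtained by localizing and rescaling (one factor of $\mu_n$ at a time, never a product), rather than by trying to run the SI-martingale machinery on $\R^{2d}$.
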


In particular, part (i) holds for $\mu_n^{\text{ball}(\alpha,d)}$ if $\alpha<k$, and for $\mu_n^{\text{snow}(\alpha)}$ if $\alpha<1$ and $k=1$, while claim (ii) holds for $\mu_n^{\text{snow}(\alpha)}$ if $\alpha<1$ (recall Lemma \ref{lem:koch_epsilon}).

Before proceeding to the proof of the theorem, we present a useful consequence.
\begin{cor} \label{cor:dim_lower_bound}
\begin{enumerate}[(i)]
\item In the same setting of Theorem \ref{thm:dim_lower_bound}(i), for each $\beta>0$ there is a positive probability that
\begin{equation} \label{eq:dim-intersection-affine}
\dim_H(A\cap V)=\dim_B(A\cap V)=k-\alpha
\end{equation}
for all $V\in\mathbb{A}_{d,k}$ such that $V\cap\Omega^\beta\neq \varnothing$ (where $\Omega^\beta$ is the quantitative $\beta$-interior of $\Omega$ defined in \eqref{eq:quantitative-interior}).
\item In the same setting of Theorem \ref{thm:dim_lower_bound}(ii), if $\Gamma_0$ is a fixed compact subset of $\mathcal{K}^\Upsilon_m$ (in the $d$ metric of Definition \ref{def:metric-algebraic} with respect to any open ball $\Upsilon$ with $\supp\mu_0\subset\Upsilon$)such that each $V\in\Gamma_0$ hits the interior of $\Omega$, then with positive probability,
\begin{equation} \label{eq:dim-intersection-algebraic}
\dim_H(A\cap V)=\dim_B(A\cap V)=1-\alpha
\end{equation}
for all $V\in\Gamma_0$.
\end{enumerate}
\end{cor}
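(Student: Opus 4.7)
The strategy is to combine Theorem \ref{thm:dim_lower_bound} with a uniform lower bound $\inf_{V\in\Gamma} Y^V > 0$ holding on a set of positive probability, where $\Gamma$ denotes $\{V\in\AA_{d,k}:V\cap\Omega^\beta\neq\varnothing\}$ in part (i) and the given $\Gamma_0$ in part (ii). Both are compact in the appropriate metric, and Theorem \ref{thm:dim_lower_bound} already yields that a.s. the equality of dimensions holds for every $V$ with $Y^V > 0$. Thus the whole task is to show $\PP(\inf_{V\in\Gamma} Y^V > 0) > 0$.

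First, I would verify uniform estimates on the first two moments of $Y^V$. Since $V\cap\Omega^\beta\neq\varnothing$ (resp. $V\in\Gamma_0$ meets the interior of $\Omega$), $V\cap\Omega$ supports $\eta_V$-mass bounded below by a constant $c_0 > 0$ depending only on $\beta$ (resp. $\Gamma_0$). Because $Y^V_n$ is a martingale with $Y_0^V = \eta_V(\Omega)\ge c_0$, one has $\EE[Y^V_n] \ge c_0$; Corollary \ref{cor:tail_estimate} (applied on $\Gamma$, noting that $N_0$ has doubly-exponential tail in the Poissonian setting by Lemma \ref{lem:Poisson_is_cut_out_type}) yields that $\overline{Y}=\sup_{V\in\Gamma}Y^V$ has all moments finite, so $Y^V$ is bounded in $L^2$ uniformly in $V$. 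Uniform integrability then gives $\EE[Y^V]\ge c_0$ in the limit. The Paley-Zygmund inequality produces constants $c,p>0$ such that $\PP(Y^V\ge 2c)\ge p$ uniformly over $V\in\Gamma$.

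Next, pick a finite $\varepsilon$-net $V_1,\dots,V_N$ in $\Gamma$. The key observation is that each event $\{Y^{V_i}>2c\}$ is \emph{decreasing} with respect to the underlying Poisson point process: adding more shapes to the cut-out only shrinks $A_n$, hence only decreases $Y_n^{V_i}$, and therefore also $Y^{V_i}$. By the FKG inequality for Poisson point processes (the same tool used in the proof of Theorem \ref{thm:intersection-compact-sss}, via \cite{Janson84}),
\[
\PP\left(\bigcap_{i=1}^N\{Y^{V_i}>2c\}\right)\ge p^N > 0.
\]
Invoking Corollary \ref{cor:tail_estimate} once more, choose $L$ so large that $\PP(K > L) < p^N/2$, where $K$ is the random H\"older constant of $V\mapsto Y^V$ supplied by Theorem \ref{thm:linear-projections} (resp. Theorem \ref{thm:alg_curves}). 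Then shrink $\varepsilon$ so that $L\varepsilon^\gamma < c$. On the intersection of the two events, which has probability at least $p^N/2$, for any $V\in\Gamma$, taking $V_i$ the nearest net point yields
\[
Y^V\ge Y^{V_i}-K\, d(V,V_i)^\gamma>2c-L\varepsilon^\gamma>c,
\]
so $\inf_{V\in\Gamma}Y^V > c$ on that event. Combining with Theorem \ref{thm:dim_lower_bound} completes the proof.

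The main obstacle is the simultaneous positivity of $Y^V$ over a continuum of parameters. H\"older continuity (plus the tail bound on the H\"older constant) reduces the issue to a finite net, and the FKG inequality, enabled by the decreasing-event structure of Poissonian cut-outs, is the crucial tool that upgrades uniform pointwise probability lower bounds into a uniform simultaneous lower bound; without the positive correlation it is not obvious how to pass from $p$ pointwise to any positive probability simultaneously over all $V_i$.
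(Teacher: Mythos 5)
Your overall architecture---positive probability of $Y^{V}>0$ at each fixed parameter, the FKG inequality for decreasing events to handle a finite family, and continuity of $V\mapsto Y^V$ to pass to the whole compact set---is the right one and matches the paper's. However, the quantitative way you combine the last two steps contains a circularity that cannot be closed with the available estimates. You first fix an $\e$-net of size $N=N(\e)=\Theta(\e^{-m})$, obtain $\PP\bigl(\bigcap_i\{Y^{V_i}>2c\}\bigr)\ge p^{N}$ from FKG, then choose $L$ with $\PP(K>L)<p^{N}/2$, and finally ask that $L\e^{\gamma}<c$. But Corollary \ref{cor:tail_estimate} only gives $\PP(K>L)\lesssim \exp(-L^{\delta})$ for some small $\delta>0$, so the requirement $\PP(K>L)<p^{N(\e)}/2=\exp(-\Omega(\e^{-m}))$ forces $L\gtrsim \e^{-m/\delta}$, whence $L\e^{\gamma}\gtrsim\e^{\gamma-m/\delta}\to\infty$ as $\e\to0$ (there is no reason to have $\gamma\ge m/\delta$, and indeed $\gamma<1$ while $\delta$ is tiny); conversely, ``shrinking $\e$'' after fixing $L$ enlarges $N$ and invalidates the choice of $L$. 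No admissible pair $(\e,L)$ satisfies both constraints, so the event you construct is not shown to have positive probability.

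The fix is to drop the quantitative lower bound $Y^V>c$ entirely---Theorem \ref{thm:dim_lower_bound} only needs $Y^V>0$---and make the continuity step qualitative \emph{before} applying FKG. For each fixed $V_0\in\Gamma$, $\PP(Y^{V_0}>0)>0$ by Lemma \ref{lem:Yt-survives}; on that event, a.s. continuity of $V\mapsto Y^V$ gives a random neighbourhood on which $Y^V>0$, and monotone convergence of the events $E(U)=\{Y^V>0\ \text{for all}\ V\in U\}$ as $U$ shrinks to $\{V_0\}$ yields a deterministic open neighbourhood $\Gamma_{V_0}$ with $\PP(E(\Gamma_{V_0}))>0$. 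Compactness extracts a finite subcover $\Gamma_1,\dots,\Gamma_M$; each $E(\Gamma_i)$ is decreasing, and FKG gives $\PP\bigl(\bigcap_i E(\Gamma_i)\bigr)>0$, with no tail bound on the H\"older constant needed. (Your Paley--Zygmund step is also more than is required: Lemma \ref{lem:Yt-survives} already provides $\PP(Y^{V_0}>0)>0$ without uniform moment estimates, and your passage from $\sup_n\EE[(Y_n^V)^2]$ to uniform integrability is itself only implicit in Corollary \ref{cor:tail_estimate}, which bounds the limit $\overline{Y}$ rather than the supremum over $n$.)
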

\begin{proof}
To unify notation, denote $\Gamma_0=\{V\in\mathbb{A}_{d,k}:V\cap\overline{\Omega^\beta}\neq \varnothing\}$ in the first claim. Then $\Gamma_0$ is also compact. Recall that $\Omega=\supp\mu_0$. Since $V$ intersects $\Omega^\circ$ for all $V\in\Gamma_0$ by assumption, it follows from Lemma \ref{lem:Yt-survives} that $\PP(Y^V>0)>0$ for each fixed $V\in\Gamma_0$. As $V\mapsto Y^V$ is a.s. continuous on $\Gamma$ by Theorems \ref{thm:linear-projections} and \ref{thm:alg_curves}, we can find a finite open covering $\{ \Gamma_i\}_{i=1}^M$ of $\Gamma_0$ such that $\PP(E_i)>0$ for all $i$, where $E_i$ is the event that $Y^V>0$  for all $V\in \Gamma_i$. Clearly, the events $E_i$ are decreasing (since removing fewer shapes increases $Y_V$), hence  $\PP(\cap_i E_i)>0$ by the FKG inequality for Poisson point processes (see the proof of Theorem \ref{thm:intersection-compact-sss}). In light of Theorem \ref{thm:dim_lower_bound}, claims \eqref{eq:dim-intersection-affine} and \eqref{eq:dim-intersection-algebraic} hold for all $V\in\Gamma_0$ with positive probability.
\end{proof}

\begin{rem}
 This corollary is optimal in a number of ways. Write $E(\Gamma_0)$ for the event that $Y^V>0$ for all $V\in\Gamma_0$. Clearly, all $V\in\Gamma_0$ must hit $\Omega^\circ$ to have any chance of $\PP(E(\Gamma_0))>0$.  One cannot hope to prove that $E(\Gamma_0)$ has full probability even for a singleton $\Gamma_0$, nor that $E(\Gamma_0)$ has positive probability if $\Gamma_0$ is not compact. In particular, it is easy to see that $E(\Gamma_0)$ has probability zero if $V$ is the set of all affine $k$-planes hitting $\Omega^\circ$.
\end{rem}

We now start the proof of Theorem \ref{thm:dim_lower_bound}. To unify notation, in both claims (i)-(ii) we denote the corresponding measures by $\eta_V$, and the natural parameter space by $\Gamma$. That is, for claim (i), we set $\Gamma=\mathbb{A}_{d,k}$, and for claim (ii) we take $\Gamma=\mathcal{K}^\Upsilon_m$, where $\Upsilon$ is any open ball containing $\supp\mu_0$. Also, for claim (ii) we set $k=1$. Write
\[
Y^V_\varrho = \lim_{n\to\infty} \int \mu_{n,\varrho} \,d\eta_V\,,
\]
where $(\mu_{n,\varrho})$ is obtained from the regular inner approximation as in \eqref{eq:munrho_def}.
For all $0<\varrho<1$, the limit exists and is H\"{o}lder continuous thanks to Theorems \ref{thm:linear-projections} and \ref{thm:alg_curves}, and the assumptions of Theorem \ref{thm:dim_lower_bound}. Recall that $(\mu_{n,\varrho})$ is of $(\mathcal{F}_\varrho,\alpha_\varrho,d)$-cutout type with $\alpha_\varrho\ge\alpha-\varrho$.

Define random variables $X_1, X_1^\varrho, X_2^\varrho$ (for $\varrho>0$) as
\begin{align*}
X_1 &= \sup_{t\in\Gamma} Y^V\,,\\
X_1^\varrho &=\sup_{t\in\Gamma}Y^V_\varrho\,,\\
X_2^\varrho &= \limsup_{n\to\infty}\sup_{t\in\Gamma} \frac{\text{Pack}(V\cap A_n, 2^{-n})}{2^{n(k-\alpha+\varrho)}}\,,
\end{align*}
where $\text{Pack}(E,r)$ is the cardinality of the largest subset $(x_i)$ of $E$ such that $|x_i-x_j|>2r$ for $i\neq j$.

We have already observed that $X_1, X_1^\varrho$ $(\varrho> 0)$ are a.s. finite. Using Corollary \ref{cor:tail_estimate}, we obtain finer information on the tails of $X_1, X_1^\varrho$ and apply them to obtain a tail estimate for $X_2^\varrho$.
\begin{lemma}\label{lem:tails}
Fix $\varrho>0$. There is $\delta>0$ such that for $i=1,2$, and $M>1$ we have
\[
\mathbb{P}\left(X_i^\varrho\ge M\right)=O\left(\exp\left(-M^{\delta}\right)\right)\,,
\]
where the implicit constant is independent of $M$ and $\delta$. The same holds for $X_1$.
\end{lemma}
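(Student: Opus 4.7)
The plan is to handle $X_1$ and $X_1^\varrho$ as direct consequences of Corollary \ref{cor:tail_estimate}, and then to reduce $X_2^\varrho$ to $X_1^\varrho$ via the inclusion \eqref{eq:A_n^beta} that defines the regular inner approximation. Throughout, I will invoke the uniform H\"older continuity supplied by Theorems \ref{thm:linear-projections} (for affine $k$-planes) or \ref{thm:alg_curves} (for algebraic curves), applied to the SI-martingale $(\mu_{n,\varrho})$ of $(\mathcal{F}_\varrho,\alpha_\varrho,d)$-cutout type.

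For $X_1^\varrho$ (and likewise $X_1$), the key observation is that $X_1^\varrho$ is precisely the random variable $\overline{Y}$ of Corollary \ref{cor:tail_estimate} applied to $(\mu_{n,\varrho})$ paired with the family $\{\eta_V\}_{V\in\Gamma}$. The a priori H\"older estimate \eqref{H:Holder-a-priori} required by the corollary is provided by Proposition \ref{prop:cutout-measures-satisfy-Holder-a-priori}, whose $N_0$ coincides with the random variable from Definition \ref{def:cutout-type}. Since $(\mu_{n,\varrho})\in\poissonian$, Lemma \ref{lem:Poisson_is_cut_out_type} yields the tail estimate $\PP(N_0>N)\le 2\exp(-2^{Nd})$. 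Inserting $N=\log M/C$ into the bound of Corollary \ref{cor:tail_estimate} then produces $\PP(X_1^\varrho>M)=O(\exp(-M^\delta))$ for some $\delta>0$, because $2^{dN}$ becomes a positive power of $M$ under this substitution.

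For $X_2^\varrho$, I plan a geometric packing argument. Given $V\in\Gamma$ and $n\in\N$, take a maximal $(2\cdot 2^{-n})$-separated subset $\{x_i\}_{i=1}^M$ of $V\cap A_n$, so that $M=\text{Pack}(V\cap A_n,2^{-n})$. With $\beta=\beta(\varrho)$ as in Definition \ref{def:inner-approximation}, the balls $B(x_i,\beta\,2^{-n-1})$ are pairwise disjoint and, since $x_i\in A_n$, each one lies inside $A_n^\varrho$ by \eqref{eq:A_n^beta}. In case (i) each $V$-restricted ball has $\eta_V$-mass $\Omega(2^{-nk})$; in case (ii) the same holds for all but $O(1)$ of the $x_i$, namely those whose distance to the (boundedly many) singular points and endpoints of $V\cap\Upsilon$ exceeds $2^{-n}$. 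Summing,
\[
Y_n^{V,\varrho}=\int_V\mu_{n,\varrho}\,d\eta_V\;\ge\;\Omega\bigl(M\cdot 2^{n(\alpha_\varrho-k)}\bigr)-O(1),
\]
which rearranges to $M=O\bigl((Y_n^{V,\varrho}+1)\cdot 2^{n(k-\alpha+\varrho)}\bigr)$, using $\alpha_\varrho\ge\alpha-\varrho$.

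Finally, since $Y_n^{V,\varrho}\to Y^{V,\varrho}$ uniformly over $V\in\Gamma$ by Theorem \ref{thm:Holder-continuity}, almost surely $\sup_{V\in\Gamma}Y_n^{V,\varrho}\le X_1^\varrho+1$ for all sufficiently large $n$; as $X_2^\varrho$ is defined via a limsup, the reduction $X_2^\varrho\le C(X_1^\varrho+1)$ follows, and the claimed tail estimate for $X_2^\varrho$ is then inherited from the one already proved for $X_1^\varrho$. The main technical obstacle I anticipate is precisely this last step: it requires that the packing centers $x_i\in A_n$ generate disjoint balls landing \emph{inside} $A_n^\varrho$ (which is exactly what \eqref{eq:A_n^beta} guarantees), and that the Frostman lower bound $\eta_V(B(x,r)\cap V)\gtrsim r^k$ holds at essentially all such centers, which in the algebraic-curve case needs only the brief observation that curves of bounded degree carry finitely many singular points and so contribute a negligible additive correction.
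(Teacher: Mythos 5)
Your proposal is correct and follows essentially the same route as the paper: $X_1$ and $X_1^\varrho$ via Corollary \ref{cor:tail_estimate} combined with the tail bound for $N_0$ from Lemma \ref{lem:Poisson_is_cut_out_type}, and $X_2^\varrho$ via a packing argument using the inclusion \eqref{eq:A_n^beta} to place disjoint balls of radius $\Omega(2^{-n})$ around points of $V\cap A_n$ inside $A_n^\varrho$, giving $X_2^\varrho=O(X_1^\varrho)$. Your extra care about singular points of algebraic curves and about passing from $Y_{n,\varrho}^V$ to the limit $X_1^\varrho$ only makes explicit details the paper leaves implicit.
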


\begin{proof}
We know from Lemma \ref{lem:Poisson_is_cut_out_type} that $\mathbb{P}(N_0>N)\le 2\exp(-2^{Nd})$, where $N_0$ is the random variable in the definition of cutout type, and therefore also the random variable in \eqref{H:Holder-a-priori} (recall that Propositions \ref{prop:cutout-measures-satisfy-Holder-a-priori} or \ref{prop:cutout-measures-satisfy-Holder-a-priori-2} apply in this setting). The claims for $X_1, X_1^\varrho$ are then immediate from Corollary \ref{cor:tail_estimate}.

The claim concerning $X_2^\varrho$ follows from the claim for $X_1^\varrho$.
Indeed, we have
\[
\eta_V(B(x,2^{-n})\cap A_n^\varrho)=\Omega(2^{-k n}) \quad\text{if } x\in V\cap A_n\,.
\]
Hence, if $(x_j)_{j=1}^M$ is a $(2^{-n})$-packing of $V\cap A_n$, then, using that $\alpha_\varrho\ge \alpha-\varrho$,
\[
2^{-n(\alpha-\varrho)} Y_{n,\varrho}^V \ge \eta_V(A_n^\varrho) \ge M\, \Omega(2^{-kn})\,.
\]
This shows that $X_2^{\varrho}=O(X_1^{\varrho})$, which immediately gives the claim for $X_2^\varrho$.
\end{proof}

\begin{rem}
For $X^{\varrho}_2$, the weaker statement $X^\varrho_{2}<\infty$ a.s. is enough for the proof of Theorem \ref{thm:dim_lower_bound}. The tail estimate was included above because it is a direct consequence of the proofs.
\end{rem}

We will in fact need a version of Lemma \ref{lem:tails} for the restrictions of the process to scaled copies of $\supp\mu_0$.
\begin{lemma}\label{lem:tails_scaled}
Let $B$ be a scaled and translated copy of $\Omega$ of diameter $2^{-n_0}\diam(\Omega)$.
Define
\begin{equation} \label{eq:scaled-intersection}
X_1(B)=\sup_{V\in\Gamma}2^{n_0(k-\alpha)}\limsup_{m\rightarrow\infty}\mu^{V}_{m}(B)\,.
\end{equation}
Then
\[
\mathbb{P}\left(X_1(B)\ge M\right)\le C(\exp(-M^{\delta}))\,,
\]
for some $\delta,C>0$ independent of $B$ and $n_0$.
\end{lemma}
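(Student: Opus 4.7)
The plan is to deduce Lemma \ref{lem:tails_scaled} from Lemma \ref{lem:tails} by exploiting the scale and translation invariance of the intensity measure $\mathbf{Q}$, producing an explicit coupling that dominates $X_1(B)$ by a random variable with the same distribution as $X_1$.

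Let $g : B \to \Omega$ denote the similarity of ratio $2^{n_0}$ taking $B$ to $\Omega$. First I would introduce the ``truncated cutout inside $B$'': set
\[
\widetilde{A}_m \;=\; B \setminus \bigcup\bigl\{ s_i(\Lambda_i+t_i) \,:\, 2^{-m}\le s_i < 2^{-n_0},\ s_i(\Lambda_i+t_i)\cap B\neq\varnothing \bigr\},
\]
for $m\ge n_0$. Since $A_m$ removes at least as many shapes from $B$ as $\widetilde{A}_m$ does, we trivially have $A_m\cap B \subset \widetilde{A}_m$. The key observation is that, by \eqref{M:translation-invariance}, \eqref{M:scale-invariance} and \eqref{PPP:Poisson}--\eqref{PPP:independence}, the point process $\{g(s_i(\Lambda_i+t_i)) : 2^{-m}\le s_i<2^{-n_0}\}$ obtained by pulling back via $g$ is a Poisson point process with intensity $\mathbf{Q}$ on shapes of diameter in $[2^{-(m-n_0)},1)$ intersecting $\Omega$; this is precisely the process defining the original SI-martingale at level $m-n_0$ on $\Omega$. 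Consequently the joint process $(g(\widetilde{A}_m))_{m\ge n_0}$ has the same distribution as $(\widetilde{A}_{m-n_0})_{m\ge n_0}$, where $\widetilde{A}_j$ now denotes the usual cutout set for an independent copy $(\widetilde{\mu}_j)$ of the SI-martingale on $\Omega$.

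Second, I would track how the measures $\eta_V$ transform under $g$. In case (i) the measure $\eta_V$ is $k$-dimensional Hausdorff measure on $V\in\AA_{d,k}$ (and in case (ii) it is $1$-dimensional Hausdorff measure on $V\in\mathcal{K}^\Upsilon_m$, with $k=1$), so $\eta_V = 2^{-n_0 k}\, g^{-1}\eta_{g(V)}$ as measures on $B$. Since $g$ sends the algebraic/affine family in $\Gamma$ to itself and $V\cap B\neq\varnothing$ implies $g(V)\cap\Omega\neq\varnothing$, we conclude
\[
\mu_m^V(B) \;=\; 2^{\alpha m}\eta_V(A_m\cap B) \;\le\; 2^{\alpha m}\eta_V(\widetilde{A}_m) \;=\; 2^{\alpha m-n_0 k}\eta_{g(V)}(g(\widetilde{A}_m)),
\]
and multiplying by $2^{n_0(k-\alpha)}$ and using the coupling of the previous paragraph yields
\[
2^{n_0(k-\alpha)}\mu_m^V(B) \;\le\; 2^{\alpha(m-n_0)}\eta_{g(V)}(\widetilde{A}_{m-n_0}) \;=\; \widetilde{Y}_{\,m-n_0}^{\,g(V)},
\]
where $\widetilde{Y}$ is the analogue of $Y$ for the independent SI-martingale $(\widetilde{\mu}_j)$ on $\Omega$.

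Taking $\limsup_{m\to\infty}$ on both sides and then $\sup_{V\in\Gamma}$, while noting that $g$ maps $\Gamma$ into $\Gamma$ in both cases (i) and (ii), gives
\[
X_1(B) \;\le\; \sup_{W\in\Gamma}\widetilde{Y}^{\,W} \;=\; \widetilde{X}_1,
\]
and $\widetilde{X}_1$ has exactly the same distribution as $X_1$. Applying Lemma \ref{lem:tails} to $\widetilde{X}_1$ immediately gives the claimed tail bound $\mathbb{P}(X_1(B)\ge M) = O(\exp(-M^\delta))$ with constants $C$ and $\delta$ independent of $B$ and $n_0$. The only delicate point in carrying this out is verifying the pull-back coupling rigorously—in particular that the map ``keep only the shapes of diameter $\le 2^{-n_0}$ hitting $B$ and apply $g$'' produces a process with the correct intensity $\mathbf{Q}$ restricted to diameters $\le 1$; everything else is routine bookkeeping about how Hausdorff measure and the normalization $2^{\alpha m}$ rescale under the similarity $g$.
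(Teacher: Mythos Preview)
Your proof is correct and takes essentially the same approach as the paper: both exploit the scale and translation invariance of $\mathbf{Q}$ to stochastically dominate $X_1(B)$ by a copy of $X_1$, then invoke Lemma~\ref{lem:tails}. The only cosmetic difference is that the paper phrases the domination by conditioning on the event $\Delta_{2^{-n_0}}^{1}(\mathcal{Y})=\varnothing$, whereas you phrase it via the truncated process $\widetilde{A}_m$; by the independence property of Poisson point processes these are equivalent formulations of the same stochastic domination.
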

\begin{proof}
We may assume that $\diam(\Omega)=1$.  Let $f$ be the homothety of ratio $2^{-n_0}$ mapping $\Omega$ onto $B$.  By the translation and scale invariance of $\mathbf{Q}$, the process
$\mu_m|_B$, $m\ge n_0$  conditioned on $\Delta_{2^{-n_0}}^1(\mathcal{Y})=\varnothing$ has the same law as $2^{n_0\alpha}\mu_{m-n_0}\circ f^{-1}|_{B\cap\Omega}$, where we think of the $\mu_j$ as functions rather than measures. Hence $X_1(B)$ is stochastically dominated by the random variable
\[
\sup_{V\in\Gamma}
2^{n_0 k}\limsup_{m\to\infty}2^{\alpha(m-n_0)}\eta_V(f(A_{m-n_0}))\,.
\]
Note that
\begin{align*}
\eta_V(f(A_{m-n_0}))&=
\mathcal{H}^{k}(f(A_{m-n_0}\cap f^{-1}(V\cap B)) \\
&= \Theta(2^{-k n_0 }) \mathcal{H}^{k}(A_{m-n_0}\cap f^{-1}(V))\,.
\end{align*}
We deduce that  $\eta_V(f(A_{m-n_0}))=\Theta(2^{-k n_0}) \eta_{W}(A_{m-n_0})$
for some other parameter $W\in\Gamma$, so the claim follows from Lemma \ref{lem:tails}.
\end{proof}

\begin{proof}[Proof of Theorem \ref{thm:dim_lower_bound}]
The upper bounds follow from Theorems \ref{thm:dim_linear-intersections} and \ref{thm:dim_alg_curves}, so it is enough to consider the lower bounds.

Fix $\varrho>0$. For each $n\in\N$, let $\mathcal{B}^n$ be a covering of $\supp\mu_0$ by $2^{-n}$-scaled copies of $\Omega$ with $O(2^{dn})$
elements. For each $B\in \mathcal{B}^n$, let $X_1(B)$ be the random variable defined in \eqref{eq:scaled-intersection}. Then,
\[\mathbb{P}\left(X_1(B)\ge 2^{n\varrho}\right)=O(\exp(-2^{n\delta}))\,,\]
for some $\delta>0$.
Thus, by the Borel-Cantelli lemma, almost surely there is $K'<\infty$ such that for all $n\in\N$ and each $B\in \mathcal{B}^n$ we have $X_1(B)\le K' 2^{n\varrho}$.
Also, $X_2^\varrho\le K''$ for some random a.s. finite $K''$, again by Lemma \ref{lem:tails}. Let $K=\max(K',K'')$.

Fix $V\in\Gamma$, and some large $n\in\N$. Let $x_1,\ldots, x_j$ be a $2^{-n}$ separated set on $V$. Then, by the definition of $X_2^\varrho$, we have $B(x_i,2^{-n})\cap A_n\neq\emptyset$ for at most $O_K(2^{n(k-\alpha+\varrho)})$ indexes $i$. Further, $\mu_m^V(B(x_i,2^{-n}))\le O_K(2^{n(\alpha-k+\varrho)})$ for each such $i$ and large enough $m$.  Hence
\[
\sum_{i=1}^j\mu_m^V(B(x_i,2^{-n}))^2 =O_K\left(2^{n(k-\alpha+\varrho)}\left(2^{n(\alpha-k+\varrho)}\right)^2\right)= O_K\left(2^{n(3\varrho-k+\alpha)}\right)\,.
\]
Letting $m\rightarrow\infty$, we have $\sum_{i=1}^j\mu^V_{\infty}(B(x_i,2^{-n}))^2= O_K(2^{n(3\varrho-k+\alpha)})$ for all $V\in\Gamma$ and $n\in\N$, where $\mu^V_{\infty}$ is any weak accumulation point of the measures $\mu^{V}_m$. This implies that if $\mu^V_{\infty}\neq 0$ (which is the case if $Y^V>0$), then $\dim_2(\mu^V_{\infty})\ge k-\alpha-3\varrho$, where $\dim_2$ is the  correlation (or $L^2$) dimension. As
\[
\dim_H(A\cap V)\ge \dim_H(\mu^V_{\infty})\ge\dim_2(\mu^V_{\infty})
\]
always holds, see e.g. \cite{FLR2002}, the claim follows by letting $\varrho\downarrow 0$ along a sequence.
\end{proof}

\begin{rems}
\begin{enumerate}[(i)]
\item Although this is not needed in the above proof, it is not hard to use Theorem \ref{thm:Holder-continuity}  to show that, in the setting of Theorem \ref{thm:dim_lower_bound}, a.s. there is only one limit measure $\mu^{V}_\infty$ for each $V\in\Gamma$ (i.e. it does not depend on the chosen subsequence),  and $Y^V$ is nothing but the total mass of $\mu^{V}_{\infty}$.
\item A minor modification of the proof shows that almost surely $\tau(\mu^V_{\infty},q)=(q-1)(k-\alpha)$ for all $q\in(0,\infty)$ and all $V\in\Gamma$ such that $Y^V>0$, where $\tau(\nu,q)$ is the $L^q$ spectrum of the measure $\nu$ (see e.g. \cite{FLR2002}). Thus, the intersection measures $\mu^V_{\infty}$ are a.s. monofractal for all $V$ simultaneously.
\end{enumerate}
\end{rems}

We now state an analogue of Theorem \ref{thm:dim_lower_bound}(i) and Corollary \ref{cor:dim_lower_bound}(i) for fractal percolation; afterwards we comment on the situation for more general subdivision type martingales.

\begin{thm} \label{thm:dim_lower_bound_percolation}
Fix $k\in\{1,\ldots,d-1\}$, and let $\mu_n=\mu_n^{\text{perc}(\alpha,d)}$ where $\alpha<k$. As usual, let $Y^V=\lim_{n\to\infty}\int_V \mu_n \,d\mathcal{H}^V$. Writing $A$ for the percolation limit set, almost surely
\begin{equation} \label{eq:dim-intersection-affine-perc}
\dim_H(A\cap V)=\dim_B(A\cap V)=k-\alpha
\end{equation}
for all $V\in\mathbb{A}_{d,k}$ such that $Y^V>0$ and $V$ is not parallel to a coordinate hyperplane.

Moreover, for any compact set $\Gamma_0\subset\mathbb{A}_{d,k}$ of planes that are not parallel to a coordinate hyperplane and intersect the interior of the unit cube, there is a positive probability that \eqref{eq:dim-intersection-affine-perc} holds simultaneously for all $V\in\Gamma_0$.
\end{thm}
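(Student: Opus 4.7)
The plan is to replicate the strategy of Theorem \ref{thm:dim_lower_bound}, with two structural modifications specific to the subdivision setting. First, the non-flatness condition \eqref{eq:non-flat-shapes} fails for dyadic cubes precisely on planes parallel to a coordinate hyperplane, so for each $\e>0$ I restrict attention to $\Gamma_\e\subset\mathbb{A}_{d,k}$, the set of planes making angle at least $\e$ with every coordinate hyperplane. Second, no regular inner approximation is required: since $A_n$ is an honest union of dyadic cubes of side $2^{-n}$, any plane $V$ hitting a cube $Q\subset A_n$ contributes $\Omega_\e(2^{-kn})$ to $\mathcal{H}^k(V\cap A_n)$ (uniformly for $V\in\Gamma_\e$), giving a clean comparison between the covering number of $V\cap A$ by cubes of $\mathcal{Q}_n$ and the integrated mass $Y_n^V$.

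On $\Gamma_\e$, Theorem \ref{thm:linear-projections} together with Remark \ref{rem:affine-perc} provides uniform convergence and H\"older continuity of $V\mapsto Y^V$, while Theorem \ref{thm:dim_linear-intersections}(i) (applied to the cell-type SI-martingale $\mu_n^{\mathrm{perc}(\alpha,d)}$) yields a random $K_\e<\infty$ with $\mathcal{H}^k(A_n\cap V)\le K_\e\,2^{-\alpha n}$ for all $V\in\Gamma_\e,\,n\in\N$. Dividing by the per-cube contribution gives the uniform upper bound $\dim_B(A\cap V)\le k-\alpha$ for every $V$ not parallel to a coordinate hyperplane.

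For the lower bound, I exploit the exact statistical self-similarity of percolation: conditional on a dyadic cube $Q\in\mathcal{Q}_{n_0}$ being retained, the restriction of the process to $Q$ has, up to the scaling factor $2^{n_0(\alpha-k)}$, the law of the original process, and the homothety $f\colon[0,1]^d\to Q$ preserves $\Gamma_\e$. Corollary \ref{cor:tail_estimate} applies in the cell-type regime with trivial $N_0$ (because $\#\mathcal{F}_n\le 2^{dn}$ is deterministic), furnishing a stretched-exponential tail $\PP(X_1>M)=O(\exp(-M^\delta))$ for $X_1=\sup_{V\in\Gamma_\e}Y^V$. Transferring via the homothety yields the same tail for $X_1(Q)$ at every dyadic cube, so Borel--Cantelli over all cubes at all levels (the $2^{dn}$ count is absorbed by the stretched-exponential) gives an a.s. finite random $K$ with $X_1(Q)\le K$ for every dyadic $Q$. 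With this and the $X_2$-type packing bound from the previous paragraph, the correlation-dimension computation at the end of the proof of Theorem \ref{thm:dim_lower_bound} carries over verbatim: for a $2^{-n}$-separated set $\{x_i\}$ on $V$, the number of ``active'' $x_i$ is $O(2^{n(k-\alpha)})$ and each contributes $\mu_\infty^V(B(x_i,2^{-n}))=O(2^{n(\alpha-k)})$, so $\dim_2(\mu_\infty^V)\ge k-\alpha$ whenever $Y^V>0$. Intersecting the resulting full-probability events over a countable sequence $\e_j\downarrow 0$ yields the first claim.

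For the second claim, since $\Gamma_0$ is compact and consists of non-principal planes, $\Gamma_0\subset\Gamma_\e$ for some $\e>0$. The a.s. H\"older continuity of $V\mapsto Y^V$ on $\Gamma_\e$, combined with Lemma \ref{lem:Yt-survives} (applicable because each $V\in\Gamma_0$ meets the interior of the unit cube), lets me cover $\Gamma_0$ by finitely many open sets $\Gamma_1,\ldots,\Gamma_M$ with $\PP(E_i)>0$, where $E_i=\{Y^V>0\ \forall V\in\Gamma_i\}$. Each $E_i$ is an increasing event in the independent retention decisions, so Harris's inequality for product measures on $\{0,1\}^{\bigcup_n\mathcal{Q}_n}$ gives $\PP(\bigcap_iE_i)\ge\prod_i\PP(E_i)>0$, and combining with the first claim finishes the proof. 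The main technical point to verify is that statistical self-similarity genuinely preserves the class $\Gamma_\e$; this is immediate because axis-aligned homotheties preserve all angles with coordinate hyperplanes.
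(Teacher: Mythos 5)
Your overall strategy is the paper's: restrict to the planes $\Gamma_\e$ making angle at least $\e$ with the coordinate hyperplanes (where Remark \ref{rem:affine-perc} supplies the a priori H\"older estimate), get the packing/covering upper bound from Theorem \ref{thm:dim_linear-intersections}(i), transfer the tail bound for $X_1=\sup_V Y^V$ to dyadic cubes by statistical self-similarity, run Borel--Cantelli and the correlation-dimension computation, and finish the second claim with Lemma \ref{lem:Yt-survives} plus FKG. Two steps, however, are wrong as written.

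The more serious one is the Borel--Cantelli step. You claim that the stretched-exponential tail $\PP(X_1(Q)\ge M)=O(\exp(-M^\delta))$ ``absorbs'' the $2^{dn}$ cubes of generation $n$ and yields a single a.s.\ finite $K$ with $X_1(Q)\le K$ for \emph{every} dyadic $Q$. It does not: for a fixed threshold $M$ the union bound at level $n$ is $2^{dn}\exp(-M^\delta)$, which is not summable in $n$, and in fact $\sup_Q X_1(Q)=\infty$ a.s.\ (the normalized restrictions to the cubes of generation $n$ contain arbitrarily many essentially independent copies of an unbounded random variable, so their supremum diverges). Consequently your final assertion that each active ball satisfies $\mu^V_\infty(B(x_i,2^{-n}))=O(2^{n(\alpha-k)})$ with a uniform constant is unjustified. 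The repair is routine and is exactly what the proof of Theorem \ref{thm:dim_lower_bound} does: take a level-dependent threshold $M_n$ with $2^{dn}\exp(-M_n^\delta)$ summable (e.g.\ $M_n=2^{\varrho n}$, or even $M_n=Cn^{2/\delta}$), conclude $X_1(Q)\le K M_n$ for all $Q\in\mathcal{Q}_n$, and carry the factor $M_n^2$ through the correlation sum; with $M_n=2^{\varrho n}$ this gives $\dim_2(\mu^V_\infty)\ge k-\alpha-2\varrho$ and you let $\varrho\downarrow0$ along a sequence (with a polynomial $M_n$ you even recover $k-\alpha$ directly). As written, your proof is missing this limit.

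A smaller point: the claim that ``any plane $V$ hitting a cube $Q\subset A_n$ contributes $\Omega_\e(2^{-kn})$ to $\mathcal{H}^k(V\cap A_n)$'' is false -- a plane can clip a cube near a corner in arbitrarily small $\mathcal{H}^k$-measure, whatever its angle with the coordinate hyperplanes. The correct versions are the ones actually used in the paper: for the covering-number upper bound, thicken $V$ to a tube $V(r)$ with $r=\Theta(2^{-n})$ so that every generation-$n$ cube meeting $V$ lies inside the tube, and apply Fubini over the parallel planes (this is the proof of Theorem \ref{thm:dim_linear-intersections}(i)); for the lower-bound computation, use balls $B(x_i,2^{-n})$ centred at points $x_i\in V$, for which $\mathcal{H}^k(V\cap B(x_i,2^{-n}))=\Omega(2^{-kn})$ holds trivially. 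With these two corrections your argument coincides with the paper's proof.
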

\begin{proof}
The proof of the first part has the same structure as the proof of Theorem \ref{thm:dim_lower_bound}, but the details are simpler. Of course, instead of relying on Theorem \ref{thm:linear-projections}, we rely on the modification outlined in Remark \ref{rem:affine-perc}. More precisely, for each $\delta>0$ the claims are proved for planes making an angle $\ge\delta$ with all coordinate hyperplanes, and at the end we let $\delta\downarrow 0$ along a sequence.

The tail bound given in Lemma \ref{lem:tails} for $X_1$ continues to hold with the same proof. Letting $X_2=  2^{(\alpha-k)n} M_n$, where $M_n$ is the number of chosen cubes of generation $n$, it follows from a simple and well-known martingale argument that $\mathbb{E}(X_2)<\infty$. The claim of Lemma \ref{lem:tails_scaled} when $B$ is a dyadic cube follows simply by the self-similarity of the process. With these ingredients, one argues as in the proof of Theorem \ref{thm:dim_lower_bound} to see that, given any $\varrho>0$, a.s. for all $V\in\mathbb{A}_{d,k}$ and sufficiently large $m$,
\[
\sum_i \mu_m^V(Q_i)^2=O\left(2^{n(k-\alpha)}\left(2^{n(\varrho+\alpha-k)}\right)^2\right)= O\left(2^{n(2\varrho-k+\alpha)}\right)\,,
\]
where the sum is over dyadic cubes of generation $n$ and the implicit constants are random (but a.s. finite). Since the correlation dimension can also be defined by summing over dyadic cubes, this implies that $\dim_2(\mu^V_\infty) \ge k-\alpha-2\varrho$ for any accumulation point $\mu^V_\infty$ of $\mu^V_m$, which implies the first claim.

The last claim follows exactly as in the proof of Corollary \ref{cor:dim_lower_bound}.
\end{proof}

We comment on the analogue of this theorem for more general subdivision martingales. In the general $\mathcal{SM}$ setting, in addition to the hypotheses of Theorem \ref{thm:dim_linear-intersections}, it is necessary to impose a homogeneity assumption on the shapes of the filtration. The simplest way to achieve this is by requiring that all $F_n$ be similar images of the seed $F_0$. The role of this assumption is to ensure that the restriction of the process to small domains satisfy the same a priori H\"older condition as the original process, which is crucial in obtaining an analogue of Lemma \ref{lem:tails_scaled}. With this homogeneity assumption, the proof of Theorem \ref{thm:dim_linear-intersections} goes through with minor variations (there is still a conceptual difference: in the general $\mathcal{SM}$ setting, the process can be very far from scale invariant; this requires some additional care to ensure that all constants are uniform in Lemma \ref{lem:tails_scaled}).

The reason we require some form of homogeneity here is that condition \eqref{eq:Holder-shape} (and the underlying geometric conditions, in the setting of Theorem \ref{thm:dim_lower_bound}) is not scale invariant in the appropriate way: much more is required from the shapes $\Lambda\in\mathcal{F}$ of large diameter than the ones of small diameter. Although they seem quite artificial, it is possible to construct $\subdivision$ processes satisfying the transversality conditions of Theorem \ref{thm:dim_linear-intersections} such that the boundaries of the shapes $\Lambda\in\mathcal{F}$ get flatter and flatter as $n\to\infty$, so that our method to obtain the dimension lower bound fails. Note that in the $\poissonian$ case (and for $\mu_n^{\text{perc}}$), this problem does not arise due to the intrinsic scale invariance properties of the process.

\subsection{Dimension conservation}
\label{subsec:dimension-conservation}

There is no Cavalieri principle for Hausdorff dimension: the graph of a continuous function may have any dimension up to the dimension of the ambient space, even though the coordinate fibres are singletons. In \cite{Furstenberg08}, Furstenberg introduced the concept of \emph{dimension conservation}: given a set $A\subset\R^d$ and a Lipschitz map $\pi:A\mapsto\R^k$, we say that $\pi$ is \textbf{dimension conserving} if there exists $\delta\ge 0$ such that
\[
\delta+\dim_H\{ x\in\R^k:\dim_H(\pi^{-1}(x))\ge\delta\} \ge \dim_H(A)\,.
\]
Here $\dim_H(\varnothing)=-\infty$; otherwise dimension conservation would always hold by taking $\delta\ge \dim_H(A)$. When $\pi$ is an orthogonal projection, the fact that $\pi|_A$ is dimension conserving indicates that $A$ satisfies some weak ``Cavalieri principle'' for Hausdorff dimension in the corresponding direction. Furstenberg proved that restrictions of linear maps to a class of homogeneous fractals, including many self-similar sets, are dimension conserving. Different variants of dimension conservation were recently explored by several authors \cite{Hochman10, BFS12, ManningSimon13, FalconerJin14b}. In particular, Falconer and Jin \cite{FalconerJin14b} established the following variant of dimension conservation for the fractal percolation set $A\subset\R^d$: a.s. if $\dim_H A>k$, then for every plane $W\in\mathbb{G}_{d,k}$ and every $\e>0$, the set $W_{\e}$ has positive $\mathcal{H}^{k}$ measure, where
\[
 W_{\e} = \{ w\in W: \dim_H(A\cap W^\perp_w) > \dim_H(A)-k-\e\}
\]
and $W^\perp_w$ is the $(d-k)$-plane orthogonal to $W$ through $w$.

We can see some of our intersection results in the light of the dimension conservation concept.
\begin{defn}
Let $A\subset\R^d$ and let $\pi:A\to\R^k$ be a Lipschitz map. We say that $\pi$ is \textbf{strongly dimension conserving} if there exists a nonempty open set $U\subset\R^k$ such that $\dim_H(A)=\dim_H(\pi^{-1}(x))+k$ for all $x\in U$.
\end{defn}

We make some remarks on this definition. Firstly, if $\pi$ is strongly dimension conserving, then $\pi(A)$ has nonempty interior, so in particular $\dim_H A\ge\dim_H\pi(A)\ge k$. When $\dim_H A\le k$, dimension conservation holds (with $\delta=0$) whenever $\dim_H\pi(A)=\dim_H(A)$ which, for linear maps $\pi$, is the case for many SI-martingales: recall Corollary \ref{cor:dim_projections}. Strong dimension conservation implies dimension conservation with $\delta=\dim_H(A)-k$, but is indeed much stronger since it not only determines the value of $\delta$, but also means that the set of $x$ that witnesses the dimension conservation can be taken to be open.

With this concept in hand, we obtain the following immediate consequence of our earlier results.
\begin{cor}
 \begin{enumerate}[(i)]
  \item If $A$ is the cutout set for a $\mathcal{PCM}$ $(\mu_n)$ satisfying the conditions of Theorem \ref{thm:dim_lower_bound}(i), and $\alpha<d-k$, then a.s. on $\mu_\infty\neq 0$, all the restrictions $\pi|_A$, where $\pi:\R^d\to\R^k$ is linear, are strongly dimension conserving.
  \item If $A$ is the cutout set for a $\mathcal{PCM}$ $(\mu_n)$ satisfying the conditions of Theorem \ref{thm:dim_lower_bound}(ii), and $\alpha<1$, then a.s. on $\mu_\infty\neq 0$, for all polynomial maps $P:\R^2\to \R$, the restrictions $P|_A$ are strongly dimension conserving.
  \item If $A\subset\R^d$ is a fractal percolation limit set for which $\alpha<d-k$, then a.s. on $A\neq 0$, all the restrictions $\pi|_A$, where $\pi$ is an orthogonal projection onto $V\in\mathbb{G}_{d,k}$ such that $V^\perp$ is not parallel to any coordinate hyperplane, are strongly dimension conserving.
 \end{enumerate}

\end{cor}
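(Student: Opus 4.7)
The plan is to deduce each assertion by combining two facts already proved: H\"older regularity of the ``projected mass function'' $x\mapsto Y^{\pi^{-1}(x)}$ (which produces the required nonempty open set where fibres carry positive intersection mass), and the intersection dimension lower bounds of Theorem \ref{thm:dim_lower_bound} / Theorem \ref{thm:dim_lower_bound_percolation} (which identify the dimension of these fibres). Since $\dim_H A=d-\alpha$ almost surely on $\mu_\infty\neq 0$ by Theorem \ref{thm:dim_cut_out_set} (and the classical analogue for percolation), strong dimension conservation reduces to showing $\dim_H(A\cap\pi^{-1}(x))=(d-k)-\alpha$ for $x$ in a nonempty open set.

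For part (i), I would restrict attention to surjective linear $\pi\colon\R^d\to\R^k$ (for non-surjective $\pi$, $\pi(A)$ lies in a proper subspace of $\R^k$ and strong dimension conservation trivially cannot hold). Then the fibres $\pi^{-1}(x)$ are affine $(d-k)$-planes parallel to $\ker\pi$, and since $\alpha<d-k$, Theorem \ref{thm:linear-projections} applies and gives an a.s. continuous density $f$ of $\pi\mu_\infty$ with $f(x)=Y^{\pi^{-1}(x)}$. Because $\|\pi\mu_\infty\|=\|\mu_\infty\|>0$ on the event under consideration, the open set $U:=\{f>0\}$ is nonempty. Conditioning additionally on the a.s. event from Theorem \ref{thm:dim_lower_bound}(i) (which controls all affine $(d-k)$-planes simultaneously), I get $\dim_H(A\cap\pi^{-1}(x))=(d-k)-\alpha$ for every $x\in U$, yielding strong dimension conservation. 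Crucially the almost-sure event does not depend on $\pi$, so the conclusion holds simultaneously for all surjective linear maps.

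Parts (ii) and (iii) follow from the same template after substituting the appropriate regularity and intersection results. For (ii) I would replace Theorem \ref{thm:linear-projections} by Theorem \ref{thm:polynomial-projections}: for any nonconstant polynomial $P\colon\R^2\to\R$, the image measure $P\mu_\infty$ has a piecewise H\"older density $f$ with $f(x)=Y^{P^{-1}(x)}$ away from the finitely many critical values of $P$, so $\{f>0\}$ contains a nonempty open set whenever $\mu_\infty\neq 0$; Theorem \ref{thm:dim_lower_bound}(ii) then identifies the fibre dimension as $1-\alpha=\dim_H A-1$. For (iii) I would use the variant of Theorem \ref{thm:linear-projections} for $\mu_n^{\text{perc}}$ described in Remark \ref{rem:affine-perc} (which gives a H\"older density precisely for the non-exceptional $V$ permitted in the hypothesis) together with Theorem \ref{thm:dim_lower_bound_percolation}.

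The only nontrivial point is the simultaneity of the statement over all admissible $\pi$ (resp.\ $P$): this is not obtained by a union-bound argument but comes ``for free'' because the a.s. intersection statements in Theorem \ref{thm:dim_lower_bound} and Theorem \ref{thm:dim_lower_bound_percolation} are already quantified over all affine planes at once, and the regularity theorems give a H\"older continuous density for the projection under any admissible map. The rest is bookkeeping.
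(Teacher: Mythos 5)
Your proposal is correct and follows essentially the same route as the paper: combine the (piecewise) H\"older continuity of the projected density $x\mapsto Y^{\pi^{-1}(x)}$ (Theorems \ref{thm:linear-projections}, \ref{thm:polynomial-projections}, and Remark \ref{rem:affine-perc}) with the simultaneous fibre-dimension identities of Theorems \ref{thm:dim_lower_bound} and \ref{thm:dim_lower_bound_percolation}, the simultaneity over $\pi$ coming for free because those events are already quantified over all planes/curves. Your explicit reduction to surjective linear maps, and the caveat that in the polynomial case the density is only $\Theta(Y^{P^{-1}(u)})$ rather than equal to it (because of the $|\nabla P|^{-1}$ weight), are harmless refinements of the paper's argument.
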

\begin{proof}
 For the first claim, we recall from Theorem \ref{thm:linear-projections} that a.s. on $\mu_\infty\neq 0$, the orthogonal projections of $\mu_\infty$ onto all $W\in \mathbb{G}_{d,k}$ are nontrivial and their densities are given by $Y^V$ where $V$ ranges over the planes orthogonal to $W$. Hence the claim is immediate from Theorems \ref{thm:linear-projections} and \ref{thm:dim_lower_bound}.

Likewise, the second claim is a consequence of Theorems \ref{thm:polynomial-projections} and \ref{thm:dim_lower_bound}. Recall that in the proof of Theorem \ref{thm:polynomial-projections}, it is shown that a.s. $P\mu_\infty$ is absolutely continuous with a locally piecewise H\"{o}lder continuous density given (in a neighbourhood of a regular value of $P$) by a random variable $\widetilde{Y}^{P,u}=\Theta(Y^{P^{-1}(u)})$. If $\mu_\infty\neq 0$, then $P\mu_\infty\neq 0$, so $\widetilde{Y}^{P,u}>0$ for $u$ in some nonempty open set $U$, and hence $Y^{P^{-1}(u)}>0$ for $u\in U$.  Since Theorem \ref{thm:dim_lower_bound} implies that a.s. $\dim_H(A\cap V)=\dim_H(A)-1$ for all $V$ with $Y^V>0$, the second claim follows.

Finally, the last assertion follows just like the first, using Theorem \ref{thm:dim_lower_bound_percolation} instead of Theorem \ref{thm:dim_lower_bound}.
\end{proof}

\begin{rems}
 \begin{enumerate}[(i)]
  \item The assumptions on $\alpha$ simply mean that $\dim_H A>k$ a.s. on $\mu_\infty\neq 0$ (recall Theorem \ref{thm:dim_cut_out_set}).
  \item For projections onto planes with fibres not parallel to a coordinate hyperplane, we improve the result of Falconer and Jin for fractal percolation discussed above, by eliminating the $\e>0$ and obtaining an open (rather than just positive measure) subset of the range of $\pi$. The question of whether the $\e$ can be replaced by $0$ here was posed in \cite[Section 5]{FalconerJin14b}. At least in dimension $d=2$, it seems very likely that one can obtain the same result for coordinate lines as well, using the dyadic (rather than Euclidean) metric as in \cite{PeresRams14}, but we do not pursue this.
  \item To the best of our knowledge, part (ii) of the Corollary is the first nontrivial positive result for dimension conservation for some nonlinear maps in the case when $\dim_H A>k$. Using Remark \ref{rem:poly-projections-percolation}, the same result holds also for fractal percolation, provided the polynomial is not a function of $x$ or $y$ only.
  \end{enumerate}
\end{rems}

\subsection{Lower bounds on the dimension of intersections with self-similar sets}

To conclude our discussion on the dimension of intersections, we present lower bounds on intersections with self-similar sets. For simplicity, we restrict our attention to $\mu_n^{\text{ball}(\alpha,d)}$ and $\mu_n^{\text{perc}(\alpha,d)}$. We write $T_t(x)=x+t$ and, given an IFS $F\in(\simi_d^c)^m$, we denote $T_t F=(T_t F_i T_t^{-1})_{i=1}^m$. Note that $T_t F\in (\simi_d^c)^m$, and the attractor $E_{T_t F}$ is the translate $T_t E_F$, and likewise for the natural measures.

\begin{thm} \label{thm:dim_lower_bound_self_sim}
Suppose $\mu_n=\mu_n^{\text{ball}(\alpha,d)}$ or $\mu_n^{\text{perc}(\alpha,d)}$. Fix $m$, and write $\Gamma$ for the subset of all $F\in(\simi_d^c)^m$ such that the strong separation holds, $E_F$ is not contained in a hyperplane, and $\dim_H E_F>\alpha$.
\begin{enumerate}[(i)]
\item Almost surely
\begin{equation} \label{eq:dim-intersection-self-sim}
 \dim_H(A\cap E_F)=\dim_B(A\cap E_F)=\dim_H(E_F)-\alpha
\end{equation}
for all $F\in\Gamma$ such that $Y^F=\lim_{n\to\infty} \int\mu_n \,d\eta_F>0$, where $\eta_F$ is the natural measure.
 \item Let $\Gamma_0$ be a compact subset of $\Gamma$ such that $E_F$ hits the open domain $\Omega^\circ$ for each $F\in\Gamma_0$. Then there is a positive probability that \eqref{eq:dim-intersection-self-sim} holds for all $F\in \Gamma_0$ simultaneously.
 \item Almost surely on $\mu_\infty\neq 0$, there exists a nonempty open set $U\subset (\simi_d^c)^m$ such that \eqref{eq:dim-intersection-self-sim} holds
for all $F\in U$. Moreover, $U$ can be chosen to contain a translate $T_t F$ of any fixed
IFS $F\in\Gamma$.
 \end{enumerate}
\end{thm}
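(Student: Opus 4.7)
The plan is to mirror the strategy of Theorem \ref{thm:dim_lower_bound} and Corollary \ref{cor:dim_lower_bound} in the self-similar setting, using Theorem \ref{thm:self_sim} as the analogue of the H\"older continuity result for intersections with $k$-planes. First I reduce to a compact subset $\Gamma_0 \subset \Gamma$ on which the strong separation condition holds with uniform constants and the natural measures $\eta_F$ satisfy a uniform Frostman bound $\eta_F(B(x,r)) \le C r^s$ with $s > \alpha$. The covering hypothesis in Theorem \ref{thm:self_sim} is verified via Proposition \ref{prop:inter-nbhd-sss}: the shapes in $\mathcal{F}$ are bounded by spheres for $\mu_n^{\text{ball}}$ and by finitely many pieces of coordinate hyperplanes for $\mu_n^{\text{perc}}$, so the proposition supplies a uniform exponent $\gamma_1 < s$ for the coverings of $\partial\Lambda(\varepsilon) \cap E_F$. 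Consequently $F\mapsto Y^F$ is a.s. H\"older continuous and uniformly bounded by some random $X_1 < \infty$ on $\Gamma_0$. The matching upper bound $\overline{\dim}_B(A\cap E_F) \le s - \alpha$ is Theorem \ref{thm:dim_intersection_selfsim} in the ball case, and an identical argument (replacing Theorem \ref{thm:linear-projections} by Theorem \ref{thm:self_sim}) handles the percolation case.

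The core step is the lower bound on $\dim_H(A\cap E_F)$, for which I adapt the $L^2$-correlation argument in the proof of Theorem \ref{thm:dim_lower_bound}. For $\varrho > 0$, consider the regularized process $(\mu_{n,\varrho})$, obtained via inner approximation of balls in the $\mu_n^{\text{ball}}$ case and left unchanged in the percolation case. Corollary \ref{cor:tail_estimate} together with Lemma \ref{lem:Poisson_is_cut_out_type} yields subexponential tail bounds on $X_1^\varrho = \sup_{F\in\Gamma_0} Y^F_\varrho$, and the scale-invariant version of Lemma \ref{lem:tails_scaled} carries over: in the ball case by the translation and scale invariance of the Poisson process, and in the percolation case by the dyadic self-similarity of the filtration exploited in Theorem \ref{thm:dim_lower_bound_percolation}. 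Combining these tail bounds with the lower Frostman estimate $\eta_F(B(x,2^{-n})) = \Omega(2^{-ns})$, valid for $x\in E_F$ under strong separation, yields the uniform packing estimate $\text{Pack}(A_n \cap E_F, 2^{-n}) = O(2^{n(s-\alpha+\varrho)})$ and then the $L^2$ summation
\[
\sum_i \mu_m^F(B(x_i, 2^{-n}))^2 = O\bigl(2^{n(3\varrho - s + \alpha)}\bigr),
\]
forcing $\dim_2(\mu^F_\infty) \ge s - \alpha - 3\varrho$ whenever $Y^F > 0$. Letting $\varrho\downarrow 0$ along a sequence completes (i). The main technical obstacle is producing the scale-invariant tail bound simultaneously in both settings, since these require different scaling arguments (Poissonian invariance versus dyadic self-similarity).

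Claim (ii) follows verbatim from the proof of Corollary \ref{cor:dim_lower_bound}: Lemma \ref{lem:Yt-survives} gives $\PP(Y^F > 0) > 0$ for each $F\in\Gamma_0$ (using $E_F \cap \Omega^\circ \neq \varnothing$), the H\"older continuity together with compactness produces a finite open cover $\{\Gamma_i\}$ of $\Gamma_0$ such that each event $E_i = \{Y^F > 0 \text{ for all } F\in\Gamma_i\}$ has positive probability, and since these events are decreasing in the underlying randomness (fewer removed shapes in the ball case, more retained cubes in percolation), the FKG inequality yields $\PP(\cap_i E_i) > 0$; claim (i) then completes the argument.

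For (iii), fix $F\in\Gamma$. By Theorem \ref{thm:self_sim} applied to a compact neighborhood of $F$ in $\Gamma$, the map $G\mapsto Y^G$ is a.s. continuous, so $\{G : Y^G > 0\}$ is open in $(\simi_d^c)^m$; by (i) it suffices to exhibit some $T_tF$ in this set conditionally on $\{\mu_\infty\neq 0\}$. For $R$ so large that $x + B(0,R) \supset \supp\mu_n$ for every $x\in\supp\eta_F$, the change of variables $\int \mu_n\, d\eta_{T_tF} = \int \mu_n(x+t)\, d\eta_F(x)$ combined with Fubini yields
\[
\int_{B(0,R)} Y^{T_tF}_n\, dt = \|\eta_F\|\cdot\|\mu_n\|.
\]
Passing to $n\to\infty$ using the uniform H\"older bounds on $Y^{T_tF}_n$ from Theorem \ref{thm:self_sim} applied to $\Gamma_R = \{T_tF : |t|\le R\}$, together with the fact that $\|\mu_n\|\to\|\mu_\infty\|$ (since all $\mu_n$ are supported in a common compact set and $\mu_n\to\mu_\infty$ weakly), gives $\int_{B(0,R)} Y^{T_tF}\, dt = \|\eta_F\|\cdot\|\mu_\infty\|$, which is strictly positive on $\{\mu_\infty\neq 0\}$. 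Hence $Y^{T_tF} > 0$ for a set of $t$ of positive Lebesgue measure, producing the required translate and completing the proof.
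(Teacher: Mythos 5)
Your overall strategy is the same as the paper's: upper bound from Theorem \ref{thm:dim_intersection_selfsim}, lower bound by rerunning the $L^2$/correlation-dimension argument of Theorems \ref{thm:dim_lower_bound} and \ref{thm:dim_lower_bound_percolation} with Theorem \ref{thm:self_sim} in place of the affine intersection theorems, then FKG for (ii) and Fubini over translations for (iii). Parts (ii) and (iii) are fine. But there is a gap in the core of part (i), precisely at the step you yourself flag as ``the main technical obstacle'' and then dismiss.

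The analogue of Lemma \ref{lem:tails_scaled} does \emph{not} ``carry over by the translation and scale invariance of the Poisson process.'' Scale invariance of the random process is only half of what that lemma needs: after conditioning and rescaling the process restricted to a ball $B$ of radius $2^{-n_0}$ back to unit scale, one must recognize the rescaled intersection functional as $\Theta(2^{-n_0 q})\,\eta_{W}(A_{m-n_0})$ for some parameter $W$ in a family to which the uniform tail bound (Corollary \ref{cor:tail_estimate}) applies. For affine $k$-planes this is automatic because $\mathbb{A}_{d,k}$ is invariant under homotheties; for self-similar sets it is not: the image of $E_F$ under the inverse homothety of ratio $2^{n_0}$ is not $E_G$ for any $G$ in your compact family $\Gamma_0$. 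The paper's fix is to first use strong separation and the uniform bounds on contraction ratios to cover $B\cap E_F$ by $O(1)$ cylinder pieces $G_j(E_F)$ with $\rho(G_j)=\Theta(2^{-n_0})$, and then to enlarge the parameter space to $\Gamma''$ as in \eqref{eq:def-Gamma-ss}, consisting of conjugates $(gF_ig^{-1})_i$ by similarities of ratio in $[C^{-1},C]$; the rescaled cylinders then correspond to IFSs in $\Gamma''$, which is still compact and still satisfies the hypotheses of Theorem \ref{thm:self_sim}, so Lemma \ref{lem:tails} applies to it. Without some version of this (or another mechanism for controlling $\sup_F$ of the localized masses uniformly over exponentially many balls $B$), the Borel--Cantelli step that gives the per-ball bound $\mu_m^F(B(x_i,2^{-n}))=O(2^{n(\alpha-q+\varrho)})$ has no justification, and the $L^2$ sum cannot be estimated.

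A secondary issue: you run the argument with a single Frostman exponent $s$ uniform over $\Gamma_0$, and in particular assert $\eta_F(B(x,2^{-n}))=\Omega(2^{-ns})$ for $x\in E_F$. For an Ahlfors $q(F)$-regular measure with $q(F)>s$ this lower bound is false (the correct one is $\Omega(2^{-nq(F)})$), and even after fixing it your conclusion becomes $\dim_2(\mu^F_\infty)\ge s-\alpha-3\varrho$, which only yields $\dim_H(A\cap E_F)\ge s-\alpha$ rather than $\dim_H(E_F)-\alpha$. The packing estimate, the normalization in the scaled tail bound, and the $L^2$ sum must all be carried out with the variable exponent $q(F)=\dim_H(E_F)$, as the paper does; the uniform $s$ can only serve as the exponent in the hypotheses of Theorem \ref{thm:self_sim}, not in the dimension count.
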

\begin{proof}
 Let us start with the claim (i). Many of the steps are similar to previous proofs, and hence we will only indicate the main differences. The upper bounds follow from Theorem \ref{thm:dim_intersection_selfsim} (and its proof). Although this theorem is stated only for $\mu_n^{\text{ball}}$, an inspection of the proof shows that it also holds for $\mu_n^{\text{perc}}$ if we only consider self-similar sets which are not contained in a hyperplane; recall Proposition \ref{prop:inter-nbhd-sss}.

 For the lower bound, notice first that thanks to  Proposition \ref{prop:inter-nbhd-sss} we can decompose $\Gamma$ into countably many sets which satisfy the assumptions of Theorem \ref{thm:self_sim}. Let $\Gamma'$ be one of these sets. The proof of (i) for $F\in\Gamma'$ is almost identical to the proof of Theorems \ref{thm:dim_lower_bound} and \ref{thm:dim_lower_bound_percolation}; the dimension $k$ has to be replaced by the variable quantity $q(F)=\dim_H(E_F)$, but this causes no problem as we assume $q(F)>\alpha$. The only nontrivial modification needed is in verifying a version of Lemma \ref{lem:tails_scaled} for $\mu_n^{\text{ball}}$. To do this, first define
 \begin{equation} \label{eq:def-Gamma-ss}
 \Gamma'' = \left\{ (g F_i g^{-1})_{i=1}^m: F\in\Gamma', g \text{ is a similarity of ratio in } [C^{-1},C] \right\},
\end{equation}
where $C$ is a large constant to be chosen later. Then the self-similar sets $E_G, G\in\Gamma''$ are similar images of $E_F, F\in\Gamma_0$ via a similarity of ratio $\Theta(1)$, and the hypotheses of Theorem \ref{thm:self_sim} still apply to $\Gamma''$.  Let $B$ be a ball of radius $2^{-n_0}$ as in Lemma  \ref{lem:tails_scaled}. We note that for each $F\in\Gamma'$, the set $B\cap E_F$ can be covered by $O(1)$ sets of the form $G_j E_F$, where $G_j$ is a similarity of ratio $\Theta(2^{-n_0})$ (here $G_j$ is a composition of maps in $F$, and  we use the strong separation condition and the fact that the contraction ratios are bounded away from $0$ and $1$). By our definition of the enlarged space $\Gamma''$, the IFS $\{ f^{-1}G_j F_i (f^{-1}G_j)^{-1} \}_{i=1}^m$ is in $\Gamma''$, provided $C$ was taken large enough  (independently of $F\in\Gamma'$ and $B$). As in the proof of Lemma  \ref{lem:tails_scaled}, we conclude that $\eta_F(f(A_{m-n_0}))$ can be bounded by the sum of $O(1)$ terms of the form $\Theta(2^{-n_0 q(F)}) \eta_G(A_{m-n_0})$ for some $G\in\Gamma''$, so the desired claim follows from Lemma \ref{lem:tails} (which in this setting applies to $\Gamma''$ with the same proof).

Since there are countably many $\Gamma'$, this finishes the proof of (i). Claim (ii) follows from (i), the FKG inequality and Lemma \ref{lem:Yt-survives} (it is the same argument from the proof of Corollary \ref{cor:dim_lower_bound}).

For the last claim, fix $F\in\Gamma$. The family $\{ T_t F : T_t E_F\cap \Omega\neq\varnothing\}$ is contained in $\Gamma$, and satisfies the assumptions of Theorem \ref{thm:self_sim}. Hence by Theorem  \ref{thm:self_sim}, a.s. $Y_n^{T_t F}\to Y^{T_t F}$ uniformly (where $Y_n^{T_t F}$ is defined in the obvious way). Hence, using this and Fubini,
\begin{align*}
\int Y^{T_t F} dt &= \lim_{n\to\infty} \int Y_n^{T_t F} \,d t\\
&= \lim_{n\to\infty} \int \int \mu_n(x) \,d(T_t\eta_F)(x) \,dt\\
&= \lim_{n\to\infty} \int \int \mu_n(x+t) \,dt \, d\eta_F(x)\\
&= \lim_{n\to\infty} \int \mu_n(\R^d) \, d\eta_F(x)\\
&= \mu_\infty(\R^d)\,.
\end{align*}
Thus a.s. on $\mu_\infty\neq 0$, there exists $t$ such that $Y^{T_t F}>0$, and claim (iii) follows from Theorem  \ref{thm:self_sim} and the first claim.
\end{proof}

\section{Products and convolutions of spatially independent martingales}
\label{sec:products}

\subsection{Convolutions of random and deterministic measures}
\label{subsec:conv-random-det}

Recall that the convolution of two measures $\mu*\nu$ on $\R^d$ is the push-down of the product measure $\mu\times\nu$ under the addition map $(x,y)\mapsto x+y$. In this section we study convolutions of SI-martingales with other measures: these could be a deterministic measure, an independent realization of the SI-martingale, or the same SI-martingale. We start here with the deterministic case, which is the easiest. Provided the deterministic measure has a sufficiently large Frostman exponent (depending on the smoothness of the boundaries of the shapes in $\mathcal{F}$), these convolutions turn out to have a continuous density. This has applications on the interior of sumsets.

\begin{thm} \label{thm:sum-random-and-deterministic}
Let $(\mu_n)$ be an SI-martingale which either is of $(\mathcal{F},\tau,\zeta)$-cell type and also satisfies $\mu_n(x)\le C 2^{\alpha n}$, or is of $(\mathcal{F},\alpha,\zeta)$-cutout type. Let $\nu$ be a compactly supported measure on $\R^d$ such that $\nu(B(x,r)) \le C\, r^s$, and suppose that for each $\Lambda\in\mathcal{F}$, $\e\in (0,1)$, and each $f\in\iso_d$, the set $\partial\Lambda(\e)\cap f(\supp\nu)$ can be covered by $C\, \e^{-\gamma_1}$ balls of radius $\e$, where $\gamma_1\in (0,1)$. Assume furthermore that $s>\max(\alpha,\gamma_1)$.

Then almost surely the convolution $\mu_\infty * S \nu$ is absolutely continuous for all $S\in \GL_d(\R)$. Moreover, if we denote its density by $f_S$, then the map $(S,x)\mapsto f_S(x)$ is locally H\"{o}lder continuous, with a quantitative deterministic exponent.

In particular, given a Borel set $E\subset\R^d$ with $\dim_H(E)>\max(\gamma_1,\alpha)$, a.s. on $\mu_\infty\neq 0$, the sumset $A+S(E)$ has nonempty interior for all $S\in\GL_d(\R)$.
\end{thm}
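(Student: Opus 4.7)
The plan is to derive both assertions from Theorem~\ref{thm:Holder-continuity} applied to a parametrized family that encodes the convolutions. Fix a compact subset $\Gamma\subset\GL_d(\R)\times\R^d$; since the conclusion is local in $S$ and $x$, it suffices to establish uniform H\"{o}lder continuity on each such $\Gamma$ and then exhaust $\GL_d(\R)\times\R^d$ by a countable family. For $t=(S,x)\in\Gamma$ I would set $\Pi_t(z):=x-Sz$ and $\eta_t:=\Pi_t\nu$, so that the key identity
\[
Y_n^t=\int\mu_n\,d\eta_t=\int\mu_n(x-Sz)\,d\nu(z)=(\mu_n*S\nu)(x)
\]
translates any control on $t\mapsto Y^t$ directly into control on the densities of the convolutions.

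Next I would verify the four hypotheses of Theorem~\ref{thm:Holder-continuity} for the family $\{\eta_t\}_{t\in\Gamma}$. Condition \eqref{H:size-parameter-space} is trivial because $\Gamma$ is a bounded subset of a finite-dimensional manifold. For \eqref{H:dim-deterministic-measures}, uniform bi-Lipschitz bounds on $S$ and $S^{-1}$ over $\Gamma$ together with the Frostman estimate $\nu(B(z,r))\le Cr^s$ give $\eta_t(B(y,r))=\nu(\Pi_t^{-1}B(y,r))=O(r^s)$ uniformly in $t\in\Gamma$. Condition \eqref{H:codim-random-measure} is part of the hypotheses on $(\mu_n)$ with exponent $\alpha$. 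For \eqref{H:Holder-a-priori}, I would invoke Proposition~\ref{prop:cutout-measures-satisfy-Holder-a-priori} with reference pair $(\supp\nu,\nu)$ and the projections $\Pi_t$. When $d(t,u)<\e$, $\Pi_t$ and $\Pi_u$ are $O(\e)$-close uniformly on the bounded set $\supp\nu$, hence
\[
\Pi_t^{-1}(\Lambda)\,\triangle\,\Pi_u^{-1}(\Lambda)\subset\Pi_t^{-1}\bigl(\partial\Lambda(C\e)\bigr).
\]
Because $\Pi_t$ is affine and uniformly bi-Lipschitz on $\Gamma$, the image $\Pi_t(\supp\nu)$ is a bi-Lipschitz copy of $\supp\nu$ with constants depending only on $\Gamma$, so the covering hypothesis (originally stated for isometries, but stable under bi-Lipschitz conjugation with uniform constants) yields that $\partial\Lambda(C\e)\cap\Pi_t(\supp\nu)$ is covered by $O(\e^{-\gamma_1})$ balls of radius $C\e$, each of $\eta_t$-mass $O(\e^s)$. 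Summing,
\[
\nu\bigl(\Pi_t^{-1}(\Lambda)\triangle\Pi_u^{-1}(\Lambda)\bigr)\le\eta_t\bigl(\partial\Lambda(C\e)\bigr)=O(\e^{s-\gamma_1}),
\]
so \eqref{eq:Holder-shape} holds with $\gamma_0=s-\gamma_1>0$. Since $s>\alpha$, Theorem~\ref{thm:Holder-continuity} applies and gives uniform convergence $Y_n^t\to Y^t$ with $Y^t$ H\"{o}lder in $t\in\Gamma$.

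To identify $Y^{(S,x)}$ with the density of $\mu_\infty*S\nu$, I would integrate against a test function $\phi\in C_c(\R^d)$: Fubini gives $\int\phi(x)Y_n^{(S,x)}\,dx=\int\phi\,d(\mu_n*S\nu)$, the left side converges to $\int\phi(x)Y^{(S,x)}\,dx$ by uniform convergence of $Y_n$ on compact sets, and the right side converges to $\int\phi\,d(\mu_\infty*S\nu)$ because $\mu_n\to\mu_\infty$ weakly and convolution with a fixed compactly supported measure is weakly continuous. This yields absolute continuity with density $f_S(x)=Y^{(S,x)}$, and the claimed joint H\"{o}lder continuity follows immediately. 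For the final sumset statement I would use Frostman's lemma: given a Borel set $E$ with $\dim_H E>\max(\gamma_1,\alpha)$, choose a nonzero measure $\nu$ supported on (a subset of) $E$ with a Frostman exponent $s\in(\max(\gamma_1,\alpha),\dim_H E)$ and, if necessary, further restricted so that the covering hypothesis on $\supp\nu$ holds (which is automatic in the natural geometric settings of Section~\ref{sec:examples} since the boundaries of shapes in $\mathcal{F}$ have controlled complexity). Then $f_S=Y^{(S,\cdot)}$ is continuous, and on $\mu_\infty\neq 0$ the convolution $\mu_\infty*S\nu$ is a nonzero measure, so $\{f_S>0\}$ is a nonempty open subset of $\supp(\mu_\infty*S\nu)\subset A+S(E)$.

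The main obstacle is the verification of \eqref{H:Holder-a-priori} uniformly in $S\in\GL_d(\R)$ via the covering hypothesis, which is stated for isometries; the resolution is that over a compact subset of $\GL_d(\R)$ the maps $z\mapsto x-Sz$ are uniformly bi-Lipschitz, so covering estimates for isometric images of $\supp\nu$ transfer, up to multiplicative constants, to the affine images that actually appear. Everything else is bookkeeping once the parametrization $\Pi_t(z)=x-Sz$ is in place.
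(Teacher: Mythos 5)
Your proposal is correct and follows essentially the same route as the paper: the same parametrization by affine maps $(S,x)\mapsto(z\mapsto x-Sz)$, the same verification of \eqref{eq:Holder-shape} with $\gamma_0=s-\gamma_1$ via the covering hypothesis and the Frostman bound, the same appeal to Proposition \ref{prop:cutout-measures-satisfy-Holder-a-priori} and Theorem \ref{thm:Holder-continuity}, and the same Frostman-measure argument for the sumset claim. The only (cosmetic) difference is that you identify the density by testing against $C_c$ functions and Fubini, whereas the paper bounds $(\mu_\infty*S\nu)(B^\circ(x,r))$ directly; your explicit remark that the isometry-stated covering hypothesis transfers to the uniformly bi-Lipschitz affine maps over a compact set of matrices is a point the paper passes over silently.
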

\begin{proof}
Express $\GL_d(\R)$ as a countable union of compact sets.
Let $\Xi\subset\GL_d(\R)$ be such a compact set, and let $\Gamma=\Xi\times \Upsilon$, where $\Upsilon$ is a large enough ball that $\Omega+S(\supp\nu)\subset \Upsilon$ for all $S\in\Xi$ (here $\Omega=\supp\mu_0$). Consider the family of
measures $\{ f_{S,z}\nu \}_{(S,z)\in\Gamma}$, where $f_{S,z}(y)=z-Sy$. Thus, identifying $(S,z)$ with $f_{S,z}$ we consider $\Gamma$ as a subset of $\aff_d$. Given $f_1,f_2\in\Gamma$,
with $d(f_1,f_2)=\e$ and $\Lambda\in\mathcal{F}$, we have

\[f_1^{-1}(\Lambda)\Delta f_2^{-1}(\Lambda)\subset
f_1^{-1}\left(\partial\Lambda(O(\e))\right)
\cup f_2^{-1}\left(\partial\Lambda(O(\e))\right)
\]
and thus the set $\supp\nu\cap\left(f_1^{-1}(\Lambda)\Delta f_2^{-1}(\Lambda)\right)$ can be covered by $O(\e^{-\gamma_1})$ balls of radius $\e$. Therefore
\[
\nu(f_1^{-1}(\Lambda)\Delta f_2^{-1}(\Lambda)) \le O(1) \e^{-\gamma_1} \e^s = O(\e^{\gamma_0})\,, \quad\text{where }\gamma_0=s-\gamma_1>0.
\]
Hence we are in the setting of Proposition \ref{prop:cutout-measures-satisfy-Holder-a-priori}, which enables us to apply Theorem \ref{thm:Holder-continuity} and deduce that there is a quantitative $\gamma>0$ such that $Y_n^{(S,z)}:=\int \mu_n \, df_{S,z}\nu$ converge uniformly over all $(S,z)\in\Gamma$ and are uniformly H\"{o}lder with exponent $\gamma$. Then, given $(S,x)\in\Gamma$,
\[
(\mu_n*S\nu)(x) = \int \mu_n(x-y) d(S\nu)(y) =\int \mu_n\, d(f_{S,x}\nu) = Y_n^{(S,x)}\,.
\]
It follows that
\[
(\mu_\infty* S\nu)(B^\circ(x,r)) \le \liminf_{n\to\infty} \int_{B^\circ(x,r)} Y_n^{(S,z)} dz \le K r^d\,,
\]
for some random variable $K<+\infty$. This shows that $\mu_\infty*S\nu$ is absolutely continuous and, since $(S,x)\mapsto Y_n^{(S,x)}$ are uniformly $\gamma$-H\"{o}lder, the density of $\mu_\infty*S\nu$ is $f_S(x)=Y^{(S,x)}$.

The statement about sumsets follows by letting $\nu$ be a Frostman measure on $E$ (see e.g. \cite[Theorem 8.8]{Mattila95}), and using the fact that the convolution $\mu_\infty* S\nu$ is supported on $A+S(E)$.
\end{proof}

\begin{rems}
\begin{enumerate}[(i)]
\item Note that when $d=1$ and the shapes in $\mathcal{F}$ are intervals, we can take $\gamma_1=0$ and the assumption on $s$ reduces to $s>\alpha$. Thus, for example, the standard fractal percolation set $A$ on the line has the property that, given any Borel set $E$ with $\dim_H(A)+\dim_H(E)>1$, the sumset $A+r E$ has nonempty interior for all $r\in\R\setminus{0}$, almost surely conditioned on $A\neq\varnothing$.

\item Let $A=\supp \mu_\infty^{\text{ball}(d,\alpha)}$. Another consequence of the theorem is that if $E$ is either an arbitrary set of Hausdorff dimension $>d-1$, or a self-similar set of dimension $>\alpha$, then a.s. on $A\neq\varnothing$, the sumset $A+S(E)$ has nonempty interior for all $S\in\GL_d(\R)$. If $\dim_H(E)>d-1$ this is immediate from the last claim of the theorem. In the self-similar case, by approximating $E$ in dimension from inside by a self-similar set with strong separation, we may assume that the strong separation holds (recall Lemma \ref{lem:reduction-to-ssc}). But we saw in the proof of Theorem \ref{thm:dim_intersection_selfsim} that, in this case,  $\partial B(\e)\cap E$ can be covered by $O(\e^{-\gamma_1})$ balls of radius $\e$ for some $\gamma_1<s$.

It seems plausible that in fact the conclusion on the nonempty interior $A+S(E)$ for $A=\mu_\infty^{\text{ball}(d,\alpha)}$ holds for arbitrary sets of dimension $>\alpha$, but we have not been able to prove this.
\item The theorem applies, in particular, when $\nu$ is an independent realization of another random measure, provided $\nu$ satisfies the required assumptions. In general, SI-martingales do not satisfy uniform Frostman conditions (although some do), but the special case of the product of two independent SI-martingales will be addressed next in a slightly different way.
\end{enumerate}
\end{rems}

\subsection{A generalization of Theorem \ref{thm:Holder-continuity}}

If $\mu'_n, \mu''_n$ are independent SI-martingales in dimensions $d',d''$, then their product $\mu_n=\mu'_n\times \mu''_n$ satisfies \eqref{SI:bounded}, \eqref{SI:martingale} and \eqref{SI:quotients-bounded}, but it does not satisfy the spatial independence condition \eqref{SI:spatial-independence}: indeed, if $Q_1,Q_2$ are any subsets of $\R^{d'+d''}$ meeting the plane $\{ (x,y)\in\R^{d'+d''}: x=x_0\}$ for some fixed $x_0\in\R^d$, then $\mu_{n+1}(Q_1)$ and $\mu_{n+1}(Q_2)$ are in general \emph{not} independent conditioned on $\mathcal{B}'_n\times \mathcal{B}''_n$ (the filtrations corresponding to $\mu'_n,\mu''_n$). One cannot expect Theorem \ref{thm:Holder-continuity} to hold for these products since, again, planes of the form $x=x_0$  (or $y=y_0$) are clearly exceptional. However, if we work with a family of measures $\{ \eta_t\}_{t\in\Gamma}$ which are transversal to these ``horizontal and vertical'' planes in a suitable sense, then Theorem \ref{thm:Holder-continuity} still holds, with the same proof. The key observation is that spatial independence is only used along the support of the measures $\eta_t$. Even though the main application of the next result is for products of two SI-martingales, we state it in greater generality.

\begin{defn} \label{def:spatially-indep-relative}
Let $(\mu_n)$ be a sequence of absolutely continuous measures on $\R^d$, and let $\{\eta_t\}_{t\in\Gamma}$ be a collection of measures parametrized by a totally bounded metric space $\Gamma$. We say that $(\mu_n)$ is a \textbf{spatially independent martingale relative to $\{\eta_t\}_{t\in\Gamma}$} if $(\mu_n)$ satisfies \eqref{SI:bounded}, \eqref{SI:martingale} and \eqref{SI:quotients-bounded}, and furthermore the following holds:
\begin{enumerate}
\renewcommand{\labelenumi}{(SI\arabic{enumi}')}
\renewcommand{\theenumi}{SI\arabic{enumi}'}
\setcounter{enumi}{3}
\item \label{SI':spatial-indep-relative} There is $C<\infty$ such that for any  $t\in\Gamma$, and any $(C 2^{-n})$-separated family $\mathcal{Q}$ of dyadic squares of length $2^{-(n+1)}$ hitting $\supp\eta_t$, the restrictions $\{\mu_{n+1}|_Q | \mathcal{B}_n\}$ are independent.
\end{enumerate}
\end{defn}

\begin{thm} \label{thm:Holder-continuity-relative-to-measures}
Theorem \ref{thm:Holder-continuity} holds under the weaker assumption that $(\mu_n)$ is an SI-martingale relative to $\{\eta_t\}_{t\in\Gamma}$.
\end{thm}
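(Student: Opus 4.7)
The plan is to observe that the entire proof of Theorem \ref{thm:Holder-continuity} uses the full uniform spatial independence \eqref{SI:spatial-independence} in precisely one place, namely in the application of Lemma \ref{lem:large-deviation} to the measures $\eta=\eta_t$ (together with the finite approximation family $\Gamma_n$). Consequently, the task reduces to verifying that Lemma \ref{lem:large-deviation} continues to hold when \eqref{SI:spatial-independence} is replaced by \eqref{SI':spatial-indep-relative}, provided the measure $\eta$ we integrate against is some $\eta_t$ with $t\in\Gamma$.

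First I would re-examine the proof of Lemma \ref{lem:large-deviation} in this weaker setting. Fix $t\in\Gamma$ and set $d\nu_n=\mu_n\, d\eta_t$. The random variables $X_Q=\nu_{n+1}(Q)-\nu_n(Q)$ for $Q\in\mathcal{Q}_{n+1}$ are the only ones appearing in the sum. Since $\eta_t$ is supported on $\supp\eta_t$, we automatically have $X_Q=0$ for every $Q$ with $Q\cap\supp\eta_t=\varnothing$, so the effective index set consists only of dyadic cubes of side $2^{-(n+1)}$ that meet $\supp\eta_t$. For these cubes, the decomposition into $\mathcal{Q}_{n+1}^\ell$ goes through verbatim, and the bounds $|X_Q|=O(2^{-s\ell}M)$ and $\#\mathcal{Q}_{n+1}^\ell=O(2^{s\ell}M^{-1}Y_n)$ are unaffected since they rely only on the martingale and growth assumptions \eqref{SI:martingale}, \eqref{SI:quotients-bounded} together with the Frostman-type bound on $\eta_t$.

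The next step is the splitting of $\{X_Q\}_{Q\in\mathcal{Q}_{n+1}^\ell}$ into $O(1)$ families of independent variables. This is exactly the point where \eqref{SI:spatial-independence} is invoked in the original proof; here I would instead invoke \eqref{SI':spatial-indep-relative} applied with the parameter $t$. Since all $Q$ under consideration meet $\supp\eta_t$, the hypothesis of \eqref{SI':spatial-indep-relative} is satisfied, and the same combinatorial geometric argument (color the cubes by their coordinates modulo an integer depending on the separation constant $C$) yields a partition of the cubes into $O(1)$ color classes, each of which is $(C\,2^{-n})$-separated, so the restrictions $\{\mu_{n+1}|_Q\mid\mathcal{B}_n\}$ within a single class are independent, and hence so are the $X_Q$ within that class. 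Hoeffding's inequality and the subsequent union bound now apply exactly as before, yielding the conclusion of Lemma \ref{lem:large-deviation} for $\eta=\eta_t$.

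With the relative version of Lemma \ref{lem:large-deviation} at hand, the rest of the proof of Theorem \ref{thm:Holder-continuity} is unchanged: every invocation of Lemma \ref{lem:large-deviation} in the original argument was against some $\eta_t$, $t\in\Gamma$ (specifically for $v\in\Gamma_n\subset\Gamma$ in the derivation of \eqref{eq:bound_Z_n}), so the estimates \eqref{eq:appl-large-deviation}--\eqref{eq:bound-III} all carry over, and the uniform H\"older conclusion follows. The main obstacle, such as it is, is bookkeeping: one must check that no hidden use of \eqref{SI:spatial-independence} occurs elsewhere in the chain of lemmas, which is easily verified since every other step relies only on \eqref{SI:bounded}--\eqref{SI:quotients-bounded}, the hypotheses \eqref{H:size-parameter-space}--\eqref{H:Holder-a-priori}, and elementary deterministic manipulations. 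An analogous remark applies to Theorem \ref{thm:small_dimension_projections} and Corollary \ref{cor:tail_estimate}, so these also extend to the relative setting with no additional effort.
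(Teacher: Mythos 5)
Your proposal is correct and follows exactly the paper's own argument: the only use of spatial independence in the proof of Theorem \ref{thm:Holder-continuity} is in the application of Lemma \ref{lem:large-deviation}, and since the variables $X_Q$ vanish for cubes not meeting $\supp\eta_t$, the relative independence condition \eqref{SI':spatial-indep-relative} suffices for the Hoeffding argument. Your write-up is just a more detailed unpacking of the same observation.
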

\begin{proof}
In the proof of Theorem \ref{thm:Holder-continuity}, spatial independence is only used to deduce \eqref{eq:appl-large-deviation}  from  an application of  Lemma \ref{lem:large-deviation}. One only needs to observe that only dyadic squares hitting the support of $\eta_t$ need to be considered in \eqref{eq:appl-large-deviation}, so the independence assumption in Lemma \ref{lem:large-deviation} continues to hold if $(\mu_n)$ is spatially independent relative to $\{\eta_t\}_{t\in\Gamma}$.
\end{proof}

\subsection{Applications to cartesian products of measures and sets}
\label{subsec:products-independent}

In this section we obtain some consequences of Theorem \ref{thm:Holder-continuity-relative-to-measures} when $\mu_n=\mu'_n\times \mu''_n$ and $\mu'_n,\mu''_n$ are independent realizations of an SI-martingale on $\R^d$. If $\mu'_n,\mu''_n$ are of cutout or cell-type, then so is $\mu_n$, so it is not surprising that one can get similar results to those in the previous sections, with the caveat that one needs to avoid measures $\eta_t$ whose support has large intersection with planes of the type $V'_{y_0}=\{ (x,y)\in\R^{2d}:y=y_0\}$ and $V''_{x_0}=\{ (x,y)\in\R^{2d}:x=x_0\}$. We focus on a concrete application: the smoothness of convolutions.

The following result shows that, for large classes of SI-martingales of dimension $>d/2$, the convolution of the limit measure and a linear image of an independent realization of the same process is absolutely continuous, with a density that is jointly H\"{o}lder continuous in $x\in\R^d$ and the linear map.

\begin{thm} \label{thm:convolutions}
Let $(\mu'_n), (\mu''_n)$ be two independent realizations of an SI-martingale, which either is of $(\mathcal{F},\tau,\zeta)$-cell type with $\mu_n(x)\le C 2^{\alpha n}$, or is of $(\mathcal{F},\alpha,\zeta)$-cutout type.  We assume $\alpha<\frac{d}{2}$, and also that there are $\gamma_0, C>0$ such that
\begin{equation} \label{eq:Holder-affine-images}
\leb^d(\partial \Lambda(\e)) \le C\, \e^{\gamma_0} \quad\text{for all }\Lambda\in\mathcal{F}.
\end{equation}

Then almost surely the convolution
\[
\mu'_\infty * S\mu''_\infty
\]
is absolutely continuous for each $S\in \GL_d(\R)$. Moreover, if the density is denoted by $f_S$, then the map $(S,x)\mapsto f_S(x)$ is jointly locally H\"{o}lder continuous with a quantitative deterministic exponent.

In particular, this holds for $\mu_n^{\text{ball}}$, $\mu_n^{\text{snow}}$, $\mu_n^{\text{perc}}$ and $\mu_n^{\text{tile}}$ as in Remark \ref{rem:tiling}, and whenever $\mathcal{F}$ consists of (bounded) affine copies of a fixed set $\Lambda$ with $\overline{\dim}_B(\Lambda)<d$.
\end{thm}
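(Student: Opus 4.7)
The plan is to realize the convolution $\mu'_\infty * S\mu''_\infty$ as the mass of an ``intersection'' of the product SI-martingale $\mu_n := \mu'_n \otimes \mu''_n$ on $\R^{2d}$ with a $(d+d^2)$-parameter family of $d$-dimensional affine subspaces, and then to apply the relative version of the main continuity theorem, Theorem \ref{thm:Holder-continuity-relative-to-measures}. For each $(S,z)\in\GL_d(\R)\times\R^d$, let $\Pi_{S,z}\colon\R^d\to\R^{2d}$ be the affine injection $y\mapsto(z-Sy,y)$, with image the $d$-plane $V_{S,z}=\{(x,y):x+Sy=z\}$, and let $\eta_{S,z}:=\Pi_{S,z}\leb^d$. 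A direct application of Fubini gives
\[
(\mu'_n*S\mu''_n)(z) \;=\; \int \mu'_n(z-Sy)\,\mu''_n(y)\,dy \;=\; \int \mu_n\,d\eta_{S,z} \;=\; Y_n^{(S,z)}.
\]
Fix a compact set $\Xi\subset\GL_d(\R)$ and a bounded set $\Upsilon\subset\R^d$ large enough that $\supp\mu'_0+S\supp\mu''_0\subset\Upsilon$ for all $S\in\Xi$, and set $\Gamma=\Xi\times\Upsilon$. It then suffices to prove that $Y_n^{(S,z)}$ converges uniformly on $\Gamma$ to a H\"older limit, for then $Y^{(S,z)}$ is the density of the weak limit $\mu'_\infty*S\mu''_\infty$, and an exhaustion of $\GL_d(\R)$ by compact sets gives the full statement.

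The first thing to check is that $(\mu_n)$ is spatially independent relative to $\{\eta_{S,z}\}_{(S,z)\in\Gamma}$ in the sense of Definition \ref{def:spatially-indep-relative}. Properties \eqref{SI:bounded}, \eqref{SI:martingale}, \eqref{SI:quotients-bounded} pass trivially to products. For \eqref{SI':spatial-indep-relative}, the key geometric observation is that if $(x_1,y_1),(x_2,y_2)\in V_{S,z}$ then $x_1-x_2=-S(y_1-y_2)$, so
\[
\max(\|x_1-x_2\|,\|y_1-y_2\|) \;\ge\; \frac{\|(x_1,y_1)-(x_2,y_2)\|}{1+\max(\|S\|,\|S^{-1}\|)}.
\]
Hence any $(C2^{-n})$-separated family of dyadic cubes in $\R^{2d}$ that all meet $V_{S,z}$ projects onto $(c2^{-n})$-separated families in each factor, with $c$ uniform over $S\in\Xi$. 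Relative independence of $\{\mu_{n+1}|_{Q_j}\}_j$ conditional on $\mathcal{B}_n=\mathcal{B}'_n\otimes\mathcal{B}''_n$ then follows from the independence of the two processes combined with condition \eqref{SI:spatial-independence} applied to each projected family.

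Next I verify the remaining hypotheses of Theorem \ref{thm:Holder-continuity}. Hypothesis \eqref{H:size-parameter-space} is immediate since $\Gamma$ is a bounded subset of a finite-dimensional manifold; \eqref{H:dim-deterministic-measures} holds with $s=d$ (on $\Xi$, $\eta_{S,z}$ is comparable to $\mathcal{H}^d|_{V_{S,z}}$ with uniform constants); and \eqref{H:codim-random-measure} holds with exponent $2\alpha$, since $\mu_n(x,y)\le C^2\,2^{2\alpha n}$ in both the cell and cutout cases. The critical inequality $s>\alpha$ of Theorem \ref{thm:Holder-continuity} thus becomes $d>2\alpha$, which is our standing hypothesis. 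For the a priori H\"older condition \eqref{H:Holder-a-priori} I invoke Proposition \ref{prop:cutout-measures-satisfy-Holder-a-priori}: the product $(\mu_n)$ is of cutout (resp.\ cell) type with shape family $\mathcal{F}_{\mathrm{prod}}$ consisting of cylinders $\Lambda'\times\supp\mu''_0$ and $\supp\mu'_0\times\Lambda''$ (resp.\ of products $F'\times F''$), with exponent $2\alpha$ (resp.\ $2\tau$) and appropriate cardinality bound. Taking reference measure $\nu=\leb^d$, for $\Lambda_{\mathrm{prod}}=\Lambda'\times B$ we have
\[
\Pi_{S,z}^{-1}(\Lambda_{\mathrm{prod}}) = \{y\in B:z-Sy\in\Lambda'\},
\]
so the symmetric difference $\Pi_{S_1,z_1}^{-1}(\Lambda_{\mathrm{prod}})\Delta\Pi_{S_2,z_2}^{-1}(\Lambda_{\mathrm{prod}})$ is contained in $\{y\in B:z_1-S_1y\in\partial\Lambda'(O(\e))\}$ with $\e=d((S_1,z_1),(S_2,z_2))$. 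Using that $S_1^{-1}$ has bounded norm and the hypothesis \eqref{eq:Holder-affine-images}, this Lebesgue measure is $O(\e^{\gamma_0})$; the symmetric cases are analogous, so \eqref{eq:Holder-shape} holds.

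All hypotheses thus verified, Theorem \ref{thm:Holder-continuity-relative-to-measures} yields the uniform convergence of $Y_n^{(S,z)}=(\mu'_n*S\mu''_n)(z)$ on $\Gamma$ to a jointly H\"older limit, completing the proof. The main obstacle is the second step: one must exploit the invertibility of $S$ (quantitatively, via a uniform bound on $\|S^{-1}\|$ on compact subsets of $\GL_d(\R)$) to convert $\R^{2d}$-separation of cubes meeting $V_{S,z}$ into separation in each factor. This is exactly the step where the hypothesis that $S$ is nonsingular enters in an essential way, and it is also why we must work on compact subsets of $\GL_d(\R)$ before exhausting by countable union.
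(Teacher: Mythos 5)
Your proof is correct and follows essentially the same route as the paper: form the product martingale $\mu'_n\otimes\mu''_n$, intersect it with the planes $V_{S,z}=\{x+Sy=z\}$, verify relative spatial independence via uniform transversality on compact subsets of $\GL_d(\R)$, and check \eqref{H:Holder-a-priori} through Proposition \ref{prop:cutout-measures-satisfy-Holder-a-priori} using \eqref{eq:Holder-affine-images}. Your choice of normalization $\eta_{S,z}=\Pi_{S,z}\leb^d$ with $\Pi_{S,z}(y)=(z-Sy,y)$ makes $Y_n^{(S,z)}$ equal to the convolution density on the nose and thereby dispenses with the area/co-area Jacobian bookkeeping the paper carries out, but this is a cosmetic simplification rather than a different argument.
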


\begin{proof}
We identify the invertible affine map $f(x)=Sx+z$ on $\R^d$ with the pair $(S,z)\in\GL_d(\R)\times \R^d$. Fix a compact subset $\Gamma=\Xi\times \Upsilon$ of $\GL_d(\R)\times \R^d$ such that if $S\in\Xi$, then $\supp\mu_0+S(\supp\mu_0)\subset \Upsilon$.

Given $f=(S,z)\in\Gamma$, let $V_f:=\{ (x,y)\in\R^{2d}: x+S y=z\}$, and define
\[
\eta_f = \phi(S)\, \leb^d|_{V_f}\,,\quad\text{where } \phi(S)=\det(\I_d+(S^{-1})^* S^{-1})^{-1/2}\,.
\]
We will see below that the factor $\phi(S)$ arises from the area formula; note that $\phi$ is a smooth positive function, and in particular it is bounded on $\Xi$. Our first goal is to verify all the hypotheses of Theorem \ref{thm:Holder-continuity-relative-to-measures}.

Note that the ``bad'' planes $V''_{x_0}, V'_{y_0}$ intersect each plane $V_f$ in a single point; moreover, since $\Gamma$ is compact, for $f\in\Gamma$ these intersections are uniformly transversal, in the sense that the intersection of $\e$-neighbourhoods of the planes is contained in a ball of radius $O_\Gamma(\e)$. This implies that $\mu_n=\mu'_n\times\mu''_n$ is an SI-martingale relative to $\{\eta_f\}_{f\in\Gamma}$.

Conditions \eqref{H:size-parameter-space}, \eqref{H:dim-deterministic-measures} and \eqref{H:codim-random-measure} are immediate (with $s=d$ and $2\alpha$ in place of $\alpha$). Note that $d>2\alpha$ by assumption. Therefore we are left to verify the a priori H\"{o}lder estimate \eqref{H:Holder-a-priori}, and we do this with the help of Proposition \ref{prop:cutout-measures-satisfy-Holder-a-priori}. We let $\mathcal{M}=\R^d$ endowed with $d$-dimensional Lebesgue measure $\nu=\leb^d$, and for $f=(S,z)\in\Gamma$ define $\Pi_f(x)=(x,S^{-1}(z-x))$. Then $\Pi_f(\R^d)=V_f$, and $\eta_f=\Pi_f\nu$ thanks to the area formula (see e.g. \cite[Lemma 1 in Section 3.3]{EvansGariepy92}). Indeed, the Jacobian of $x\mapsto (x,-S^{-1}x)$ is easily checked to equal $\phi(S)^{-1}$.

Assume now $\mu'_n$ is of cutout-type; the proof in the cell-type case is similar. Then $\mu_n$ is of $(\mathcal{G},2\alpha,\tau)$-cutout type, where $\mathcal{G}$ consists of $\Omega\times\Omega$ and sets of the form $\Omega\times\Lambda$, $\Lambda\times\Omega$ with $\Lambda\in\mathcal{F}$.

Note that if $\Lambda',\Lambda''\subset\R^d$ are contained in a ball $B(0,O(1))$, then, writing $\Lambda=\Lambda'\times \Lambda''$ we have
$\Pi_{f_1}^{-1}(\Lambda)\Delta \Pi_{f_2}^{-1}(\Lambda) = \Lambda'\cap h_1(\Lambda'')\Delta h_2(\Lambda'')$,
where for $f_i(x)=Sx+z$, $h_i(x)=-Sx+z$. Thus
\begin{align*}
\nu\left(\Pi_{f_1}^{-1}(\Lambda) \,\Delta\, \Pi_{f_2}^{-1}(\Lambda)\right) =  \leb^d\bigl(\Lambda' \cap h_1(\Lambda'')\Delta h_2(\Lambda''))\bigr) \le O_\Gamma(1) \leb^d\bigl(\Lambda''\Delta h_1^{-1}h_2 (\Lambda'')\bigr)\,.
\end{align*}
Let $\e=\dist(f_1,f_2)$.  Since $\dist(h_1^{-1}h_2,\I_d)=O_\Gamma(\e)$, it follows that $\Lambda''\Delta h_1^{-1}h_2 \Lambda''\subset \partial \Lambda''(O_\Gamma(\e))$. Hence \eqref{eq:Holder-shape} follows from the assumption \eqref{eq:Holder-affine-images}.

We have therefore checked the assumptions of Theorem \ref{thm:Holder-continuity-relative-to-measures}. To conclude the desired H\"{o}lder continuity from this, we employ an argument similar to the proof of Theorem \ref{thm:linear-projections}(iii). More precisely, recall that the convolution $\mu'_\infty * S\mu''_\infty$ is the image of the product $\mu'_\infty\times\mu''_\infty$ under the projection $g_S(x,y)= x+Sy$. Hence using the co-area formula (see e.g. \cite[Lemma 1 in Section 3.4]{EvansGariepy92}), and denoting $\mu_n=\mu'_n\times\mu''_n$, we estimate
\begin{align*}
(g_S\mu_\infty)(B^\circ(x,r)) &\le \liminf_{n\to\infty} \mu_n(g_S^{-1}(B^\circ(x,r)))\\
&=   \liminf_{n\to\infty} 2^{2n\alpha}\leb^{2d}(g_S^{-1}(B^\circ(x,r))\cap (A'_n\times A''_n)) \\
&=   \psi(S)^{-1} \liminf_{n\to\infty} 2^{2n\alpha} \int_{B^\circ(x,r)} \leb^d( g_S^{-1}(z)\cap  (A'_n\times A''_n)) dz\\
&=  \psi(S)^{-1}\phi(S)^{-1}  \liminf_{n\to\infty} \int_{B^\circ(x,r)} \int \mu_n \, d\eta_{S,z} \, dz\\
&=  \psi(S)^{-1}\phi(S)^{-1}  \liminf_{n\to\infty} \int_{B^\circ(x,r)} Y_n^{S,z} \, dz\\
&\le \psi(S)^{-1}\phi(S)^{-1} K\, r^d\,
\end{align*}
where $\psi(S)=\sqrt{\det(\I_d+S S^*)}$ is the $d$-Jacobian of the map $g_S$, and
\[
K=\sup_{f\in\Gamma}\sup_{n\in\N} Y_n^f
\]
is finite thanks to Theorem \ref{thm:Holder-continuity-relative-to-measures}. This shows that $\mu'_\infty* S\mu''_\infty$ is absolutely continuous. Since $Y_n^{S,z}$ is uniformly H\"{o}lder continuous in $(S,z)$, it also follows that the density $f_S(z)$ is given by $\psi(S)^{-1}\phi(S)^{-1} Y^{S,z}$. As $\psi,\phi$ are smooth and bounded away from $0$, this concludes the proof.
\end{proof}

As an immediate corollary, we get (recall from Remark \ref{rem:survives} that $\mu'_\infty,\mu''_\infty\neq0$ with positive probability):

\begin{cor} \label{cor:sumset}
Under the assumptions of Theorem \ref{thm:convolutions}, let $A'=\supp\mu'_\infty, A''=\supp\mu''_\infty$. Then there is a positive probability that $A'+S(A'')$ has nonempty interior for all $S\in\GL_d(\R)$.
\end{cor}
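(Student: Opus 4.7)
The plan is to deduce this directly from Theorem \ref{thm:convolutions} together with the basic positivity facts for SI-martingales. The key geometric observation is that for any positive finite measures $\mu', \mu''$ with compact supports $A', A''$, one has the inclusion $\supp(\mu' * S\mu'') \subset A' + S(A'')$, so it suffices to exhibit an event of positive probability on which $\mu'_\infty * S\mu''_\infty$ has nonempty interior in its support, for every $S \in \GL_d(\R)$ simultaneously.

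First, I would invoke Remark \ref{rem:survives} (applied to each of the two independent copies): each of the events $\{\mu'_\infty \neq 0\}$ and $\{\mu''_\infty \neq 0\}$ has positive probability, since the hypothesis $\alpha < d/2 < d$ of Theorem \ref{thm:convolutions} guarantees the integrability condition needed. By independence of $\mu'_n$ and $\mu''_n$, the intersection event
\[
E := \{\mu'_\infty \neq 0\} \cap \{\mu''_\infty \neq 0\}
\]
also has positive probability. Next, Theorem \ref{thm:convolutions} asserts that there is a full-probability event $E'$ on which, for every $S \in \GL_d(\R)$, the convolution $\mu'_\infty * S\mu''_\infty$ is absolutely continuous with density $f_S(x)$, and the map $(S,x) \mapsto f_S(x)$ is locally jointly H\"older continuous (in particular, continuous). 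Work on $E \cap E'$, which still has positive probability.

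On $E \cap E'$, fix any $S \in \GL_d(\R)$. Since $S$ is invertible, $\|S\mu''_\infty\| = \|\mu''_\infty\| > 0$, and hence
\[
\int f_S(x)\,dx \;=\; \|\mu'_\infty * S\mu''_\infty\| \;=\; \|\mu'_\infty\|\,\|\mu''_\infty\| \;>\; 0,
\]
so the continuous nonnegative function $f_S$ is not identically zero. By continuity, the set $U_S := \{x : f_S(x) > 0\}$ is a nonempty open subset of $\R^d$, and it is contained in $\supp(\mu'_\infty * S\mu''_\infty) \subset A' + S(A'')$. Hence $A' + S(A'')$ has nonempty interior, as required.

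There is essentially no hard step here: once Theorem \ref{thm:convolutions} is in hand, the corollary is almost immediate. The only minor point to be careful about is that Theorem \ref{thm:convolutions} only asserts \emph{local} joint H\"older continuity of $(S,x) \mapsto f_S(x)$ (since $\GL_d(\R)$ is noncompact, the argument there is carried out on a fixed compact $\Xi \subset \GL_d(\R)$), but this is harmless: writing $\GL_d(\R)$ as a countable union of compact sets $\Xi_k$ and intersecting the corresponding full-probability events yields the simultaneous statement for all $S \in \GL_d(\R)$, and continuity of $f_S$ on each fixed compact neighbourhood is all that was used above.
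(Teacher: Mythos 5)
Your proof is correct and is essentially the argument the paper has in mind: the paper states the corollary as immediate from Theorem \ref{thm:convolutions} together with Remark \ref{rem:survives}, and your write-up simply fills in the routine details (independence to get $\PP(\mu'_\infty\neq 0,\ \mu''_\infty\neq 0)>0$, positivity of the total mass of the convolution, and the inclusion of the open set $\{f_S>0\}$ in $A'+S(A'')$). The handling of the local-versus-global H\"older issue via a countable exhaustion of $\GL_d(\R)$ by compacta is also exactly how the theorem itself is set up.
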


\begin{rems}
\begin{enumerate}[(i)]
\item Since the corollary applies, in particular, to fractal percolation of dimension $>d/2$ in $\R^d$, this recovers and substantially generalizes a result of Dekking and Simon \cite[Corollary 1]{DekkingSimon08}, who proved that the difference set of two independent realizations of fractal percolation  of dimension $>1/2$ on the line has nonempty interior a.s.
\item It is not important in the proof that the measures $\mu'_n$ and $\mu''_n$ are realizations of the same SI-martingale. They could be independent realizations of two different SI-martingales satisfying \eqref{eq:Holder-affine-images} so that $\mu_n'=O(2^{\alpha'n}), \mu_n''=O(2^{\alpha''n})$ with $\alpha'+\alpha''<d$.
\end{enumerate}
\end{rems}

\subsection{Products and the breakdown of spatial independence}

While Definition \ref{def:spatially-indep-relative} provides a good substitute of spatial independence, as illustrated by Theorem \ref{thm:convolutions}, its applicability is limited. For example, if $(\mu'_n,\mu''_n)$ are independent realizations of an SI-martingale on $\R^d$, then the products $\mu'_n\times\mu''_n$ are not spatially independent relative to families of measures supported on algebraic varieties of dimension $>d$ (outside of degenerate situations involving massive parallelism). Such intersections arise naturally in many problems. For example, to study the distance set $D(A',A'')= \{ |x'-x''|:x'\in A', x''\in A''\}$, one needs to intersect the products $\mu'_n\times\mu''_n$ with hypersurfaces $\{ (x,y)\in\R^{2d}: |x-y|^2=t\}$. The situation becomes even worse if one considers products of $\ell>2$ independent realizations of an SI-martingale. For example, to study convolutions
\[
\mu^{(1)}_\infty * S_2 \mu^{(2)}_\infty * \cdots *  S_\ell \mu^{(\ell)}_\infty
\]
one needs to consider intersections with $d(\ell-1)$-planes in $\R^{\ell d}$; on the other hand, there are dependencies along $d(\ell-1)$-planes as well, so there is no relative spatial independence. Similar issues arise when studying many kinds of patterns inside random fractals, such as finite configurations or angles.

It turns out that it is possible to overcome this obstacle by allowing a certain degree of dependency. The key to this is the fact that slightly weaker versions of Hoeffding's inequality continue to hold if one allows some controlled dependency between the random variables. Both the precise statement of the appropriate condition that replaces spatial independence, and its  verification in concrete examples, take a substantial amount of work, and hence we defer the details to our forthcoming article \cite{ShmerkinSuomala14}.

\subsection{Self-products of SI-martingales}
\label{subsec:self-convolutions}

In the last section we studied products of independent realizations of SI-martingales. What about self-products $\nu_n:=\mu_n\times\mu_n$ where $(\mu_n)$ is a given SI-martingale? In this case spatial independence fails, just as for products of independent realizations, but now the martingale property \eqref{SI:martingale} also fails. Indeed, $\mu_{n+1}(x)$ determines $\nu_{n+1}(x,x)$, so \eqref{SI:martingale} always fails for points on the diagonal, and therefore also near the diagonal. Fortunately, this is not a serious issue if the supports of the measures $\eta_t$ are uniformly transversal to the diagonal. Even in this case, there are still some new dependencies: if $Q_i$ are dyadic cubes of side length $2^{-k}$ in $\R^d$ then $(\nu_{n+1}|_{Q_1\times Q_2})|\mathcal{B}_n$ and $(\nu_{n+1}|_{Q_2\times Q_3})|\mathcal{B}_n$ are \emph{not} independent. Again, it turns out that this issue is not serious when the supports of $\eta_t$ are transversal to the diagonal.

Rather than formulating a more general version of Theorem \ref{thm:Holder-continuity} valid for a weaker notion of SI-martingale, we will establish the required modification for the setting that interests us here, deferring a systematic study of martingales with weak spatial dependency to \cite{ShmerkinSuomala14}.

\begin{thm} \label{thm:self-convolutions}
Let $(\mu_n)$ be an SI-martingale which is either of $(\mathcal{F},\tau,\zeta)$-cell type with $\mu_n(x)\le C 2^{\alpha n}$, or of $(\mathcal{F},\alpha,\zeta)$-cutout type. We assume $\alpha<\frac{d}{2}$, and also that there are $\gamma_0, C>0$ such that \eqref{eq:Holder-affine-images} holds.

Write $\mathcal{O}= \{ S\in\GL_d(\R)\,: \, S+\I_d\text{ is not invertible}\}$.
Then almost surely the convolution
\[
\mu_\infty * S\mu_\infty
\]
is absolutely continuous for each $S\in \GL_d(\R)\setminus \mathcal{O}$. Moreover, if the density is denoted by $f_S$, then the map $(S,x)\mapsto f_S(x)$ is jointly locally H\"{o}lder continuous with a quantitative deterministic exponent.

In particular, this holds for $\mu_n^{\text{ball}}$, $\mu_n^{\text{snow}}$ and $\mu_n^{\text{perc}}$, and whenever $\mathcal{F}$ consists of affine copies of a fixed set $\Lambda$ with $\overline{\dim}_B(\Lambda)<d$.
\end{thm}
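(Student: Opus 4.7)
The proof follows the pattern of Theorem \ref{thm:convolutions}, again reducing to uniform H\"older continuity in $(S,z)$ of
$Y_n^{S,z}:=\int\nu_n\,d\eta_{S,z}$, where $\nu_n:=\mu_n\times\mu_n$, $V_{S,z}:=\{(x,y)\in\R^{2d}:x+Sy=z\}$, and $\eta_{S,z}:=\phi(S)\mathcal{H}^d|_{V_{S,z}}$ with $\phi$ as in the proof of Theorem \ref{thm:convolutions}. The new difficulty is that $\nu_n$ satisfies neither the martingale property \eqref{SI:martingale} (since $\nu_{n+1}(x,x)=\mu_{n+1}(x)^2$ has nontrivial conditional variance on the diagonal $D=\{(x,x):x\in\R^d\}$) nor spatial independence \eqref{SI:spatial-independence} (cubes $Q_1\times Q_2$ and $Q_3\times Q_4$ are correlated whenever $\{Q_1,Q_2\}\cap\{Q_3,Q_4\}\neq\varnothing$, notably via the ``diagonal'' contacts $Q_1=Q_4$ or $Q_2=Q_3$). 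Both defects are localised near $D$, and the hypothesis $S+\I_d\in\GL_d(\R)$ is precisely what guarantees that $V_{S,z}$ meets $D$ transversally at the single point $p_0(S,z)=((\I_d+S)^{-1}z,(\I_d+S)^{-1}z)$, uniformly on any compact $K\subset\GL_d(\R)\setminus\mathcal{O}$.

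The plan is to cover $\GL_d(\R)\setminus\mathcal{O}$ by countably many such compacts, fix one and set $\Gamma=K\times\Upsilon$ with $\Upsilon$ as in Theorem \ref{thm:convolutions}. Introduce a scale-dependent cutoff $\rho_n=2^{-\beta n}$ with $2\alpha/d<\beta<1$, which is feasible since $\alpha<d/2$, and split $V_{S,z}=V_{S,z}^{far}\cup V_{S,z}^{near}$ where $V_{S,z}^{near}=V_{S,z}\cap B(p_0(S,z),\rho_n)$ and $V_{S,z}^{far}$ is the complement. Using the uniform bound $\nu_n\le C\,2^{2\alpha n}$ together with $\mathcal{H}^d(V_{S,z}^{near})=O(\rho_n^d)$, one gets $Y_n^{S,z,near}=O(2^{n(2\alpha-\beta d)})\to 0$ uniformly in $(S,z)\in\Gamma$. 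On $V_{S,z}^{far}$, both SI-martingale defects disappear at scales $2^{-n}\ll\rho_n$: any $(x,y)\in V_{S,z}^{far}$ satisfies $|x-y|\gtrsim\rho_n$, so the cubes of $x$ and $y$ at scale $2^{-n-1}$ are disjoint and the identity $\EE(\mu_{n+1}(x)\mu_{n+1}(y)|\mathcal{B}_n)=\mu_n(x)\mu_n(y)$ holds; and any pair of points on $V_{S,z}^{far}$ whose cubes share a coordinate through the ``diagonal'' mechanism would, by uniform transversality, force one of them to lie in $B(p_0(S,z),O(\rho_n))$, which is excluded by construction.

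Consequently $\nu_n$ restricted to $V_{S,z}^{far}$ qualifies as an SI-martingale relative to $\{\eta_{S,z}^{far}:=\phi(S)\mathcal{H}^d|_{V_{S,z}^{far}}\}_{(S,z)\in\Gamma}$ in the sense of Definition \ref{def:spatially-indep-relative}, and we are set up to apply Theorem \ref{thm:Holder-continuity-relative-to-measures}: conditions \eqref{H:size-parameter-space}, \eqref{H:dim-deterministic-measures} (with $s=d$) and \eqref{H:codim-random-measure} (with exponent $2\alpha<d$) are immediate, while the a priori H\"older estimate \eqref{H:Holder-a-priori} is provided by Proposition \ref{prop:cutout-measures-satisfy-Holder-a-priori} using the non-flatness hypothesis \eqref{eq:Holder-affine-images}, exactly as in Theorem \ref{thm:convolutions}. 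This yields uniform convergence and H\"older continuity of $(S,z)\mapsto Y^{S,z}=\lim_n Y_n^{S,z}$ on $\Gamma$, and the coarea formula, as in the last step of the proof of Theorem \ref{thm:convolutions}, then produces the density $f_S(z)=\psi(S)^{-1}\phi(S)^{-1}Y^{S,z}$ of $\mu_\infty*S\mu_\infty$, jointly locally H\"older in $(S,z)$. The main technical hurdle will be book-keeping in running Lemma \ref{lem:large-deviation} with the time-varying cutoff $\rho_n$: the partition of cubes at scale $2^{-n-1}$ into $O(1)$ independent classes must be adapted to $\rho_n$. The cleanest way to manage this is to dyadically decompose $V_{S,z}\setminus\{p_0\}$ into annular shells $V_{S,z}\cap(B(p_0,2^{-k})\setminus B(p_0,2^{-k-1}))$, apply the large-deviation estimate on each shell only once $n\gtrsim k$, and sum over $k$; the resulting series converges precisely because $2\alpha<d$, recovering a uniform tail bound comparable to Corollary \ref{cor:tail_estimate}.
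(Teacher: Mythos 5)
There is a genuine gap in your treatment of spatial independence. You claim that on $V_{S,z}^{far}$ the independence defect disappears because ``any pair of points \dots whose cubes share a coordinate through the diagonal mechanism would \dots force one of them to lie in $B(p_0(S,z),O(\rho_n))$.'' This is false. The dependency between $\nu_{n+1}|_{Q_1\times Q_2}$ and $\nu_{n+1}|_{Q_3\times Q_4}$ occurs whenever $\{Q_1,Q_2\}\cap\{Q_3,Q_4\}\neq\varnothing$, and in particular when the $y$-cube of one equals the $x$-cube of the other ($Q_2=Q_3$ or $Q_1=Q_4$); such pairs exist arbitrarily far from the diagonal. Concretely, take $d=1$, $S=\I_1$, $z=0$, so $V_{S,z}=\{(x,-x)\}$ and $p_0=(0,0)$: the points $(1,-1)$ and $(-1,1)$ both lie at distance $\sqrt2$ from the diagonal, yet $x_1=y_2=1$, so the corresponding product cubes share the component cube containing $1$ and are \emph{not} conditionally independent (both depend on $\mu_{n+1}$ restricted to that cube). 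The condition $Q_2=Q_3$ only imposes one linear relation $y_1+Sy_2\approx z$ on the pair, which does not localize either point near $p_0$. Consequently $\nu_n$ restricted to $V_{S,z}^{far}$ is \emph{not} an SI-martingale relative to $\{\eta_{S,z}^{far}\}$ in the sense of Definition \ref{def:spatially-indep-relative}, and Theorem \ref{thm:Holder-continuity-relative-to-measures} (equivalently, Hoeffding's inequality applied to the full family of increments) cannot be invoked.

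The missing idea is how to run the large-deviation argument in the presence of these bounded but non-local dependencies. The paper's route is to define a graph on the product cubes meeting $V_{S,z}$, with an edge whenever two cubes share a component; uniform transversality of $V_{S,z}$ to the coordinate $d$-planes gives a degree bound $M=O_\Gamma(1)$, so the cubes split into $M$ classes that are each genuinely independent, and Hoeffding is applied class by class. Your handling of the martingale defect near the diagonal (the cutoff $\rho_n=2^{-\beta n}$ and shell decomposition) is workable but heavier than needed: since $V_{S,z}$ meets the $\sqrt d\,2^{-n}$-neighbourhood of the diagonal in a set of diameter $O_\Gamma(2^{-n})$, only $O(1)$ cubes violate the martingale property at step $n$, and their deterministic contribution $O(2^{(2\alpha-d)n})$ can simply be absorbed as an additive $O(\kappa^2)$ error in the large-deviation estimate, avoiding any $n$-dependent decomposition of the parameter family. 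With the coloring argument added and the diagonal cubes absorbed this way, the rest of your outline (verification of \eqref{H:size-parameter-space}--\eqref{H:Holder-a-priori} via Proposition \ref{prop:cutout-measures-satisfy-Holder-a-priori} and \eqref{eq:Holder-affine-images}, then the coarea formula) matches the paper and is correct.
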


Let $\nu_n=\mu_n\times\mu_n$, and fix a compact set
\[
\Gamma=\Xi\times \Upsilon\subset (\GL_d(\R)\setminus\mathcal{O})\times\R^d\,.
\]
Given $(S,z)\in\Gamma$, let $V_{S,z}=\{ (x,y):x+Sy=z\}$ and $\eta_{S,z}=\leb^d|_{V_{S,z}}$. As discussed above, $\nu_n$ is not an SI-martingale relative to $\{\eta_{S,z}\}_{(S,z)\in\Gamma}$ but nevertheless the claim of Theorem \ref{thm:Holder-continuity} holds. To see this, we first establish the following analogue of Lemma \ref{lem:large-deviation}. We denote the diagonal $\{(x,x)\,:\,x\in\R^d\}\subset\R^d\times\R^d$ by $V_\Delta$.

\begin{lemma} \label{lem:large-deviation-self-conv}
There is $C'>0$ such that for all $t=(S,z)\in\Gamma$, $n\in\N$ and $\kappa^2\ge 2^{(2\alpha-d)n}$,
\[
\PP\left(|Y_{n+1}^t - Y_n^t| > \kappa(\sqrt{Y_n^t}+C'\kappa)\right)= O\left(\exp(-\Omega(\kappa^2 2^{(d-2\alpha)n} ))\right),
\]
where, as usual, $Y_n^t = \int \nu_n \,d\eta_t$.
\end{lemma}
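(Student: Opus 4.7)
The plan is to adapt the proof of Lemma \ref{lem:large-deviation}, carefully accounting for the two ways in which $\nu_n=\mu_n\times\mu_n$ fails to be a spatially independent martingale: it lacks the martingale property along the diagonal $V_\Delta$ (since $\nu_{n+1}(x,x)=\mu_{n+1}(x)^2$ has positive conditional drift), and spatial independence breaks down for product cubes that share a coordinate. The essential geometric input is that the assumption $S+\I_d\in\GL_d(\R)$ forces $V_{S,z}$ to meet $V_\Delta$ transversally, uniformly over the compact $\Xi\subset\GL_d(\R)\setminus\mathcal{O}$, so the ``bad'' set on $V_{S,z}$ has $\mathcal{H}^d$-measure $O(2^{-dn})$ at scale $2^{-n}$.

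First I would parametrize $V_{S,z}$ by $y\mapsto(z-Sy,y)$, so that
\[
Y_n^t = c(S)\int \mu_n(z-Sy)\,\mu_n(y)\,dy
\]
for a bounded Jacobian factor $c(S)=\sqrt{\det(\I_d+S^*S)}$. Writing $\Delta_n:=\mu_{n+1}-\mu_n$ and expanding the product $\mu_{n+1}(z-Sy)\mu_{n+1}(y)$, one obtains the decomposition $Y_{n+1}^t-Y_n^t=\I+\II+\III$, where
\[
\I = c(S)\int\Delta_n(z-Sy)\mu_n(y)\,dy, \qquad \II = c(S)\int\mu_n(z-Sy)\Delta_n(y)\,dy,
\]
and $\III = c(S)\int\Delta_n(z-Sy)\Delta_n(y)\,dy$.

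The linear terms $\I$ and $\II$ are conditional martingale differences: conditional on $\mathcal{B}_n$ the function $\mu_n(y)$ is deterministic, so $\I=\int\Delta_n\,d\lambda_I$ where $\lambda_I$ is the $\mathcal{B}_n$-measurable measure on $\R^d$ (in the variable $u=z-Sy$) with density proportional to $\mu_n(S^{-1}(z-u))$. A change of variables gives $\int\mu_n\,d\lambda_I = Y_n^t$, while the pointwise bound $\mu_n \le O(2^{\alpha n})$ yields a Frostman-type estimate $\lambda_I(Q)=O(2^{-(d-\alpha)n})$ for $Q\in\mathcal{Q}_n$. Applying Lemma \ref{lem:large-deviation} conditionally on $\mathcal{B}_n$ with parameters $s=d-\alpha$ and $M=O(2^{\alpha n})$ converts the hypothesis \eqref{eq:kappaeq} into the assumed $\kappa^2\ge 2^{(2\alpha-d)n}$, and gives
\[
\PP\bigl(\,|\I|>\tfrac{\kappa}{3}\sqrt{Y_n^t}\,\bigm|\,\mathcal{B}_n\bigr) = O\bigl(\exp(-\Omega(\kappa^2\, 2^{(d-2\alpha)n}))\bigr),
\]
and symmetrically for $\II$.

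Finally I would handle $\III$ by splitting $\III=\III_{\text{near}}+\III_{\text{far}}$ according to whether $|(\I_d+S)y-z|\le C_0 2^{-n}$ or not, where $C_0$ is chosen large enough that in the complementary region the cubes of $\mathcal{Q}_{n+1}$ containing $z-Sy$ and $y$ are $(C 2^{-n})$-separated. Transversality ($\I_d+S$ uniformly invertible on $\Xi$) gives $\leb^d\{y:|(\I_d+S)y-z|\le C_0 2^{-n}\}=O(2^{-dn})$, so the trivial bound $|\Delta_n\Delta_n|=O(2^{2\alpha n})$ yields a deterministic contribution $|\III_{\text{near}}|=O(2^{(2\alpha-d)n})=O(\kappa^2)$, which is absorbed into the $C'\kappa^2$ term of the conclusion. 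For $\III_{\text{far}}$, condition \eqref{SI:spatial-independence} gives $\EE[\Delta_n(z-Sy)\Delta_n(y)\mid\mathcal{B}_n]=0$ pointwise, and the cube increments $X_Q^{\III}=c(S)\int_Q\Delta_n(z-Sy)\Delta_n(y)\,dy$ depend on $\mu_{n+1}$ only at $Q$ itself and at the $O(1)$ cubes of $\mathcal{Q}_{n+1}$ meeting $z-S(Q)$; a standard $O(1)$-coloring partitions them into conditionally independent families, and the Hoeffding argument of Lemma \ref{lem:large-deviation} yields the same sub-exponential tail. The main technical obstacle is this last step: the self-product structure couples cube increments pairwise, but transversality keeps the coupling graph sparse and reduces the diagonal drift to a harmless $O(\kappa^2)$ contribution, which is precisely what motivates the extra $C'\kappa$ term in the statement.
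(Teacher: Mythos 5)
Your proof is correct, and it organizes the argument differently from the paper's. The paper works directly in the product space: it writes $Y_{n+1}^t-Y_n^t=\sum_Q X_Q$ over dyadic cubes $Q=Q_1\times Q_2$ of $\R^{2d}$ meeting $V_t$, isolates the $O(1)$ cubes meeting a $\Theta(2^{-n})$-neighbourhood of the diagonal (these are bounded deterministically by $O(2^{(2\alpha-d)n})=O(\kappa^2)$, the source of the $C'\kappa$ term), and colours the remaining cubes by the graph joining $Q_1\times Q_2$ to $Q_3\times Q_4$ whenever $\{Q_1,Q_2\}\cap\{Q_3,Q_4\}\neq\varnothing$; transversality bounds the degree by $O_\Gamma(1)$, and Hoeffding is applied to each colour class as in Lemma \ref{lem:large-deviation}. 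You instead expand $\mu_{n+1}\mu_{n+1}-\mu_n\mu_n=(\mu_{n+1}-\mu_n)\mu_n+\mu_n(\mu_{n+1}-\mu_n)+(\mu_{n+1}-\mu_n)^{\otimes 2}$ first. This buys you something: the two linear terms are exact martingale increments of the original $d$-dimensional SI-martingale against the $\mathcal{B}_n$-measurable measures $\lambda_I,\lambda_{II}$, so Lemma \ref{lem:large-deviation} applies verbatim (with $s=d-\alpha$ and $M=O(2^{\alpha n})$, which reproduces both the exponent $\kappa^2 2^{(d-2\alpha)n}$ and the threshold $\kappa^2\ge 2^{(2\alpha-d)n}$ via \eqref{eq:kappaeq}), and neither the diagonal drift nor the coupling between product cubes arises for them at all. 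Both pathologies are confined to your quadratic term $\III$, where you treat them exactly as the paper does: the near-diagonal piece lives on a set of $y$-measure $O(2^{-dn})$ by the uniform invertibility of $\I_d+S$ and is absorbed into $C'\kappa^2$, and the far piece is a sum of conditionally centred, $O(1)$-coupled increments handled by colouring plus Hoeffding. One small point, common to both arguments: the colouring step needs independence of \emph{groups} of restrictions $\{\mu_{n+1}|_R\}_{R\in\mathcal{A}_Q}$ attached to well-separated cube families, which is a mild strengthening of \eqref{SI:spatial-independence} as literally stated but holds in all the examples and is what the paper itself uses. In the end the two proofs rest on the same two facts — uniform transversality of $V_{S,z}$ to $V_\Delta$ for $S\notin\mathcal{O}$, and the bounded degree of the coupling graph — and give the same bound; yours is slightly longer but cleanly attributes each failure of the SI-martingale axioms to a specific term.
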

\begin{proof}
Note that $\{\eta_t\}_{t\in\Gamma}$ has Frostman exponent $d$, and $\nu_n(x)\le 2^{2\alpha n}$ so the numerology is that of Lemma \ref{lem:large-deviation}. The martingale assumption \eqref{SI:martingale} is only used in the proof of Lemma \ref{lem:large-deviation} to ensure that $\EE(X_Q)=0$, where
\[
X_Q =\int_Q \mu_{n+1} \,d\eta_t - \int_Q \mu_n\,d\eta_t\,.
\]
Observe that $V_t$ intersects the neighbourhood of the diagonal $V_\Delta(\sqrt{d} 2^{-n})$ in a set of diameter $O_\Gamma(2^{-n})$ (this is the point where we use that $-1$ is not an eigenvalue of $S$). We split the family of dyadic cubes hitting $\supp\eta_t$ into two families, the ones which hit the diagonal, and the rest.  Let us denote these families by $\mathcal{Q}'_n,\mathcal{Q}''_n$ respectively. Then $|\mathcal{Q}'_n|=O(1)$, and $\EE(X_Q)=0$ for $Q\in\mathcal{Q}''_n$.

We will further split $\mathcal{Q}''_n$ into $O(1)$ subfamilies $\mathcal{Q}''_{n,j}$ as follows. Define a graph on the vertices $\mathcal{Q}''_n$ by drawing an edge between $Q_1\times Q_2$ and $Q_3\times Q_4$ (where $Q_i$ are dyadic cubes in $\R^d$) if $\{ Q_1,Q_2\}\cap\{ Q_3,Q_4\}\neq\varnothing$. Since the planes $V_t$ are uniformly transversal, the vertices in this graph have degree uniformly bounded by $M=O_\Gamma(1)$. We can then split $\mathcal{Q}''_n$ into $M$ families $\mathcal{Q}''_{n,j}$ such that there is no edge joining distinct elements of $\mathcal{Q}''_{n,j}$ for any $j$: let $\mathcal{Q}''_{n,1}$ be a maximal subset of $\mathcal{Q}''_n$ with no edge between different elements, $\mathcal{Q}''_{n,2}$ a maximal subset of $\mathcal{Q}''_n\setminus\mathcal{Q}''_{n,1}$ with no edge between different elements, and so on. By construction, the random variables $\{ X_Q: Q\in\mathcal{Q}''_{n,j}\}$ are independent for each $j$.

It follows that
\[
Y_{n+1}^t - Y_n^t = \sum_{Q\in\mathcal{Q}'_n} X_Q +  \sum_{j=1}^M \sum_{Q\in\mathcal{Q}''_{n,j}} X_Q =: Z' + \sum_{j=1}^M Z''_j\,.
\]
For each $Z''_j$ we can apply exactly the same argument as in the proof of Lemma \ref{lem:large-deviation}, to conclude that
\[
\PP(|Z''_j|>\kappa \sqrt{Y_n^t})= O\left(\exp\left(-\Omega(\kappa^2 2^{(d-2\alpha)n})\right)\right)\,.
\]
Hence the same holds for $Z''=\sum_{j=1}^M Z''_j$ by absorbing $M$ to the $O(1)$ constant.

For $Z'$, we have the deterministic bound $|Z'|= O(1) 2^{(2\alpha-d)n}= O(\kappa^2)$, by the assumption on $\kappa$. Combining the bounds for $Z'$ and $Z''$ yields the lemma.
\end{proof}

We can deduce that the conclusion of Theorem \ref{thm:Holder-continuity} remains valid in this setting.
\begin{prop} \label{prop:Holder-self-convolutions}
Almost surely, $Y_n^t$ converges uniformly over $t\in\Gamma$, and the limit $Y^t$ is H\"{o}lder continuous with a deterministic quantitative exponent.
\end{prop}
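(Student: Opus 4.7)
The plan is to follow the proof of Theorem \ref{thm:Holder-continuity} essentially verbatim, using Lemma \ref{lem:large-deviation-self-conv} as the replacement for Lemma \ref{lem:large-deviation}. First I would verify that the conditions \eqref{H:size-parameter-space}--\eqref{H:Holder-a-priori} hold (with spatial independence removed) in the current setup: condition \eqref{H:size-parameter-space} is immediate since $\Gamma$ sits in a finite-dimensional manifold; \eqref{H:dim-deterministic-measures} holds with $s=d$ because $\eta_t$ is Lebesgue measure on the $d$-plane $V_t$; and \eqref{H:codim-random-measure} holds with $2\alpha$ in place of $\alpha$ thanks to $\nu_n(x)\le C^2 2^{2\alpha n}$. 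Note that the hypothesis $s>\alpha$ from Theorem \ref{thm:Holder-continuity} becomes $d>2\alpha$, which is precisely the assumption $\alpha<d/2$.

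For \eqref{H:Holder-a-priori}, one repeats the argument of Theorem \ref{thm:convolutions}: $\nu_n=\mu_n\times\mu_n$ is of $(\mathcal{G},2\alpha,\zeta)$-cutout type (resp.\ $(\mathcal{G},2\tau,\zeta)$-cell type) where $\mathcal{G}$ consists of $\Omega\times\Omega$, $\Omega\times\Lambda$ and $\Lambda\times\Omega$ for $\Lambda\in\mathcal{F}$. Using the parametrization $\Pi_{S,z}(x)=(x,S^{-1}(z-x))$ and $\nu=\leb^d$ as in the proof of Theorem \ref{thm:convolutions}, the non-flatness estimate \eqref{eq:Holder-affine-images} together with Proposition \ref{prop:cutout-measures-satisfy-Holder-a-priori} yields \eqref{H:Holder-a-priori} with $\theta=2\alpha+\zeta$ (resp.\ $\theta=2\tau+\zeta$).

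The only remaining point is to see that the proof of Theorem \ref{thm:Holder-continuity} goes through when Lemma \ref{lem:large-deviation} is replaced by Lemma \ref{lem:large-deviation-self-conv}. Inspecting that proof, Lemma \ref{lem:large-deviation} is used solely to obtain the estimate \eqref{eq:appl-large-deviation}, from which the uniform bound \eqref{eq:bound_Z_n} on $\max_{v\in\Gamma_n}|Y_{n+1}^v-Y_n^v|$ is derived via Borel--Cantelli. Choosing $\lambda$ with $0<\lambda<(d-2\alpha)/2$ and applying Lemma \ref{lem:large-deviation-self-conv} with $\kappa=2^{-\lambda n}$ (note that $\kappa^2=2^{-2\lambda n}\ge 2^{(2\alpha-d)n}$, so the hypothesis of that lemma is satisfied), we obtain
\[
\PP\!\left(|Y_{n+1}^t-Y_n^t|>2^{-\lambda n}\sqrt{Y_n^t}+C'2^{-2\lambda n}\right)=O\!\left(\exp\!\left(-\Omega(2^{(d-2\alpha-2\lambda)n})\right)\right).
\]
Taking a union bound over the $\exp(O(2^{B\xi n}))$ points of $\Gamma_n$, and choosing $\xi>0$ small enough that $B\xi<d-2\alpha-2\lambda$, Borel--Cantelli yields the modified version of \eqref{eq:bound_Z_n},
\[
\max_{v\in\Gamma_n}|Y_{n+1}^v-Y_n^v|\le 2^{-\lambda n}\max(\overline{Y}_n,1)^{1/2}+C'2^{-2\lambda n}\quad\text{for all }n\ge N_1.
\]

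From this point onward, the proof is identical to that of Theorem \ref{thm:Holder-continuity}: the extra additive term $C'2^{-2\lambda n}$ is deterministic and decays exponentially, and can simply be absorbed into the term $O(\overline{Y}_N+1)2^{-\e n}$ in the key estimate \eqref{eq:key-Holder-estimate}. Consequently $\overline{X}=\sup_n X_n<\infty$ almost surely, and $Y_n^t$ converges uniformly, exponentially fast, to a limit $Y^t$ that is $\gamma$-H\"older for a deterministic $\gamma>0$. The main (only) subtlety is the loss of spatial independence near the diagonal and on ``shared coordinate'' pairs, which is precisely what Lemma \ref{lem:large-deviation-self-conv} handles by splitting $\Q_{n+1}$ into the $O(1)$ diagonal cubes plus a bounded number of independent subfamilies; once that lemma is in hand, the modifications above are purely cosmetic.
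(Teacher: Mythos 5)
Your proposal is correct and follows essentially the same route as the paper: verify \eqref{H:size-parameter-space}--\eqref{H:Holder-a-priori} exactly as in Theorem \ref{thm:convolutions}, then note that Lemma \ref{lem:large-deviation} enters the proof of Theorem \ref{thm:Holder-continuity} only through \eqref{eq:appl-large-deviation} and replace it by Lemma \ref{lem:large-deviation-self-conv} with $\kappa=2^{-\lambda n}$, $\lambda<(d-2\alpha)/2$. The only (cosmetic) difference is in absorbing the extra $C'\kappa$ term: the paper uses $\sqrt{x}+1\le 2\max(x,1)$ to fold it into a factor of $2$ in \eqref{eq:bound_Z_n}, whereas you carry it as an explicit exponentially decaying additive term and absorb it into \eqref{eq:key-Holder-estimate}; both work equally well.
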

\begin{proof}
Firstly, we note that $2\alpha<d$ by assumption, and hypotheses \eqref{H:size-parameter-space}--\eqref{H:Holder-a-priori} of Theorem \ref{thm:Holder-continuity} are met. Only \eqref{H:Holder-a-priori} is nontrivial, but this follows exactly as in the proof of Theorem \ref{thm:convolutions} (the fact that we have self-products instead of products of two independent realizations does not change the geometry underlying \eqref{H:Holder-a-priori}). Thus, we need to check that the proof of the theorem goes through in the current setting.

We see  that in the proof of Theorem \ref{thm:Holder-continuity}, Lemma \ref{lem:large-deviation} is applied with $\kappa=2^{-\lambda n}\ll 1$, thus, using that $\sqrt{x}+1\le 2\max(x,1)$, Lemma \ref{lem:large-deviation-self-conv} implies that, instead of \eqref{eq:appl-large-deviation}, we get
\[
\PP(|Y_{n+1}^t - Y_n^t|> 2^{-\lambda n} 2\max(Y_n^t,1)) \le \exp\left(-\Omega(2^{(d-2\alpha-2\lambda)n})\right).
\]
Continuing as in the proof of the theorem, this shows that \eqref{eq:bound_Z_n} holds with a new factor of $2$ in the right-hand side, however this factor is immaterial and the rest of the proof proceeds in the same way.
\end{proof}

\begin{proof}[Proof of Theorem \ref{thm:self-convolutions}]
The proof is identical to the proof of Theorem \ref{thm:convolutions}, except that we rely on Proposition \ref{prop:Holder-self-convolutions} instead of Theorem \ref{thm:Holder-continuity-relative-to-measures}.
\end{proof}

\begin{rem}
The exclusion $S\notin\mathcal{O}$ is in general necessary: note that e.g.
\[
\mu*(-\I_d\mu)(B(0,\e)) = \int \mu(B(x,\e))d\mu(x)\,,
\]
and therefore
\[
\dim_2(\mu)=\lim_{\e\downarrow 0} \frac{\log \mu*(-\I_d\mu)(B(0,\e))}{\log\e}\,,
\]
where $\dim_2$ is correlation dimension (see e.g. \cite[Section 4]{PeresSolomyak00} for this way of defining $\dim_2$). Since $\dim_2\mu\le \dim_H\supp\mu$ for any measure, it follows that the density $\e^{-d}\mu*(-\I_d\mu)(B(0,\e))$ grows exponentially as $\e\downarrow 0$ whenever our random measures are supported on a set of dimension $<d$ (which is the case for all our main examples when $\alpha>0$).

More generally, $\mu_\infty*S\mu_\infty$ can have singularities on the set $E_S=(S+\I_d)(\R^d)$ for any $S\in\mathcal{O}$. However, it still holds that the measure $\mu_\infty*S\mu_\infty$ is absolutely continuous, and the set $A+S(A)$ has nonempty interior, for \emph{all} $S\in\GL_d(\R)$.
\end{rem}

\begin{cor}
Under the assumptions of Theorem \ref{thm:self-convolutions}, a.s. on $\mu_\infty\neq 0$,  $\mu_\infty*S\mu_\infty$ is absolutely continuous, and the set $A+S(A)$ has nonempty interior, for \emph{all} $S\in\GL_d(\R)$.
\end{cor}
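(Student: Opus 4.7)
The plan is to extend Theorem~\ref{thm:self-convolutions} from $\GL_d(\R)\setminus\mathcal{O}$ to all of $\GL_d(\R)$; for $S\notin\mathcal{O}$ the statement (absolute continuity and nonempty interior via the H\"older continuous positive density) is immediate from Theorem~\ref{thm:self-convolutions}, so I would fix $S\in\mathcal{O}$ and proceed by a self-similar decomposition that replaces the problematic self-convolution by convolutions of \emph{independent} pieces, at which point Theorem~\ref{thm:convolutions} applies with no restriction on $S$.

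First I would fix a large integer $n_0$, condition on $\mathcal{B}_{n_0}$, and let $\{\Lambda_i\}_{i=1}^{M_{n_0}}$ be the ``cells'' at stage $n_0$ (surviving dyadic cubes for $\mu_n^{\text{perc}}$, surviving subdivision cells for $\mu_n^{\text{tile}}$, or the connected components of $\Omega\setminus\bigcup_{\diam\ge 2^{-n_0}} s_j(\Lambda_j+t_j)$ for $\mu_n^{\text{ball}}$ and $\mu_n^{\text{snow}}$). Set $\mu^{(i)}:=\mu_\infty|_{\Lambda_i}$. The key structural fact is that conditional on $\mathcal{B}_{n_0}$, the measures $\{\mu^{(i)}\}$ are mutually independent, and each is distributed as a scaled and translated copy of an SI-martingale of the same type satisfying the hypotheses of Theorem~\ref{thm:convolutions} (this is immediate for the subdivision examples and follows for the Poisson examples from the independence of the restrictions of the driving Poisson process to disjoint spatial regions). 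I would then write
\[
\mu_\infty * S\mu_\infty \;=\; U_{n_0} + D_{n_0}, \qquad U_{n_0}=\sum_{i\neq j}\mu^{(i)}*S\mu^{(j)}, \qquad D_{n_0}=\sum_{i}\mu^{(i)}*S\mu^{(i)},
\]
and apply Theorem~\ref{thm:convolutions} to each off-diagonal term $\mu^{(i)}*S\mu^{(j)}$, $i\neq j$, obtaining that it has an everywhere-defined H\"older continuous density. Since there are only finitely many terms for each fixed $n_0$, $U_{n_0}$ has a continuous density on $\R^d$.

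Next I would control the diagonal via $\|D_{n_0}\|=\sum_i\|\mu^{(i)}\|^2\le\bigl(\max_i\|\mu^{(i)}\|\bigr)\|\mu_\infty\|$. Applying Lemma~\ref{lem:Yt-survives} at scale $2^{-n_0}$ with $\eta=\mathbf{1}_{\Lambda_i}\,d\leb^d$, the random variable $2^{(d-\alpha)n_0}\|\mu^{(i)}\|$ has an exponential tail uniformly in $i$, and $M_{n_0}=O(2^{\zeta n_0})$ with $\zeta\le d$; a union bound plus Borel--Cantelli then give $\max_i\|\mu^{(i)}\|=O(n_0\,2^{-(d-\alpha)n_0})$ a.s.\ for all large $n_0$, so $\|D_{n_0}\|\to 0$ almost surely. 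By uniqueness of the Lebesgue decomposition, the singular part of $\mu_\infty*S\mu_\infty$ is dominated by $D_{n_0}$ for every $n_0$, hence vanishes, establishing absolute continuity. For the nonempty interior statement, on the event $\mu_\infty\neq 0$ one has $\|\mu_\infty*S\mu_\infty\|=\|\mu_\infty\|^2>0$, so $\|U_{n_0}\|>0$ for $n_0$ large; its continuous density is then strictly positive on a nonempty open set contained in $\bigcup_{i\ne j}(\Lambda_i+S\Lambda_j)\subset A+S(A)$.

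The main obstacle is the first step: for each example class, one must cleanly isolate the sub-$\sigma$-algebra $\mathcal{B}_{n_0}$ and the decomposition $\mu_\infty=\sum_i\mu^{(i)}$ so that conditional independence and self-similarity of the pieces genuinely hold. For $\mu_n^{\text{perc}}$ and $\mu_n^{\text{snowtile}}$ this is immediate from the hierarchical construction, but for the Poisson models one must take $\{\Lambda_i\}$ to be the components of the cutout set after removing only shapes of diameter $\ge 2^{-n_0}$, and verify that the restriction of the Poisson process to each $\Lambda_i$ continues as an independent Poissonian cutout process inside $\Lambda_i$; this rests on the translation and scale invariance of $\mathbf{Q}$ together with property~\eqref{PPP:independence}. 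Once this is in place, the remainder of the argument is a soft scaling computation plus an application of Theorem~\ref{thm:convolutions} (whose hypotheses are scale and translation invariant, so rescaled copies of the process still fall under its scope).
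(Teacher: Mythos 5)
Your overall strategy (split $\mu_\infty*S\mu_\infty$ into an absolutely continuous ``off-diagonal'' part built from convolutions of independent pieces, plus a ``diagonal'' remainder of vanishing mass) is sound, and for the subdivision models $\mu_n^{\text{perc}}$ and $\mu_n^{\text{tile}}$ it does go through, because there the stage-$n_0$ cells genuinely evolve independently given $\mathcal{B}_{n_0}$. But there is a real gap for the Poissonian models, which are among the main examples the corollary must cover. The conditional independence of the pieces $\{\mu^{(i)}\}$ fails there: property \eqref{PPP:independence} gives independence of the point process over \emph{disjoint families of shapes}, and the families of shapes (of diameter $<2^{-n_0}$, still to be removed) that meet $\Lambda_i$ and that meet $\Lambda_j$ overlap whenever $\dist(\Lambda_i,\Lambda_j)$ is smaller than the diameters in question --- a single removed ball of diameter just under $2^{-n_0}$ can straddle several cells. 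This is exactly why Definition \eqref{SI:spatial-independence} demands $(C2^{-n})$-separation at stage $n$, and it is why the paper cannot simply invoke Theorem \ref{thm:convolutions} on nearby pieces. Worse, for $\mu_n^{\text{ball}}$ or $\mu_n^{\text{snow}}$ the connected components of $A_{n_0}=\Omega\setminus\Delta_{2^{-n_0}}^1(\mathcal{Y})$ need not have diameter $\approx 2^{-n_0}$ at all (typically $A_{n_0}$ is connected, or has a few large components), so the decomposition into small cells you rely on does not even exist; you would have to partition $A_{n_0}$ artificially into dyadic cubes, and then adjacent cubes are dependent. The products $\mu^{(i)}\times\mu^{(j)}$ for such adjacent $i\neq j$ are precisely the ones whose supports approach the diagonal $V_\Delta$, so your decomposition relocates the problem rather than removing it.

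The gap is fixable, and the fix essentially reproduces the paper's proof: enlarge the exceptional part to include \emph{all} pairs $(i,j)$ with $\dist(\Lambda_i,\Lambda_j)<C2^{-n_0}$, i.e.\ cut out a full $O(2^{-n_0})$-neighbourhood $D_{n_0}$ of the diagonal in the product space. Its mass $(\mu_\infty\times\mu_\infty)(D_{n_0})$ tends to $0$ because $\mu_\infty$ has no atoms, so $(\mu_\infty\times\mu_\infty)(V_\Delta)=0$. The remaining part $(\mu_n\times\mu_n)|_{\R^{2d}\setminus D_{n_0}}$ is then shown to be an SI-martingale (the separation from the diagonal restores enough independence, with the residual bounded dependencies absorbed into the Hoeffding argument as in Lemma \ref{lem:large-deviation-self-conv}), and Theorem \ref{thm:convolutions}'s proof applies to it for \emph{all} $S\in\GL_d(\R)$, since the restriction $S\notin\mathcal{O}$ was only needed to keep $V_{S,z}$ transversal to $V_\Delta$, which is moot once a diagonal neighbourhood has been excised. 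Absolute continuity then follows from summing over the annuli $D_{n-1}\setminus D_n$ rather than from a Lebesgue-decomposition limit, but your limiting argument would also work. In short: your soft scaling computation and the treatment of the diagonal mass are fine; the missing ingredient is a correct handling of the \emph{near}-diagonal pairs, where neither independence nor Theorem \ref{thm:convolutions} is available.
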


\begin{proof}
Let $\Gamma=\Xi\times \Upsilon\subset\GL_d(\R)\times\R^d$ be a compact set as in the proof of Theorem \ref{thm:convolutions}.
For each $n\in\N$, let $D_n$ be a finite union of sets $\Lambda'\times\Lambda''$, where each $\Lambda',\Lambda''$ is a similar image of some $\Lambda\in\mathcal{F}$, such that
\[
(\Omega\times\Omega)\cap V_\Delta\left((C+\sqrt{d}) 2^{-n}\right)\subset D_{n}\subset V_\Delta\left(O(2^{-n})\right)\,.
\]
Note that to construct such $D_n$ it is enough to find one $\Lambda\in\mathcal{F}$ with nonempty interior, and if there were no such $\Lambda$, assumption \eqref{eq:Holder-affine-images} would imply that $\mu_\infty=0$ a.s.

Fix $n_0\in\N$, and consider the sequence $\nu_{n,n_0}=(\mu_n\times\mu_n)|_{\R^{2d}\setminus D_{n_0}}$, $n\ge n_0$, conditioned on $\mathcal{B}_{n_0}$. This is an SI-martingale (because we are cutting out a sufficiently large neighbourhood of the diagonal); denote the limit by $\nu_{\infty,n_0}$. Although these sequences are not spatially independent (even relative to $\eta_t$), just as above the dependencies are bounded, so the proofs of Lemma \ref{lem:large-deviation-self-conv}, Proposition \ref{prop:Holder-self-convolutions} and Theorem \ref{thm:self-convolutions} carry over, except that we do not need to exclude maps $S$ such that $S+\I_d$ is not invertible. Condition \eqref{eq:Holder-shape} is checked exactly as in the proof of Theorem \ref{thm:convolutions} (note that \eqref{eq:Holder-affine-images} is valid for the sets $\Lambda',\Lambda''$ used to define $D_{n_0}$, where the constant $C$ is allowed to depend on $n_0$). Hence a.s. the images of $\nu_{\infty,n_0}$ under the projections $g_S(x,y)=x+Sy$ are H\"{o}lder continuous for all $S\in\GL_d(\R)$.

Further, since $\mu_\infty$ has no atoms a.s., $(\mu_\infty\times\mu_\infty)(V_\Delta)=0$ and it follows that a.s. on $\mu_\infty\neq 0$ there is $n_0$ such that $\nu_{\infty,n_0}\neq 0$. Conditioning on such $n_0$ then implies that a.s $A+SA$ has nonempty interior for all
$S\in\GL_d(\R)$.

The absolute continuity of the convolutions $\mu_\infty*S\mu_\infty$ is obtained by writing $\mu_\infty*S\mu_\infty=\sum_{n=1}^\infty g_S((\mu_\infty\times\mu_\infty)|_{U_n})$, where $U_1=\R^{2d}\setminus D_1$ and $U_n=D_{n-1}\setminus D_{n}$ for $n> 1$.
\end{proof}

\begin{rem}
As mentioned in the introduction, K{\"o}rner \cite{Korner08} constructed random measures $\mu$ on the real line supported on a set of any dimension $s\in (1,2)$, such that the self-convolution $\mu*\mu$ is absolutely continuous, and moreover its density is H\"{o}lder with the optimal exponent $s-1/2$. Other than for the value of the H\"{o}lder exponent, Theorem \ref{thm:self-convolutions} shows that this holds for a rich class of random measures, including fractal percolation and related models. Moreover, the existence of measures $\mu$ on $\R^d$ supported on sets of dimension $d/2+\e$ such that $\mu * S\mu$ is absolutely continuous (even with a H\"{o}lder density) for all $S\in\GL_d(\R)\setminus\mathcal{O}$ is a new result.
\end{rem}

\section{Applications to Fourier decay and restriction}
\label{sec:Fourier}

\subsection{Fourier decay of SI-martingales}
\label{subsec:Salem}

Recall that for a measure $\mu\in\mathcal{P}_d$, its Fourier dimension is $\dim_F\mu=\sup\{ 0\le\sigma\le d: \widehat{\mu}(\xi)=O_\sigma (|\xi|^{-\sigma/2})\}$, and we say that $\mu$ is a \textbf{Salem measure} if $\dim_F\mu=\dim_H\mu$ (it is well known that $\dim_F\mu\le\dim_H\mu$). There are relatively few known classes of Salem measures, and among those many are ad-hoc random constructions, see e.g. \cite{LabaPramanik09} and references there. We next show that for some classes of SI-martingales the limits are indeed Salem measures. In particular, this is the case for $\mu_n^{\text{perc}(\alpha,d)}$ provided $\alpha\ge d-2$ (and this range is sharp), which appears to be a new result.

\begin{thm} \label{thm:Salem}
Let $(\mu_n)$ be a subdivision martingale (recall Definition \ref{def:subdivision}), defined using the dyadic filtration $\mathcal{Q}_{n}$ of the unit cube $[0,1]^d$, and suppose for each $n$ there is $p_n\in (0,1]$ such that $W_Q\in \{0,p_n^{-1}\}$ for all $Q\in\mathcal{Q}_n$. Write $\beta_n=(p_1\cdots p_n)^{-1}$, and note that in this case $\mu_n = \beta_n \mathbf{1}_{A_n}$ for some random set $A_n\subset [0,1]^d$.

Suppose $\frac1n \log_2\beta_n\to\alpha\in [d-2,d]$. Then a.s. $\mu=\mu_\infty$ is a Salem measure and, moreover, $\dim_H\mu=\dim_H\supp\mu=\dim_B\supp\mu=d-\alpha$ a.s. on $\mu\neq 0$.
\end{thm}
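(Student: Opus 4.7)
My plan has two essentially separate parts: the dimension statements and the Fourier decay.

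For the dimension, the upper bound $\overline{\dim}_B(\supp\mu)\le d-\alpha$ is a simple counting argument: since $\mu_n=\beta_n\mathbf{1}_{A_n}$ is a nonnegative martingale with $\EE\mu_n(\R^d)=\mu_0(\R^d)=1$, we have $\EE\#\{Q\in\mathcal{Q}_n:Q\subset A_n\}=2^{nd}/\beta_n$, and a Borel--Cantelli argument applied to the events $\{\#\{Q\subset A_n\}>2^{n(d-\alpha+\varepsilon)}\}$ together with $\beta_n=2^{n\alpha+o(n)}$ yields $\overline{\dim}_B(\supp\mu)\le d-\alpha$ a.s. For the matching lower bound $\dim_H\mu\ge d-\alpha$ (on $\mu\neq 0$), I would use the standard energy method: estimate $\EE\iint|x-y|^{-s}\,d\mu(x)d\mu(y)$ for $s<d-\alpha$ by Fatou and the fact that $\PP(x,y\in A_n)=O(\beta_n^{-2}|x-y|^{-\alpha})$ (which follows from the spatial independence at scales exceeding $|x-y|$), exactly as in the proof of Theorem \ref{thm:dim_cut_out_set}; the existence of $\mu\neq 0$ with positive probability is guaranteed by Lemma \ref{lem:Yt-survives}.

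The heart of the proof is the Fourier decay bound. For each fixed $\xi\in\R^d$ the sequence $X_n(\xi):=\widehat{\mu_n}(\xi)$ is a martingale (by \eqref{SI:martingale}), and the increment
\[
 X_{n+1}(\xi)-X_n(\xi)=\beta_n\sum_{Q\in\mathcal{Q}_{n+1},\,\widehat{Q}\subset A_n}(W_Q-1)\,\widehat{\mathbf{1}_Q}(\xi)
\]
is, conditional on $\mathcal{B}_n$, a sum of independent zero-mean terms with $\EE(W_Q-1)^2=p_{n+1}^{-1}-1$. Using $|\widehat{\mathbf{1}_Q}(\xi)|^2\le\prod_{j=1}^d\min(2^{-2(n+1)},|\xi_j|^{-2})$, which for $|\xi|\approx 2^N$ gives $O(2^{-2(n+1)(d-1)}|\xi|^{-2})$ when $n\le N$ and $O(2^{-2(n+1)d})$ when $n\ge N$, together with $\#\{Q\in\mathcal{Q}_{n+1}:\widehat{Q}\subset A_n\}\lesssim 2^d\cdot 2^{nd}|A_n|$ and $\EE|A_n|=\beta_n^{-1}$, yields
\[
 \EE|X_{n+1}(\xi)-X_n(\xi)|^2\lesssim\begin{cases}2^{n(\alpha-d+2)}|\xi|^{-2},& n\le N,\\ 2^{-n(d-\alpha)},& n\ge N.\end{cases}
\]
Summing over $n$, using $\alpha\ge d-2$ so that the first sum is bounded by $O(\log|\xi|)\cdot|\xi|^{-(d-\alpha)}$ at worst, and combining with the trivial $|X_0|^2\lesssim|\xi|^{-2}\le|\xi|^{-(d-\alpha)}$, I obtain the $L^2$ bound $\EE|\widehat{\mu}(\xi)|^2\lesssim (1+\log|\xi|)\,|\xi|^{-(d-\alpha)}$. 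The martingale converges in $L^2$, so $\widehat\mu(\xi)=\lim X_n(\xi)$ a.s.\ and in $L^2$.

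The final step is to upgrade the pointwise $L^2$ bound to the a.s.\ uniform bound $|\widehat{\mu}(\xi)|=O_\varepsilon(|\xi|^{-(d-\alpha)/2+\varepsilon})$. I would do this via moment control plus a net argument. For each $k\in\N$, Rosenthal's or Burkholder's inequality applied to the martingale $X_n(\xi)$ together with the independence of the increments $\{(W_Q-1)\widehat{\mathbf{1}_Q}(\xi)\}_Q$ and the uniform bound $|(W_Q-1)\widehat{\mathbf{1}_Q}(\xi)|\lesssim 2^{-(n+1)d}\cdot p_{n+1}^{-1}$ propagates the $L^2$ estimate into
\[
 \EE|\widehat{\mu}(\xi)|^{2k}\lesssim_k (1+\log|\xi|)^{C_k}\,|\xi|^{-k(d-\alpha)}.
\]
Since $\widehat{\mu}$ is Lipschitz with constant $\le 2\pi\diam(\supp\mu_0)\,\mu(\R^d)$ and $\mu(\R^d)$ has an exponential tail (Lemma \ref{lem:Yt-survives}), choosing a $|\xi|^{-M}$-net $\Xi_N$ on the dyadic annulus $\{2^N\le|\xi|<2^{N+1}\}$ with $M$ large enough, Markov's inequality for the $2k$-th moment and a union bound over $\Xi_N$ (for $k$ chosen large in terms of $d,M$ and $\varepsilon$) followed by Borel--Cantelli in $N$ gives the desired uniform decay almost surely. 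Combining with the matching $\dim_H\mu=d-\alpha$ from the first paragraph yields $\dim_F\mu=\dim_H\mu=d-\alpha$.

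The main obstacle I anticipate is the bookkeeping in the higher-moment estimate: ensuring the constants and logarithmic factors are quantitatively tight enough so that the union bound over $\Xi_N$ succeeds while still being able to take $k\to\infty$ and $\varepsilon\to 0$ at the end. Conceptually, however, this is standard martingale-method technology (going back to Kahane's treatment of multiplicative chaos), and the critical computation that makes the Salem property work is the borderline estimate at $\alpha=d-2$, where the logarithmic accumulation in the low-frequency sum is exactly affordable.
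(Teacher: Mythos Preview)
Your approach is sound and would go through, but it differs from the paper's in one essential place: the passage from pointwise control of $\widehat{\mu}(\xi)$ to uniform control over all frequencies.

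The paper, like you, writes $\widehat{\mu}_{n+1}(k)-\widehat{\mu}_n(k)$ as a sum of $M_n$ conditionally independent bounded terms (one per parent cube in $A_n$) and applies Hoeffding. But instead of higher moments and a Lipschitz net, it exploits an exact algebraic identity specific to dyadic indicators: for $Q\in\mathcal{Q}_{n+1}$, $\|k\|<2^{n+1}$, and $0\neq\ell\in\Z^d$,
\[
\widehat{\mathbf{1}_Q}(k+2^{n+1}\ell)=\prod_{j:\,k_j+2^{n+1}\ell_j\neq 0}\frac{k_j}{k_j+2^{n+1}\ell_j}\,\widehat{\mathbf{1}_Q}(k)\,,
\]
which gives $|\widehat{\mu}_{n+1}(k')-\widehat{\mu}_n(k')|\le (2^{n+1}/\|k'\|)\,|\widehat{\mu}_{n+1}(k)-\widehat{\mu}_n(k)|$ for all $\|k'\|\ge 2^{n+1}$. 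Thus control of the increment at the $O(2^{(n+1)d})$ integer frequencies $\|k\|<2^{n+1}$ deterministically propagates to \emph{all} integer frequencies, and a single union bound plus Borel--Cantelli and telescoping finish the proof. This is slicker than the moment/net route and avoids the Rosenthal/Burkholder bookkeeping you flag as the main obstacle; conversely, your method does not rely on the dyadic lattice structure and would adapt to more general filtrations.

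Two minor points. First, your separate energy argument for $\dim_H\mu\ge d-\alpha$ is unnecessary: once $\dim_F\mu\ge d-\alpha$, the general inequality $\dim_F\le\dim_H$ closes the chain $\dim_F\mu\le\dim_H\mu\le\overline{\dim}_B\supp\mu\le d-\alpha$. Second, in your $L^2$ increment computation you write the sum over $Q\in\mathcal{Q}_{n+1}$ and use independence, but \eqref{SD:independent} only guarantees independence across distinct parent cells; group by parent (as the paper does) to make the independence legitimate. This costs only an $O_d(1)$ factor.
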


The proof is an adaptation  of the construction in \cite[Section 6]{LabaPramanik09}, which in essence deals with a special case, but is also closely related to the proof of Theorem \ref{thm:Holder-continuity}, and in particular relies on an argument similar to Lemma \ref{lem:large-deviation}, further emphasizing the unity of the SI-martingale approach.
\begin{proof}
The dimension upper bounds are standard, but we sketch the argument for the reader's convenience. The assumptions imply that $A_n$ is a union of a random number $M_n$ of cubes in $\mathcal{Q}_n$. Each cube $Q$ of $A_n$ gives rise to a random number $X_{Q,n}$ of offspring, where $\EE(X_{Q,n})=2^d p_{n+1}$. It follows that $2^{-dn}\beta_n M_n$ is a nonnegative martingale, and hence converges a.s. to a finite limit $K$. This shows that (for large $n$) $A_n$ can be covered by $2K 2^{dn}\beta_n^{-1}$ cubes in $\mathcal{Q}_n$, and hence $\overline{\dim}_B(\supp\mu) \le d-\alpha$.

For the rest of the proof, it suffices to show that $\dim_F\mu \ge d-\alpha$ or, in other words, that given $\sigma<d-\alpha$, we have the estimate $\widehat{\mu}(k) = O_\sigma(\|k\|^{-\sigma/2})$ for $k\in\Z^d$ (for the fact that it suffices to establish decay at integer frequencies, see \cite[Lemma 9A4]{Wolff03}). It is convenient to use the $L^\infty$ norm $\|k\|=\max(|k_1|,\ldots,|k_d|)$.

We write $e(x)=\exp(-2\pi i x)$ and define a (complex) measure $\eta_k = e(k\cdot x) dx$. Now
\[
\widehat{\mu}_{n+1}(k)-\widehat{\mu}_n(k)=\sum_{Q\in\Q_n}X_Q\,,
\]
where $X_Q=\int_Q\mu_{n+1}\,d\eta_k-\int_Q\mu_n\,d\eta_k$. Note that, conditional on $\mathcal{B}_n$, $A_n$ is a union of $M_n$ cubes $Q\in\mathcal{Q}_n$, and $|X_Q|=O(\beta_n 2^{-nd})$. Using Hoeffding's inequality \eqref{eq:Hoeffding-Janson} for the real and imaginary parts of $\eta_k$, we obtain
\begin{align*}
\PP\left(|\widehat{\mu}_{n+1}(k)-\widehat{\mu}_n(k)| > 2^{-\sigma n/2}\|\mu_n\|^{1/2}\,|\,\mathcal{B}_n\right) &=
O\left(\exp\left(-\Omega(2^{(2d-\sigma)n}\|\mu_n\|\beta_n^{-2}M_n^{-1})\right)\right)\\
&=O\left(\exp(-\Omega(2^{(d-\sigma)n}\beta_n^{-1}))\right)\,.
\end{align*}
Since $\log_2\beta_n\to\alpha$ and $\sigma<d-\alpha$, we get
\begin{equation} \label{eq:Salem-larve-dev}
\PP\left(|\widehat{\mu}_{n+1}(k)-\widehat{\mu}_n(k)| > 2^{-\sigma n/2}\|\mu_n\|^{1/2} \text{ for some } \|k\|< 2^{n+1}\right) \le \e_n\,,
\end{equation}
where $\e_n=O(2^n)\exp(-\Omega(2^{(d-\sigma)n}\beta_n^{-1}))$ is summable.

Let $Q\in\mathcal{Q}_{n+1}$. Since all coordinates of all vertices of $Q$ are of the form $m 2^{-n-1}$, it follows that for $\|k\|<2^{n+1},0\neq\ell\in\mathbb{Z}^d$,
\[
\widehat{\mathbf{1}}_Q(k+2^{n+1}\ell) = \prod_{j:k_j+2^{n+1}\ell_j\neq 0}\frac{k_j}{k_j+ 2^{n+1}\ell_j} \widehat{\mathbf{1}}_Q(k)\,.
\]
Since $\widehat{\mu}_{n+1}$ is a linear combination of the functions $\widehat{\mathbf{1}}_Q$, the same relation holds between $\widehat{\mu}_{n+1}(k+2^{n+1}\ell)$ and $\widehat{\mu}_{n+1}(k)$ (and this holds also for $\widehat{\mu}_{n}$ in place of $\widehat{\mu}_{n+1}$). Note that we can write any frequency $k'$ with $\|k'\|\ge 2^{n+1}$ as $k'=k+2^{n+1}\ell$, where $\|k\|<2^{n+1}$, $k_j\ell_j\ge 0$ for all $j$ and $\ell\neq 0$. For such $k'$, we have
\begin{align*}
|\widehat{\mu}_{n+1}(k')-\widehat{\mu}_n(k')| &\le  \prod_{j:k_j+2^{n+1}\ell_j\neq 0}\frac{|k_j|}{|k_j+ 2^{n+1}\ell_j|}|\widehat{\mu}_{n+1}(k)-\widehat{\mu}_n(k)|\\
&< \frac{2^{n+1}}{\|k'\|}|\widehat{\mu}_{n+1}(k)-\widehat{\mu}_n(k)|\,,
\end{align*}
where we have bounded each factor by $1$, except the one for which $|k_j+2^{n+1}\ell_j|=\|k'\|$. Combining this with \eqref{eq:Salem-larve-dev}, we arrive at the following key fact: $\PP(E_n)<\e_n$, where $E_n$ is the event
\[
\left|\widehat{\mu}_{n+1}(k)-\widehat{\mu}_n(k)\right|>
\|\mu_n\|^{1/2}\min\left(1,\frac{2^{n+1}}{\|k\|} \right) 2^{-\sigma n/2}\text{ for some }k\in\Z^d\,.
\]
Since $\e_n$ was summable, and $\|\mu_n\|$ is a.s. bounded, it follows from the Borel-Cantelli Lemma that a.s. there are $C$ and $n_0$ such that
\[
\left|\widehat{\mu}_{n+1}(k)-\widehat{\mu}_n(k)\right|  \le C \min\left(1,\frac{2^{n+1}}{\|k\|} \right) 2^{-\sigma n/2}\quad\text{for all }k\in\Z^d,n\ge n_0\,.
\]
Thus, choosing $n_1\in\N$ such that $2^{n_1}\le\|k\|<2^{n_1+1}$, and telescoping, we have
\begin{equation}\label{eq:telescope}
\begin{split}
\left|\widehat{\mu}_m(k)-\widehat{\mu}_{n_0}(k)\right|&\le\sum_{n_0\le n\le n_1}2C2^{n(1-\sigma/2)}\|k\|^{-1} +\sum_{\max(n_1,n_0)<n\le m}C 2^{-n\sigma/2}\\
&=O(\|k\|^{-\sigma/2}))
\end{split}
\end{equation}
 for all $k\in\mathbb{Z},m\ge n_0$.

Noting that $\widehat{\mu}_{n_0}(k)=O_{n_0}(\|k\|^{-1})$ and letting $m\to\infty$ finishes the proof.
\end{proof}

\begin{rems}
\begin{enumerate}[(i)]
\item The above theorem is sharp: if $\alpha<d-2$, then $\dim_F(\mu_\infty)=2$ a.s. on $\mu_\infty\neq 0$ (so $\mu_\infty$ is \emph{not} a Salem measure). Indeed, one can see that $\widehat{\mu}_\infty(k) = O(\|k\|^{-1})$ from the estimate \eqref{eq:telescope}, where the first sum is of order $O_{n_0}(\|k\|^{-1})$ if $\sigma>2$ (otherwise the proof is the same). This shows that $\dim_F(\mu_\infty)\ge 2$.

On the other hand, the orthogonal projection $\nu$ of $\mu_\infty$ onto a fixed coordinate axis cannot be absolutely continuous with a continuous density, as the dyadic nature of the construction together with the independence assumption \eqref{SD:independent} imply that a.s. on $\mu_\infty\neq 0$ there are discontinuities at all dyadic points in $\supp\nu$. Since a measure $\nu$ on $\R$ with $|\widehat{\nu}(\xi)|=O(|\xi|^{-1-\delta})$ for $\delta>0$ has a H\"{o}lder continuous density (see \cite[Proposition 3.2.12]{Grafakos08}), we have shown that $\dim_F\mu_\infty \le 2$.
\item The result is valid also in slightly less homogeneous situations: Suppose $(\mu_n)\in\mathcal{SM}$ is defined via the dyadic filtration and that for each $Q\in\mathcal{Q}$, there is $p(Q)\in(0,1]$ such that $W_Q\in\{0,p(Q)^{-1}\}$.
For $Q\in\mathcal{Q}_n$, let $Q_1,\ldots Q_{n}$ be its dyadic ancestors and denote $\beta_n(Q)=\prod_{i=1}^n p(Q_i)^{-1}$. If
\[
\frac1n \log_2\beta_n(Q)\to\alpha\in [d-2,d]\,
\]
uniformly as $n\to\infty$, the proof of Theorem \ref{thm:Salem} with straightforward modifications still shows that a.s. on $\mu_\infty\neq 0$, $\dim_F\mu_\infty=\dim_H\mu_\infty=\dim_H\supp\mu_\infty=\dim_B\supp\mu_\infty=d-\alpha$.
\end{enumerate}
\end{rems}

\subsection{Application to the restriction problem for fractal measures}
\label{subsec:question-laba}

Recall that the restriction problem for a measure $\mu\in\mathcal{P}_d$ on Euclidean space asks for what pairs $p,q$ there is an estimate
\begin{equation} \label{eq:restriction}
\|\widehat{f d\mu}\|_{L^p(\R^d)}  \le O_{p,q}(1) \|f\|_{L^q(\mu)}\,.
\end{equation}
See \cite{Laba14} for background and the dual formulation (from which the name ``restriction problem'' originates). One often takes $q=2$ and looks for values of $p$ such that the above holds. The study of the restriction problem for fractal measures was pioneered by Mockenhaupt \cite{Mockenhaupt00} (see also \cite{Mitsis02}), who proved that if
\begin{align}
\mu(B(x,r))&=O(r^\alpha)\,, \label{eq:decay-mass}\\
|\widehat{\mu}(\xi)| &= O((1+|\xi|)^{-\beta/2}) \,,\label{eq:decay-FT}
\end{align}
then \eqref{eq:restriction} with $q=2$ holds whenever
\[
p >  p_{\alpha,\beta,d} = \frac{2(2d-2\alpha+\beta)}{\beta}\,.
\]
This was extended to the endpoint by Bak and Seeger in \cite{BakSeeger11}. It is well known that if \eqref{eq:decay-mass} and \eqref{eq:decay-FT} hold, then $\beta,\alpha\le \alpha_0$, where $\alpha_0$ is the Hausdorff dimension of the support of $\mu$, and the most interesting case is that in which $\beta,\alpha$ can be taken arbitrarily close to $\alpha_0$. Let us call measures with this property \textbf{strong Salem measures}. Recently, Hambrook and {\L}aba \cite{HambrookLaba13} established the sharpness of the range of $p$ for strong Salem measures, in the case $d=1$. (This was extended to general pairs $0\le\beta\le\alpha\le 1$ by Chen \cite{Chen13}.) However, this left open the problem of whether the restriction estimate \eqref{eq:restriction} holds for a range of $p<p_{\alpha,\beta,1}$ for \emph{some} strong Salem measures on the real line. This question was asked by {\L}aba \cite[Question 3.5]{Laba14}.

A different restriction theorem based on convolution powers was proved by Chen in \cite[Theorem 1]{Chen14}. In particular, it implies the following.
\begin{thm} \label{thm:chen}
If $\mu\in\mathcal{P}_d$ has the property that $\mu*\mu$ is absolutely continuous with a bounded density, then \eqref{eq:restriction} holds for $p\ge 4$ and $q\ge p/(p-2)$, and in particular for $p\ge 4$ and $q=2$.
\end{thm}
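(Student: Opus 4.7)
The plan is a Tomas--Stein style argument exploiting the assumption on $\mu\ast\mu$ at the $L^4$ endpoint and then interpolating with the trivial $L^\infty$ bound. Write $h$ for the bounded density of $\mu\ast\mu$.

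First, I would prove the endpoint $(p,q)=(4,2)$. By Plancherel,
\[
\|\widehat{f d\mu}\|_{L^4(\R^d)}^4 \;=\; \bigl\|(\widehat{f d\mu})^2\bigr\|_{L^2}^2 \;=\; c_d\,\bigl\|(f d\mu)\ast(f d\mu)\bigr\|_{L^2}^2,
\]
so it suffices to bound the $L^2$ norm of the (complex) measure $\sigma_f:=(fd\mu)\ast(fd\mu)$. By duality, testing against $\phi\in L^2(\R^d)$,
\[
\Bigl|\int \phi\, d\sigma_f\Bigr|
=\Bigl|\iint f(x)f(y)\,\phi(x+y)\,d\mu(x)d\mu(y)\Bigr|
\le \|f\otimes f\|_{L^2(\mu\times\mu)}\Bigl(\iint|\phi(x+y)|^2 d\mu(x)d\mu(y)\Bigr)^{1/2}
\]
by Cauchy--Schwarz. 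The first factor equals $\|f\|_{L^2(\mu)}^2$, while the second equals $\bigl(\int|\phi|^2 d(\mu\ast\mu)\bigr)^{1/2}\le \|h\|_\infty^{1/2}\|\phi\|_{L^2}$ thanks to the hypothesis on $\mu\ast\mu$. Hence $\|\sigma_f\|_{L^2}\le \|h\|_\infty^{1/2}\|f\|_{L^2(\mu)}^2$, which upon combination with the Plancherel identity above yields
\[
\|\widehat{fd\mu}\|_{L^4(\R^d)} \;\le\; C\,\|f\|_{L^2(\mu)},
\]
the case $p=4,q=2$ of \eqref{eq:restriction}.

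Next, I would combine this with the trivial pointwise estimate $\|\widehat{fd\mu}\|_{L^\infty}\le \|fd\mu\|_{\mathrm{total\ variation}}=\|f\|_{L^1(\mu)}$, which gives that the linear map $T\colon f\mapsto \widehat{fd\mu}$ is bounded $L^1(\mu)\to L^\infty(\R^d)$. Riesz--Thorin interpolation between the pairs $(L^2(\mu),L^4(\R^d))$ and $(L^1(\mu),L^\infty(\R^d))$ with parameter $\theta\in[0,1]$ gives boundedness $L^{q_\theta}(\mu)\to L^{p_\theta}(\R^d)$ where $1/p_\theta=(1-\theta)/4$ and $1/q_\theta=(1-\theta)/2+\theta=(1+\theta)/2$. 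Setting $1-\theta=4/p$ one checks
\[
q_\theta \;=\; \frac{2}{1+\theta} \;=\; \frac{p}{p-2},
\]
which produces \eqref{eq:restriction} for every $p\ge 4$ with $q=p/(p-2)$.

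Finally, the case $q>p/(p-2)$ follows immediately from H\"older's inequality: since $\mu$ is a finite measure, $\|f\|_{L^{p/(p-2)}(\mu)}\le \mu(\R^d)^{1/(p/(p-2))-1/q}\|f\|_{L^q(\mu)}$, and the already established case reduces matters to the preceding step. There is no single hard obstacle in this scheme; the only delicate point is the duality/Cauchy--Schwarz computation that converts the hypothesis on $\mu\ast\mu$ into the $L^2$ bound for $(fd\mu)\ast(fd\mu)$, and everything else is bookkeeping via standard interpolation.
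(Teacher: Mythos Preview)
Your argument is correct and is exactly the Tomas--Stein type reasoning underlying Chen's result. Note, however, that the paper does not give its own proof of this statement: it is quoted as a consequence of \cite[Theorem 1]{Chen14}. So there is no proof in the paper to compare against; you have simply supplied the (standard) proof that the paper omits.

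One very small remark on presentation: the passage from the duality bound to the Plancherel step deserves one extra sentence. The duality estimate shows that $\sigma_f$ acts boundedly on $L^2$, hence by Riesz representation $\sigma_f$ is absolutely continuous with density in $L^2$; only then does Plancherel give $\widehat{\sigma_f}\in L^2$ and the identification $\widehat{\sigma_f}=(\widehat{fd\mu})^2$ as $L^2$ functions. You implicitly use this, and it is fine, but making it explicit removes any doubt about circularity (i.e., that one is not assuming $\widehat{fd\mu}\in L^4$ in order to prove it). Alternatively one can first take $f\in L^\infty(\mu)$ and pass to general $f\in L^2(\mu)$ by density. Everything else---the interpolation arithmetic and the final H\"older step for $q>p/(p-2)$---is routine and correct.
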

Previously, there were very few examples of fractal measures $\mu$ for which the self-convolution was known to have a bounded density. Recall that, for $d=1$,  K\"{o}rner \cite{Korner08} constructed such measures of any dimension in $[1/2,1)$; however,  it seems hard to understand other geometric properties such as \eqref{eq:decay-mass} and \eqref{eq:decay-FT} for those measures. In connection with his theorem, Chen \cite[Remark (iii) after Corollary 1]{Chen14} poses the problem of the existence of strong Salem measures whose self-convolution has a bounded density.

Theorem \ref{thm:self-convolutions} provides a rich class of measures for which the self-convolution has a continuous density. Combining this with Theorems \ref{thm:Salem} and \ref{thm:chen}, we are able to give a positive answer to the questions of {\L}aba and Chen, in the range $\alpha_0\in (1/2,2/3]$. Note that for $d=1$ the range of  the Mockenhaupt/Bak and Seeger Theorem in the strong Salem case is $p\in (4/\alpha_0-2,\infty)$, and $4/\alpha_0-2>4$ if and only if $\alpha_0<2/3$.

\begin{thm} \label{thm:restriction-application}
Let $\alpha_0\in (1/2,1)$. Then there is a strong Salem measure $\mu\in\mathcal{P}_1$ supported on a set of Hausdorff dimension $\alpha_0$, such that $\mu*\mu$ has a H\"{o}lder continuous density, and \eqref{eq:restriction} holds for all $p\ge 4$ and $q\ge p/(p-2)$.
\end{thm}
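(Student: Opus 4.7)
The plan is to take $\mu = \mu_\infty^{\text{perc}(\alpha,1)}$, the natural measure on binary dyadic fractal percolation on $[0,1]$ with retention probability $p = 2^{\alpha_0 - 1}$, so that $\alpha := 1 - \alpha_0 \in (0, 1/2)$; we condition throughout on the positive-probability event $\mu \neq 0$, on which $\dim_H \mu = \alpha_0$. For the Fourier-decay half of the strong Salem property I apply Theorem \ref{thm:Salem}: since the ambient dimension is $d = 1$, the hypothesis $\alpha \in [d-2, d] = [-1, 1]$ is automatic, so $\dim_F \mu = \alpha_0$ a.s., giving \eqref{eq:decay-FT} with $\beta = \alpha_0 - \varepsilon$ for every $\varepsilon > 0$.

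Next, I check that the hypotheses of Theorem \ref{thm:self-convolutions} are satisfied: $\alpha < d/2 = 1/2$ by our choice, the shapes $\Lambda \in \mathcal{F}$ are dyadic intervals with $\leb(\partial\Lambda(\varepsilon)) \le 4\varepsilon$ (so \eqref{eq:Holder-affine-images} holds trivially with $\gamma_0 = 1$), and $S = \I_1 \notin \mathcal{O}$ since $\I_1 + \I_1$ is invertible. Theorem \ref{thm:self-convolutions} then shows that almost surely $\mu * \mu$ has a locally H\"older continuous density, which is bounded because $\mu * \mu$ is compactly supported. Chen's Theorem \ref{thm:chen} applied to this bounded self-convolution delivers the restriction estimate \eqref{eq:restriction} for all $p \ge 4$ and $q \ge p/(p-2)$ as a black box.

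The main obstacle, and the only step not handled directly by the results of the paper, is the mass-decay half \eqref{eq:decay-mass} of the strong Salem property: the uniform Frostman bound $\mu(B(x,r)) \le C_\varepsilon r^{\alpha_0 - \varepsilon}$. I plan to establish this by a Borel--Cantelli union bound across dyadic intervals. Conditional on a fixed $Q \in \mathcal{Q}_n$ surviving to level $n$, the rescaled mass $2^{n\alpha_0}\mu(Q)$ has the law of the survival limit $Z$ of the whole percolation process, which has finite moments of all orders because the multiplicative weights are bounded; by Markov's inequality, $\PP(2^{n\alpha_0}\mu(Q) > 2^{n\varepsilon}) = O_q(2^{-n\varepsilon q})$ for every $q$, and taking $q$ large enough makes this summable even after union bounding across the at most $2^n$ candidate intervals at level $n$. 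Borel--Cantelli then yields $\max_{Q \in \mathcal{Q}_n} \mu(Q) = O(2^{-n(\alpha_0 - \varepsilon)})$ a.s. for all large $n$, from which the uniform Frostman bound follows by the usual dyadic comparison of balls with intervals. The threshold $\alpha_0 > 1/2$ in the hypothesis is dictated exactly by the condition $\alpha < d/2$ of Theorem \ref{thm:self-convolutions}, which places $\mu$ in the $L^2$ regime that makes $\mu * \mu$ H\"older (and in particular bounded).
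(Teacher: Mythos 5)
Your proposal is correct, but it takes a genuinely different route from the paper's. The paper does not use fractal percolation here: it builds a bespoke subdivision martingale with \emph{deterministic} branching numbers $N_j\in\{1,2\}$ chosen so that $P_n=N_1\cdots N_n=\Theta(2^{\alpha_0 n})$, keeping both dyadic children when $N_{n+1}=2$ and a uniformly random single child when $N_{n+1}=1$. The payoff is that every surviving level-$n$ interval has mass exactly $P_n^{-1}=\Theta(2^{-\alpha_0 n})$, so \eqref{eq:decay-mass} holds with $\alpha=\alpha_0$ \emph{exactly} and the measure is Ahlfors $\alpha_0$-regular — no probabilistic estimate is needed for the Frostman half of the strong Salem property, and the regularity feeds the remark after the theorem about the Mitsis/Mattila problem. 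Your version instead reuses the off-the-shelf example $\mu_\infty^{\text{perc}(1-\alpha_0,1)}$; the Fourier-decay and self-convolution halves are handled identically (Theorems \ref{thm:Salem}, \ref{thm:self-convolutions} and \ref{thm:chen} apply verbatim, and your checks of their hypotheses are right), but the mass decay now requires your extra large-deviation plus Borel--Cantelli argument. That argument is sound: conditional on survival of $Q\in\mathcal{Q}_n$ the rescaled mass is indeed an independent copy of the total-mass limit $Z$ by statistical self-similarity, and $Z$ has all moments finite — in fact exponential tails, as follows from the paper's own Lemma \ref{lem:Yt-survives} applied with $\eta=\leb|_{[0,1]}$ — so the union bound over the $2^n$ dyadic intervals closes. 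The cost is that you only obtain the Frostman exponent $\alpha_0-\e$ for every $\e>0$ (there is genuinely a logarithmic correction for percolation, so this cannot be improved to $\alpha_0$), which still meets the paper's definition of strong Salem but loses Ahlfors regularity. In short: the paper trades a new construction for an exact, deterministic mass bound; you trade a routine moment estimate for the convenience of a standard example.
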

\begin{proof}
In light of Theorems \ref{thm:self-convolutions}, \ref{thm:Salem} and \ref{thm:chen}, all that is left to do is to exhibit limits of SI-martingales $\mu_\infty$ of arbitrary dimension $\alpha_0$ which satisfy the hypotheses of Theorem \ref{thm:Salem} (which automatically imply the conditions of Theorem \ref{thm:self-convolutions} provided $\alpha_0>1/2$), and for which $\mu_\infty(B(x,r)) = O(r^{\alpha_0})$. The construction we present here is closely related to the one form \cite{ShmerkinSuomala12}.

Let us then fix $\alpha_0$, and pick a sequence $N_j$ with $N_j\in\{1,2\}$ for all $j$ and such that $P_n:=N_1\cdots N_n=\Theta(2^{\alpha_0 n})$. Using this sequence we construct the following $\subdivision$ martingale: start with $\mu_0=\mathbf{1}_{[0,1)}$. Suppose that we have defined $\mu_n$ of the form $2^n P_n^{-1}\sum_{j=1}^{P_n} \mathbf{1}[I_j]$, where the $I_j$ are different half-open dyadic intervals of length $2^{-n}$. We iterate as follows: if $N_{n+1}=2$, then we split each $I_j$ into its two dyadic offspring $I'_j, I''_j$ and keep both of them (deterministically), and set
\[
\mu_{n+1} = \frac{2^{n+1}}{P_{n+1}} \sum_{j=1}^{P_n} (\mathbf{1}[I'_j]+\mathbf{1}[I''_j]) = \mu_n\,.
\]
Otherwise, if $N_{n+1}=1$, then choose one of the two dyadic offsprings of $I_j$ uniformly at random and call it $I'_j$, with all choices independent. Set
\[
\mu_{n+1} = \frac{2^{n+1}}{P_{n+1}} \sum_{j=1}^{P_{n+1}} \mathbf{1}[I'_j]\,.
\]
In both cases $\mu_{n+1}$ has the same form as $\mu_n$. It is clear that $(\mu_n)\in\subdivision$ and that it meets the assumptions of Theorem \ref{thm:Salem} with $\beta_n=2^n P_n^{-1}$. Note that $\log_2\beta_n/n\to 1-\alpha_0$. Since all chosen intervals $I$ of step $n$ have the same mass $P_n^{-1}=\Theta(2^{-\alpha_0 n})$ for all $n$, \eqref{eq:decay-mass} holds with $\alpha=\alpha_0$, and this completes the proof.
\end{proof}

\begin{rems}
\begin{enumerate}[(i)]
\item The measure constructed in the proof of Theorem \ref{thm:restriction-application} satisfies \eqref{eq:decay-mass} with $\alpha=\alpha_0$ while \eqref{eq:decay-FT} holds for all $\beta<\alpha_0$. Actually, it follows from the construction that
$\mu_\infty$ is Ahlfors $\alpha_0$-regular so that $\mu(B(x,r))=\Theta(r^\alpha_0)$ for $x\in\supp(\mu_\infty)$ and $0<r<1$. It is not known if there are Ahlfors $\alpha$-regular measures which satisfy \eqref{eq:decay-FT} with $\beta=\alpha$, see \cite{Mitsis02}, \cite[Problem 20]{Mattila04}.

\item With some additional work, it is possible to extend the above theorem to $\alpha_0=1/2$. The same construction works, with $P_n=\Theta(n^C 2^{-n/2})$ for a sufficiently large $C$. Although Theorem \ref{thm:self-convolutions} is not directly applicable, the $n^C$ factor allows the proof of the theorem to go through, except that the density is now uniformly continuous but not necessarily H\"{o}lder, and this is enough to show that $\mu*\mu$ has a continuous (and thus bounded) density. The value $\alpha_0=1/2$ is perhaps of special significance because the range $p\ge 4$ for $q=2$ is optimal in this case, see \cite{Laba14}.
\item For $\alpha_0\in (0,1/2]$, we can also construct strong Salem measures for which \eqref{eq:restriction} holds for $q=2$ and a range strictly larger than the one coming from Mockenhaupt's Theorem. However, this involves higher order convolutions and therefore higher order cartesian products, which are not SI-weak martingales. This will be addressed in \cite{ShmerkinSuomala14}.
\item Theorem \ref{thm:restriction-application} also works, with a nearly identical proof, in dimensions $d=2,3$ for the range $\alpha_0\in(d/2,2]$. However, the one-dimensional case is perhaps the most interesting, since surface area on the sphere $S^{d-1}\subset\R^d$ is a strong Salem measure for which the optimal range of $p$ for which \eqref{eq:restriction} holds (with $q=2$) follows from the Stein-Tomas restriction theorem (the well-known restriction conjecture concerns finding the optimal range of $q$ in this case). This is due to the curvature of the sphere, which is a meaningless concept for subsets of $\R$. Our result therefore highlights the analogy between ``curvature'' and ``randomness'' (see \cite{Laba14} for a detailed discussion of this analogy). The construction \cite{HambrookLaba13} contains many highly structured subsets which act as an obstruction to a restriction estimate if $p$ is small enough while limits of SI-martingales cannot contain any patterns of size larger than their dimension allows (this will also be explored in \cite{ShmerkinSuomala14}).
\end{enumerate}
\end{rems}

\begin{rem}
Immediately after an earlier version of this paper was posted to the ArXiv, X. Chen informed us that in joint work in progress with A. Seeger they construct random measures which also satisfy Theorem \ref{thm:restriction-application}. Their construction is different from ours  and is based on Korner's from \cite{Korner08}, and some features are also different. For example, our example is Ahlfors-regular while theirs is not, but they get the sharp H\"{o}lder exponent and their construction works in any dimension. On the other hand, they do not address the simultaneous convolutions $\mu*S\mu$ (recall Theorem \ref{thm:self-convolutions}). We thank X. Chen for sharing an early manuscript of their work with us.
\end{rem}

\bibliographystyle{plain}
\bibliography{randomintersections}

\end{document}